    \newcommand{\Q}{\mathbb{Q}} 
    \newcommand{\A}{\mathbb{A}} 
    \newcommand{\G}{\mathbb{G}} 
    \newcommand{\SL}{\operatorname{SL}}
    \newcommand{\LG}{\mathfrak{g}}
    \newcommand{\LT}{\mathfrak{t}}
    \newcommand{\Hom}{\operatorname{Hom}}
    \renewcommand{\Hom}{\operatorname{Hom}} 
    \newcommand{\uHom}{\underline{\operatorname{Hom}}} 
    \newcommand{\uEnd}{\underline{\operatorname{End}}} 
    \newcommand{\QCoh}{\operatorname{QCoh}}
    \newcommand{\IndCoh}{\operatorname{IndCoh}}
    \newcommand{\C}{\mathcal{C}} 
    \renewcommand{\O}{\mathcal{O}} 
    \newcommand{\F}{\mathcal{F}} 
    \newcommand{\D}{\mathcal{D}} 
    \renewcommand{\\}{\backslash}
    \theoremstyle{definition}
    \newtheorem{Theorem}{Theorem}[section]
    \newtheorem{Corollary}[Theorem]{Corollary}
    \newtheorem{Definition}[Theorem]{Definition}
    \newtheorem{Proposition}[Theorem]{Proposition}
    \newtheorem{Remark}[Theorem]{Remark}
    \newtheorem{Example}[Theorem]{Example}
    \newtheorem{Lemma}[Theorem]{Lemma}
    \title{The Coarse Quotient for Affine Weyl Groups and Pseudo-reflection Groups}
    \author{Tom Gannon}
    \newcommand{\AvN}{\text{Av}_*^N}
    \newcommand{\Avpsi}{\text{Av}_{!}^{\psi}}
    \newcommand{\Symt}{\text{Sym(}\mathfrak{t}\text{)}}
    \newcommand{\LTd}{\LT^{\ast}}
    \newcommand{\Symtplus}{\text{Sym}(\mathfrak{t})^+}
    \newcommand{\Symtt}{\text{Sym}(\mathfrak{t} \times \mathfrak{t})}
    \newcommand{\Wext}{\tilde{W}^{\text{aff}}}
    \newcommand{\Waff}{W^{\text{aff}}}
\begin{document}
    
\renewcommand{\G}{\mathcal{G}}
\newcommand{\AvNTw}{\text{Av}_*^{N, (T, w)}}
\newcommand{\AvGw}{\text{Av}_{\ast}^{G,w}}
\newcommand{\pifin}{\pi_{\text{fin}}}
\newcommand{\pifinL}{\pi_{\text{fin,L}}}

    \newcommand{\ELeftAdjoint}{\text{ev}_{\omega_{\LTd}}}
\newcommand{\ClGlobalDiffOp}{\text{H}^0\Gamma(\mathcal{D}_{G/N})}
\newcommand{\GlobalDiffOp}{\Gamma(\mathcal{D}_{G/N})}
\newcommand{\indsch}{\mathcal{X}}

    \newcommand{\DGCatContk}{\text{DGCat}^k_{\text{cont}}}
\newcommand{\DGCatContL}{\text{DGCat}^L_{\text{cont}}}

\newcommand{\DNTw}{\mathcal{D}(N\backslash G/N)^{T_r,w}}
\newcommand{\DNTwWhit}{\mathcal{D}(N^-_{\psi}\backslash G/N)^{T_r,w}}
\newcommand{\DNWhit}{\mathcal{D}(N^-_{\psi}\backslash G/N)}
\newcommand{\DNTwldeg}{\mathcal{D}(N \backslash G/N)^{T_r,w}_{\text{left-deg}}}
\newcommand{\DNTwnondeg}{\mathcal{D}(N \backslash G/N)^{T_r,w}_{\text{nondeg}}}
\newcommand{\DN}{\mathcal{D}(N\backslash G/N)}
\newcommand{\DNldeg}{\mathcal{D}(N \backslash G/N)_{\text{left-deg}}}
\newcommand{\DNnondeg}{\mathcal{D}(N \backslash G/N)_{\text{nondeg}}}
\newcommand{\DNlambda}{\mathcal{D}^{\lambda}(N \backslash G/B)}
\newcommand{\Dpsilambda}{\mathcal{D}^{\lambda}(N^- _{\psi}\backslash G/B)}
\newcommand{\DbiTw}{\mathcal{D}(N \backslash G/N)^{T \times T, w}}
\newcommand{\DbiTwnondeg}{\mathcal{D}(N \backslash G/N)^{T \times T, w}_{\text{nondeg}}}
\newcommand{\DbiTwnondegheart}{\mathcal{D}(N \backslash G/N)^{(T \times T, w), \heartsuit}_{\text{nondeg}}}
\newcommand{\DbiTwdeg}{\mathcal{D}(N \backslash G/N)^{T \times T, w}_{\text{deg}}}
\newcommand{\DNBlambda}{\mathcal{D}(N \backslash G/_{\lambda}B)}
\newcommand{\DNTwBlambda}{\mathcal{D}(N \backslash G/_{\lambda}B)}
\newcommand{\DWhitBlambda}{\mathcal{D}(N^-_{\psi}\backslash G/_{\lambda}B)}
\newcommand{\HN}{\D(N \backslash G/N)}
\newcommand{\HNTw}{\D(N \backslash G/N)^{T \times T, w}}
\newcommand{\HNTwabbreviated}{\mathcal{H}^{N, (T,w)}}
\renewcommand{\indsch}{\mathcal{X}}
\newcommand{\wdot}{\dot{w}}
\newcommand{\Gaminusalpha}{\mathbb{G}_a^{-\alpha}}
\newcommand{\Aone}{\mathbf{A}}
\newcommand{\Atwo}{\mathcal{L}\text{-mod}(\Aone)}
\newcommand{\algobj}{\mathcal{A}}
\newcommand{\newalgobj}{\mathcal{A}'}
\newcommand{\tilder}{\tilde{r}}
\newcommand{\Ccirc}{\mathring{\C}}
\newcommand{\rootlattice}{\mathbb{Z}\Phi}
\newcommand{\characterlatticeforT}{X^{\bullet}(T)}
\newcommand{\Spec}{\text{Spec}}
\newcommand{\FourierMukai}{\text{FMuk}}
\newcommand{\oneshiftedCartierdual}{c_1}
\newcommand{\quotientmapforcoarsequotient}{\overline{s}}

\newcommand{\tildeV}{\tilde{\mathbb{V}}}
\newcommand{\Vdual}{V^{\vee}}
\newcommand{\vectorspaceHactson}{V}
\newcommand{\generalstacktoGITquotientmap}{\phi}
\newcommand{\SpecofL}{\text{Spec}(L)}
\newcommand{\terminalmapfromC}{\alpha}
\newcommand{\terminalmapfromCmodassociatedstabilizer}{\dot{\alpha}}

\newcommand{\AvNshifted}{\AvN[\text{dim}(N)]}
\newcommand{\hyperplanefixedbys}{V^{\ast}_{s = \text{id}}}
\newcommand{\pointwisegeneralstacktoGITquotientmap}{\dot{\generalstacktoGITquotientmap}}
\newcommand{\fieldpossiblydifferentfromgroundfield}{K}
\newcommand{\Avpsishifted}{\Avpsi[-\text{dim}(N)]}

    \maketitle
\begin{abstract}
We study the coarse quotient $\LTd\sslash\Waff$ of the affine Weyl group $\Waff$ acting on a dual Cartan $\LTd$ for some semisimple Lie algebra. Specifically, we classify sheaves on this space via a \lq pointwise\rq{} criterion for descent, which says that a $\Waff$-equivariant sheaf on $\LTd$ descends to the coarse quotient if and only if the fiber at each field-valued point descends to the associated GIT quotient.

We also prove the analogous pointwise criterion for descent for an arbitrary finite group acting on a vector space.  Using this, we show that an equivariant sheaf for the action of a finite pseudo-reflection group descends to the GIT quotient if and only if it descends to the associated GIT quotient for every pseudo-reflection, generalizing a recent result of Lonergan.

\end{abstract}
\tableofcontents

 \section{Introduction}
\label{Overview of the Main Results}
The goal of this paper is to study the \textit{coarse quotient} $\LTd\sslash\Waff$ for use in \cite{GannonCategoricalRepresentationTheoryandTheCoarseQuotient}. While this quotient appears naturally in many representation theoretic contexts (see \cref{Motivation Subsection}), its global geometry makes direct analysis of this space difficult--for example, it is not a scheme or algebraic space, and its diagonal map is not quasicompact. However, in this paper we argue that many questions about $\LTd\sslash\Waff$ and its category of sheaves can be understood through the use of \textit{field-valued points}. Specifically, we show the following, which we also use to re-derive the main results of \cite{LoRemark}:

\begin{Theorem} (\cref{CoherentFullyFaithful}, \cref{Various Conditions for Wext Equivariant Sheaf to Satisfy Coxteter Descent})
The pullback map induced by $\LTd/\Waff \to \LTd\sslash \Waff$ is fully faithful. Moreover, a $\Waff$-equivariant sheaf $\F$ on $\LTd$ lies in the essential image of this pullback if and only if at every field-valued point $x$ of $\LTd$, the $W_x$-representation on the fiber of $\F$ at $x$ is trivial. 
\end{Theorem}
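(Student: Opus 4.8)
The plan is to realize both quotients through comonadic descent along the quotient map $q \colon \LTd \to \LTd\sslash\Waff$ and its factorization $\LTd \xrightarrow{r} \LTd/\Waff \xrightarrow{p} \LTd\sslash\Waff$, and then to push every assertion down to a fiberwise computation at field-valued points, where the infinite group $\Waff$ gets replaced by the finite stabilizer $W_x$. First I would use the Cartesian presentation $\Gamma = \LTd \times_{\LTd\sslash\Waff} \LTd$ together with its base change theorem to exhibit $\QCoh(\LTd\sslash\Waff)$ as comodules over the comonad $s_* t^*$ on $\QCoh(\LTd)$, where $s,t$ are the source and target maps; conservativity of $q^*$ (the map $q$ is an effective epimorphism of prestacks) and dual Barr--Beck--Lurie give the comonadicity. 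The analogous computation for the action groupoid $\coprod_{w \in \Waff} \LTd = \LTd \times_{\LTd/\Waff} \LTd$ presents $\QCoh(\LTd/\Waff)$ as $\Waff$-equivariant sheaves, i.e.\ comodules over $\bigoplus_w w_*$. The inclusion of the disjoint union of graphs into their scheme-theoretic union $\Gamma$ is a morphism of groupoids over $\LTd$, and it induces a map of comonads $\theta \colon s_* t^* \to \bigoplus_w w_*$ whose associated functor on comodule categories is exactly $p^*$.

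With this setup, both claims become statements about $\theta$ that I would check at field-valued points. For full faithfulness I would show the unit $\id \to p_* p^*$ is an equivalence: since $r^*$ is conservative with $r^* p^* = q^*$ and $q^*$ is comonadic, it suffices to verify the unit after applying $q^*$ and then, using the finite flatness of $s$ from \cref{FiniteFlat}, fiber by fiber over field-valued points. Over the neighborhood of such a point $x$ the coarse quotient is modeled by the finite GIT quotient $V\sslash W_x$ for $V = \LTd$ and the \emph{reflection} group $W_x$ (stabilizers in an affine Weyl group are generated by reflections); here $\mathcal{O}(V)$ is free over $\mathcal{O}(V)^{W_x}$ of rank $|W_x|$, which is the local content of \cref{FiniteFlat}. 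Consequently $p^*$ is computed underived, characteristic-zero invariants compute $p_*$, and the averaging idempotent $\tfrac{1}{|W_x|}\sum_{h} h$ identifies $p_* p^*$ with the identity locally. This simultaneously establishes full faithfulness and reduces membership in the essential image to the condition that, for a $\Waff$-equivariant $\F$ with coaction $c$, the map $c$ factor compatibly through $\theta$.

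The crux, and the main obstacle, is to show that this factorization condition can be detected at field-valued points and that there it is equivalent to triviality of the $W_x$-action. The reduction to field-valued points is the technical heart of the argument: finite flatness of $s$ guarantees that $s_* t^* \F$ carries no higher derived terms (consistent with $\LTd\sslash\Waff$ being $0$-coconnective), so the factorization of $c$ through $\theta$ is the vanishing of a map of sheaves on $\LTd$ that may be tested on $\ast$-fibers, and the paper's principle that $K$-points not factoring through a $k$-point see only the generic, free behavior confines all content to points with nontrivial stabilizer. Fixing such an $x$ and linearizing at the tangent space, the fiber of $\Gamma$ over $x$ along $s$ is $\Spec(C_0)$ for the coinvariant algebra $C_0$ of $W_x$, a local Artinian ring of length $|W_x|$ supported at $x$, and $\theta_x$ is the reduction map $C_0 \to \mathrm{Fun}(W_x, k)$ tensored with $\F_x$. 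Because $C_0$ is local, its image is the diagonal copy of $\F_x$ inside $\bigoplus_{w \in W_x} (w_* \F)_x$, so the coaction factors through $\theta_x$ precisely when its $|W_x|$ components agree; unwinding the equivariance identifications $(w_*\F)_x \cong \F_x$, these components are governed by the $W_x$-action on $\F_x$, and they agree exactly when that action is trivial. The same computation at the remaining orbit points $wx$ returns the conjugate condition, which is automatic once $W_x$ acts trivially on $\F_x$; assembling the fiberwise conclusions over all field-valued points, and using finite flatness once more to glue, yields the stated description of the essential image.
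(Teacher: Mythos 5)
Your proposal follows the same skeleton as the paper---groupoid presentations of both quotients, reduction to field-valued points, and a local analysis of the finite stabilizer through its coinvariant algebra---but two of its load-bearing steps fail as written. The first is the sheaf-theoretic framework. You work in $\QCoh$ with $\ast$-pullback, so your pushforwards $s_*$, $q_*$, $p_*$ are \emph{right} adjoints, and both the identification of $\QCoh(\LTd\sslash\Waff)$ with comodules over $s_*t^*$ and every subsequent ``check it on $\ast$-fibers'' step require base change for these pushforwards against pullback to field-valued points. That base change is false here: $s$ is ind-finite flat but not quasi-compact, so $s_*$ is the inverse limit $\lim_m (s_m)_*$ over the finite closed pieces of \cref{FiniteFlat}, and $\ast$-pullback does not commute with that limit. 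Concretely, at the generic point of $\LTd$, with $K = \mathrm{Frac}(\Symt)$, the fiber of $\Gamma$ is $\coprod_{w \in \Waff}\Spec(K)$, so the target of the base-change map is the infinite product $\prod_w K$; but every element of $\left(\lim_m (s_m)_*(t_m)^*\mathcal{O}\right)\otimes_{\Symt} K$ is a compatible system divided by a \emph{single} denominator, so the comparison map is not surjective. Nor can you dodge this by testing only at closed points: the $k[t]$-module $k(t)$ is nonzero yet has vanishing derived fiber at every closed point of $\A^1_k$, which is why \cref{For any qcoh sheaf on smooth classical Noetherian scheme there exists a field valued point where the fiber doesn't vanish} must quantify over all field-valued points. (That proposition also detects vanishing of \emph{objects}, not of \emph{morphisms}, so your claim that the factorization of the coaction through $\theta$ ``may be tested on $\ast$-fibers'' needs a separate justification in any case.) This failure is precisely what the paper's $\IndCoh$ formalism is built to avoid: there, pushforward along the ind-proper maps is a continuous \emph{left} adjoint to $!$-pullback satisfying base change (\cref{WAffIndFiniteFlat}, \cref{Definition of Pushing Forward by Quotientmapforcoarsequotient}, \cref{FakeBaseChangeLemma}), the existence of the left adjoint along $\LTd/\Wext \to \LTd\sslash\Wext$ is itself a lemma (\cref{Left Adjoint of MapfromStackQuotientofWexttoCoarsequotientofWext Exists}), and only then is full faithfulness proved by checking the counit fiberwise as you intend (\cref{CoherentFullyFaithful}). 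The base change you invoke from the introduction is a statement about these $\IndCoh$ functors, not about the $\QCoh$ adjunction you use.

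The second gap is the local criterion you yourself call the crux. Your argument---the image of the local Artinian ring $C_0$ in $\mathrm{Fun}(W_x,k)$ is the diagonal, hence the coaction factors through $\theta_x$ precisely when its $|W_x|$ components agree, i.e.\ when $W_x$ acts trivially on $\F_x$---is the \emph{underived} criterion, and the paper shows it is false. By \cref{Derived Fiber is Necessary for Pointwise Criterion}, for $\mathbb{Z}/2$ acting on $\A^1$ by $-1$ (so $C = k[x]/x^2$), the equivariant module $k = C/x$ carries the trivial action on its classical fiber yet does \emph{not} descend: the sign representation appears in $H^{-1}$ of the derived fiber. So no argument that only inspects where the coaction lands inside $\bigoplus_{w \in W_x}\F_x$ can be correct; the actual content is \cref{Equivariant Sheaf on Coinvariant Algebra Descends to the Coarse Quotient iff Fiber Is Trivial}, whose proof (\cref{If the fiber of a C-module is trivial then the fixed points are the fixed points of the fiber}) filters $C^+$ by equivariant subquotients containing no trivial subrepresentation, deduces $(C^+ \otimes_C M)^H \simeq 0$ whenever the \emph{entire derived} fiber is a trivial complex of representations, and then runs a conservativity argument for the adjoint on the subcategory cut out by the fiberwise condition. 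Your proposal contains no substitute for this step. Two smaller inaccuracies in the same paragraph: the $s$-fiber of $\Gamma$ at $x$ is $\Waff \times^{\Waff_x} \Spec(C_x)$ (\cref{Orbit-Stabilizer for Union of All Graphs}), not a single copy of $\Spec(C_0)$; and since $t$ restricted to that fiber factors through the formal neighborhood of the orbit, the fiber of the comonad is $\F \otimes_{\Symt} C_x$, which depends on $\F$ near $x$ and is not $C_0 \otimes_k \F_x$.
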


Field-valued points can also be used to better understand the coarse quotient $\vectorspaceHactson\sslash H := \Spec(\text{Sym}(\Vdual)^H)$ in the classical setting of a finite group $H$ acting on a finite dimensional $k$-vector space $V$. For example, using field-valued points, we generalize a result of Lonergan \cite{LoRemark} on descent to the coarse quotient for finite Coxeter groups to the setting of \textit{pseudo-reflection groups}, see \cref{Equivariant Sheaf for Pseudo-Reflection Group Descends iff It Descends for Each Reflection}. After giving the definition of the coarse quotient in \cref{The Coarse Quotient Intro Subsection}, we discuss these applications in \cref{Descent to the Coarse Quotient for Finite Pseudo-Reflection Groups Subsection}, and then give some motivation for the study of the coarse quotient in \cref{Motivation Subsection}. 
\subsection{The Coarse Quotient $\LTd\sslash\Waff$}\label{The Coarse Quotient Intro Subsection} Fix a split reductive group $G$ over a field $k$ of characteristic zero with choice of maximal torus $T$ and Borel subgroup $B \supseteq T$ and let $\LT$ denote the Lie algebra of $T$. As discussed above, this paper studies the coarse quotient $\LTd\sslash \Waff$, where $\Waff := \mathbb{Z}\Phi \rtimes W$ denotes the affine Weyl group for $G$, $W := N_G(T)/T$ is the (finite) Weyl group and $\mathbb{Z}\Phi$ denotes the root lattice of $G$. Here, we discuss desired properties of the coarse quotient $\LTd\sslash\Waff$ and give its formal definition. 

\subsubsection{Introduction to the Coarse Quotient $\LTd\sslash\Waff$}\label{Introduction to the Coarse Quotient of Affine Weyl Group Subsubsection}
The coarse quotient $\LTd\sslash \Waff$ is defined to serve as an analogue of the GIT quotient for the (in general infinite) group $\Waff$. However, one of the first obstructions which appears when working with the coarse quotient $\LTd\sslash \Waff$ is defining it so as to satisfy similar properties to the usual GIT quotients of finite groups. We survey some of these properties of the usual GIT quotient $V\sslash H$ of a Weyl group $H$ acting on some finite dimensional $k$-vector space $V$. Notice that, by definition of the GIT quotient for affine schemes, \[\mathcal{O}(V\sslash H) = \mathcal{O}(V)^H \xrightarrow{\sim} \text{lim}(\mathcal{O}(V) \rightrightarrows \mathcal{O}(V) \otimes_{\mathcal{O}(V)^H} \mathcal{O}(V))\] where rightmost maps are induced by the projections and the limit is taken in the category of ordinary $k$-algebras. When $H$ is a Weyl group, the $k$-algebra $\mathcal{O}(V) \otimes_{\mathcal{O}(V)^H} \mathcal{O}(V)$ admits a description as the \textit{union of graphs} $\Gamma_H$ of $H$ as a closed subscheme of $V \times V$, which we review in \cref{Union of Graphs is Product of LieTs over GIT Quotient}. In particular, we see that
\begin{equation}\label{Union of Graphs Is Product Over GIT Quotient}
    \text{the map }\Gamma_H \xrightarrow{} V \times_{V\sslash H} V\text{ is an equivalence}
\end{equation} and that, by definition, the affine scheme \begin{equation}\label{Quotient of Union of Finitely Many Graphs Acting on V} V\sslash H \text{ is a quotient of }V\text{ by the groupoid }\Gamma_H.\end{equation} Additionally, one of the key features of the GIT quotient $V\sslash H$ (which follows immediately from \cite[Amplification 1.3]{MumfordFogartyKirwanGeometricInvariantTheory}) is that \begin{equation}\label{The Quotient of the k-points is the k-points of the quotient}
    \text{the map } V(\overline{k})/H \xrightarrow{} (V\sslash H)(\overline{k}) \text{ is a bijection}
\end{equation} where $\overline{k}$ is the algebraic closure of $k$. 

Another useful feature of the quotient $V\sslash H$ which, for example, distinguishes it from the stack quotient $V/H$, is that the fiber of the quotient map depends on the choice of point of the codomain. Specifically, as we recall in \cref{Quotient map iso}, we have that
\begin{equation}\label{Fiber Product at Point Depends on Choice of Point}
    \text{for any }\lambda \in V(\overline{k})\text{, we have an isomorphism }\{\lambda\} \times_{V\sslash H} V \cong H \mathop{\times}\limits^{H_{\lambda}} \text{Spec}(C_{\lambda})
\end{equation} where $C_{\lambda}$ is a certain Artinian local $\overline{k}$-algebra determined by the action of the action of the stabilizer $H_{\lambda}$ of $\lambda$ on $V$. For example, by definition one has an equivalence
\raggedbottom
\[\{0\} \times_{V\sslash H} V \simeq \Spec(C_0)\]

\noindent where \[C_0 = \text{Sym}_{\overline{k}}(\overline{k} \otimes_k \Vdual)/\text{Sym}_{\overline{k}}(\overline{k} \otimes_k \Vdual)_+^H\] is the $(\overline{k}$-)\textit{coinvariant algebra}. On the other hand, if $H$ acts faithfully, then the action of $H$ on $V$ is generically free. In particular, for a generic $k$-point $\lambda$ of $V$, the orbit map gives an equivalence \begin{equation}\label{Generic Iso}\tag{*}\{\lambda\} \times_{V\sslash H} V \simeq \coprod_{h \in H} \Spec(k)\end{equation} as we review in \cref{We actually recover}. 

Finally, in representation theory, it is often of interest to study coherent sheaves on spaces such that $V\sslash H$ and relate them to equivariant coherent sheaves $\text{Coh}(V)^H$ on $V$. Specifically, if $q: V \to V\sslash H$ denotes the quotient map, then we have that \begin{equation}\label{Pullback Functor Is Fully Faithful With Easily Described Essential Image}
    q^*\text{ lifts to a fully faithful functor }\text{Coh}(V\sslash H) \xhookrightarrow{} \text{Coh}(V)^H\text{ with an easily described essential image}
\end{equation} as we review in much more detail in \cref{Descent to the Coarse Quotient for Pseudo-Reflection Groups Section}. 
\subsubsection{Other Potential Definitions of $\LTd\sslash\Waff$}
Using the above desiderata, we may immediately disqualify two potential definitions of $\LTd\sslash\Waff$:

\begin{enumerate}
    \item When $G = \SL_2$, we may identify $\LTd \cong \mathbb{A}^1 = \text{Spec}(k[x])$. In this case, since $\text{Spec}(k[x]^{\mathbb{Z}}) = \text{Spec}(k)$, the na\"ive definition for the coarse quotient $\text{Spec}(k[x]^{\Waff}) = \text{Spec}(k)$ does not satisfy \labelcref{Union of Graphs Is Product Over GIT Quotient}, \labelcref{The Quotient of the k-points is the k-points of the quotient}, or \labelcref{Fiber Product at Point Depends on Choice of Point}. 
    \item If the coarse quotient is replaced with the stack quotient $\LTd/\Waff$, \labelcref{Fiber Product at Point Depends on Choice of Point} is not satisfied.
\end{enumerate}

\subsubsection{The Coarse Quotient $\LTd\sslash\Waff$ Via Groupoids}
We will define $\LTd\sslash\Waff$ as a quotient of $\LTd$ by the union of graphs $\Gamma$ of $\Waff$. However, $\Gamma$ is not a scheme if $G$ is not a torus, and so a choice of ambient category is required to define this quotient analogous to \labelcref{Quotient of Union of Finitely Many Graphs Acting on V}. We choose what is essentially the most general setting and define $\LTd\sslash\Waff$ as a certain colimit in the category of \textit{prestacks} in the sense of \cite[Chapter 2.1]{GaRoI}. Specifically, let $\Gamma_{n}$ denote the $n$-fold product $\Gamma \times_{\LTd} \Gamma \times_{\LTd} ... \times_{\LTd} \Gamma$. Through the various source and target maps, the $\Gamma_{n}$ naturally form a groupoid object $\Gamma_{\bullet}$, see \cref{Groupoid Objects and Higher Algebra Subsection}. We now give a special case of our definition of the coarse quotient:

\begin{Definition}\label{Definition of Coarse Quotient for Coxeter Group}
The \textit{coarse quotient} $\LTd\sslash\Waff$ is the geometric realization of the simplicial object $\Gamma_{\bullet}$, in other words, \[\LTd\sslash\Waff := \text{colim}(\xymatrix{... \ar@<-2ex>[r] \ar[r] \ar@<2ex>[r] & \Gamma \ar@<-1ex>[r] \ar@<-1ex>[l] \ar@<1ex>[l] \ar@<1ex>[r]^{\text{t}} & \ar[l] \mathfrak{t}^{\ast}})\] in the category of prestacks.
\end{Definition}

As we review in \cref{Comparison to Classical GIT Construction Subsubsection}, replacing $\Gamma$ with $\Gamma_W$ in \cref{Definition of Coarse Quotient for Coxeter Group} gives the variety $\LTd\sslash W$ defined in \cref{Introduction to the Coarse Quotient of Affine Weyl Group Subsubsection} after sheafification. Observe that our \cref{Definition of Coarse Quotient for Coxeter Group} has the property that \labelcref{Quotient of Union of Finitely Many Graphs Acting on V} is satisfied vacuously. One advantage of taking \cref{Definition of Coarse Quotient for Coxeter Group} as our definition of $\LTd\sslash\Waff$ is that analogues of \labelcref{Union of Graphs Is Product Over GIT Quotient} and \labelcref{The Quotient of the k-points is the k-points of the quotient} hold essentially immediately, as we will see below. 

\subsubsection{Field-Valued Points}\label{Field-Valued Points Subsubsection} One disadvantage, however, of \cref{Definition of Coarse Quotient for Coxeter Group} is that the class of prestacks is \lq so general that it is, of course, impossible to prove anything non-trivial about\rq{} \cite{GaRoI} other than the formal properties discussed above. Therefore, much of the work in studying the coarse quotient will focus on showing the analogues of point \labelcref{Fiber Product at Point Depends on Choice of Point} and \labelcref{Pullback Functor Is Fully Faithful With Easily Described Essential Image} from the quotient map $\quotientmapforcoarsequotient: \LTd \to \LTd\sslash\Waff$. 

One issue, however, which immediately arises when attempting to study $\LTd\sslash \Waff$ in this way is that Zariski open subsets of $\LTd$ are \lq too large.\rq{} For example, when $G = \text{SL}_2$, we have
\raggedbottom
\[\Waff = 2\mathbb{Z} \rtimes \mathbb{Z}/2\mathbb{Z}\]

\noindent and, when considering the action of this group on $\mathbb{A}^1_k$, the $k$-points fixed by some order two subgroup of $\Waff$ behave differently than all other $k$-points, on which the action is free. Since the $k$-points fixed by some order two subgroup of $\Waff$ are given by a copy of the integers, we see that these points do not form a closed subscheme; they only form an \textit{ind-closed subscheme}. In particular, the subset of points for which the $\Waff$-action is free is not a Zariski open subset of $\mathbb{A}^1_k$.  

The main way we propose to work around this is to use \textit{field-valued points}, which can serve as a useful substitute when dealing with quotients such as $\LTd\sslash \Waff$. While in some algebro-geometric applications, generic points can often be more difficult to work with than general closed points, a general feature of representation theory is that $K$-points which do not factor through some $k$-point behave, informally speaking, as if they were generic $k$-points. For example, if $\ast$ denotes either a non-integer $k$-point of $\mathbb{A}^1_k$ or the generic point of $\mathbb{A}^1_k$, then there is a canonical isomorphism (see \cref{Soergel-Ish Isomorphism}):
\raggedbottom
\[\mathbb{A}^1/(2\mathbb{Z}) \times_{\mathbb{A}^1\sslash \Waff} \ast\textcolor{white}{l}\simeq\textcolor{white}{l}\ast \coprod \ast\]

\noindent whereas if $\ast$ denotes an integral $k$-point, one has a canonical isomorphism 
\raggedbottom
\[\mathbb{A}^1/(2\mathbb{Z}) \times_{\mathbb{A}^1\sslash \Waff} \ast\textcolor{white}{l}\simeq\textcolor{white}{l}\Spec(C_0)\]

\noindent where in this case $C_0 \cong k[\epsilon]/\epsilon^2$. A generalization of this plays an important role in representation theory, see \cref{Representation Theoretic Motivation Subsubsection}.

\subsection{Descent to the Coarse Quotient for Finite Groups}\label{Descent to the Coarse Quotient for Finite Pseudo-Reflection Groups Subsection} 
The tools used to study sheaves on the coarse quotient $\LTd\sslash \Waff$ also can be applied to obtain new results for quotients by reflection groups, and more generally \textit{pseudo-reflection} or \textit{complex reflection groups}. We survey these results here.

\subsubsection{Generalities on Descent to the Coarse Quotient}Let $H$ denote a finite group acting on an affine scheme $V$. When studying sheaves on $V\sslash H := \Spec(\O(V)^H)$, a key first insight is that any sheaf on $\vectorspaceHactson$ which is pulled back from $\vectorspaceHactson\sslash H$ canonically acquires an $H$-equivariant structure, or, equivalently, can be viewed as a sheaf on the \textit{stack quotient} $\vectorspaceHactson/H$. In the setting of quasicoherent sheaves, it is in fact not too difficult to show that the pullback 
\raggedbottom
\[\generalstacktoGITquotientmap^{\ast}: \QCoh(\vectorspaceHactson\sslash H) \to \QCoh(\vectorspaceHactson/H)\]
\noindent is fully faithful, as we review in \cref{Pullback From GIT to Stack Quotient of Finite Group of Affine Scheme is Fully Faithful}. 
\begin{Definition}
We say that $\F \in \QCoh(\vectorspaceHactson/H)$ \textit{descends to the coarse quotient} $V\sslash H$ if $\F$ is in the essential image of the pullback $\generalstacktoGITquotientmap^{\ast}$. 
\end{Definition}

In the course of studying sheaves on $\LTd\sslash \Waff$, we prove the following lemma, which says that the condition for a given $H$-equivariant sheaf to descend to the coarse quotient can be checked at each field-valued point of $V$. 

\begin{Lemma}\label{An Pseudo-Reflection Group Descends to Coarse Quotient Iff It Descends to Every Field-Valued Point}
A given $\F \in \QCoh(V)^H$ descends to the coarse quotient $V\sslash H$ if and only if for every field-valued point $x \in V(K)$, the induced $H_x$-representation on $x^*(\F)$ is trivial, where $H_x$ denotes the stabilizer of $x$. 
\end{Lemma}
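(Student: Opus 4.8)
The plan is to use the adjunction recorded in \cref{Pullback From GIT to Stack Quotient of Finite Group of Affine Scheme is Fully Faithful}. Writing $A := \text{Sym}(\Vdual)$, so that $V = \Spec A$ and $V\sslash H = \Spec A^H$, the pullback $\phi^*$ admits the $H$-invariants functor $\phi_*\F = \F^H$ as right adjoint, and $\phi^*$ is fully faithful. Hence an object $\F \in \QCoh(V)^H$, regarded as an $A$-module $M$ with compatible $H$-action, descends to the coarse quotient if and only if the counit
\[
\theta_{\F} \colon A \otimes_{A^H} M^H \longrightarrow M
\]
is an isomorphism of $H$-equivariant $A$-modules. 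Thus the lemma reduces to showing that the pointwise triviality condition is equivalent to $\theta_{\F}$ being an isomorphism.

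For the forward direction, suppose $\F = \phi^*\mathcal{G}$. Writing $m_h \colon V \to V$ for the action of $h \in H$, the equivariant structure consists of isomorphisms $c_h \colon m_h^*\F \xrightarrow{\sim} \F$; since $\phi$ is $H$-invariant we have $\phi \circ m_h = \phi$, so on $\phi^*\mathcal{G}$ the isomorphism $c_h$ is the canonical identification $m_h^*\phi^*\mathcal{G} = (\phi \circ m_h)^*\mathcal{G} = \phi^*\mathcal{G}$. For a field-valued point $x$ and $h \in H_x$ we have $m_h \circ x = x$, and the induced action of $h$ on the fiber $x^*\F$ is $x^*(c_h)$, which is the pullback of this canonical identification along a fixed point, hence the identity. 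Therefore $H_x$ acts trivially on $x^*\F$, as required.

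The substance is the reverse direction, and the plan is to detect the isomorphy of $\theta_{\F}$ on fibers at field-valued points, which (after the standard reduction to coherent sheaves) form a jointly conservative family on $\QCoh(V)$. First, on the open locus $U \subseteq V$ where $H$ acts with trivial stabilizers, the map $U \to U/H$ is an $H$-torsor agreeing with the restriction of $V \to V\sslash H$, so by descent along this torsor every equivariant sheaf descends over $U$; thus $\theta_{\F}|_U$ is automatically an isomorphism and no hypothesis is needed there. It remains to check that $x^*\theta_{\F}$ is an isomorphism at field-valued points $x$ of the closed complement $Z = V \setminus U = \bigcup_{h \neq 1} V^h$, where $H_x$ is nontrivial. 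Here the fiber of the source is $x^*\phi^*\phi_*\F = (\phi \circ x)^*(M^H)$, the fiber of the invariants at the image point $\phi(x)$, while the fiber of the target is the genuine fiber $x^*\F$, and the comparison between them is governed by the interaction of $M^H$ localized at $\phi(x)$ with the $H$-orbit of fibers over $\phi(x)$.

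The main obstacle is precisely this last base-change step: taking $H$-invariants does not commute with restriction to a fiber, and for a general (non-pseudo-reflection) group $A$ is not flat over $A^H$, so one cannot simply base change along $V \to V\sslash H$. The plan is to use the local structure of the quotient at a point of $Z$ to reduce the computation of $(\phi \circ x)^*(M^H)$ to the stabilizer subgroup $H_x$ and its representation on $x^*\F$; the hypothesis that this representation is trivial is then exactly what forces $x^*\theta_{\F}$ to be an isomorphism. Carrying this out, together with an induction (on $|H|$, or on the codimension of the strata of $Z$ cut out by the various stabilizer subgroups) to handle overlapping fixed loci and iterated stabilizers, completes the reverse implication.
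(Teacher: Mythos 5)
Your reduction of descent to the counit $\theta_{\F}\colon \generalstacktoGITquotientmap^*\generalstacktoGITquotientmap_*\F \to \F$ being an isomorphism, and your plan to test this fiberwise at field-valued points (Arinkin's lemma, \cref{For any qcoh sheaf on smooth classical Noetherian scheme there exists a field valued point where the fiber doesn't vanish}, applied to the cone of $\theta_\F$), are both legitimate; this is a mild variant of the paper's argument, which instead shows $\generalstacktoGITquotientmap_*$ is conservative on the subcategory of objects satisfying the pointwise condition. The forward direction is also fine. But the reverse direction has a genuine gap: the entire mathematical content of the lemma is the step you defer with \lq\lq carrying this out\rq\rq{}. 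Concretely, after base change along diagram \labelcref{Cartesian Diagram for Stack to GIT Map at K-Point}, the fiber of $\theta_{\F}$ at $x$ becomes the natural map $(\dot{x}^{\ast}\F)^{H_x} \to x^{\ast}\F$ from the $H_x$-invariants of the $C_x$-module $\dot{x}^{\ast}\F$ to its derived fiber at the unique closed point of $\Spec(C_x)$, and one must prove that triviality of the $H_x$-representation on that derived fiber forces this map to be an equivalence. That is exactly \cref{If the fiber of a C-module is trivial then the fixed points are the fixed points of the fiber} (equivalently \cref{Equivariant Sheaf on Coinvariant Algebra Descends to the Coarse Quotient iff Fiber Is Trivial}), which the paper proves by a genuinely representation-theoretic argument: the cofiber sequence $C^+ \to C \to k$ reduces the claim to $(C^+ \otimes_C M)^{H} \simeq 0$, which holds because $C^+$ is filtered by finite-dimensional subrepresentations containing no trivial summands (the defining property of the coinvariant algebra). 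Your proposed substitute---an induction on $|H|$ or on the codimension of strata of the fixed locus---is not an argument and does not obviously become one: there are no \lq\lq iterated stabilizers\rq\rq{} to induct on ($H_x$ is already the whole local group acting on $\Spec(C_x)$), and the difficulty is not geometric stratification but the algebra of the non-reduced ring $C_x$.

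You have also miscalibrated where the obstacle lies. The base-change step you flag as the \lq\lq main obstacle\rq\rq{} is in fact harmless: in the derived setting, base change along the quasi-compact schematic map $\generalstacktoGITquotientmap$ holds with \emph{no} flatness hypothesis (this is how the paper proceeds, citing \cite{GaRoI}), precisely because the relevant fiber product $V/H \times_{V\sslash H} \Spec(K) \simeq \Spec(C_x)/H_x$ is the \emph{derived} one. The price is that all fibers in the statement and proof must be derived fibers: \cref{Derived Fiber is Necessary for Pointwise Criterion} shows the lemma is false for underived fibers, so any completion of your argument that silently works with classical fibers or classical fixed-locus geometry cannot succeed. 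In short, the skeleton of your proof is sound, but it is a skeleton: the local computation over the coinvariant algebra, which is the heart of the paper's proof, is missing.
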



\subsubsection{Descent to the Coarse Quotient for Pseudo-Reflection Groups} A case where one can profitably apply \cref{An Pseudo-Reflection Group Descends to Coarse Quotient Iff It Descends to Every Field-Valued Point} is the case where $H$ acts as a pseudo-reflection group:

\begin{Definition}\label{Pseudo-Reflection Group Definition}
We say that a finite group $H$ is a \textit{pseudo-reflection group} acting on some vector space $V$ if $H$ is generated by elements which act by \textit{pseudo-reflections} on $V$, which are in turn defined as non-identity endomorphisms\footnote{All finite order pseudo-reflections are diagonalizable over our characteristic zero field--see, for example, \cite[Section 14.3(a)]{KaneReflectionGroupsandInvariantTheory}. In particular, our definition agrees with others found in the literature.} of $V$ which fix a hyperplane of $V$ pointwise.
\end{Definition}

The groups acting by pseudo-reflections play a distinguished role in the theory of finite groups acting on vector spaces. For example, the Chevalley-Shephard-Todd theorem (which we review in \cref{Chevalley-Shephard-Todd Theorem}) says that the coarse quotient $V\sslash H$ is affine space if and \textit{only if} $H$ acts on $V$ as a pseudo-reflection group. Furthermore, a theorem of Steinberg \cite{SteinbergDifferentialEquationsInvariantUnderFiniteReflectionGroups} says that if $H$ is a pseudo-reflection group acting on a vector space, the stabilizer of any point is a pseudo-reflection group as well. Using this and \cref{An Pseudo-Reflection Group Descends to Coarse Quotient Iff It Descends to Every Field-Valued Point}, one can derive our first main result: 

\begin{Theorem}\label{Equivariant Sheaf for Pseudo-Reflection Group Descends iff It Descends for Each Reflection}
Assume $H$ is a pseudo-reflection group acting on some vector space $V$. A sheaf $\F \in \QCoh(\vectorspaceHactson)^H$ descends to the coarse quotient $\vectorspaceHactson\sslash H$ if and only if for every pseudo-reflection $r \in H$, the sheaf $\text{oblv}_{\langle r \rangle}^{H}(\F) \in \QCoh(\vectorspaceHactson)^{\langle r \rangle}$ descends to the coarse quotient $\vectorspaceHactson\sslash \langle r \rangle$. 
\end{Theorem}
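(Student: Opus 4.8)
The plan is to reduce the ``only if'' direction to a triviality and concentrate all effort on the ``if'' direction, deploying \Cref{An Pseudo-Reflection Group Descends to Coarse Quotient Iff It Descends to Every Field-Valued Point} as the engine. The ``only if'' direction is immediate: if $\F$ descends to $\vectorspaceHactson\sslash H$, then $\F$ is pulled back along $\vectorspaceHactson/H \to \vectorspaceHactson\sslash H$, and for any pseudo-reflection $r \in H$ the composite $\vectorspaceHactson/\langle r\rangle \to \vectorspaceHactson/H \to \vectorspaceHactson\sslash H$ factors (on coarse quotients) through $\vectorspaceHactson\sslash\langle r\rangle$, since $\Spec(\text{Sym}(\Vdual)^H) \to \Spec(\text{Sym}(\Vdual)^{\langle r\rangle})$ is a map of affine schemes compatible with the stack quotient maps; hence $\text{oblv}_{\langle r\rangle}^H(\F)$ descends to $\vectorspaceHactson\sslash\langle r\rangle$.

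For the ``if'' direction, I would apply the pointwise criterion of \Cref{An Pseudo-Reflection Group Descends to Coarse Quotient Iff It Descends to Every Field-Valued Point} to $\F$ itself: it suffices to show that for every field-valued point $x \in \vectorspaceHactson(K)$, the $H_x$-representation on $x^*(\F)$ is trivial. First I would invoke Steinberg's theorem, stated in the excerpt, which guarantees that the stabilizer $H_x$ is itself a pseudo-reflection group acting on $V$ (after base change to $K$). Thus $H_x$ is generated by the pseudo-reflections $r \in H$ that it contains. Since a representation of a group is trivial if and only if every element of a generating set acts trivially, it then suffices to show that each such pseudo-reflection $r \in H_x$ acts trivially on the fiber $x^*(\F)$.

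This last point is exactly where the hypothesis is used: by assumption, $\text{oblv}_{\langle r\rangle}^H(\F)$ descends to $\vectorspaceHactson\sslash\langle r\rangle$. I would again apply the pointwise criterion, this time to the group $\langle r\rangle$ and the sheaf $\text{oblv}_{\langle r\rangle}^H(\F)$, at the \emph{same} point $x$. Because $r$ stabilizes $x$, the stabilizer of $x$ in $\langle r\rangle$ is all of $\langle r\rangle$, so the criterion tells us that the $\langle r\rangle$-representation on $x^*\bigl(\text{oblv}_{\langle r\rangle}^H(\F)\bigr) = x^*(\F)$ is trivial, i.e.\ $r$ acts trivially on $x^*(\F)$. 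Running this over all pseudo-reflections $r \in H_x$ shows the full $H_x$-action on $x^*(\F)$ is trivial, and since $x$ was arbitrary, the pointwise criterion yields that $\F$ descends to $\vectorspaceHactson\sslash H$.

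The main obstacle I anticipate is bookkeeping around the field-valued points and their stabilizers: I must check that the fiber $x^*(\F)$ computed in the $H$-equivariant setting genuinely agrees with the fiber computed in the $\langle r\rangle$-equivariant setting after applying $\text{oblv}_{\langle r\rangle}^H$, and that the stabilizer $H_x$, defined over the field $K$ of the point, is legitimately generated by the pseudo-reflections it contains (this uses Steinberg over $K$, so one needs the pseudo-reflection structure to survive base field extension). Once these compatibilities are pinned down, the argument is a clean two-fold application of the pointwise criterion, with Steinberg's theorem as the only external input. The remaining details are routine and I would not grind through them here.
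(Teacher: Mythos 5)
Your proposal is correct and follows essentially the same route as the paper: the ``only if'' direction via compatibility of the quotient maps $V/\langle r\rangle \to V\sslash\langle r\rangle$ with $V/H \to V\sslash H$, and the ``if'' direction by combining the pointwise descent criterion (\Cref{An Pseudo-Reflection Group Descends to Coarse Quotient Iff It Descends to Every Field-Valued Point}, equivalently \Cref{Fully Faithfulness and Easy Essential Image of Pullback by generalstacktoGITquotientmap}) with Steinberg's theorem that $H_x$ is a pseudo-reflection group, then checking triviality on each generating pseudo-reflection by a second application of the pointwise criterion at the same point $x$. The compatibility issues you flag (fibers agreeing under $\text{oblv}_{\langle r\rangle}^H$, and the stabilizer over $K$) are handled in the paper by the same commuting-diagram observations you sketch, so there is no gap.
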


Using \cref{Equivariant Sheaf for Pseudo-Reflection Group Descends iff It Descends for Each Reflection} and the fact that any reflection of a Coxeter group is conjugate to a simple reflection, one can derive an alternate proof of the main result of \cite{LoRemark}:

\begin{Theorem}\cite{LoRemark}
Assume $W$ is a Coxeter group with reflection representation $V$. A given $\F \in \QCoh(V)^W$ descends to the coarse quotient $V\sslash W$ if and only if for all simple reflections $s \in W$, $\text{oblv}^{H}_{\langle s \rangle}(\F) \in \QCoh(V)^{\langle s \rangle}$ descends to the coarse quotient $V\sslash\langle s \rangle$.
\end{Theorem}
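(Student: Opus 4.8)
The plan is to deduce this statement directly from \cref{Equivariant Sheaf for Pseudo-Reflection Group Descends iff It Descends for Each Reflection}, applied with $H = W$, together with the standard Coxeter-theoretic fact that every reflection of $W$ is $W$-conjugate to a simple reflection. Since $W$ acting on its reflection representation $V$ is a pseudo-reflection group whose pseudo-reflections are precisely its reflections, the pseudo-reflection theorem says that $\F$ descends to $V \sslash W$ if and only if $\text{oblv}^{W}_{\langle r \rangle}(\F)$ descends to $V \sslash \langle r \rangle$ for \emph{every} reflection $r \in W$. As each simple reflection is in particular a reflection, this immediately gives the forward direction of the theorem, and the whole problem reduces to showing that descent at every simple reflection forces descent at every reflection.

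For that reduction I would argue that the descent condition at a reflection $r$ depends only on its $W$-conjugacy class. Fix a reflection $r = w s w^{-1}$ with $s$ simple and $w \in W$. The action of $w$ on $V$ is an automorphism $a_w \colon V \to V$ conjugating the $\langle s \rangle$-action into the $\langle r \rangle$-action, so it descends to an isomorphism of stack quotients $c_w \colon V/\langle s \rangle \xrightarrow{\sim} V/\langle r \rangle$ fitting into a commutative square with the coarse-quotient maps $\generalstacktoGITquotientmap$ and an induced isomorphism $V \sslash \langle s \rangle \xrightarrow{\sim} V \sslash \langle r \rangle$ on the bottom. Because both horizontal maps of this square are isomorphisms, pulling back along $c_w$ identifies the essential image of $\generalstacktoGITquotientmap^{\ast}$ for $\langle r \rangle$ with that for $\langle s \rangle$; hence $\text{oblv}^{W}_{\langle r \rangle}(\F)$ descends to $V \sslash \langle r \rangle$ if and only if $c_w^{\ast}\, \text{oblv}^{W}_{\langle r \rangle}(\F)$ descends to $V \sslash \langle s \rangle$.

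The crux is then to identify $c_w^{\ast}\, \text{oblv}^{W}_{\langle r \rangle}(\F)$ with $\text{oblv}^{W}_{\langle s \rangle}(\F)$, which is exactly where the $W$-equivariant structure on $\F$ enters: the coherence datum of the $W$-action supplies a canonical isomorphism $a_w^{\ast} \F \cong \F$ which, via the relation $a_w a_s a_w^{-1} = a_r$, intertwines the $\langle r \rangle$- and $\langle s \rangle$-equivariant structures and descends along the quotient by $\langle s \rangle$ to the desired isomorphism. Granting this, descent of $\text{oblv}^{W}_{\langle s \rangle}(\F)$ yields descent of $\text{oblv}^{W}_{\langle r \rangle}(\F)$ for the arbitrary reflection $r$, and the pseudo-reflection theorem then closes the argument.

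I expect the main obstacle to be the bookkeeping in this last identification: one must verify that the equivariant coherence isomorphisms restrict correctly to the cyclic subgroups $\langle s \rangle$ and $\langle r \rangle$ and are compatible with the quotient maps, so that they genuinely descend to an isomorphism $c_w^{\ast}\,\text{oblv}^{W}_{\langle r \rangle}(\F) \cong \text{oblv}^{W}_{\langle s \rangle}(\F)$ of equivariant sheaves, not merely of their underlying objects. Alternatively, one can sidestep the stacky bookkeeping by working pointwise through \cref{An Pseudo-Reflection Group Descends to Coarse Quotient Iff It Descends to Every Field-Valued Point}: since $s(w^{-1}x) = w^{-1}x$ exactly when $rx = x$, the equivariant structure gives an isomorphism of fibers $x^{\ast}\F \cong (w^{-1}x)^{\ast}\F$ carrying the $r$-action to the $s$-action, so triviality of the $\langle s \rangle$-representation at every $s$-fixed point is equivalent to triviality of the $\langle r \rangle$-representation at every $r$-fixed point, which is precisely the equivalence of the two descent conditions.
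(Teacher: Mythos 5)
Your proposal is correct and takes essentially the same route as the paper: the paper likewise deduces Lonergan's theorem from \cref{Equivariant Sheaf for Pseudo-Reflection Group Descends iff It Descends for Each Reflection} together with the fact that every reflection of a Coxeter group is conjugate to a simple reflection. Your conjugation step is exactly the paper's proof of the implication $(3) \Rightarrow (2)$ in \cref{Equivalent Conditions to Be in Essential Image of Pullback of GeneralStackToGITQuotient Pullback}, where the commutative square induced by the action of $w$ (with $w^{-1}rw = s$) and the $W$-equivariant structure give the identification $\text{oblv}^{W}_{\langle r \rangle}(\F) \simeq w^{!}\,\text{oblv}^{W}_{\langle s \rangle}(\F)$, hence transfer descent from $V\sslash\langle s \rangle$ to $V\sslash\langle r \rangle$.
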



\subsubsection{Local Descent to the Coarse Quotient}
\cref{An Pseudo-Reflection Group Descends to Coarse Quotient Iff It Descends to Every Field-Valued Point} is proved via study of the \textit{coinvariant algebra} $C$ for the action of $H$ on $V$, defined as the ring $\text{Sym}(\Vdual)/\text{Sym}(\Vdual)^H_+$, i.e. the quotient of $\text{Sym}(\Vdual)$ by the ideal generated by homogeneous polynomials of positive degree fixed by $H$. Since $H$ acts on $C$, we similarly obtain a stack quotient $\Spec(C)/H$ and a coarse quotient $\Spec(C)\sslash H$; however, the coarse quotient simplifies since 
\raggedbottom
\[\Spec(C)\sslash H := \Spec(C^H) \cong \Spec(k).\]

\noindent Similarly, we will recall that the pullback functor is fully faithful in \cref{Pullback From GIT to Stack Quotient of Finite Group of Affine Scheme is Fully Faithful} and therefore we can analogously define the notion of an object of $\QCoh(\Spec(C))^H$ \textit{descending to the coarse quotient}. A key technical tool in the proof of \cref{An Pseudo-Reflection Group Descends to Coarse Quotient Iff It Descends to Every Field-Valued Point} is the following proposition:

\begin{Proposition}\label{Equivariant Sheaf on Coinvariant Algebra Descends to the Coarse Quotient iff Fiber Is Trivial}
A given $\F \in \QCoh(\Spec(C))^H$ descends to the coarse quotient $\Spec(C)\sslash H = \Spec(k)$ if and only if the canonical $H$-representation on $i^{\ast}(\F)$ is trivial, where $i: \Spec(k) \xhookrightarrow{} \Spec(C)$ is the inclusion of the unique closed point. 
\end{Proposition}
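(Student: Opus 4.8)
The plan is to trade the notion of descent for a statement about a single adjunction and then to detect it on the derived fiber at the closed point. Write $\generalstacktoGITquotientmap\colon \Spec(C)/H \to \Spec(C)\sslash H = \Spec(k)$ for the map to the coarse quotient, so that $\QCoh(\Spec(C))^{H} = \QCoh(\Spec(C)/H)$ and $\generalstacktoGITquotientmap^{\ast}\colon \QCoh(\Spec(k)) = \text{Vect}_k \to \QCoh(\Spec(C))^{H}$ is the functor $N \mapsto N \otimes_{k} C$ with $H$ acting through $C$. By \cref{Pullback From GIT to Stack Quotient of Finite Group of Affine Scheme is Fully Faithful} this functor is fully faithful, so $\F$ descends precisely when the counit $c\colon \generalstacktoGITquotientmap^{\ast}\generalstacktoGITquotientmap_{\ast}\F \to \F$ is an equivalence, where $\generalstacktoGITquotientmap_{\ast}\F = \F^{H}$ is the (derived, hence by characteristic zero underived) functor of $H$-invariants; concretely $c$ is the action map $\F^{H} \otimes_{k} C \to \F$. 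The forward implication is then immediate: if $\F \simeq \generalstacktoGITquotientmap^{\ast}N = N \otimes_{k} C$, then $i^{\ast}\F \simeq N \otimes_{k} (C \otimes_{C}^{\mathbb{L}} k) \simeq N$ carries the trivial $H$-action.

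For the converse I would assume $i^{\ast}\F$ is a trivial $H$-representation and show $c$ is an equivalence by proving that its cofiber $Q := \operatorname{cofib}(c)$ vanishes. Two inputs pin down $Q$. First, applying $\generalstacktoGITquotientmap_{\ast} = (-)^{H}$ to $c$ and using that the unit of the adjunction is an equivalence (full faithfulness of $\generalstacktoGITquotientmap^{\ast}$), the triangle identities force $\generalstacktoGITquotientmap_{\ast}c$ to be an equivalence, so $Q^{H} = \generalstacktoGITquotientmap_{\ast}Q = 0$. Second, since $i^{\ast}C \simeq k$ we have $i^{\ast}(\F^{H}\otimes_{k}C) \simeq \F^{H}$, so $i^{\ast}c$ is a map $\F^{H} \to i^{\ast}\F$ whose source is a trivial $H$-representation (being invariants) and whose target is trivial by hypothesis; hence $i^{\ast}Q \simeq \operatorname{cofib}(i^{\ast}c)$ is again a trivial $H$-representation.

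The heart of the argument is to combine these two facts. After the harmless reduction to $\F$ coherent and, up to shift, connective, $Q$ is connective and coherent; suppose $Q \neq 0$ and let $N := \pi_{i_0}Q \neq 0$ be its lowest nonvanishing homotopy group, a nonzero finitely generated $C$-module with $H$-action. Because $(-)^{H}$ is exact in characteristic zero it commutes with $\pi_{i_0}$, so $N^{H} = \pi_{i_0}(Q^{H}) = 0$; and because $\otimes^{\mathbb{L}}_{C}k$ is right $t$-exact, $\pi_{i_0}(i^{\ast}Q) \simeq N \otimes_{C} k = N/\mathfrak{m}N$, which is a trivial $H$-representation. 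Exactness of invariants applied to the surjection $N \twoheadrightarrow N/\mathfrak{m}N$ shows that $N/\mathfrak{m}N = (N/\mathfrak{m}N)^{H}$ is a quotient of $N^{H} = 0$, so $N/\mathfrak{m}N = 0$, and then $N = 0$ by Nakayama, contradicting $N \neq 0$. Hence $Q = 0$ and $\F$ descends.

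I expect the main obstacle to be conceptual rather than computational: the naive expectation that $(-)^{H}$ should commute with $i^{\ast}$ is \emph{false} (already for $\F = C \otimes_{k}\rho$ with $\rho$ a nontrivial irreducible representation, where $\F^{H} \neq (i^{\ast}\F)^{H}$), so one cannot directly compare $\F^{H}$ with $(i^{\ast}\F)^{H}$. The argument above sidesteps this by never comparing the two globally; instead it isolates the single lowest homotopy group of the cofiber, where full faithfulness kills the invariants while the triviality hypothesis controls the reduction modulo $\mathfrak{m}$, and it is only the semisimplicity of $\mathrm{Rep}(H)$ together with Nakayama that produces the contradiction. The remaining care is in the reduction to coherent $\F$ and in verifying that $i^{\ast}$ is computed as the genuine derived fiber, so that the trivial-representation hypothesis is imposed on the entire complex $\F \otimes_{C}^{\mathbb{L}} k$ and not merely on the classical fiber $\F/\mathfrak{m}\F$.
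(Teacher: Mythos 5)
Your proposal follows the same skeleton as the paper's proof: descent is traded for the counit of the adjunction $(\pointwisegeneralstacktoGITquotientmap^{\ast},\pointwisegeneralstacktoGITquotientmap_{\ast})$ via \cref{Pullback From GIT to Stack Quotient of Finite Group of Affine Scheme is Fully Faithful}, the forward implication is the same observation, and the converse is reduced to showing that $(-)^H$ does not annihilate the nonzero cofiber $Q$ of the counit, which you correctly check satisfies $Q^H=0$ and has trivial derived fiber. Where you genuinely diverge is the nonvanishing mechanism: the paper's \cref{If the fiber of a C-module is trivial then the fixed points are the fixed points of the fiber} filters $C^{+}$ by its finite degree filtration, whose subquotients are $H$-representations without trivial summands on which $C^{+}$ acts by zero, and semisimplicity then gives $(C^{+}\otimes_C M)^H\simeq 0$ for an \emph{arbitrary} object $M$; you instead argue one homotopy group at a time (lowest nonvanishing homotopy group, right $t$-exactness of $k\otimes_C-$, exactness of $(-)^H$, Nakayama). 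Your mechanism is correct and arguably more elementary where it applies; note also that your coherence hypothesis is superfluous even on your own terms, since $\mathfrak{m}=C^{+}$ is nilpotent and Nakayama therefore requires no finite generation.

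The genuine gap is the phrase ``after the harmless reduction to $\F$ coherent and, up to shift, connective.'' The proposition concerns all of $\QCoh(\Spec(C))^H$, i.e.\ the unbounded derived category, while your argument needs $Q$ to have a \emph{lowest} nonvanishing homotopy group, i.e.\ to be bounded below; and this reduction is not available a priori. The hypothesis ``$i^{\ast}\F$ is trivial'' does not pass to the truncations $\tau_{\geq m}\F$, precisely because $i^{\ast}$ is not $t$-exact: in the fiber sequence $i^{\ast}\tau_{\geq m}\F \to i^{\ast}\F \to i^{\ast}\tau_{\leq m-1}\F$ the third term picks up higher Tor classes of the low-degree homotopy of $\F$, which can be nontrivial representations (this is the phenomenon of \cref{Derived Fiber is Necessary for Pointwise Criterion}), so triviality of the middle term says nothing about the first. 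Nor can one present $\F$ as a filtered colimit of bounded-below objects satisfying the hypothesis: the terms of the standard presentation $\F\simeq\operatorname{colim}_n\tau_{\geq -n}\F$ need not have trivial fiber, and any other generation statement of this kind is, in effect, equivalent to the proposition itself (a posteriori the truncations do descend, but invoking that is circular). The clean repair is to drop the homotopy-group bookkeeping and run the paper's filtration on the whole complex: since $\mathfrak{m}^n=0$, the finite filtration of $C$ by powers of $\mathfrak{m}$ exhibits $Q\simeq C\otimes_C Q$ as a finite tower with associated graded pieces $(\mathfrak{m}^j/\mathfrak{m}^{j+1})\otimes_k i^{\ast}Q$; as $(\mathfrak{m}^j/\mathfrak{m}^{j+1})^H=0$ for $j\geq 1$, applying the exact functor $(-)^H$ collapses the tower to $0=Q^H\simeq i^{\ast}Q$, and then the same filtration gives $Q=0$ with no boundedness assumption. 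That unconditional step is exactly what your write-up is missing.
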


\begin{Remark}
Our notation is inherently derived--see \cref{Conventions Subsection} for our exact conventions.
\end{Remark}

\subsection{Motivation}\label{Motivation Subsection}
The coarse quotient $\LTd\sslash \Waff$ can also be generalized to the coarse quotient of $\LTd$ by the action of the \textit{extended affine Weyl group} $\Wext := \characterlatticeforT \rtimes W$, where $\characterlatticeforT$ is the character lattice for $T$. This coarse quotient is expected to play an important role in geometric representation theory. In this section, we highlight some roles it plays and discuss some future applications.

\subsubsection{Representation Theoretic Motivation}\label{Representation Theoretic Motivation Subsubsection}
Assume, for the sake of exposition, that $G$ is semisimple and simply connected, and recall the \textit{BGG Category} $\mathcal{O}_{\lambda}$, a certain abelian category of representations with generalized central character $\chi_{\lambda}$. These categories and categories such as $\D(G)^{B \times B}$ (the category of bi $B$-equivariant $\D$-modules on $G$) can be understood through the use of Soergel modules and Soergel bimodules. We also recall that (ungraded) Soergel bimodules can be understood as certain quasicoherent sheaves on the union of graphs of the Weyl group $\Gamma_W$. As we review in \cref{Union of Graphs is Product of LieTs over GIT Quotient}, we can identify $\Gamma_W \simeq \LTd \times_{\LTd\sslash W} \LTd$. 

We can use the coarse quotient $\LTd\sslash \Wext$ to define one analogue of $\Gamma_W$ whose fibers, informally speaking, describe a subcategory of the BGG category $\mathcal{O}$ of the given central character. One can make this remark precise as follows. First, define:
\raggedbottom
\[\Gamma' := \LTd/\characterlatticeforT \times_{\LTd\sslash \Wext} \LTd/\characterlatticeforT\]

\noindent and let $\lambda \in \LTd(k)$ denote some $k$-point. We assume for the sake of exposition that $\lambda$ is an antidominant and regular with respect to the $(W, \cdot)$-action, and denote the associated image under the quotient map by $[\lambda]: \Spec(k) \to \LTd/\characterlatticeforT$. Then it is standard (see \cite[Chapter 4.9]{HumO}) that the blocks of $\mathcal{O}_{\lambda}$ are in bijection with the cosets $W/W_{[\lambda]}$, where $W_{[\lambda]}$ denotes the \textit{integral Weyl group}, see \cref{Integral Weyl Group Definition}. This group admits some remarkable properties--for example, $W_{[\lambda]}$ need not be a parabolic subgroup of the usual Weyl group and, moreover, as the notation suggests, this group only depends on $[\lambda]$. With this setup, we note that from \cref{SpecificGroupoidResults} and the Endomorphismensatz \cite{Soe1} one can derive the following:

\begin{Proposition}
We have $W$-equivariant isomorphisms
\raggedbottom
\[\Gamma' \times_{\LTd/\characterlatticeforT} \Spec(k) \simeq \LTd/\characterlatticeforT \times_{\LTd\sslash \Wext} \Spec(k) \simeq W \mathop{\times}\limits^{W_{[x]}} \text{Spec}(E_{\lambda})\]

\noindent where $E_{\lambda}$ is the endomorphism ring of the projective cover of the simple labelled by $\lambda$. 
\end{Proposition}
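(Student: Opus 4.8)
The plan is to establish the two displayed equivalences in turn: the first is a purely formal consequence of the definition of $\Gamma'$ together with the pasting law for Cartesian squares, while the second is the substantive statement and combines the Cartesian diagram of \cref{SpecificGroupoidResults} with Soergel's Endomorphismensatz. Throughout I write $Y := \LTd/\characterlatticeforT$ and recall that, since $\Wext = \characterlatticeforT \rtimes W$ with $\characterlatticeforT$ acting by translations, we may form the quotient in stages to obtain $\LTd\sslash\Wext \simeq Y\sslash W$, so that $q \colon Y \to \LTd\sslash\Wext$ is the coarse quotient map for the residual action of the finite group $W$ on $Y$.

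For the first equivalence I would simply observe that, by definition, $\Gamma' = Y\times_{\LTd\sslash\Wext} Y$, and that the fiber product $\Gamma'\times_{Y}\Spec(k)$ is formed along the second projection using the map $[\lambda]$. The pasting law for Cartesian squares then gives
\[
\Gamma'\times_{Y, [\lambda]}\Spec(k) \;\simeq\; Y\times_{\LTd\sslash\Wext}\bigl(Y\times_{Y,[\lambda]}\Spec(k)\bigr) \;\simeq\; Y\times_{\LTd\sslash\Wext}\Spec(k),
\]
since $Y\times_{Y}\Spec(k)\simeq\Spec(k)$ and the resulting map to $\LTd\sslash\Wext$ is the composite $q\circ[\lambda]$. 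This is manifestly $W$-equivariant for the action of $W$ on the left-hand factor $Y$.

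For the second equivalence I would use the Cartesian diagram of \cref{SpecificGroupoidResults}, which identifies $\Gamma' \simeq Y\times_{\LTd\sslash\Wext}Y$ with the union of graphs of the residual action and, crucially, asserts that base change holds. Hence the fiber product in question is computed as the scheme-theoretic fiber $t^{-1}([\lambda])$ of the target map, which by \cref{FiniteFlat} is a finite flat---in particular finite---$k$-scheme, and by \cref{Affine Coarse Quotient is 0-Coconnective LFT} is classical. Set-theoretically $t^{-1}([\lambda])$ consists of the pairs $(w^{-1}[\lambda],[\lambda])$ for $w \in W$; grouping these by the value $w^{-1}[\lambda]$ decomposes the fiber over the $W$-orbit of $[\lambda]$, whose points are indexed by $W/W_{[\lambda]}$ once one identifies the stabilizer $\mathrm{Stab}_W([\lambda])$ with the integral Weyl group $W_{[\lambda]}$ of \cref{Integral Weyl Group Definition}. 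At each orbit point the local contribution is the scheme-theoretic union of the $|W_{[\lambda]}|$ graphs passing through it, which by the local analysis underlying \cref{Equivariant Sheaf on Coinvariant Algebra Descends to the Coarse Quotient iff Fiber Is Trivial} is $\Spec$ of the coinvariant algebra $C_{[\lambda]} := \mathrm{Sym}(\LT)/\mathrm{Sym}(\LT)^{W_{[\lambda]}}_+$ for the stabilizer action fixing $[\lambda]$. Assembling the orbit yields a $W$-equivariant identification $t^{-1}([\lambda]) \simeq W\times^{W_{[\lambda]}}\Spec(C_{[\lambda]})$, where $W$ acts by left translation and by conjugation on the stabilizers.

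It then remains to identify the coinvariant algebra $C_{[\lambda]}$ with the endomorphism ring $E_\lambda$. This is exactly the content of Soergel's Endomorphismensatz \cite{Soe1}: for $\lambda$ antidominant and regular, the projective cover of the simple labelled by $\lambda$ is the antidominant (big) projective of its block, and its endomorphism ring is canonically isomorphic to the coinvariant algebra of the integral Weyl group $W_{[\lambda]}$. I expect the main obstacle to lie precisely in making this last step rigorous, namely matching conventions so that (i) the geometric stabilizer $\mathrm{Stab}_W([\lambda])$ is genuinely the reflection group $W_{[\lambda]}$ rather than a larger subgroup---this is where regularity of $\lambda$ is used---and (ii) the coinvariants appearing in the Endomorphismensatz, taken with respect to the dot action of $W_{[\lambda]}$ centered at $\lambda$, agree on the nose with the local fiber $C_{[\lambda]}$ extracted from the groupoid. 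The geometric input, by contrast, is a clean application of the coarse-quotient machinery developed above, together with the standard description of blocks of $\mathcal{O}_\lambda$ via $W/W_{[\lambda]}$ recalled in \cite{HumO}.
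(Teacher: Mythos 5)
Your first displayed equivalence is fine: the pasting argument for fiber products is exactly the formal content the paper extracts from \cref{SpecificGroupoidResults}, and your appeal to the Endomorphismensatz at the very end is also the paper's final step. The genuine gap is in the middle, and it sits exactly where you guessed it might: you identify the stabilizer $\mathrm{Stab}_W([\lambda])$ of $[\lambda]$ for the $W$-action on $\LTd/\characterlatticeforT$ with the integral Weyl group $W_{[\lambda]}$, and this is false in general. Indeed $\mathrm{Stab}_W([\lambda]) = \{w \in W : w\lambda - \lambda \in \characterlatticeforT\}$, while $W_{[\lambda]} = \{w \in W : w\lambda - \lambda \in \rootlattice\}$ (\cref{Equivalent Notions of Integral Weyl Group}); for simply connected $G$ the character lattice is the weight lattice, which strictly contains the root lattice. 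Regularity does not repair this, because regularity for the dot action says nothing about whether $w\lambda - \lambda$ lands in $\characterlatticeforT \setminus \rootlattice$. Concretely, take $G = \SL_2$, so $\characterlatticeforT = \mathbb{Z}$, $\rootlattice = 2\mathbb{Z}$, and $\lambda = -1/2$, which is antidominant and regular: then $s\lambda - \lambda = 1$, so $\mathrm{Stab}_W([\lambda]) = W$ while $W_{[\lambda]} = \{1\}$. Your recipe (points indexed by $W/\mathrm{Stab}_W([\lambda])$, local ring the coinvariant algebra of the stabilizer) outputs $\Spec(K[h]/h^2)$, whereas the correct fiber is $\Spec(K) \sqcup \Spec(K) = W \times^{W_{[\lambda]}} \Spec(E_{\lambda})$ with $E_{\lambda} = K$ --- consistent with the fact that $\mathcal{O}_{\lambda}$ has two semisimple blocks here.

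The conceptual source of the error is your treatment of $\Gamma'$ as the union of graphs of a ``residual $W$-action'' on $Y = \LTd/\characterlatticeforT$, i.e.\ as a subobject of $Y \times Y$, with the fiber computed as a set-theoretic preimage of $[\lambda]$. By \cref{SpecificGroupoidResults} one has $\Gamma' \simeq \characterlatticeforT\backslash\Gamma_{\Wext}/\characterlatticeforT$ with $\Gamma_{\Wext} := \Wext \times^{\Waff}\Gamma_{\Waff}$ a balanced product, and the map $\Gamma_{\Wext} \to \LTd \times \LTd$ is \emph{not} a monomorphism: elements of $\Wext$ in distinct $\Waff$-cosets can have overlapping graphs (equivalently, elements of $\Wext\setminus\Waff$ can have fixed points --- this is exactly the phenomenon in the $\SL_2$ example), yet the balanced product keeps them as distinct components. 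Likewise $\Spec(k) \to \LTd\sslash\Wext$ is not a monomorphism, so the (derived) fiber product has one component per coset of $W/W_{[\lambda]}$, several of which may map to the same point of $Y$; in particular your premise that $\LTd\sslash\Wext$ may be treated as a finite-group coarse quotient $Y\sslash W$ ``in stages'' is not available and would give the wrong answer. The paper's route (\cref{Soergel-Ish Isomorphism}) never invokes a residual $W$-action on $Y$: it uses $\characterlatticeforT\backslash\Wext \cong \rootlattice\backslash\Waff$ to rewrite $\characterlatticeforT\backslash\Gamma_{\Wext} \simeq \rootlattice\backslash\Gamma_{\Waff}$, then applies the orbit--stabilizer decomposition for the Coxeter group $\Waff$ (\cref{Orbit-Stabilizer for Union of All Graphs}, resting on \cref{Picking a Field-Valued Point Splits Off Different Waff Orbits}), where the stabilizer $\Waff_{\lambda}$ maps isomorphically onto $W_{[\lambda]}$ (\cref{Integral Weyl Group is Subgroup of Affine Weyl Group Remark}) and its union of graphs has fiber $\Spec(C_{[\lambda]})$. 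Only after this affine-Weyl-group computation does the Endomorphismensatz enter, identifying $C_{[\lambda]} \cong E_{\lambda}$; that last step of yours is correct, but the geometric input feeding it must come from $\Waff$, not from finite GIT for $W$ acting on $Y$.
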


\subsubsection{Connections to Categorical Representation Theory}
The coarse quotient $\LTd\sslash \Wext$ has appeared recently in providing a coherent description of certain categories associated to the category of $\D$-modules on $G$, and more generally plays an important role in \textit{categorical representation theory}. For example, the coarse quotient implicitly appears in a recent result of Ginzburg and Lonergan: 

\begin{Theorem}\label{Abelian Categorical Mellin Transform for biWhittaker Sheaves} \cite{Lo}, \cite{Gin} There is an equivalence identifying the abelian category of bi-Whittaker $\D$-modules on $G$ with the full subcategory of $\Wext$-equivariant sheaves on $\LTd$ which descend to the coarse quotient in the sense of \cref{Descends To Coarse Quotient For Wext Definition}.
\end{Theorem}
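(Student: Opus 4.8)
The plan is to build the equivalence as a Mellin--Whittaker functor, identify its essential image using the coarse-quotient technology developed above, and thereby reduce the statement to the pointwise descent criterion. Since the underlying equivalence of abelian categories is due to Lonergan \cite{Lo} and Ginzburg \cite{Gin}, the genuinely new content is the reinterpretation of the essential image as the \emph{descent} subcategory of \cref{Descends To Coarse Quotient For Wext Definition}.

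First I would construct the functor. The bi-Whittaker category $\Hpsi$ carries a convolution monoidal structure which, by \cite{Lo}, is commutative, so $\Hpsi$ is controlled by a commutative algebra; concretely, bi-Whittaker (Kostant) reduction sends a bi-Whittaker $\D$-module to a module over the bi-Whittaker reduction $A$ of the algebra of global differential operators on $G$, the quantum Toda algebra. The $W$-invariant part of the center of $A$ is the Harish--Chandra center $\O(\LTd)^W = \O(\LTd\sslash W)$, and passing from the Lie-algebra level to the $\D$-module level --- i.e.\ recording monodromy rather than a fixed central character --- promotes the additive $\LT$-symmetry to the full lattice $\characterlatticeforT$ acting by translation. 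Applying the Mellin transform, which identifies $\D$-modules on the torus $T$ with $\characterlatticeforT$-equivariant quasicoherent sheaves on $\LTd$, therefore produces a functor
\[
\Phi \colon \Hpsi^{\heartsuit} \longrightarrow \QCoh(\LTd)^{\Wext, \heartsuit},
\]
where the residual $W$-symmetry of the Kostant slice upgrades the $\characterlatticeforT$-equivariance to an $\Wext = \characterlatticeforT \rtimes W$-equivariance.

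Second I would establish full faithfulness. Because the convolution unit of $\Hpsi$ generates the whole category under colimits and $\Hpsi$ is commutative, $\Hpsi$ is equivalent to modules over $\End$ of the unit; under $\Phi$ this endomorphism ring is the global functions $\O(\LTd\sslash\Wext)$ on the coarse quotient. Full faithfulness then reduces to a $\Hom$-computation between the images of standard generators, which after the Mellin transform is a computation of morphisms between $\Wext$-equivariant sheaves on $\LTd$ --- this is exactly the content of \cite{Lo} and \cite{Gin}, and I would simply cite it.

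Third --- and this is the main obstacle --- I would identify the essential image of $\Phi$ with the descent subcategory. A priori $\Phi$ lands in all of $\QCoh(\LTd)^{\Wext,\heartsuit}$, and the difficulty is to pin down exactly which equivariant sheaves arise. Here the coarse-quotient tools do the work: by \cref{SpecificGroupoidResults} the coarse quotient $\LTd\sslash\Wext$ is governed by the finite-flat union of graphs of \cref{FiniteFlat}, and by the extended-affine analogue of the pointwise criterion \cref{An Pseudo-Reflection Group Descends to Coarse Quotient Iff It Descends to Every Field-Valued Point}, a $\Wext$-equivariant sheaf descends to $\LTd\sslash\Wext$ precisely when, at each field-valued point $x$, the stabilizer $(\Wext)_x$ acts trivially on the fiber. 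I would match this with the intrinsic structure of bi-Whittaker reduction: the Whittaker nondegeneracy condition forces the fiber of $\Phi(\F)$ at a point where $\Wext$ acts non-freely to be a trivial $(\Wext)_x$-representation, since the Whittaker averaging kills the non-invariant isotypic components; conversely, any equivariant sheaf satisfying this pointwise triviality descends and is hit by $\Phi$. Thus the essential image is exactly the subcategory of \cref{Descends To Coarse Quotient For Wext Definition}. The crux is therefore the translation, enabled by \cref{FiniteFlat} and the pointwise criterion, between the representation-theoretic vanishing imposed by the Whittaker condition and the geometric descent condition for the coarse quotient.
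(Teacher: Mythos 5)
You should first note what the paper actually does with this statement: it offers no proof at all, but quotes the result from \cite{Lo} and \cite{Gin} as motivation. So the only standard your sketch can be measured against is whether it could stand on its own, and it cannot, because the pivotal third step is asserted rather than argued. The sentence ``the Whittaker nondegeneracy condition forces the fiber \dots to be a trivial representation, since the Whittaker averaging kills the non-invariant isotypic components'' is not a proof sketch of the crux --- it \emph{is} the crux, and it is exactly the nontrivial content of Ginzburg's nil-Hecke analysis and Lonergan's Fourier transform for the quantum Toda lattice; likewise essential surjectivity (``is hit by $\Phi$'') is never addressed. There is also an abelian-versus-derived mismatch you do not flag: the theorem concerns abelian categories, but the paper's own \cref{Derived Fiber is Necessary for Pointwise Criterion} shows that the pointwise criterion \emph{fails} if one uses classical fibers. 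Any fiberwise matching of the Whittaker condition with descent must therefore be run with derived fibers (and, to handle objects of the heart, routed through the cohomology-group criterion, condition (6) of \cref{Various Conditions for Wext Equivariant Sheaf to Satisfy Coxteter Descent}); as written, your proposal is silent on this, and a literal reading of it would invoke a criterion the paper explicitly disproves at the abelian level. (A smaller mismatch: \cref{Descends To Coarse Quotient For Wext Definition} is formulated in $\IndCoh$ with $!$-pullbacks, while your $\Phi$ lands in $\QCoh$; this is harmless over the smooth $\LTd$ but should be said.)

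There is also a concrete error in the descent condition you use. The paper's pointwise criterion for $\LTd\sslash\Wext$, namely \cref{Various Conditions for Wext Equivariant Sheaf to Satisfy Coxteter Descent}(1), tests triviality against the stabilizer $\Waff_x$ in the \emph{unextended} affine Weyl group, not against $(\Wext)_x$ as you write. This is forced by the construction of the coarse quotient: the groupoid is $\Gamma_{\Wext} = \Wext \mathop{\times}\limits^{\Waff} \Gamma_{\Waff} \cong \Wext/\Waff \times \Gamma_{\Waff}$ (\cref{Action Map is Projection Map}), so elements of $\Wext \setminus \Waff$ fixing a point contribute free copies of the graph and impose no condition on fibers. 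The two criteria genuinely differ: for $G = \SL_2$ one has $\Wext = \mathbb{Z} \rtimes W$ and $\Waff = 2\mathbb{Z} \rtimes W$, and the point $x = 1/2$ is fixed by a length-zero involution in $\Wext \setminus \Waff$ while $\Waff_x$ is trivial. Your criterion would demand triviality of that involution's action on the fiber at $1/2$ and hence carve out a strictly smaller subcategory than the one in \cref{Descends To Coarse Quotient For Wext Definition}; since the bi-Whittaker category does match the paper's (weaker) condition, your identification of the essential image would fail precisely at such points. Correcting this requires replacing your appeal to an ``extended-affine analogue'' of \cref{An Pseudo-Reflection Group Descends to Coarse Quotient Iff It Descends to Every Field-Valued Point} with the actual statement proved in the paper, whose conditions (3)--(5) are all phrased in terms of reflections of $\Waff$ and the finite Weyl group $W$.
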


It is argued in \cite{BZG} that the equivalence of \cref{Abelian Categorical Mellin Transform for biWhittaker Sheaves} can be upgraded to a monoidal equivalence conditional on a derived, mixed variant of the geometric Satake theorem, which in particular shows that the convolution monoidal structure on the derived category of bi-Whittaker $\D$-modules on $G$ is symmetric. In \cite{GannonCategoricalRepresentationTheoryandTheCoarseQuotient}, we prove this upgrade of \cref{Abelian Categorical Mellin Transform for biWhittaker Sheaves} unconditionally.

In \cite{BZG}, an argument is also sketched that there is a central functor from the category of sheaves on the coarse quotient $\LTd\sslash\Wext$ to the center of the monoidal category of $\D$-modules on $G$, i.e. the category $\mathcal{D}(G)^G$. It is also conjectured that this functor can be identified with a modified version of parabolic induction \cite[Conjecture 2.9]{BZG}. In \cite{GannonCategoricalRepresentationTheoryandTheCoarseQuotient}, we use the descent conditions outlined in this paper to construct a quotient of a candidate inverse to this functor. Moreover, this restricted parabolic induction functor can also be used to better understand character $\mathcal{D}$-modules on $G$, and was used to prove a conjecture of Braverman and Kazhdan on the acyclicity of $\rho$-Bessel sheaves on reductive groups, see \cite{ChenAVanishingConjecturetheGLnCase} and \cite{ChenOnTheConjecturesofBravermanKazhdan}. 

Moreover, in \cite{GannonCategoricalRepresentationTheoryandTheCoarseQuotient}, we argue that $\LTd\sslash \Wext$ plays an analogous role in the study of $\D(N\backslash G/N)$ as $\LTd\sslash W$ plays in the study of $\D(B\backslash G/B)$, see \cref{Representation Theoretic Motivation Subsubsection}. 

\begin{Theorem}\label{Main Monoidal Equivalence without T weak invariants}
\cite{GannonCategoricalRepresentationTheoryandTheCoarseQuotient} There is some quotient category of $\D(N\backslash G/N)$, denoted $\D(N\backslash G/N)_{\text{nondeg}}$, and a monoidal equivalence of categories
\raggedbottom
    \[\D(N\backslash G/N)_{\text{nondeg}} \simeq \IndCoh(\LTd/\characterlatticeforT \times_{\LTd\sslash\Wext} \LTd/\characterlatticeforT)\]
    \noindent which is $t$-exact up to cohomological shift. 
\end{Theorem}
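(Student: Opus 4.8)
The plan is to deduce this equivalence from a \emph{monodromic} upgrade of the Soergel-bimodule description of the finite Hecke category, transported to the coherent side by the Mellin transform, and then to identify the resulting groupoid with the fiber product on the right using the geometric results of this paper. First I would record the monoidal building blocks: $\DN$ is a convolution category whose residual bi-$T$-monodromy makes it a module over $\mathcal{D}(T)\otimes\mathcal{D}(T)$, and the Mellin transform identifies $\mathcal{D}(T)\simeq\QCoh(\LTd/\characterlatticeforT)$, so that the ambient category of bimonodromic sheaves becomes ind-coherent sheaves on $\LTd/\characterlatticeforT \times \LTd/\characterlatticeforT$. The Bruhat stratification of $N\backslash G/N$ by $W$, combined with the translations coming from the two copies of the monodromy lattice $\characterlatticeforT$, should exhibit $\DNnondeg$ as sheaves supported on the union of graphs of $\Wext = \characterlatticeforT\rtimes W$ inside this product, i.e.\ on the small groupoid $\smallgroupoid$.

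The heart of the argument is to promote this support statement to an equivalence of monoidal categories. I would proceed stratum by stratum: for each $w\in\Wext$ the corresponding standard and costandard objects convolve according to the group law of $\Wext$, and the derived endomorphisms of the monodromic objects are governed, via Soergel's Endomorphismensatz (as already invoked in \cref{Abelian Categorical Mellin Transform for biWhittaker Sheaves}), by the structure sheaf of the relevant graph. Convolution of $\mathcal{D}$-modules then matches the groupoid convolution on $\smallgroupoid$, identifying $\DNnondeg$ with $\IndCoh(\smallgroupoid)$ as a monoidal category. The finite-flatness of the source map from \cref{FiniteFlat} is precisely what makes this groupoid convolution well defined and compatible with the $t$-structures, replacing the failure of quasicompactness of the diagonal of $\LTd\sslash\Wext$.

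To finish I would invoke the base-change description of the coarse quotient proved earlier in the paper: the square with source and target maps $\biggroupoid \rightrightarrows \LTd$ over $\LTd\sslash\Wext$ is Cartesian and satisfies base change, and descending to the lattice quotient gives a canonical identification $\smallgroupoid \simeq \LTd/\characterlatticeforT \times_{\LTd\sslash\Wext}\LTd/\characterlatticeforT$; passing to $\IndCoh$ yields the stated target. The nondegenerate quotient on the automorphic side is exactly what discards the $\mathcal{D}$-modules invisible to this monodromic picture, and that the surviving objects are precisely those described by the pointwise criterion of \cref{An Pseudo-Reflection Group Descends to Coarse Quotient Iff It Descends to Every Field-Valued Point} is the compatibility that makes the two sides match. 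The $t$-exactness up to shift then follows by comparing the perverse $t$-structure on $\DNnondeg$ with the natural $t$-structure on $\IndCoh$ of the finite-flat groupoid, the shift being accounted for by $\dim N$.

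The main obstacle I expect is the second step: upgrading the support statement to a genuine monoidal equivalence. Controlling the full derived, bimonodromic $\Hom$-complexes, rather than merely the abelian-categorical endomorphisms, and verifying that convolution is computed by the groupoid multiplication over the non-quasicompact base $\LTd\sslash\Wext$, is where the delicate analysis lies; this is exactly where one must lean hardest on the finite-flatness and base-change theorems of this paper to reduce every computation to classical schemes and ind-schemes.
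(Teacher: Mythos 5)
You should first be aware that the paper contains no proof of \cref{Main Monoidal Equivalence without T weak invariants}: it is stated with the citation \cite{GannonCategoricalRepresentationTheoryandTheCoarseQuotient} and the surrounding text (\lq In future work \dots we will argue that\dots\rq) makes explicit that both the construction of $\DNnondeg$ and the proof of the equivalence are deferred to that companion paper. The present paper only supplies the geometric infrastructure that such a proof would consume, namely \cref{FiniteFlat}, \cref{SpecificGroupoidResults}, \cref{Soergel-Ish Isomorphism}, and the descent results of \cref{CoxeterDescentSubsection}. So there is no internal argument to compare yours against; your proposal can only be judged on its own terms.

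On those terms, your outline identifies the expected scaffolding (Mellin transform, localization of the nondegenerate quotient on the union of graphs of $\Wext$, Soergel-type endomorphism computations, and the identification of that union of graphs with $\LTd/\characterlatticeforT \times_{\LTd\sslash\Wext} \LTd/\characterlatticeforT$ via \cref{SpecificGroupoidResults}), but it is a plan rather than a proof, and the load-bearing steps are missing. Concretely: (i) the theorem asserts the \emph{existence} of a quotient category $\DNnondeg$ with specific properties, so any proof must begin by constructing it and showing the quotient is compatible with the monoidal structure; you never do this, you only gesture at \lq discarding the $\mathcal{D}$-modules invisible to the monodromic picture.\rq{} (ii) The passage from \lq standards/costandards convolve by the group law and endomorphisms are governed by the Endomorphismensatz\rq{} to a monoidal equivalence of DG categories is exactly the hard part: an Endomorphismensatz controls abelian-categorical ($\mathrm{H}^0$) information only, and promoting it to the full derived, bimonodromic statement requires a monadicity or generation-plus-formality argument over the base, which you acknowledge but do not supply. (iii) Your claim that the matching of the two sides is \lq the pointwise criterion of \cref{An Pseudo-Reflection Group Descends to Coarse Quotient Iff It Descends to Every Field-Valued Point}\rq{} is misplaced: that lemma characterizes equivariant sheaves descending to a coarse quotient, which is the shape of \cref{Abelian Categorical Mellin Transform for biWhittaker Sheaves} (the bi-Whittaker statement), not of the present theorem, whose target is $\IndCoh$ of the fiber product (a groupoid), not a category of sheaves satisfying descent to $\LTd\sslash\Wext$. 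The relevant inputs from this paper are instead \cref{FiniteFlat} and \cref{SpecificGroupoidResults}, which you do invoke correctly for identifying the groupoid and controlling its sheaf theory.
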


Moreover, as we will show, the category $\D(N\backslash G/N)_{\text{nondeg}}$ can be described explicitly--for example, if $G = \text{SL}_2$, we can identify $\D(N\backslash G/N)_{\text{nondeg}}$ as the quotient of $\D(N\backslash G/N)$ by the full subcategory generated under colimits by the constant sheaf $\underline{k}_{N\backslash G/N}$. 

\subsection{Conventions}\label{Conventions Subsection}
We work over $k$, a field of characteristic zero, and by \textit{scheme}, we mean a $k$-scheme. If unspecified, $K/k$ denotes an arbitrary field extension. What follows is written in the language of derived algebraic geometry in the sense of \cite{GaRoI}, and, in particular, all categories of sheaves are written as \textit{DG categories}, or equivalently, $k$-linear presentable stable $\infty$-categories. However, in \cref{Descent to the Coarse Quotient for Pseudo-Reflection Groups Section} and \cref{The Union of Graphs of Affine Weyl Group is Ind Finite-Flat Section}, this usage is inessential--we only use this language to parallel the definitions of the coarse quotient $\LTd\sslash \Wext$ in \cref{The Coarse Quotient for Affine Weyl Group Section}. In particular, the reader only interested in results such as \cref{Equivariant Sheaf for Pseudo-Reflection Group Descends iff It Descends for Each Reflection} can read \cref{Descent to the Coarse Quotient for Pseudo-Reflection Groups Section} and \cref{The Union of Graphs of Affine Weyl Group is Ind Finite-Flat Section}, replacing our notion with the classical derived categories and derived functors between them, and lose no information. However, in \cref{The Coarse Quotient for Affine Weyl Group Section}, the usage of higher categories becomes more essential, as $\LTd\sslash \Wext$ is most naturally defined as a \textit{prestack}, see \cite[Chapter 2, Section 1]{GaRoI}. 

In particular, the classical abelian category of quasicoherent sheaves on a scheme $X$ (respectively, equivariant sheaves on a scheme $X$ with an $H$-action) on some scheme $X$ will be denoted $\QCoh(X)^{\heartsuit}$ (respectively, $\QCoh(X)^{H, \heartsuit}$). However, as we will review below in \cref{Chevalley-Shephard-Todd Theorem}, the Chevalley-Shephard-Todd Theorem gives that for any pseudo-reflection group $H$, the pullback functor $\phi^{\ast}$ is $t$-exact, and so it induces an exact functor of abelian categories
\raggedbottom
\[\QCoh(V\sslash H)^{\heartsuit} \xrightarrow{\phi^{\ast}} \QCoh(V)^{H, \heartsuit}\]
\noindent and the natural analogue of \cref{Equivariant Sheaf for Pseudo-Reflection Group Descends iff It Descends for Each Reflection} below holds for abelian categories. However, the abelian categorical analogues of results such as \cref{An Pseudo-Reflection Group Descends to Coarse Quotient Iff It Descends to Every Field-Valued Point} and \cref{Equivariant Sheaf on Coinvariant Algebra Descends to the Coarse Quotient iff Fiber Is Trivial} do \textit{not} hold, see \cref{Derived Fiber is Necessary for Pointwise Criterion}. The failure of these claims essentially stems from the failure of $i^{\ast}$ to be $t$-exact, or equivalently, the failure of the morphism $\text{Spec}(k) \xhookrightarrow{} \text{Spec}(C)$ to be flat. 

If $g$ is a point of a group acting on an affine scheme $X$, its \textit{graph} is the scheme-theoretic image of the closed embedding $X \xrightarrow{(g\cdot (-), \text{id})} X \times X$. The graph of $g$ is thus determined by an ideal $I_g$ of the ring of functions on $X \times X$. Given a finite subset $F$ of points of a group acting on an affine scheme, we define the \textit{union of graphs} $\Gamma_F$ as the closed subscheme cut out by the ideal $\cap_{g \in S} I_g$. Observe that, if $F \subseteq F'$ are finite subsets of points of a group acting on an affine scheme, then there is a natural closed embedding $\Gamma_F \xhookrightarrow{} \Gamma_{F'}$. We define the \textit{union of graphs} $\Gamma_S$ as the colimit over all $\Gamma_F$ for finite subsets $F \subseteq S$; thus $\Gamma_S$ is an ind-scheme but not necessarily a scheme.

\subsection{Acknowledgements}
I would like to thank David Ben-Zvi, Joakim F\ae geman, Rok Gregoric, Sam Gunningham, Allen Knutson, Gus Lonergan, Sam Raskin, Victor Reiner, Mark Shimozono, Brian Shin, Julianna Tymoczko, David Yang, and Alexander Yong for many interesting and useful conversations. I would also like to thank Sam Gunningham for some useful feedback on an earlier version of this draft, as well as the anonymous referees whose comments seriously improved the exposition of this paper. This project was largely completed while I was a graduate student at the University of Texas at Austin, and I would like to thank everyone there for contributing to such an excellent environment.

\section{Descent to the Coarse Quotient for Pseudo-Reflection Groups}\label{Descent to the Coarse Quotient for Pseudo-Reflection Groups Section}
\subsection{Preliminary Results} We recall that $k$ denotes a field of characteristic zero and, if unspecified, $K/k$ denotes an arbitrary field extension. 
\subsubsection{GIT Quotients}\label{GIT Quotients Subsubsection}
We temporarily assume $H$ is an arbitrary finite group acting on some arbitrary affine scheme $X := \Spec(A)$. Recall the notion of a \textit{GIT quotient} as in \cite{MumfordFogartyKirwanGeometricInvariantTheory}; we will only use the affine version so we content ourselves to define GIT quotient or \textit{coarse quotient} as $X\sslash H := \text{Spec}(A^H)$ \cite[Theorem 1.1]{MumfordFogartyKirwanGeometricInvariantTheory}. Direct computation shows the following result, a modification of which will be used to define the coarse quotient $\LTd\sslash \Wext$ in \cref{The Coarse Quotient for Affine Weyl Group Section}:

\begin{Proposition}\label{GIT Quotient is Colimit of Act Project}
If $H$ is a finite group acting on an affine scheme $X$ then the canonical map
\raggedbottom
\[X\sslash H \simeq \text{colim}(X \times_{X\sslash H} X \rightrightarrows X)\]

\noindent is an equivalence, where the colimit is taken in the (1,1)-category of classical affine schemes. 
\end{Proposition}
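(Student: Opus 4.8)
The plan is to pass to the opposite category of commutative $k$-algebras and reduce the statement to a concrete assertion in invariant theory. Writing $B := A^H$, so that $X \sslash H = \Spec(B)$, the classical fiber product $X \times_{X \sslash H} X$ is $\Spec(A \otimes_B A)$ (with the ordinary, underived tensor product), and the two projections correspond to the coface maps $d_0, d_1 : A \rightrightarrows A \otimes_B A$ given by $d_0(a) = a \otimes 1$ and $d_1(a) = 1 \otimes a$. Since the colimit is computed in the $(1,1)$-category of classical affine schemes, which is the opposite of ordinary commutative $k$-algebras, the coequalizer in question is $\Spec$ of the equalizer $\mathrm{eq}(d_0, d_1)$. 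The canonical comparison map corresponds to the ring map $B \to \mathrm{eq}(d_0, d_1)$ obtained by corestricting the structure map $B \to A$, which is valid precisely because $b \otimes 1 = 1 \otimes b$ for every $b \in B$ by the very definition of $\otimes_B$. Thus everything reduces to the purely algebraic claim that $B = \mathrm{eq}(d_0, d_1)$, i.e. that an element $a \in A$ with $a \otimes 1 = 1 \otimes a$ in $A \otimes_B A$ already lies in $A^H$.

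The inclusion $B \subseteq \mathrm{eq}(d_0, d_1)$ was just observed, so the content is the reverse inclusion. For this I would construct the homomorphism of $k$-algebras
\[\gamma : A \otimes_B A \longrightarrow \prod_{h \in H} A, \qquad \gamma(a \otimes a') = \big(\, a \cdot h(a') \,\big)_{h \in H}.\]
The only point requiring care is that $\gamma$ is well defined, i.e. that the bilinear map $(a, a') \mapsto (a \cdot h(a'))_{h}$ is balanced over $B$: for $b \in B = A^H$ one has $h(b) = b$, so $a \cdot h(b a') = a \cdot b \cdot h(a') = (ab)\cdot h(a')$, which is exactly the balancing condition, and the universal property of $\otimes_B$ then produces $\gamma$. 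Now given $a \in \mathrm{eq}(d_0, d_1)$, applying $\gamma$ to the equality $a \otimes 1 = 1 \otimes a$ and reading off the $h$-component yields $a = h(a)$ for every $h \in H$; hence $a \in A^H = B$. This gives $\mathrm{eq}(d_0, d_1) \subseteq B$ and completes the identification $B = \mathrm{eq}(d_0, d_1)$, so the canonical map is an equivalence.

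I expect the only genuine subtlety to be the construction and well-definedness of $\gamma$, which is the \lq direct computation\rq{} referenced in the statement; once $\gamma$ is available, both inclusions are immediate. It is worth emphasizing that this argument uses neither flatness of $A$ over $B$ nor invertibility of $|H|$, so the same proof in fact establishes the result for an arbitrary finite group acting on an arbitrary affine scheme in any characteristic. The characteristic-zero hypothesis and the Reynolds averaging operator are not needed here, although they will become essential for the finer $t$-exactness and pointwise descent statements appearing later in the paper.
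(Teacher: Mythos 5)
Your proof is correct, and it supplies exactly the ``direct computation'' that the paper leaves unproved: passing to rings, identifying the coequalizer with $\Spec$ of the equalizer $\mathrm{eq}(A \rightrightarrows A \otimes_{A^H} A)$, and collapsing that equalizer to $A^H$ via the balanced map $\gamma(a \otimes a') = (a\cdot h(a'))_{h \in H}$. Your closing observation is also accurate: the argument uses neither flatness nor characteristic zero, so this particular proposition holds for any finite group acting on any affine scheme, in contrast to the later results in the paper that genuinely need the characteristic-zero and pseudo-reflection hypotheses.
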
 

\subsubsection{Stack Quotients}\label{Stack Quotients Subsubsection} Note that, in the notation of \cref{GIT Quotients Subsubsection}, the GIT quotient $X\sslash H$ is typically distinct from the stack quotient $X/H$ defined in, say, \cite[Chapter 2, Remark 4.3.8]{GaRoI}. We briefly recall this construction for the convenience of the reader. One can define a simplicial object in the category of ordinary affine schemes by the formula \[\Delta \ni [n] \mapsto H^{\times n} \times X\] for which the degeneracy map $d_0$ is given by projection away from the first factor and, if $i \in \{1, 2, ..., n\}$ and $j \in \{0, 1, ..., n-1\}$ then \begin{equation}\label{Groupoid Formulas}d_{i}(h_n, ..., h_0) = (h_{n}, h_{n-1}, ..., h_{n - i + 1}h_{n-i}, ..., h_0), \text{ and } s_j(h_n, ..., h_0) = (h_n, ..., h_{n-i + 1}, 1, h_{n - i}, ..., h_0)\end{equation} for $h_0 \in X$ and $h_{\ell} \in H$ for $\ell \geq 1$. From the inclusion of ordinary affine schemes into PreStk given by the Yoneda embedding, one obtains a simplicial object in the category PreStk. Define $X/H$ as the \'etale sheafification of the geometric realization of this simplicial object in PreStk.

It is standard (and an immediate consequence of \cref{GroupoidResults} below and the fact that we sheafification commutes with finite limits \cite[Lemma 2.3.6]{GaRoI}) that we have an equivalence \begin{equation}\label{Act Project Equivalence}(\mathrm{act}, \mathrm{proj}): H \times X \xrightarrow{\sim} X \times_{X/H} X\end{equation} induced by the action map $\mathrm{act}$ and the projection map $\mathrm{proj}$. As in the proof of \cite[Chapter 2, Proposition 4.3.6]{GaRoI}, $X \to X/H$ is a flat covering, and so we have induced equivalences \begin{equation}\label{Induced Equivalence by Flat Covering}\QCoh(X/H) \xrightarrow{\sim} \lim_{\Delta}\QCoh(H^{\bullet} \times X) \xleftarrow{\sim} \lim_{\Delta}\QCoh(H)^{\otimes \bullet} \otimes \QCoh(X) =: \QCoh(X)^{H, w}\end{equation} given by pullback (since $\QCoh$ satisfies flat descent \cite[Theorem 1.3.4]{GaRoI}) and the symmetric monoidality of the functor \lq -mod\rq{} \cite[Chapter 1, Sect. 8.5.10]{GaRoI}. We will implicitly use the composite equivalence \begin{equation}\label{QCoh of Stack Quotient is Equivariant Sheaf Category}\QCoh(X/H) \xrightarrow{\sim} \QCoh(X)^{H, w}\end{equation} in what follows. 

The following result informally states that the category $\QCoh(X/H)$ can be identified with the category whose objects are objects of $\QCoh(X)$ equipped with an $H$-representation and various higher compatibilities; it is well known, but we were unable to locate a reference for it so we recall the proof for the convenience of the reader: 

\begin{Proposition}\label{QCoh on Stack Quotient}
If $H$ is a finite group (or any affine algebraic group) acting on an affine scheme $X$, there is an equivalence \[\QCoh(X/H) \xrightarrow{\sim} \O(H \times X)\mathrm{-comod}(\QCoh(X))\] induced by pullback by the quotient map $f: X \to X/H$.
\end{Proposition}

\begin{proof}
Observe that $(-)^{\mathrm{op}}: \mathrm{DGCat} \to \mathrm{DGCat}$ is an autoequivalence of categories such that $(R^{\mathrm{op}}, L^{\mathrm{op}})$ is an adjoint pair if $(L, R)$ is. In particular, we obtain an equivalence \begin{equation}\label{Opposite Iso}\QCoh(X/H)^{\mathrm{op}} \xrightarrow{\sim} \lim_{\Delta} \QCoh(H^{\bullet} \times X)^{\mathrm{op}}\end{equation} induced by the various pullback maps. Moreover, the functor $\mathrm{act}_*\mathrm{proj}^*$ naturally lifts to the structure of a \textit{monad} acting on $\QCoh(X)^{\mathrm{op}}$. We next claim that there is an equivalence \begin{equation}\QCoh(H^{\bullet} \times X)^{\mathrm{op}} \xrightarrow{\sim} \mathrm{act}_*\mathrm{proj}^*\mathrm{-mod}(\QCoh(X)^{\mathrm{op}})\end{equation} induced by $f^*$. To this end, we first observe that, for every $\zeta: [m] \to [n]$ in $\Delta$, the diagram \begin{equation}\label{Left Adjointable Diagram}
  \xymatrix@R+2em@C+2em{
\QCoh(H^{n} \times X)^{\mathrm{op}} \ar[r]^{d_0^*} \ar[d]^{\varepsilon_{n}^*} &  \QCoh(H^{n + 1} \times X)^{\mathrm{op}} \ar[d]^{\varepsilon_{n + 1}^*}\\
 \QCoh(H^{m} \times X)^{\mathrm{op}} \ar[r]^{d_0^*}  &   \QCoh(H^{m + 1} \times X)^{\mathrm{op}}
  }
\end{equation} commutes by the functoriality of $\QCoh^*$, where $\varepsilon_n$ is the map of affine schemes associated to $\zeta$ and $\varepsilon_{n + 1}$ is the map associated to the unique extension of $\zeta$ sending $m + 1$ to $n + 1$. Moreover, the diagram \labelcref{Left Adjointable Diagram} is left adjointable in the sense of \cite[Definition 7.3.1.2]{LuHTT} by say \cite[Proposition 2.2.2]{GaRoI}. Therefore, \cite[Theorem 4.7.5.2(3)]{LuHA} gives our desired equivalence. Taking opposites, we obtain an equivalence  \begin{equation}\QCoh(H^{\bullet} \times X) \xrightarrow{\sim} \mathrm{act}_*\mathrm{proj}^*\mathrm{-comod}(\QCoh(X))\end{equation} induced by $f^*$. 
Finally, since $\QCoh(X) := A\mathrm{-mod}$, its dual can be naturally identified with $A^{\mathrm{op}}\mathrm{-mod}$ ($= A\mathrm{-mod}$) and there is a monoidal equivalence \[(\uEnd_{\mathrm{DGCat}}(\QCoh(X)), \circ) \xrightarrow{\sim} (A \otimes A^{\mathrm{op}}\mathrm{-mod}, - \otimes_A -)\] compatible with the natural actions on $\QCoh(X) := A\mathrm{-mod}$. Under this equivalence, the comonad $\mathrm{act}_*\mathrm{proj}^*$ naturally corresponds to the coalgebra object $\O(H \times X)$, and so our claim follows.
\end{proof}
\subsubsection{GIT Quotients and Stack Quotients}Let $\generalstacktoGITquotientmap: X/H \to X\sslash H$ denote the quotient map.\footnote{More specifically: the universal property of this colimit gives a map from the non-sheafified analogue of $X/H$, and the map $\phi$ is induced from this map and the fact that the affine scheme $X\sslash H$ is a sheaf \cite[Lemma 2.2.2.13]{ToenVezzosiHomotopicalAlgebraicGeometryIIGeometricStacksandApplications}, \cite[Proposition 2.4.2]{GaRoI}.} We now recall the following result: 

\begin{Proposition}\label{Pullback From GIT to Stack Quotient of Finite Group of Affine Scheme is Fully Faithful}
 The pullback $\generalstacktoGITquotientmap^{\ast}$ is fully faithful.
\end{Proposition}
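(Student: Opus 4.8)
The plan is to verify the standard criterion that a left adjoint is fully faithful precisely when its unit is an equivalence. Writing $\generalstacktoGITquotientmap^{\ast} \dashv \generalstacktoGITquotientmap_{\ast}$, full faithfulness of $\generalstacktoGITquotientmap^{\ast}$ is equivalent to the unit $\eta_P \colon P \to \generalstacktoGITquotientmap_{\ast}\generalstacktoGITquotientmap^{\ast}P$ being an equivalence for every $P \in \QCoh(V \sslash H)$. So after unwinding the two categories and the two functors, the whole statement reduces to a single computation of $\generalstacktoGITquotientmap_{\ast}\generalstacktoGITquotientmap^{\ast}P$.

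First I would make all the objects explicit. Writing $V = \Spec(A)$, we have $V \sslash H = \Spec(A^H)$, so $\QCoh(V \sslash H) = A^H\text{-mod}$, while $\QCoh(V/H) \simeq \QCoh(V)^H$ is the category of $A$-modules equipped with a compatible $H$-action. Under these identifications $\generalstacktoGITquotientmap^{\ast}P = A \otimes_{A^H} P$ with $H$ acting through the $A$-factor, and, since $V \sslash H$ is affine, $\generalstacktoGITquotientmap_{\ast}$ is computed by (derived) $H$-invariants $M \mapsto M^H$ viewed as an $A^H$-module. Thus the unit becomes the natural map $P \to (A \otimes_{A^H} P)^H$.

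The key input is that $k$ has characteristic zero and $H$ is finite, so $k[H]$ is semisimple and the averaging element $e = |H|^{-1}\sum_{h \in H} h$ is an idempotent. It gives an $A^H$-linear, $H$-equivariant splitting $A \cong A^H \oplus A_+$, where $A_+ = (1-e)A$ satisfies $A_+^H = 0$, and it exhibits the invariants functor $(-)^H$ as the exact functor given by applying $e$. Exactness is what lets me ignore the derived subtleties: even though $A$ need not be flat over $A^H$, the (possibly derived) tensor product decomposes as $A \otimes_{A^H} P \simeq (A^H \otimes_{A^H} P) \oplus (A_+ \otimes_{A^H} P)$ compatibly with the $H$-action, and applying $e$ selects the first summand, which is canonically $P$. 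Hence $\generalstacktoGITquotientmap_{\ast}\generalstacktoGITquotientmap^{\ast}P \simeq P$ and the unit is an equivalence.

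The main obstacle, and the only genuinely nontrivial point, is precisely this interaction between the derived structure and the pushforward: a priori $\generalstacktoGITquotientmap_{\ast}$ is derived invariants and $A$ is not flat over $A^H$, so one must be sure that no higher cohomology corrections appear. The resolution is the semisimplicity of $k[H]$ in characteristic zero, which forces $(-)^H$ to be exact and to commute with colimits and with the (derived) tensor product; this collapses the derived invariants to the naive ones and makes the idempotent decomposition above rigorous. The same exactness is what will later be used to transfer the statement to abelian categories in the $t$-exact pseudo-reflection setting.
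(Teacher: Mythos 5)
Your proof is correct, but it follows a genuinely different route from the paper's. You verify the unit $P \to \generalstacktoGITquotientmap_{\ast}\generalstacktoGITquotientmap^{\ast}P$ is an equivalence on \emph{every} object at once, via the averaging idempotent $e = |H|^{-1}\sum_h h$: the $H$-equivariant, $A^H$-linear splitting $A \cong A^H \oplus A_+$ passes through the derived tensor product, and exactness of $(-)^H = e(-)$ (semisimplicity of $k[H]$ in characteristic zero) kills the $A_+ \otimes_{A^H} P$ summand, leaving $P$. The paper instead first notes that $\generalstacktoGITquotientmap^{\ast}$ admits a \emph{continuous} right adjoint (invariants), so the full subcategory on which the unit is an equivalence is closed under colimits; this reduces the whole statement to the single compact generator $A^H \in \QCoh(V\sslash H)$, where the unit becomes the tautological computation $\text{Sym}(\Vdual)^H = A^H$ (phrased in the paper as computing endomorphisms of $\text{Sym}(\Vdual)$ in $\QCoh(V)^H$). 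What each approach buys: yours gives an explicit formula for $\generalstacktoGITquotientmap_{\ast}\generalstacktoGITquotientmap^{\ast}$ on arbitrary objects, makes the characteristic-zero input completely explicit, and avoids any appeal to compact generation; the paper's is the standard DG-categorical pattern (continuity plus checking on a generator), which minimizes the computation and is the argument that recurs later in the paper (e.g.\ in the proof of \cref{CoherentFullyFaithful}, where no such global idempotent splitting is available). Note that the paper's argument also silently uses your key point --- that in characteristic zero derived invariants are exact and continuous --- when it asserts the right adjoint is "explicitly given by taking the $H$-invariants," so the two proofs rest on the same hypothesis; yours just puts it front and center.
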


\begin{proof}
Recall $A := \O(X)$; let $C := \O(H \otimes X)$, $\C := C\mathrm{-comod}(\QCoh(X))$, and $E: \QCoh(X/H) \xrightarrow{\sim} \C$ denote the equivalence of \cref{QCoh on Stack Quotient}. To prove that $\generalstacktoGITquotientmap^*$ is fully faithful, it suffices to prove that the functor $E \circ \generalstacktoGITquotientmap^*$ is fully faithful. Observe that this functor can be obtained from tensoring with the monoidal unit $S := \O(V)$. Since \[\uHom_{\C}(S, M) \xleftarrow{\sim} \uHom_\C(S, \mathrm{colim}(\hspace{-0.04in}\xymatrix{C \otimes_{A} M \ar@<-1ex>[r] \ar@<1ex>[r] & C \otimes_{A} C \otimes_A M \ar[l] \ar@<-2ex>[r] \ar[r] \ar@<2ex>[r] & \hdots \ar@<-1ex>[l] \ar@<1ex>[l]
}))\] \[\xrightarrow{\sim} \lim(\hspace{-0.03in}\xymatrix{\uHom_{\C}(S, C \otimes_{A} M) \ar@<-1ex>[r] \ar@<1ex>[r] & \uHom_{\C}(S, C \otimes_A C \otimes_{A} M) \ar[l] \ar@<-2ex>[r] \ar[r] \ar@<2ex>[r] & \hdots \ar@<-1ex>[l] \ar@<1ex>[l]
})\]
\[\simeq \lim(\hspace{-0.03in}\xymatrix{\uHom_{S\mathrm{-mod}}(S, M) \ar@<-1ex>[r] \ar@<1ex>[r] & \uHom_{S\mathrm{-mod}}(S, C \otimes_A M)  \ar[l] \ar@<-2ex>[r] \ar[r] \ar@<2ex>[r] & \hdots \ar@<-1ex>[l] \ar@<1ex>[l]
})\]

\[\simeq \lim(\hspace{-0.04in}\xymatrix{M \ar@<-1ex>[r] \ar@<1ex>[r] & C \otimes_A M  \ar[l] \ar@<-2ex>[r] \ar[r] \ar@<2ex>[r] & \hdots \ar@<-1ex>[l] \ar@<1ex>[l]
}) \simeq M^H\] as vector spaces, we deduce that the right adjoint to this functor is given by the invariants functor. Since the invariants functor is continuous\footnote{Indeed, since $\mathrm{Rep}(H)$ is the derived category of its heart \cite[Lemma 6.4.6]{DrinfeldGaitsgoryOnSomeFinitenessQuestionsforAlgebraicStacks} the characteristic zero assumption on our field implies, by the Barr-Beck-Lurie theorem, that $\mathrm{Rep}(H)$ is equivalent to the product of the categories $\uEnd(V)$-mod where $V$ varies over isomorphism classes of irreducible representations of representations of $H$. The trivial functor is right \textit{and} left adjoint to the projection onto $\mathrm{Vect} = \uEnd(k)$-mod, and so it commutes with all colimits.} we have that, for any $M \in \QCoh(V\sslash H)$ that the rightmost map in the composite \[M \xrightarrow{\sim} \O(V)^H \otimes_{\O(V)^H} M \xrightarrow{} (\O(V) \otimes_{\O(V)^H} M)^H\] is an equivalence. Since this composite is the unit map for $E \circ \generalstacktoGITquotientmap^*$, we deduce that $E \circ \generalstacktoGITquotientmap^*$ is an equivalence, as desired.
\end{proof}

\subsubsection{Results on Pseudo-Reflection Groups} We now recall a key result in the theory of finite group actions on vector spaces:

\begin{Theorem}\label{Chevalley-Shephard-Todd Theorem}(Chevalley-Shephard-Todd Theorem \cite[Chapter 5, Theorem 4]{BourbakiGroupesEtAlgebresdeLieChaptersIVVVI}, \cite{ChevalleyInvariantsofFiniteGroupsGeneratedByReflections}, \cite{ShephardToddFiniteUnitaryReflectionGroups})
If $H$ is any finite group acting on a vector space of dimension $d$, then $V\sslash H \cong \mathbb{A}_k^d$ if and only if $H$ is a pseudo-reflection group. 
\end{Theorem}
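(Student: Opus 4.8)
The plan is to reduce the scheme-theoretic statement $V\sslash H \cong \A^d_k$ to the purely algebraic assertion that the graded invariant ring $R := \text{Sym}(\Vdual)^H$ is a polynomial algebra, and then to prove the two implications, which are of quite different character, separately. The reduction is routine: writing $S := \text{Sym}(\Vdual)$, finiteness of $S$ over $R$ gives $\dim R = d$, and since $R$ is a connected graded finitely generated $k$-algebra, the $\G_m$-action coming from the grading forces $\Spec R \cong \A^d_k$ to hold if and only if $R$ is a polynomial ring on $d$ homogeneous generators. So it suffices to prove that $R$ is polynomial if and only if $H$ is generated by pseudo-reflections.

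For the implication that a pseudo-reflection group has polynomial invariants (the Chevalley direction), I would choose a minimal homogeneous generating set $f_1, \dots, f_m$ of the augmentation ideal $R_+$; by graded Nakayama $m = \dim_k R_+/R_+^2$, and it suffices to show that these generators are algebraically independent, since then $R = k[f_1, \dots, f_m]$ and $m = \dim R = d$ automatically. The crux is a divisibility lemma, proved by induction on degree using the Reynolds operator $\tfrac{1}{|H|}\sum_{h \in H} h$ (here $\operatorname{char} k = 0$ makes $|H|$ invertible): if $g_1, \dots, g_r \in R$ are homogeneous with $g_1 \notin (g_2, \dots, g_r)R$ and $\sum_i P_i g_i = 0$ for homogeneous $P_i \in S$, then $P_1 \in (g_2, \dots, g_r)S$. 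The pseudo-reflection hypothesis enters precisely in the inductive step: for a pseudo-reflection $s$ fixing the hyperplane $\ker \ell_s$ pointwise and any $f \in S$, the difference $f - s\cdot f$ vanishes on that hyperplane and is therefore divisible by the linear form $\ell_s$. Granting the lemma, I would argue by contradiction: take an algebraic relation $F(f_1, \dots, f_m) = 0$ of minimal degree, apply $\partial/\partial x_k$ and combine the resulting identities with Euler's relation (again using $\operatorname{char} k = 0$) to force $f_1 \in (f_2, \dots, f_m)R$, contradicting minimality of the generating set. \emph{This algebraic independence argument is the main obstacle of the entire theorem.}

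The converse direction rests on two numerical identities extracted from Molien's formula. When $R = k[f_1, \dots, f_d]$ is polynomial with $\deg f_i = d_i$, one has
\[P(R,t) = \frac{1}{|H|}\sum_{h \in H} \frac{1}{\det(1 - t\,h)} = \prod_{i=1}^{d} \frac{1}{1 - t^{d_i}},\]
and I would compare the Laurent expansions of the two expressions at $t = 1$. The pole of order $d$ is contributed only by the identity, yielding $\prod_i d_i = |H|$. The pole of order $d-1$ is contributed only by the pseudo-reflections, since these are exactly the non-identity elements (diagonalizable in $\operatorname{char} k = 0$) whose fixed space has dimension $d-1$; pairing each reflection $s$ with $s^{-1}$ shows their total contribution yields $\sum_i (d_i - 1) = N$, where $N$ is the number of pseudo-reflections in $H$.

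Finally, suppose $R$ is polynomial and let $H_0 \trianglelefteq H$ be the (normal) subgroup generated by all pseudo-reflections. By the Chevalley direction $S^{H_0} = k[g_1, \dots, g_d]$ is polynomial, say with degrees $d_i^0$. Since $H$ and $H_0$ contain the same pseudo-reflections, the two identities above applied to both groups give $\sum_i (d_i - 1) = N = \sum_i (d_i^0 - 1)$, hence $\sum_i d_i = \sum_i d_i^0$. Now $S^{H_0}$ is a finite free graded module over $S^H$ (both are regular and the extension is finite), so
\[Q(t) := \frac{P(S^{H_0}, t)}{P(S^H, t)} = \frac{\prod_i (1 - t^{d_i})}{\prod_i (1 - t^{d_i^0})}\]
is a polynomial with $Q(1) = [\operatorname{Frac} S^{H_0} : \operatorname{Frac} S^H] = [H : H_0]$. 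But $\deg Q = \sum_i d_i - \sum_i d_i^0 = 0$, so $Q$ is constant, and $Q(0) = 1$ forces $[H : H_0] = 1$, i.e.\ $H = H_0$ is generated by pseudo-reflections. This completes the plan; the only genuinely hard input is the algebraic independence step of the second paragraph, everything else being bookkeeping with Hilbert series and standard commutative algebra.
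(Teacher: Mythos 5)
First, a framing note: the paper does not prove this theorem at all --- it is stated as a recalled classical result, with citations to Bourbaki, Chevalley, and Shephard--Todd --- so your proposal can only be measured against those standard arguments. In outline you have reconstructed exactly them: the reduction of $V\sslash H \cong \A^d_k$ to graded polynomiality of $R := \text{Sym}(\Vdual)^H$ (via regularity at the cone point and graded Nakayama), Chevalley's algebraic-independence argument for one direction, and the Molien-series/Hilbert-series index argument for the converse. The reduction and the entire converse direction (the two Laurent-coefficient identities, freeness of $S^{H_0}$ over $S^H$, and the degree-zero polynomial $Q$ forcing $[H:H_0]=1$) are correct as sketched.

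However, there is a genuine error in the step you yourself single out as the crux. Your divisibility lemma is false as stated. Counterexample: let $H = \Z/2\Z$ act on a line by $-1$ (a pseudo-reflection group under the paper's \cref{Pseudo-Reflection Group Definition}), so $S = k[x]$ and $R = k[x^2]$; take $g_1 = x^2$, $g_2 = x^4$, $P_1 = x^3$, $P_2 = -x$. Then $g_1 \notin (g_2)R$ and $P_1g_1 + P_2g_2 = 0$ is a homogeneous relation, yet $P_1 = x^3 \notin (g_2)S = x^4k[x]$. The correct conclusion (see \cite[Chapter V, Section 5]{BourbakiGroupesEtAlgebresdeLieChaptersIVVVI} or \cite[Section 3.5]{HumRe}) is weaker: $P_1$ lies in the ideal $S\,R_+$ of $S$ generated by \emph{all} invariants of positive degree --- and indeed $x^3 \in x^2k[x]$ in the example. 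The distinction is not cosmetic, because your induction cannot close with your conclusion: in the inductive step one obtains $P_1 - s\cdot P_1 \in \ell_s J$ for the target ideal $J$ and every pseudo-reflection $s$, hence (since the pseudo-reflections generate $H$ and $J$ is $H$-stable) $P_1$ is congruent modulo $J$ to its Reynolds average $\frac{1}{|H|}\sum_{h\in H} h\cdot P_1$, and the proof must then conclude that this average lies in $J$. When $J = S\,R_+$ this is automatic, since the average is an invariant of positive degree; when $J = (g_2,\dots,g_r)S$ there is no reason for it, and the counterexample shows it genuinely fails. With the corrected lemma your Euler-relation argument does go through --- it yields $f_1 \in (f_2,\dots,f_m)S$, hence, after applying the Reynolds operator, $f_1 \in (f_2,\dots,f_m)R$, contradicting minimality --- and the rest of the proposal then stands.
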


 \begin{Remark}
%
More generally, in fact, the space $\vectorspaceHactson\sslash H$ is smooth only if $H$ is a pseudo-reflection group. Since we do not use this theorem in any substantial way, we only remark that this theorem is a consequence of \cref{Chevalley-Shephard-Todd Theorem} and the purity of the branch locus theorem. 
\end{Remark}

We were unable to locate a proof of the following standard result for general pseudo-reflection groups\footnote{For a proof in the reflection group case, see \cite[Section 3.5]{HumRe}.}, so we recall it here:

\begin{Proposition}\label{For Pseudo-Reflection Groups Acting Faithfully on Vector Space The GIT Quotient is Flat}
If $H$ is a pseudo-reflection group acting faithfully on some vector space $V$, then $\text{Sym}(\Vdual)$ is a free $\text{Sym}(\Vdual)^H$-module of rank $|H|$. 
\end{Proposition}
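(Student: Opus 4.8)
The plan is to set $S := \text{Sym}(\Vdual)$ and $R := S^H$, and to separate the claim into a freeness statement and a rank computation. By the Chevalley--Shephard--Todd theorem (\cref{Chevalley-Shephard-Todd Theorem}), the faithful pseudo-reflection action forces $\vectorspaceHactson\sslash H = \Spec(R) \cong \A^d_k$ for $d = \dim V$, so $R$ is a graded polynomial ring $k[y_1, \dots, y_d]$ with $R_+ = (y_1, \dots, y_d)$; in particular $R$ is regular, hence Cohen--Macaulay. This identification is the crux, since it lets me treat $S$ as a finitely generated graded module over a polynomial ring and apply the standard graded dichotomy between freeness and positive projective dimension.

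First I would record that $S$ is a finitely generated $R$-module. Every $f \in S$ is a root of the monic orbit polynomial $\prod_{h \in H}(X - h\cdot f) \in R[X]$, whose coefficients are elementary symmetric functions of the $H$-orbit of $f$ and hence lie in $R$, so $S$ is integral over $R$. As $S$ is finitely generated as an $R$-algebra (by a basis of $\Vdual$), integrality upgrades this to module-finiteness, and in particular $\dim_R S = \dim R = d$.

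The heart of the argument is freeness, for which I would show that $S$ is a maximal Cohen--Macaulay $R$-module. The coinvariant algebra $C = S/R_+ S = S/(y_1, \dots, y_d)S$ is Artinian, i.e. finite dimensional over $k$; equivalently, $y_1, \dots, y_d$ is a homogeneous system of parameters for the $d$-dimensional ring $S$. Because $S$ is a polynomial ring it is Cohen--Macaulay, so this system of parameters is a regular sequence on $S$, giving $\text{depth}_{R_+}(S) = d = \dim R$. The graded Auslander--Buchsbaum formula $\text{pd}_R(S) + \text{depth}_{R_+}(S) = \text{depth}(R) = d$ then forces $\text{pd}_R(S) = 0$, so $S$ is projective and, being graded over the graded-local ring $R$, free.

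Finally I would compute the rank by passing to fraction fields. Faithfulness makes $\text{Frac}(S)/\text{Frac}(R)$ a Galois extension with group $H$: the invariant-denominator trick $f/g = \bigl(f\prod_{h \neq 1} h\cdot g\bigr)/\prod_{h} h\cdot g$ shows $\text{Frac}(S)^H = \text{Frac}(S^H) = \text{Frac}(R)$, and Artin's theorem gives $[\text{Frac}(S):\text{Frac}(R)] = |H|$. Since $S$ is finite and free over the domain $R$, its rank equals $\dim_{\text{Frac}(R)}\bigl(\text{Frac}(R)\otimes_R S\bigr) = [\text{Frac}(S):\text{Frac}(R)] = |H|$. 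I expect the freeness step to be the main obstacle --- namely, justifying cleanly that the fundamental invariants form a regular sequence on $S$ and invoking Auslander--Buchsbaum in the correct graded-local setting --- whereas the finiteness and rank computations are routine once $R$ is known to be a polynomial ring.
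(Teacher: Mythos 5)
Your proof is correct, but it takes a genuinely different route from the paper's. The paper splits the claim the same way you do (freeness, then rank), but handles each half differently: for freeness it simply cites the standard reference (Kane, Chapter 18-3), whereas you re-derive it from the Chevalley--Shephard--Todd theorem via the observation that the fundamental invariants form a homogeneous system of parameters on the Cohen--Macaulay ring $\text{Sym}(\Vdual)$, hence a regular sequence, and then apply the graded Auslander--Buchsbaum formula --- this is essentially the textbook argument, carried out in full rather than cited. For the rank, the paper argues geometrically: faithfulness gives a nonempty $H$-invariant open $U \subseteq V$ on which the action is free, the image $q(U)$ is open in $V\sslash H$, and Mumford's results (uniform categorical quotients and the identification $H \times U \cong U \times_{q(U)} U$ for free actions) yield a Cartesian diagram along which base change computes the rank of the free module as $|H|$. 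You instead work at the generic point: the invariant-denominator trick identifies $\text{Frac}(\text{Sym}(\Vdual))^H$ with $\text{Frac}(\text{Sym}(\Vdual)^H)$, Artin's lemma gives the field extension degree $|H|$, and the rank of a finite free module over a domain is read off after tensoring with the fraction field. Both rank arguments exploit the same phenomenon --- generic freeness of a faithful action --- but your version trades the GIT machinery (categorical quotients, base change of sheaves) for elementary field theory, making it more self-contained as commutative algebra; the paper's version is shorter given its citations and is stylistically consonant with the sheaf-theoretic, base-change-driven arguments used throughout the rest of the paper. The only steps you gloss over (finite-dimensionality of the coinvariant algebra, which follows from the module-finiteness you establish first; finiteness of projective dimension over the polynomial ring $R$, needed for Auslander--Buchsbaum; and the identification $\text{Frac}(R)\otimes_R S \cong \text{Frac}(S)$) are all routine and correctly ordered, so there is no gap.
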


\begin{proof}
The fact that $\text{Sym}(\Vdual)$ is a free $\text{Sym}(\Vdual)^H$-module of some finite rank is standard, see, for example, \cite[Chapter 18-3]{KaneReflectionGroupsandInvariantTheory}. To show that this rank is $|H|$, note that since $H$ acts faithfully and $V$ is irreducible, there exists some $H$-invariant nonempty open subset $U \subseteq V$ on which the action is free. Since the quotient map $q: V \to V\sslash H$ is a flat morphism of finite type Noetherian schemes, the set $q(U)$ is open. Note also that because $q$ is a uniform categorical quotient \cite[Chapter 1, \textsection 2, Theorem 1.1]{MumfordFogartyKirwanGeometricInvariantTheory}, the map $q|_U: U \to q(U)$ is a categorical quotient. 

Let $j: q(U) \xhookrightarrow{} V\sslash H$ denote the open embedding. It now suffices to show that the rank of $j^{\ast}(\text{Sym}(\Vdual))$ is $|H|$, and we may show this, in turn, by computing the rank of the free module $q|_{U}^{\ast}j^{\ast}(\text{Sym}(\Vdual))$ However, note that since the action of $H$ on $U$ is free, the leftmost of the following diagrams are Cartesian by \cite[Chapter 0, \textsection 4, Proposition 0.9]{MumfordFogartyKirwanGeometricInvariantTheory}:
\begin{equation}\label{Cartesian Diagram Exhibiting Properties from Faithful Action is Free on Dense Open}
  \xymatrix@R+2em@C+2em{
   H \times U \ar[r]^{\text{act}} \ar[d]^{\text{proj}}& U \ar[d]^{q|_{U}} \ar[r] & V \ar[d]^{q}\\
  U \ar[r]^{q|_{U}} & q(U) \ar[r]^{j} & V\sslash H
  }
 \end{equation}
 
 \noindent and rightmost box is Cartesian by construction. We then see that we may compute, by base change along the \lq large\rq{} Cartesian diagram of \labelcref{Cartesian Diagram Exhibiting Properties from Faithful Action is Free on Dense Open} to see that the rank of the free module $q|_{U}^{\ast}j^{\ast}(\text{Sym}(\Vdual))$ is $|H|$ as desired. 
\end{proof}

We also have the following theorem of Steinberg, stated in terms of our conventions on pseudo-reflection groups: 
\begin{Theorem}\label{Stabilizer of Element Under Pseudo-Reflection Group is Pseudo-Reflection Group}\cite[Theorem 1.5]{SteinbergDifferentialEquationsInvariantUnderFiniteReflectionGroups} 
The stabilizer $H_x$ of some $x \in V$ is a pseudo-reflection group. 
\end{Theorem}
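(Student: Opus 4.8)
The plan is to derive Steinberg's theorem from the two directions of the Chevalley--Shephard--Todd theorem (\cref{Chevalley-Shephard-Todd Theorem}) together with an \'etale-local comparison of the coarse quotients $V\sslash H$ and $V\sslash H_x$ near the image of $x$. Since $H$ is a pseudo-reflection group, \cref{Chevalley-Shephard-Todd Theorem} gives $V\sslash H\cong \A^{\dim V}$, so $V\sslash H$ is smooth, and in particular smooth at the image $\bar x$ of $x$. The first step is to transport this smoothness to $V\sslash H_x$. Writing $A=\text{Sym}(\Vdual)$, $R=A^H$ and $R_x=A^{H_x}$, the preimage of $\bar x$ under $V\to V\sslash H$ is the orbit $Hx$, while the preimage of $[x]$ under $V\to V\sslash H_x$ is $H_x x=\{x\}$; completing $A$ along these fibers and taking invariants yields canonical isomorphisms $\widehat{R}_{\bar x}\cong (\widehat{A}_x)^{H_x}\cong \widehat{(R_x)}_{[x]}$. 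This is the finite-group case of the Luna slice theorem, and is where the bulk of the bookkeeping lies. In particular $V\sslash H_x$ is smooth at $[x]$.

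Next I would reduce this smoothness statement to one at a cone point. Since $\text{char}(k)=0$ and $H_x$ is finite, we may decompose $V=V^{H_x}\oplus V'$ as $H_x$-representations, where $(V')^{H_x}=0$ and, as any element acting trivially on both summands is the identity, the $H_x$-action on $V'$ is faithful. Because $H_x$ acts trivially on $V^{H_x}$, we obtain $V\sslash H_x\cong V^{H_x}\times (V'\sslash H_x)$, and since $x\in V^{H_x}$ the point $[x]$ corresponds to $(x,[0])$. Hence smoothness of $V\sslash H_x$ at $[x]$ is equivalent to smoothness of $V'\sslash H_x$ at its cone point $[0]$.

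Now $V'\sslash H_x=\Spec(\text{Sym}((V')^{\vee})^{H_x})$ is the spectrum of a connected graded domain of Krull dimension $\dim V'$, and a standard graded Nakayama argument shows that regularity at the cone point forces this invariant ring to be generated by $\dim V'$ homogeneous elements, hence to be a polynomial ring; thus $V'\sslash H_x\cong \A^{\dim V'}$. I would then invoke the converse direction of \cref{Chevalley-Shephard-Todd Theorem}: since $H_x$ acts faithfully on $V'$ and $V'\sslash H_x\cong \A^{\dim V'}$, the group $H_x$ acts on $V'$ as a pseudo-reflection group. Each generating pseudo-reflection $r$ of $H_x$ on $V'$ fixes $V^{H_x}$ pointwise (as $r\in H_x$) together with a hyperplane of $V'$, so it fixes the hyperplane $V^{H_x}\oplus(\ker(r-\id)\cap V')$ of $V$ and is a pseudo-reflection on $V$; therefore $H_x$ is generated by pseudo-reflections on $V$, i.e. is a pseudo-reflection group.

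The main obstacle is the first step: making the \'etale-local (equivalently, formal) comparison $\widehat{R}_{\bar x}\cong \widehat{(R_x)}_{[x]}$ precise, including the case of a non-closed field-valued point $x$, where the residue-field extensions must be tracked. By contrast, the graded smoothness step and the final descent of the pseudo-reflections from $V'$ to $V$ are routine.
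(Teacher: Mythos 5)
Your proposal is correct, but it is worth being clear about what it is being compared to: the paper does not prove \cref{Stabilizer of Element Under Pseudo-Reflection Group is Pseudo-Reflection Group} at all --- it is quoted as a black box from Steinberg, and used as such in the descent argument. So your sketch is a genuinely different route: a self-contained derivation of Steinberg's theorem from the one ingredient the paper already records, namely the Chevalley--Shephard--Todd theorem (\cref{Chevalley-Shephard-Todd Theorem}) used in \emph{both} directions, with the bridge supplied by the finite-group slice comparison $\widehat{R}_{\bar x}\cong(\widehat{A}_x)^{H_x}\cong\widehat{(R_x)}_{[x]}$. That comparison is sound as you set it up: invariants commute with the flat base change $R\to\widehat{R}_{\bar x}$; the finite algebra $A\otimes_R\widehat{R}_{\bar x}$ splits as a product of completions indexed by the set-theoretic fiber, i.e.\ the orbit $Hx$; and $H$ permutes those factors transitively with stabilizer $H_x$ at the factor of $x$, so taking $H$-invariants of the product gives $(\widehat{A}_x)^{H_x}$, which is also $\widehat{(R_x)}_{[x]}$ since the fiber of $V\to V\sslash H_x$ over $[x]$ is the single point $x$. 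What your route buys is twofold: it keeps the paper self-contained, and --- after the base change to $V_K$ that you flag --- it proves the statement in exactly the generality in which the paper uses it, namely for field-valued points $x\in V(K)$, a translation the bare citation leaves implicit. One hypothesis you should surface explicitly: faithfulness. The converse direction of Chevalley--Shephard--Todd is false for non-faithful actions (a nontrivial group acting trivially has polynomial invariants but contains no pseudo-reflections), and the theorem itself fails without assuming $H\hookrightarrow \mathrm{GL}(V)$: if $\mathbb{Z}/4$ acts on a line through the sign character, the stabilizer of a nonzero point is the kernel, which is not generated by elements acting as pseudo-reflections. Your parenthetical ``any element acting trivially on both summands is the identity'' is precisely where faithfulness of $H$ on $V$ enters, so it should be stated as a standing assumption (as it implicitly is in Steinberg's setting and in \cref{Pseudo-Reflection Group Definition}); with it, the remaining steps --- the splitting $V\sslash H_x\cong V^{H_x}\times(V'\sslash H_x)$, the graded Nakayama argument showing a connected graded domain regular at its cone point is polynomial, and the extension of pseudo-reflections from $V'$ to hyperplane-fixing elements of $\mathrm{GL}(V)$ --- are all routine and correct.
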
 

\subsubsection{The Coinvariant Algebra}
Assume $H$ is some finite group acting on a vector space $V$, and let $C_0$ denote the \textit{coinvariant algebra} of this action, i.e.
\raggedbottom
\[C_0 := \text{Sym}(\Vdual) \otimes_{\text{Sym}(\Vdual)^H} k\]
\noindent where the ring map $\text{Sym}(\Vdual)^H \to k$ is given by the ring map sending all elements of positive degree to zero. More generally, if $x \in \vectorspaceHactson(K)$ is some field-valued point, we define the algebra $C_x$ as the $K$-algebra
\raggedbottom
\[C_x := \text{Sym}(\Vdual_K) \otimes_{\text{Sym}(\Vdual_K)^{H_x}} K\]

\noindent where the ring map $\text{Sym}(\Vdual_K)^{H_x} \to K$ is given by evaluation at $x$. Note that the map $\tau_x: V_K \to V_K$ given by translation by $x$ gives an isomorphism between $C_x$ and the coinvariant algebra for the action of $H_x$ on $V_K$. 

\subsubsection{Nonzero Sheaves on Smooth Schemes Have Nonzero Fiber}
We now recall the following result, which says that a given (complex of) quasicoherent sheaves is nonzero only if some fiber at some field-valued point is nonzero.

\begin{Proposition}\label{For any qcoh sheaf on smooth classical Noetherian scheme there exists a field valued point where the fiber doesn't vanish}
Assume $X$ is a Noetherian classical scheme and $\F \in \QCoh(X)$ is nonzero. Then there exists some field-valued point $x: \Spec(K) \to X$ such that $x^*(\F)$ is nonzero.
\end{Proposition}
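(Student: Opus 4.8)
The plan is to reduce the statement to a purely local algebraic assertion and then detect nonvanishing through an associated prime; the essential difficulty is that $\F$ is an \emph{unbounded} complex whose cohomology sheaves may fail to be finitely generated, so no naïve Nakayama argument applies degree by degree.

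First I would reduce to the affine case. Since $X$ is Noetherian it is covered by finitely many affine opens $U_i = \Spec A_i$ with each $A_i$ Noetherian, and $\F \neq 0$ forces $\F|_{U_i} \neq 0$ for some $i$; as restriction to an open immersion and the formation of fibers both commute with the relevant flat base change, and a field-valued point of $U_i$ is one of $X$, it suffices to treat $X = \Spec A$ with $A$ Noetherian and $\F = M$ a nonzero object of $\QCoh(\Spec A) = D(A)$. I would then note that it is enough to produce a scheme-theoretic point, i.e. a prime $\mathfrak{p}$, with $M \otimes^{\mathbb{L}}_A \kappa(\mathfrak{p}) \neq 0$: any field-valued point factors as $\Spec K \to \Spec \kappa(\mathfrak{p}) \to \Spec A$ with $\kappa(\mathfrak{p}) \to K$ a field extension, hence faithfully flat, so $x^*(\F) \neq 0$ if and only if $M \otimes^{\mathbb{L}}_A \kappa(\mathfrak{p}) \neq 0$, and we may then take $K = \kappa(\mathfrak{p})$. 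In this language the claim is exactly that the homological support $\{\mathfrak{p} : M \otimes^{\mathbb{L}}_A \kappa(\mathfrak{p}) \neq 0\}$ of a nonzero complex is nonempty.

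Second, I would locate a candidate point. As $M \neq 0$, some cohomology module $N := H^n(M)$ is nonzero, and since $A$ is Noetherian $N$ has an associated prime $\mathfrak{p}$. Because localization is exact we have $H^n(M_{\mathfrak{p}}) = N_{\mathfrak{p}}$, and $A/\mathfrak{p}$ localizes to $\kappa(\mathfrak{p})$, so the inclusion $A/\mathfrak{p} \hookrightarrow N$ produces an inclusion $\kappa(\mathfrak{p}) \hookrightarrow H^n(M_{\mathfrak{p}})$. Replacing $A$ by $A_{\mathfrak{p}}$, we are reduced to a Noetherian local ring $(A, \mathfrak{m}, k)$ carrying a nonzero $\mathfrak{m}$-torsion class $0 \neq \xi \in H^n(M)$, and the goal becomes that $M$ has nonempty homological support. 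At this point I emphasize why the derived fiber is unavoidable: the module $M = \mathbb{Q}/\mathbb{Z}$ over $A = \mathbb{Z}$ has $H^0(M) \otimes_A \kappa(\mathfrak{p}) = 0$ at every prime, yet its derived fiber at $(p)$ is nonzero (in $\operatorname{Tor}_1$), so one must argue in $D(A)$ rather than with individual cohomology modules.

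The crux, and the step I expect to be the main obstacle, is this local derived Nakayama statement for unbounded, not-necessarily-finitely-generated complexes. I would argue by induction on $\dim A$. The base case $\dim A = 0$ is clean: an Artinian local ring has finite length, hence $A$ lies in the thick subcategory of $D(A)$ generated by $k$ (via a finite filtration with subquotients $k$), so the exact functor $M \otimes^{\mathbb{L}}_A(-)$, which sends $k$ to $0$ whenever the fiber at the closed point vanishes, also sends $A$ to $0$, forcing $M = 0$ and contradicting $\xi \neq 0$. For the inductive step I would, after localizing at a generic point of the support, peel off a parameter $x$ using the identity $\operatorname{cone}(x\colon A \to A) \otimes^{\mathbb{L}}_A P \simeq 0$ if and only if $x$ acts invertibly on $P$ in $D(A)$, thereby reducing to a strictly smaller-dimensional support. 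The delicate point throughout is controlling supports of unbounded complexes, and making the devissage precise is where the real work lies; alternatively, this entire local statement may be quoted directly from Neeman's classification of the localizing subcategories of $D(A)$ by subsets of $\Spec A$, which gives at once that a nonzero complex over a Noetherian ring has nonempty homological support.
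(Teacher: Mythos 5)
Your affine reduction and the reformulation in terms of residue fields are correct, but the argument stops exactly where the theorem lives: the claim that a nonzero $M \in D(A)$, $A$ Noetherian, has nonempty homological support is what needs proof, and your induction on $\dim A$ does not supply it. Worse, the associated-prime reduction that precedes it points in a false direction. If $\mathfrak{p}$ is an associated prime of some $H^n(M)$, the derived fiber of $M$ at $\mathfrak{p}$ can perfectly well vanish: take $A = k[[x,y]]$ with maximal ideal $\mathfrak{m}$, and let $M$ be the \v{C}ech complex $(A_x \oplus A_y \to A_{xy})$ placed in degrees $0$ and $1$. Each term is flat and is killed by $- \otimes_A k$ (since $x$ and $y$ act by zero on $k$), so $M \otimes^{\mathbb{L}}_A k = 0$; yet $H^1(M) \cong H^2_{\mathfrak{m}}(A) \neq 0$ is $\mathfrak{m}$-power torsion, so $\mathfrak{m}$ is its unique associated prime. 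Thus after your reduction you hold a nonzero $\mathfrak{m}$-torsion class $\xi$ in cohomology while the fiber at the closed point is zero: $\xi$ detects nothing at $\mathfrak{m}$ (the nonvanishing fibers of this $M$ sit at all the \emph{other} primes), and you are left facing the original problem for $A_{\mathfrak{p}}$ with no reduction in difficulty. Your base case uses $\xi$ only as a witness that $M \neq 0$, and your inductive step --- \lq\lq localize at a generic point of the support and peel off a parameter\rq\rq{} --- is precisely where examples like the one above must be confronted; the identity that $\operatorname{cone}(x\colon A \to A) \otimes^{\mathbb{L}}_A P \simeq 0$ iff $x$ acts invertibly on $P$ is true but does not by itself lower the dimension. (A completion that does work: by induction $M_{\mathfrak{q}} = 0$ for all primes $\mathfrak{q} \neq \mathfrak{m}$, hence every $H^i(M)$ is $\mathfrak{m}$-power torsion, hence $M$ lies in the localizing subcategory of $D(A)$ generated by $k$, whence $M \otimes^{\mathbb{L}}_A k = 0$ forces $M = 0$; none of this is in your sketch.)

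Your fallback of quoting Neeman's classification of localizing subcategories of $D(A)$ for Noetherian $A$ is legitimate --- nonemptiness of the homological support of a nonzero complex is one of the lemmas underlying that theorem --- so \lq\lq affine reduction plus Neeman\rq\rq{} is a correct proof, just one that outsources the entire content to a much bigger result. The paper instead gives a short self-contained argument (due to Arinkin), which proceeds by Noetherian induction on closed subschemes rather than on dimension: assuming every field-valued fiber of $M$ vanishes, pick a \emph{minimal} closed subscheme $\Spec(A/I) \subseteq \Spec(A)$ to which the restriction of $M$ is nonzero and replace $A$ by $A/I$; minimality and the triangle $A \xrightarrow{f} A \to A/f$ force every nonzero $f$ to act invertibly on $M$, which immediately kills $M$ when $A$ is not a domain, while when $A$ is a domain the exactness of localization gives $H^i(M) \cong H^i(M \otimes^{\mathbb{L}}_A \operatorname{Frac}(A)) = 0$, the fiber at the generic point. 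That device of passing to a minimal closed subscheme is the idea your proposal is missing; as written, your primary argument has a genuine gap, and only the citation to Neeman rescues it.
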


The following proof is due to Arinkin \cite[Lemma 10]{Ari}. We recall it here for the convenience of the reader, and to show that the proof does not require the assumption that the associated complex of $\F$ be bounded. 

\begin{proof}
Since locally Noetherian schemes are covered by $\text{Spec}(A)$ for Noetherian $A$, it suffices to prove this when $X = \text{Spec}(A)$. Now, let $M$ denote some $A$-module whose fiber vanishes at every field-valued point, and consider all closed subschemes $i:Z \xhookrightarrow{} X$ such that $i^{\ast}(M)$ is nonzero. Since $X$ is Noetherian, there is a minimal closed subscheme with this property. Taking the fiber of $M$ at this closed subscheme, we may replace this minimal closed subscheme by $X$ itself. In particular, for any $f \in A$ which is nonzero, since we have a fiber sequence $A \xrightarrow{f\cdot -}  A \to A/f$, we have that multiplication by any nonzero $f \in A$ yields an equivalence $f\cdot -: M \xrightarrow{\sim} M$. 

If $A$ is not an integral domain, then there exists some nonzero $f, g \in A$ whose product is zero. We therefore see that the zero map is an isomorphism since the composite $M \xrightarrow{f \cdot -} M \xrightarrow{g \cdot -} M$ is an isomorphism. Now assume $A$ is an integral domain, and fix some $i \in \mathbb{Z}$. The above analysis gives any nonzero $f \in A$ acts invertibly on $H^i(M)$ for all $i$. Therefore, we have that, as classical $A$-modules, $H^i(M) \cong K \otimes_A H^i(M)$, where $K$ denotes the field of fractions of $A$. However, since localization is an exact functor of abelian categories, the functor $K \otimes_A -$ is a $t$-exact functor of derived categories. Thus, we have 
\raggedbottom
\[0 = H^i(K \otimes_A M) \cong K \otimes_A H^i(M) \cong H^i(M)\] where the first step uses the assumption that the fiber of $M$ at every point vanishes. Since $H^i(M) = 0$ for all $i$, we have that $M$ itself vanishes in $A$-mod by the left and right completeness of the $t$-structure on $A$-mod. 
\end{proof}

\subsection{Pointwise Descent to the Coarse Quotient}
Assume $H$ is a finite group which acts on some vector space $V$, and let $C$ denote the coinvariant algebra for this action. The main result of this section is the proof of \cref{Equivariant Sheaf on Coinvariant Algebra Descends to the Coarse Quotient iff Fiber Is Trivial}, which provides a \lq pointwise\rq{} criterion for an $H$-equivariant sheaf in $\QCoh(\text{Spec}(C)/H)$ to descend to the coarse quotient $\text{Spec}(C)\sslash H = \Spec(k)$. 

\begin{Proposition} Let $\pointwisegeneralstacktoGITquotientmap: \text{Spec}(C)/H \to \text{Spec}(k)$ denote the terminal map of $k$-schemes. 
\begin{enumerate}
    \item The functor $\pointwisegeneralstacktoGITquotientmap^{\ast}$ is $t$-exact and fully faithful. 
    \item Under the equivalence $\QCoh(\text{Spec}(C)/H) \simeq \QCoh(\text{Spec}(C))^H$, the essential image of $\pointwisegeneralstacktoGITquotientmap^{\ast}$ is the full subcategory generated by object $C \in \QCoh(\text{Spec}(C))^H$ with its canonical equivariance. 
\end{enumerate}
\end{Proposition}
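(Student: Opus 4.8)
The plan is to reduce both claims to the single computation $C^H = k$, after which part (1) follows from a direct unit computation (equivalently, by specializing \cref{Pullback From GIT to Stack Quotient of Finite Group of Affine Scheme is Fully Faithful} to $X = \Spec(C)$) and part (2) is formal. First I would record that $C^H = k$. Since $\operatorname{char}(k) = 0$, the Reynolds operator $\tfrac{1}{|H|}\sum_{h \in H} h$ is a $\text{Sym}(\Vdual)^H$-linear retraction onto $\text{Sym}(\Vdual)^H$, and applied to the defining ideal $I := \text{Sym}(\Vdual)\cdot \text{Sym}(\Vdual)^H_+$ it shows $I \cap \text{Sym}(\Vdual)^H = \text{Sym}(\Vdual)^H_+$. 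As invariants are exact in characteristic zero, $C^H = (\text{Sym}(\Vdual)/I)^H = \text{Sym}(\Vdual)^H / \text{Sym}(\Vdual)^H_+ = k$, recovering $\Spec(C)\sslash H = \Spec(k)$. In particular $\pointwisegeneralstacktoGITquotientmap$ is precisely the coarse-quotient map, so the notion of descent agrees with membership in the essential image of $\pointwisegeneralstacktoGITquotientmap^{\ast}$.

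For part (1), under the equivalence $\QCoh(\Spec(C)/H) \simeq \QCoh(\Spec(C))^H$ the functor $\pointwisegeneralstacktoGITquotientmap^{\ast}$ sends $W$ to $C \otimes_k W$ with its canonical equivariance, and its right adjoint $\pointwisegeneralstacktoGITquotientmap_{\ast}$ is given by $H$-invariants. The unit at $W$ is identified with the inclusion $W = C^H \otimes_k W \hookrightarrow (C \otimes_k W)^H$, which is an isomorphism since invariants commute with the flat tensor in characteristic zero and $C^H = k$; hence $\pointwisegeneralstacktoGITquotientmap^{\ast}$ is fully faithful. For $t$-exactness, note $C$ is finite-dimensional, hence free, over $k$, so $C \otimes_k(-)$ is $t$-exact into $\QCoh(\Spec(C))$; and because $|H|$ is invertible the forgetful functor $\QCoh(\Spec(C))^H \to \QCoh(\Spec(C))$ is $t$-exact, conservative, and reflects (co)connectivity. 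Composing these gives that $\pointwisegeneralstacktoGITquotientmap^{\ast}$ is $t$-exact.

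For part (2), we have $\pointwisegeneralstacktoGITquotientmap^{\ast}(k) = C$ with its canonical equivariance. As a pullback, $\pointwisegeneralstacktoGITquotientmap^{\ast}$ is a left adjoint, hence exact and colimit-preserving; together with the full faithfulness from part (1) this forces its essential image to be closed under colimits and shifts. Since $\QCoh(\Spec(k)) = k\text{-mod}$ is generated under colimits by $k$ (every complex of $k$-vector spaces is a coproduct of shifts of $k$), every object is such a colimit, so every object in the essential image is the corresponding colimit of copies of $C$; conversely the colimit- and shift-closed essential image contains $C$ and hence the entire subcategory it generates. Therefore the essential image is exactly the full subcategory generated by $C$.

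I expect the only genuinely non-formal step to be the identity $C^H = k$ for an \emph{arbitrary} finite group $H$: the standard references treat coinvariant algebras of pseudo-reflection groups, where the stronger fact that $C$ is the regular representation is available, so here the Reynolds-operator argument—which crucially uses $\operatorname{char}(k) = 0$—is the crux. The same characteristic-zero hypothesis is what underwrites the $t$-exactness and conservativity of the equivariant-to-plain forgetful functor used above; everything else is formal adjunction and colimit bookkeeping.
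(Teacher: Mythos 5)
Your proposal is correct and follows essentially the same route as the paper: full faithfulness via \cref{Pullback From GIT to Stack Quotient of Finite Group of Affine Scheme is Fully Faithful} (your unit computation is just its proof specialized to $\text{Spec}(C)$), $t$-exactness from the observation that a functor out of $\text{Vect}$ sending $k$ into the heart is $t$-exact (your forgetful-functor argument is the same point in different clothing), and the essential image claim from the generation of $\text{Vect}$ under colimits by $k$. The one step you make explicit that the paper only asserts (when introducing the coinvariant algebra) is the identity $C^H = k$ for an arbitrary finite group in characteristic zero, which your Reynolds-operator argument correctly supplies and which is indeed what identifies the terminal map with the coarse-quotient map.
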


\begin{proof}
The $t$-exactness follows since any functor $F: \text{Vect} \to \C$ to some DG category $\C$ equipped with a $t$-structure which sends the one dimensional vector space $k \in \text{Vect}^{\heartsuit}$ to an object in $\C^{\heartsuit}$ is $t$-exact, and the fully faithfulness follows from \cref{Pullback From GIT to Stack Quotient of Finite Group of Affine Scheme is Fully Faithful}. Moreover, since the category $\text{Vect}$ is generated under colimits by $k$, the essential image of the fully faithful functor $\pointwisegeneralstacktoGITquotientmap^{\ast}$ is generated under colimits by the essential image of $\pointwisegeneralstacktoGITquotientmap^{\ast}(k)$. Since this object is given by $C \in \QCoh(\text{Spec}(C))^H$ with its canonical equivariance, we obtain (2).
\end{proof}

Let $i: \Spec(k) \xhookrightarrow{} \Spec(C)$ denote the embedding of the unique closed point. To prove \cref{Equivariant Sheaf on Coinvariant Algebra Descends to the Coarse Quotient iff Fiber Is Trivial}, we first show the following lemma: 


\begin{Lemma}\label{If the fiber of a C-module is trivial then the fixed points are the fixed points of the fiber}
Assume $M \in C\text{-mod}^{H}$ has the property that the (derived) fiber $i^*(M) \simeq k \otimes_C M \in \text{Rep}_{k}(H)$ lies in the full subcategory generated by the trivial $H$-representation. Then the unit map $M \xrightarrow{} k \otimes_C M$ induces an equivalence $M^H \xrightarrow{\sim} (k \otimes_C M)^H = k \otimes_C M$, where we view $M$ and $k \otimes_C M \simeq i_*i^*(M)$ as objects of $\text{Rep}(H)$ via the composite $C\text{-mod}^H \xrightarrow{\terminalmapfromC_*} \text{Vect}^H \simeq \text{Rep}(H)$. 
\end{Lemma}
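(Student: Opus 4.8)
The plan is to exhibit the unit $M \to k\otimes_C M$ as the last map of a short cofiber sequence in $C\text{-mod}^H$ and to show that, after passing to $H$-invariants, the remaining term vanishes. Throughout I use that $H$ is finite and $\text{char}(k)=0$, so that $(-)^H$ is an exact, conservative functor on $\text{Rep}(H)$ that annihilates every representation with no trivial constituent.

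First I would extract the needed structural input from the coinvariant algebra. Since $\Spec(C)\sslash H = \Spec(C^H) \cong \Spec(k)$, the invariants $C^H$ are just the scalars $k\cdot 1$. Writing $\mathfrak{m}\subseteq C$ for the (nilpotent) maximal ideal and applying the exact functor $(-)^H$ to $0 \to \mathfrak{m} \to C \to k \to 0$ gives $\mathfrak{m}^H = 0$; applying it to the exact sequences $0 \to \mathfrak{m}^{j+1} \to \mathfrak{m}^j \to \mathfrak{m}^j/\mathfrak{m}^{j+1} \to 0$ then gives inductively $(\mathfrak{m}^j/\mathfrak{m}^{j+1})^H = 0$ for every $j \geq 1$. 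In other words, no subquotient of the $\mathfrak{m}$-adic filtration of $C$ contains the trivial representation.

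Next I would tensor $0 \to \mathfrak{m} \to C \to k \to 0$ with $M$ over $C$ to obtain a (derived) cofiber sequence $\mathfrak{m}\otimes_C M \to M \to k\otimes_C M$ of $H$-equivariant $C$-modules, whose final arrow is the unit in question. Applying the exact functor $(-)^H$ reduces the lemma to the vanishing $(\mathfrak{m}\otimes_C M)^H = 0$. To see this, filter $\mathfrak{m}\otimes_C M$ by tensoring the classical $\mathfrak{m}$-adic filtration of $\mathfrak{m}$ with $M$; since $\mathfrak{m}$ is nilpotent this is a finite filtration, with associated graded pieces $(\mathfrak{m}^j/\mathfrak{m}^{j+1})\otimes_C M$. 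Because each $\mathfrak{m}^j/\mathfrak{m}^{j+1}$ is annihilated by $\mathfrak{m}$, it is a direct sum of copies of the residue field as a $C$-module, and hence each graded piece is identified $H$-equivariantly with the exterior product $(\mathfrak{m}^j/\mathfrak{m}^{j+1})\otimes_k (k\otimes_C M) = (\mathfrak{m}^j/\mathfrak{m}^{j+1})\otimes_k i^*(M)$.

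Finally the hypothesis enters: $i^*(M)$ lies in the subcategory generated by the trivial representation, i.e. $H$ acts trivially on it, so the $H$-action on each piece $(\mathfrak{m}^j/\mathfrak{m}^{j+1})\otimes_k i^*(M)$ is carried entirely by the first tensor factor, and its invariants are $(\mathfrak{m}^j/\mathfrak{m}^{j+1})^H \otimes_k i^*(M) = 0$ by the first step. As $(-)^H$ is exact and the filtration is finite, all graded pieces of $(\mathfrak{m}\otimes_C M)^H$ vanish, so $(\mathfrak{m}\otimes_C M)^H = 0$ and the unit induces the asserted equivalence $M^H \xrightarrow{\sim} (k\otimes_C M)^H = k\otimes_C M$. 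I expect the one genuinely delicate point to be the identification of the graded pieces as exterior tensor products in the derived $H$-equivariant sense: this is precisely where the derived structure of the fiber is used, and it rests on the fact that a $C$-module killed by $\mathfrak{m}$ allows one to pull the derived tensor product $-\otimes_C M$ through the fiber $k\otimes_C M$ at the closed point.
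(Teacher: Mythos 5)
Your proposal is correct and takes essentially the same route as the paper's proof: both tensor the $H$-equivariant sequence $0 \to C^+ \to C \to k \to 0$ with $M$ to reduce the claim to $(C^+ \otimes_C M)^H \simeq 0$, and prove this vanishing via a finite $H$-stable filtration of $C^+ = \mathfrak{m}$ whose subquotients $S$ are killed by the maximal ideal, so that $S \otimes_C M \simeq S \otimes_k (k \otimes_C M)$ $H$-equivariantly and the invariants vanish because $S^H = 0$ while $k \otimes_C M$ is trivial. The only cosmetic differences are that you use the $\mathfrak{m}$-adic filtration where the paper uses the degree filtration inherited from $\mathrm{Sym}(V^{\vee})$, and you deduce $(C^+)^H = 0$ from $C^H = k$ by exactness rather than directly from the definition of the coinvariant algebra; also, your aside that $(-)^H$ is conservative on $\mathrm{Rep}(H)$ is false (it kills the sign representation), but this is never used in your argument.
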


\begin{proof}
Consider the cofiber sequence 
\raggedbottom
\[C^+ \to C \to k\]

\noindent induced by the short exact sequence of classical $C$-modules equipped with $H$-equivariance. Upon taking the (derived) tensor product with $M$, we obtain a cofiber sequence
\raggedbottom
\begin{equation}\label{Cofiber Sequence of Coinvariant Algebra Tensored with M}
C^+ \otimes_C M \to M \to k \otimes_C M
\end{equation}

\noindent of objects of $C\text{-mod}^H$. Therefore, it suffices to show that if $M$ has the property that the derived fiber $i^*(M)$ is a complex of trivial $H$-representations, then $(C^+ \otimes_C M)^H \simeq 0$. 

Note that $C$ admits a filtration $C_0 \subseteq C_1 \subseteq ...  \subseteq C_{\ell}$ induced by the degree of $\text{Sym}(\Vdual)$, and the $H$-action preserves this filtration. In particular, we may filter $C^+$ by classical $C$-modules for which $C^+$ acts trivially on the associated graded. Furthermore, since $C^+$ has no trivial subrepresentations (by definition of the coinvariant algebra), for each of these subquotients $S$ in the filtration of $C^+$, we see that 
\raggedbottom
\[(S \otimes_C M)^H \simeq ((S \otimes_{k} C/C^+) \otimes_C M)^H \simeq (S \otimes_{k} (k \otimes_C M))^H\]
\noindent is the tensor product of some nontrivial $H$-representation $S$ over $k$ with an entirely trivial representation (by assumption on $M$). Therefore we see that $(S \otimes_C M)^H \simeq 0$, and so $(C^+ \otimes_C M)^H \simeq 0$, as required. 
\end{proof}

\begin{proof}[Proof of \cref{Equivariant Sheaf on Coinvariant Algebra Descends to the Coarse Quotient iff Fiber Is Trivial}]
Because the pullback map $(\text{Spec}(k)/H \to \text{Spec}(k))^{\ast}$ corresponds to the inclusion of the trivial $H$-representation under the equivalence $\QCoh(\text{Spec}(k)/H) \simeq \text{Rep}(H)$, we see that all objects in the essential image of $\pointwisegeneralstacktoGITquotientmap^*$ have the property that the $H$-representation of the (derived) fiber at the unique closed point is trivial. Moreover, the functor $\pointwisegeneralstacktoGITquotientmap^*$ admits a right adjoint $\pointwisegeneralstacktoGITquotientmap_*$ which is, at the level of homotopy categories, explicitly given by taking the $H$-fixed points of the underlying complex of $H$-representations. Therefore, it suffices to show that $\pointwisegeneralstacktoGITquotientmap_*$ is conservative on those objects of $\QCoh(\Spec(C))^H$ for which the canonical $H$-representation on the fiber at the closed point is trivial. Let $\F$ be a nonzero object in this subcategory. Since $\Spec(C)$ has a unique closed point, we see that $i^{\ast}(\F)$ is nonzero. Therefore, by \cref{If the fiber of a C-module is trivial then the fixed points are the fixed points of the fiber}, we see that the $H$-fixed points of $\F$ itself is nonzero, as desired. 
\end{proof}

\begin{Example}\label{Derived Fiber is Necessary for Pointwise Criterion}
This example shows  taking derived fiber is necessary in \cref{Equivariant Sheaf on Coinvariant Algebra Descends to the Coarse Quotient iff Fiber Is Trivial}. Let $H$ denote the order two group, and consider its action on a one dimensional vector space $V$ given by scaling by $-1$. The associated coinvariant algebra is $A := k[x]/x^2$. Consider the $A$-module $k \cong A/x$ with its canonical $H$-equivariance. This object is not in the essential image of $\pointwisegeneralstacktoGITquotientmap^*$ since any object mapping to it must lie in the heart of $\text{Vect}$ by the $t$-exactness of $\pointwisegeneralstacktoGITquotientmap^*$, but the underlying $C$-modules of all objects in the essential image of $\pointwisegeneralstacktoGITquotientmap^*|_{\text{Vect}^{\heartsuit}}$ have even $k$-dimension. 
One can explicitly compute, however, that the $H$-representation on $H^{-1}i^{\ast}(k)$ is given by the sign representation. Alternatively, using the short exact sequence of $H$-equivariant $A$-modules (all in the ordinary abelian category)
\[0 \to k_{\text{sign}} \xhookrightarrow{1 \mapsto x} A \to k \to 0\]

\noindent where here $k_{\text{sign}}$ is $k$ as an $A$-module and equipped with the sign equivariance, one obtains an isomorphism $H^{-1}i^{\ast}(k \otimes_{A} k) \xrightarrow{\sim} H^{0}i^{\ast}(k_{\text{sign}} \otimes_{A} k)$, which is manifestly nontrivial as an $H$-representation. 
\end{Example}

\newcommand{\generalVectorSpacetoGITQuotientMap}{\tilde{\phi}}
\subsection{Global Descent to the Coarse Quotient}\label{Global Descent to the Coarse Quotient Subsection}
Assume that $H$ is a pseudo-reflection group acting by reflections on some finite dimensional $k$-vector space $V$. By \cref{Pullback From GIT to Stack Quotient of Finite Group of Affine Scheme is Fully Faithful}, the pullback by the quotient map $\generalstacktoGITquotientmap: V/H \to V\sslash H$ is fully faithful. Our goal in \cref{Global Descent to the Coarse Quotient Subsection} will be to give an explicit description of the essential image of this functor in terms of field-valued points of $V$. 

To do this, we first set the following notation: for any fixed field-valued point $x: \Spec(K) \to V$ and any group $H'$ acting on $V$, we let $V \times_{V\sslash H'} \Spec(K)$ denote the fiber product of the diagram \begin{equation*}
  \xymatrix@R+2em@C+2em{
 V \times_{V\sslash H'} \Spec(K) \ar[d] \ar[r] & \Spec(K) \ar[d]_{\Phi_{H'} \circ x} \\
 V \ar[r]^{\Phi_{H'}}   &  V\sslash H'
  } 
\end{equation*} where $\Phi_{H'}: V \to V\sslash H'$ is the quotient map. Letting $X_K := X \times_{\Spec(k)} \Spec(K)$ for any $k$-scheme $X$, we will use similar notation for the product $V_K \times_{V_K\sslash H'} \Spec(K)$. Observe that we have equivalences \[\Spec(C_x) \xrightarrow{\sim} V_K \times_{V_K\sslash H_x} \Spec(K) \xrightarrow{\sim} V \times_{V\sslash H_x} \Spec(K)\] given by the translation map and the projection map. We let $\tilde{x}: \Spec(C_x) \to V$ denote the composite of these equivalences and the projection map onto $V$, and let $\dot{x}: \Spec(C_x)/H_x \to V/H$ denote the map induced by the composite of $\tilde{x}$ and the quotient map $V \to V/H$. 

\begin{Theorem}\label{Fully Faithfulness and Easy Essential Image of Pullback by generalstacktoGITquotientmap Revamp} The following are equivalent for a given $\F \in \QCoh(V)^{H}$:
\begin{enumerate}
    \item The complex $\F$ descends to the quotient $V\sslash H$. 
    \item The complex $\tilde{x}^*(\F) \in \QCoh(\Spec(C_x))^{H_x}$ descends to the coarse quotient $\text{Spec}(C_x)\sslash H_x = \text{Spec}(K)$ for every field-valued point $x$ of $V$. 
    \item The $H_x$-representation on $x^{\ast}(\text{oblv}^{H}_{H_x}(\F)) \in \QCoh(\Spec(K))^{H_x} \simeq \mathrm{Rep}_K(H_x)$ is trivial for every field-valued point $x \in V(K)$. 
\end{enumerate}
\end{Theorem}

We prove \cref{Fully Faithfulness and Easy Essential Image of Pullback by generalstacktoGITquotientmap Revamp} after proving a lemma and deriving a corollary from it. 

\begin{Lemma}\label{Quotient map iso}
For a field-valued point $x: \Spec(K) \to V$, the closed embedding \[\Spec(C_x) \xrightarrow{\sim} V \times_{V\sslash H_x}\Spec(K) \xhookrightarrow{} V \times_{V\sslash H} \Spec(K)\] induces an isomorphism \begin{equation}\label{Action Map Iso}H \times^{H_x} \Spec(C_x) \xrightarrow{\sim}V \times_{V\sslash H} \Spec(K)\end{equation} of affine schemes via the action map.
\end{Lemma}

\begin{proof}
Since the map $H \times^{H_x} \Spec(C_x) \to H/H_x$ is an affine map onto an affine scheme,\footnote{In general, the quotient of a discrete group $H'$ by a subgroup $H'_0$ is an affine scheme: indeed, if $Q := \Spec(\O(H')^{H'_0})$  then one can directly construct an isomorphism $H_0' \times H \xrightarrow{\sim} H \times_Q H$ of affine schemes and, similarly, an isomorphism of simplicial objects $H_0'^{\bullet} \times H$ and the \^Cech nerve of $H \to Q$. Since any groupoid object in an $\infty$-topos such as PreStk is effective \cite[Theorem 6.1.0.6]{LuHTT}, we deduce that $H'/H'_0 \xrightarrow{\sim} Q$ as desired.} $H \times^{H_x} \Spec(C_x)$ is an affine scheme. Since the induced map \labelcref{Action Map Iso} commutes with base change in the natural way, we may assume that $K = k$ is algebraically closed. Observe that, in this case, by \cite[Amplification 1.3]{MumfordFogartyKirwanGeometricInvariantTheory}, the $k$-points in the fiber $V \times_{V\sslash H} \Spec(k)$ are in bijective correspondence with the $k$-points of the $H$-orbit of $x$. Therefore the map \labelcref{Action Map Iso} is a closed embedding, since this can be checked Zariski locally. By \cref{For Pseudo-Reflection Groups Acting Faithfully on Vector Space The GIT Quotient is Flat}, the $k$-dimension of the ring of functions on $V \times_{V\sslash H} \Spec(k)$ is precisely $|H|$. Similarly, since $H_x$ is a reflection group by \cref{Stabilizer of Element Under Pseudo-Reflection Group is Pseudo-Reflection Group}, the $k$-dimension of the ring of functions on $V \times_{V\sslash H_x} \Spec(k)$ is precisely $|H_x|$, and so the $k$-dimension of the ring of functions on $H \times^{H_x} \Spec(C_x)$ is exactly $[H: H_x]|H_x| = |H|$. Since \labelcref{Action Map Iso} is a closed embedding of affine $k$-schemes whose ring of functions has the same finite $k$-dimension, it must be an isomorphism.
\end{proof}

\begin{Remark}\label{We actually recover}
Observe that, if the $H$-action on some $x \in V(\overline{k})$ is free, then $C_x = k$, and so \cref{Quotient map iso} recovers the isomorphism \labelcref{Generic Iso} in the introduction.
\end{Remark}

\begin{Corollary}\label{Cartesian Diagram for Stack to GIT Map at K-Point Revamp}
    The diagram \begin{equation*}
  \xymatrix@R+2em@C+2em{
   \text{Spec}(C_x)/H_x \ar[d]_{\dot{x}} \ar[r]^{\pointwisegeneralstacktoGITquotientmap} & \Spec(K) \ar[d]^{\Phi_H \circ x} \\
  V/H \ar[r]^{\generalstacktoGITquotientmap} &  V\sslash H
  }
\end{equation*} is Cartesian, where $\pointwisegeneralstacktoGITquotientmap$ is the terminal map in $K$-schemes.
\end{Corollary}

\begin{proof}
We have isomorphisms \[\Spec(C_x)/H_x\xleftarrow{\sim} (H\backslash H \times \Spec(C_x))/H_x \xrightarrow{\sim} (H \times \Spec(C_x))/(H \times H_x)\xleftarrow{\sim} (H \times^{H_x} \Spec(C_x))/H\] since colimits commute with colimits\footnote{Since colimits are a particular case of left Kan extensions, this fact follows from the essential uniqueness of adjoints.} and that the terminal map induces an equivalence $H/H \xrightarrow{\sim} \ast$. Continuing this chain of isomorphisms, we obtain \[(H \times^{H_x} \Spec(C_x))/H \xrightarrow{\sim} (V \times_{V\sslash H} \Spec(K))/H \xrightarrow{\sim} V/H \times_{V\sslash H} \Spec(K))/H\] by \cref{Quotient map iso} and by using the fact that colimits are universal in an $\infty$-topos such as PreStk \cite[Theorem 6.1.0.6]{LuHTT} and that sheafification commutes with finite limits \cite[Lemma 2.3.6]{GaRoI}, respectively. 
\end{proof}

\begin{proof}[Proof of \cref{Fully Faithfulness and Easy Essential Image of Pullback by generalstacktoGITquotientmap Revamp}] The fact that the diagram of \cref{Cartesian Diagram for Stack to GIT Map at K-Point Revamp} commutes implies that any object $\F$ which descends to $\vectorspaceHactson\sslash H$ satisfies (2). In particular, the fully faithful functor $\generalstacktoGITquotientmap^*$ maps into the full subcategory of objects satisfying (2). Since this functor admits a right adjoint $\generalstacktoGITquotientmap_{\ast}$ (explicitly given by taking the $H$-invariants of the associated complex of $\text{Sym}(\Vdual)$-modules), it remains to verify that the adjoint $\generalstacktoGITquotientmap_{\ast}$ is conservative on the full subcategory of objects satisfying (2). Let $\F$ be such a nonzero object. Then, by \cref{For any qcoh sheaf on smooth classical Noetherian scheme there exists a field valued point where the fiber doesn't vanish}, there exists some field-valued point for which the fiber of $\F$ at this point does not vanish.

We wish to show that $\generalstacktoGITquotientmap_{\ast}(\F)$ does not vanish, and, of course, it suffices to show that $(\generalVectorSpacetoGITQuotientMap \circ x)^*\generalstacktoGITquotientmap_{\ast}(\F)$ does not vanish. Since $x$ is a map of quasicompact schemes, we may apply base change along the diagram of \cref{Cartesian Diagram for Stack to GIT Map at K-Point Revamp} (we may do this, for example, by applying \cite[Proposition 3.10]{BZFNIntegralTransforms}, whose hypotheses hold by \cite[Corollary 3.22, Corollary 3.23]{BZFNIntegralTransforms}) and may equivalently show that $\pointwisegeneralstacktoGITquotientmap_{\ast}\dot{x}^*(\F)$ does not vanish. However, by assumption, $\dot{x}^*(\F)$ lies in the essential image of $\pointwisegeneralstacktoGITquotientmap^*$. Therefore the adjoint $\pointwisegeneralstacktoGITquotientmap_{\ast}\dot{x}^*(\F)$ does not vanish, since the adjoint to a fully faithful functor is conservative on the essential image and $\dot{x}^*(\F)$ is nonzero since $x^*(\F)$ is nonzero. Combining these results, we see that $\generalstacktoGITquotientmap_{\ast}(\F)$ does not vanish on objects satisfying (2), and so the categories given by (1) and (2) are equivalent.

\newcommand{\generalVectorSpacetoStackQuotientMap}{\overline{\phi}}
We now prove that $(2) \Leftrightarrow (3)$. To this end, fix some field-valued point $x: \Spec(K) \to V$, and let $\generalVectorSpacetoStackQuotientMap: V \to V/H$ denote the quotient map. Observe that the following diagram commutes: 

\begin{equation}
  \xymatrix@R+2em@C+2em{
    \text{Spec}(K)/H_x \ar[dr]_{\overline{x}} \ar[r]^{\overline{i}} & \text{Spec}(C_x)/H_x \ar[d]_{\dot{x}} \\
    &  V/ H
  }
\end{equation}
\noindent where $\overline{x}$ is the map induced by the composite $\generalVectorSpacetoStackQuotientMap \circ x$ and $\overline{i}$ is the map induced by the composite of the unique $K$-point $\Spec(K) \to \Spec(C_x)$ of $\Spec(C_x)$ and the quotient $\Spec(C_x) \to \Spec(C_x)/H_x$. In particular, if $\F$ descends to the coarse quotient $\text{Spec}(C_x)\sslash H_x = \text{Spec}(K)$ for some fixed $x$, then we see that $\overline{i}^*(\F) \simeq (\text{Spec}(K)/H_x \to \text{Spec}(K))^*(\mathcal{G})$ for some $\mathcal{G} \in \QCoh(\text{Spec}(K)) = \text{Vect}_K$. Since, under the equivalence $\QCoh(\text{Spec}(K)/H_x) \simeq \text{Rep}_K(H_x)$, this functor corresponds to the inclusion of the trivial $H_x$-representation, we see that if $\F$ satisfies (2) it also satisfies (3). Conversely, if $\F$ satisfies (3) for some fixed $x$, then we see by \cref{Equivariant Sheaf on Coinvariant Algebra Descends to the Coarse Quotient iff Fiber Is Trivial} that $\dot{x}^{\ast}(\F)$ descends to the coarse quotient $\text{Spec}(C_x)\sslash H_x = \text{Spec}(K)$. 
\end{proof}
\begin{Remark}\label{Cant Just Check Closed Points} We claim that, when working with the \textit{cocomplete} category $\QCoh(V)$, condition (3) of \cref{Fully Faithfulness and Easy Essential Image of Pullback by generalstacktoGITquotientmap Revamp} is strictly stronger than the following variant of condition (3):

\begin{enumerate}
    \item[(3')] For each $\overline{k}$-point $x$ of $V$, the $H_x$-representation on $x^{\ast}(\text{oblv}^{H}_{H_x}(\F))$ is trivial.
\end{enumerate}

We now sketch a proof of this fact. Let $H := S_3$ act on $V := \A^3$ by the permutation action; it is not difficult to check that $H$ acts as a reflection group. For each transposition $\sigma \in H$, let $V^{\sigma}$ denote the hyperplane fixed by $\sigma$ and let $i^{\sigma}: V^{\sigma} \to V$ denote the closed embedding of the corresponding $V^{\sigma}$. Define \[M := i_*^{(1, 2)}(K(V^{(1, 2)})) \oplus i_*^{(2, 3)}(K(V^{(2, 3)})) \oplus i_*^{(1, 3)}(K(V^{(1, 3)}))\] where $K(V^{\sigma})$ denotes the field of fractions on the ring of functions on $V^{\sigma}$. It is not difficult to check that the automorphisms \[(1, 2) \cdot (\overline{p}, \overline{q}, \overline{r}) :=  (\overline{(1, 2) \cdot p}, \overline{(1, 2) \cdot r}, \overline{(1, 2) \cdot q})\text{ and } (2, 3) \cdot (\overline{p}, \overline{q}, \overline{r}) :=  (\overline{(2, 3) \cdot r}, \overline{(2, 3) \cdot q}, \overline{(2, 3) \cdot p})\] satisfy the braid relation and thus give rise to an $H$-equivariant structure on $M$. 

If we let $M' := M \otimes_k k_{\mathrm{sign}} \in \QCoh(V)^{H, \heartsuit}$ denote the $H$-equivariant sheaf obtained by twisting the equivariance by the sign character,\footnote{Explicitly, if $\chi: H \to k^{\times}$ denotes the sign character and $e_h: M \xrightarrow{\sim} \mathrm{act}_h^*(M)$ denotes the equivariant structure, then the equivariant structure $M'$ is given by the composite of $e_h$ and multiplication by $\chi(h)$.} the sheaf $M'$ does \textit{not} descend to the coarse quotient, by \cref{Fully Faithfulness and Easy Essential Image of Pullback by generalstacktoGITquotientmap Revamp}. On the other hand, it is elementary to check that $i^*(M') \simeq 0$ for any closed point $i: \Spec(k) \to V$, and so $M'$ certainly satisfies (3'). 
\end{Remark}

\subsection{From QCoh to IndCoh}
We recall the category $\text{IndCoh}(X)$, which is defined if $X$ is any laft prestack, see \cite[Chapter 3, Section 5]{GaRoII}. If $X$ is any laft prestack, there is a canonical symmetric monoidal functor $\Upsilon_{X}: \QCoh(X) \to \IndCoh(X)$ whose underlying functor of DG categories can be identified with $\lq - \otimes_{\mathcal{O}_X} \omega_X$.\rq{} If $X$ is a smooth classical scheme of dimension $d$, $\omega_X$ is the complex which has the canonical sheaf of $X$ in cohomological degree $-d$ and is zero elsewhere, and $\Upsilon_X$ is an equivalence. We show the analogous claim for the quotient stack $X/F$ where $F$ is any finite group. 

\begin{Proposition}\label{Upsilon is an Equivalence for Quotient of Finite Group Acting on Vector Space}
If $F$ is a finite group acting on a smooth scheme $X$ of dimension $d$, then $\Upsilon_{X/F}: \QCoh(X/F) \to \IndCoh(X/F)$ is an equivalence, and $\Upsilon_{X/F}[-d]$ is $t$-exact.
\end{Proposition}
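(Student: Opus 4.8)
The plan is to reduce the statement to the case of a smooth scheme, already recorded above, by descent along the quotient atlas. First I would observe that, because $F$ is a finite (hence finite étale) group scheme over our characteristic-zero field, the tautological map $p \colon X \to X/F$ to the \emph{stack} quotient is a finite étale $F$-torsor, \emph{independently} of whether the action of $F$ on $X$ is free; the stabilizers are recorded in the stacky structure of $X/F$ rather than as ramification of $p$. Its Čech nerve is the action groupoid, with $n$-th term $F^{n} \times X$, and $X/F \simeq \operatorname{colim}_{[n] \in \Delta^{\mathrm{op}}} (F^{n} \times X)$. In particular $X/F$ is a smooth laft Deligne--Mumford stack of dimension $d$, so both $\QCoh(X/F)$ and $\IndCoh(X/F)$ are defined.

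Next I would pass to categories of sheaves. Since $\QCoh$ carries colimits of prestacks to limits of DG categories along $*$-pullback, one gets $\QCoh(X/F) \simeq \lim_{[n]} \QCoh(F^{n} \times X)$, the totalization computing the $F$-equivariant category $\QCoh(X)^{F}$. The analogous description $\IndCoh(X/F) \simeq \lim_{[n]} \IndCoh(F^{n} \times X)$ holds because $\IndCoh$ satisfies descent along the atlas $p$ (see \cite{GaRoII}); here the limit is formed along the $!$-pullbacks, which for the étale transition maps of the Čech nerve agree with $*$-pullbacks. The compatibility $p^{!} \circ \Upsilon_{X/F} \simeq \Upsilon_{X} \circ p^{*}$---valid because $p^{!}\omega_{X/F} \simeq \omega_{X}$ together with the projection formula for $\Upsilon$---shows that $\Upsilon$ is a map of these two limit diagrams, so that $\Upsilon_{X/F} \simeq \lim_{[n]} \Upsilon_{F^{n}\times X}$. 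Each $F^{n} \times X$ is a disjoint union of copies of the smooth scheme $X$, so each $\Upsilon_{F^{n}\times X}$ is an equivalence by the scheme case recalled above; a limit of equivalences is an equivalence, giving the first claim.

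For $t$-exactness I would argue that, because $p$ is étale and surjective, the $t$-structures on $\QCoh(X/F)$ and on $\IndCoh(X/F)$ are both detected by pullback along $p$, with \emph{no} cohomological shift, since the relative dimension of $p$ is $0$. Equivalently, $\omega_{X/F}$ descends from $\omega_{X} \simeq \O_{X}[d]$ to the invertible object $\omega_{X/F} \simeq \O_{X/F}[d]$, so that $\Upsilon_{X/F}[-d] = (- \otimes \omega_{X/F})[-d]$ carries $\QCoh(X/F)^{\heartsuit}$ into $\IndCoh(X/F)^{\heartsuit}$. Since $\Upsilon_{X}[-d]$ is $t$-exact on the smooth scheme $X$ and $p^{*}\,\Upsilon_{X/F}[-d] \simeq \Upsilon_{X}[-d]\,p^{*}$, with $p^{*}$ detecting both $t$-structures, the functor $\Upsilon_{X/F}[-d]$ is $t$-exact.

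The main obstacle, and the only non-formal input, is the identification $\IndCoh(X/F) \simeq \lim_{[n]} \IndCoh(F^{n}\times X)$: unlike $\QCoh$, the assignment $\IndCoh$ is built by right Kan extension along $!$-pullback and does not tautologically turn the colimit $X/F \simeq \operatorname{colim} F^{\bullet} \times X$ into a limit, so one genuinely needs descent for $\IndCoh$ along the finite étale atlas $p$. Once this descent statement is in hand---equivalently, once one knows $\IndCoh(X/F) \simeq \IndCoh(X)^{F}$ for the finite group $F$---everything else is formal naturality of $\Upsilon$ together with the already-established smooth-scheme case.
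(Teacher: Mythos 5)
Your proposal is correct and follows essentially the same route as the paper: both identify $\QCoh(X/F)$ and $\IndCoh(X/F)$ with totalizations over the \v{C}ech nerve $F^{\bullet} \times X$ of the quotient map $X \to X/F$, observe that $\Upsilon$ is a map of these limit diagrams, and deduce both claims from the termwise statement that $\Upsilon_{F^{n} \times X}[-d]$ is a $t$-exact equivalence on the smooth scheme $F^{n} \times X$. The only cosmetic difference is the descent theorem invoked: the paper uses finite-flat descent for $\QCoh$ and proper descent for $\IndCoh$, whereas you use \'etale descent for both, which is equally valid here since in characteristic zero the finite group $F$ is a finite \'etale group scheme and $X \to X/F$ is a finite \'etale torsor.
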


\begin{proof}
The map $q: X \to X/F$ is finite flat. In particular, via flat descent for $\QCoh(X/F)$ \cite[Chapter 3, Section 1.3]{GaRoI}, we obtain that the pullback map induces a canonical equivalence
\raggedbottom
\[q^{\ast}: \QCoh(X/F) \xrightarrow{\sim} \text{lim}_{\Delta}\QCoh(F^{\bullet} \times X)\]

\noindent and since $q$ is in particular proper, proper descent \cite[Chapter 3, Proposition 3.3.3]{GaRoII} gives an analogous equivalence for $\IndCoh(X/F)$. Since, for each integer $i$, the categories $F^{i} \times X$ is again a smooth scheme, we see that $\Upsilon_{F^{i} \times X}$ is an equivalence, and $\Upsilon_{F^{i} \times X}[-\text{dim}(F^{i} \times X)] = \Upsilon_{F^{i} \times X}[-d]$ is a $t$-exact equivalence. Therefore, since we can identify $\Upsilon_{X/F}$ as the composite, read left to right, of the functors
\raggedbottom
\[\QCoh(X/F) \xrightarrow{\sim} \text{lim}_{\Delta}\QCoh(F^{\bullet} \times X) \xrightarrow{\Upsilon_{\bullet}} \text{lim}_{\Delta}\IndCoh(F^{\bullet} \times X) \xleftarrow{\sim} \IndCoh(X/F)\]

\noindent we see that $\Upsilon_{X/F}$ is an equivalence, as desired. 
\end{proof}

We will also use the following analogue of \cref{Equivariant Sheaf on Coinvariant Algebra Descends to the Coarse Quotient iff Fiber Is Trivial} for IndCoh in arguing that sheaves on $\LTd\sslash \Waff$ can be identified with $\Waff$-equivariant sheaves satisfying Coxeter descent, see \cref{Subsubsection with All Equivalent Conditions of Satisfying Descent to Wext Coarse Quotient}:

\begin{Corollary}\label{Pointwise Essential Image of Pullback to Equivariant Sheaves on Coinvariant Algebra for IndCoh} Assume $H$ is a pseudo-reflection group acting on some vector space $V$, and let $C$ denote the coinvariant algebra. \begin{enumerate}
    \item The functors $\Upsilon_{\Spec(C)}$ and $\Upsilon_{\Spec(C)/H}$ are fully faithful.
    \item A given $\F \in \IndCoh(\Spec(C))^H$ descends to the coarse quotient $\Spec(C)\sslash H = \Spec(k)$ if and only if $\F \simeq \Upsilon_{\Spec(C)}(\F')$ for some $\F' \in \QCoh(\Spec(C))$ and the canonical $H$-representation on $i^!(\F)$ is trivial.
\end{enumerate}
\end{Corollary}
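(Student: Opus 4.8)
The plan is to deduce both parts from the already-established quasicoherent statement \cref{Equivariant Sheaf on Coinvariant Algebra Descends to the Coarse Quotient iff Fiber Is Trivial}, together with two structural inputs: the Gorenstein property of the coinvariant algebra $C$, and the compatibility of $\Upsilon$ with $!$-pullback.

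For part (1), the key point is that the coinvariant algebra $C$ of a Coxeter group is a Poincar\'e duality algebra, hence a $0$-dimensional Gorenstein ring; indeed, by \cref{Chevalley-Shephard-Todd Theorem} the invariants $\text{Sym}(\Vdual)^H$ are polynomial, so $C$ is a complete intersection. Consequently the dualizing complex $\omega_{\Spec(C)}$ is, up to cohomological shift, the free module of rank one, so it lies in the essential image of the left adjoint $\Xi_{\Spec(C)}: \QCoh(\Spec(C)) \to \IndCoh(\Spec(C))$ of $\Psi_{\Spec(C)}$, which exists since $\Spec(C)$ is affine (hence eventually coconnective) and is fully faithful. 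Since $\Upsilon_{\Spec(C)}(\F') = \F' \otimes \omega_{\Spec(C)}$ and $\Xi_{\Spec(C)}$ is $\QCoh(\Spec(C))$-linear, I conclude that $\Upsilon_{\Spec(C)}$ agrees with $\Xi_{\Spec(C)}$ up to cohomological shift, and is therefore fully faithful. For $\Upsilon_{\Spec(C)/H}$ I would run the descent argument of \cref{Upsilon is an Equivalence for Quotient of Finite Group Acting on Vector Space} verbatim: the bar resolution of $q: \Spec(C) \to \Spec(C)/H$ has terms $H^{\bullet} \times \Spec(C)$, each a finite disjoint union of copies of the Gorenstein scheme $\Spec(C)$, so $\Upsilon$ is fully faithful at every simplicial level, and fully faithfulness of the levelwise map is inherited by the totalization (mapping spaces in the limit are limits of mapping spaces). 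Equivalently, $\Upsilon_{\Spec(C)/H}$ is the functor of $H$-invariants of $\Upsilon_{\Spec(C)}$, and taking invariants preserves fully faithfulness.

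For part (2), I first identify "descends to the coarse quotient $\Spec(C)\sslash H = \Spec(k)$" in the $\IndCoh$ setting with the essential image of the $!$-pullback $\pointwisegeneralstacktoGITquotientmap^!: \IndCoh(\Spec(k)) = \text{Vect} \to \IndCoh(\Spec(C)/H)$ along the terminal map $\pointwisegeneralstacktoGITquotientmap$. The crucial formal fact is that $\Upsilon$ intertwines $\ast$- and $!$-pullback: for any $f: X \to Y$ one has $f^!(\F' \otimes \omega_Y) \simeq f^*\F' \otimes f^!\omega_Y \simeq f^*\F' \otimes \omega_X$, using the projection formula for $f^!$ together with $\omega_X \simeq f^!\omega_Y$. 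Applied to $\pointwisegeneralstacktoGITquotientmap$ and using $\omega_{\Spec(k)} \simeq k$, this yields $\pointwisegeneralstacktoGITquotientmap^! \simeq \Upsilon_{\Spec(C)/H}\circ \pointwisegeneralstacktoGITquotientmap^{\ast}$ on $\text{Vect}$, so that the essential image of $\pointwisegeneralstacktoGITquotientmap^!$ is exactly $\Upsilon_{\Spec(C)/H}$ applied to the essential image of $\pointwisegeneralstacktoGITquotientmap^{\ast}$. By \cref{Equivariant Sheaf on Coinvariant Algebra Descends to the Coarse Quotient iff Fiber Is Trivial} the latter consists of those $\F' \in \QCoh(\Spec(C))^H$ whose canonical $H$-representation on $i^*(\F')$ is trivial. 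Hence $\F$ descends if and only if $\F \simeq \Upsilon_{\Spec(C)/H}(\F')$ for such an $\F'$; the fully faithfulness from part (1) and the conservativity of the forgetful functor let me replace the condition "$\F \simeq \Upsilon_{\Spec(C)/H}(\F')$ with $\F'$ equivariant" by the stated unstructured condition "$\F \simeq \Upsilon_{\Spec(C)}(\F')$ for some $\F' \in \QCoh(\Spec(C))$."

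It remains to rewrite the fiber condition in the $!$-normalization demanded by the statement. Applying the same intertwining to $i: \Spec(k) \hookrightarrow \Spec(C)$ gives $i^!\Upsilon_{\Spec(C)}(\F') \simeq \Upsilon_{\Spec(k)}i^*(\F') \simeq i^*(\F')$, again since $\omega_{\Spec(k)} \simeq k$; thus the $H$-representation on $i^!(\F)$ agrees with the one on $i^*(\F')$, and "$i^*(\F')$ trivial" translates precisely into "$i^!(\F)$ trivial." Assembling these equivalences yields the claim. The main obstacle is part (1)---establishing that $\Upsilon$ is fully faithful on the \emph{singular} scheme $\Spec(C)$---which is exactly where the Gorenstein property of the coinvariant algebra is indispensable; once $\omega_{\Spec(C)}$ is known to be a shifted line bundle, everything else is a formal consequence of the quasicoherent result and the $\Upsilon$--$!$ compatibility.
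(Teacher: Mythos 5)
Your proof is correct and follows essentially the same route as the paper: part (1) rests on the Gorenstein property of $C$ forcing $\Upsilon_{\Spec(C)} \simeq \Xi_{\Spec(C)}$ (hence fully faithful, and this passes to the equivariant/quotient category), and part (2) is the compatibility of $\Upsilon$ with $\ast$- and $!$-pullback combined with \cref{Equivariant Sheaf on Coinvariant Algebra Descends to the Coarse Quotient iff Fiber Is Trivial}. The only cosmetic difference is your justification of Gorenstein-ness (complete intersection via \cref{Chevalley-Shephard-Todd Theorem} rather than the paper's appeal to Borel's theorem), which if anything is the more robust argument for general, possibly non-crystallographic, Coxeter groups.
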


\begin{proof}
The fully faithfulness claims follow directly from \cite[Lemma 10.3.4]{GaiIndCoh}.\footnote{We thank an anonymous referee for providing us this reference.}
Because $\Upsilon$ is compatible with the pullback of $\Spec(C)/H \to \Spec(k)$ and $\Upsilon_{\Spec(k)}$ is an equivalence, we see that the condition $\F \simeq \Upsilon_{\Spec(C)}(\F')$ is necessary for a given $\F$ to lie in the essential image. If such an $\F'$ exists, the equivalence stated in (2) follows directly from \cref{Equivariant Sheaf on Coinvariant Algebra Descends to the Coarse Quotient iff Fiber Is Trivial} because $\Upsilon$ is compatible with the pullback of $\Spec(k)/H \to \Spec(C)/H$.
\end{proof}

\subsection{Equivalent Conditions for Descent to Coarse Quotient For Coxeter Groups}\label{Equivalent Conditions for Descent to Coarse Quotient For Coxeter Groups Subsection}
We now give alternate descriptions of those $H$-equivariant sheaves on $V$ which descend to the coarse quotient when $H$ is a finite Coxeter group, \textit{which we assume for this section}. We use IndCoh rather than QCoh, as IndCoh will be the sheaf theory used in the later sections. We note that, by smoothness of $V\sslash H$ (see \cref{Chevalley-Shephard-Todd Theorem}) and by \cref{Upsilon is an Equivalence for Quotient of Finite Group Acting on Vector Space}, all of the analogous results in this section hold when IndCoh is replaced with QCoh. 

We first give an alternate description in the case where $H$ is generated by a single reflection. For a fixed reflection $r \in H$, let $\generalstacktoGITquotientmap_r: V/\langle r \rangle \to V\sslash\langle r \rangle$ denote the quotient map for the action of the order two Weyl group $\langle r \rangle$ acting on $V$, and let $i_r: Z_r := V^{\langle r \rangle} \xhookrightarrow{} V$ denote the inclusion of the closed subscheme of fixed points. Note that $\IndCoh(Z_r/\langle r \rangle) \simeq \IndCoh(Z_r) \otimes \text{Rep}(\langle r \rangle)$ since the action of $r$ is trivial on the fixed point locus.

\begin{Proposition}\label{Trivial Restriction Characterization of Essential Image of Pullback by GeneralStacktoGITMap for Order Two Group}
Let $r$ denote a reflection. An object $\F \in \IndCoh(V/\langle r \rangle)$ descends to the coarse quotient $V\sslash \langle r \rangle$ if and only if the pullback $i_r^!(\F) \in \IndCoh(Z_r) \otimes \text{Rep}(\langle r \rangle)$ lies entirely in the summand indexed by the trivial $\langle r \rangle$-representation.
\end{Proposition}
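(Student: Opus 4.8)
The plan is to reduce the statement to the pointwise criterion \cref{Fully Faithfulness and Easy Essential Image of Pullback by generalstacktoGITquotientmap}, specialized to the order-two group $\langle r \rangle$, and then to repackage the resulting condition at the fixed locus in terms of $i_r^!(\F)$. The key geometric input is that for a reflection, the only field-valued points of $V$ with nontrivial stabilizer are those on the fixed hyperplane $Z_r$: since $r$ fixes $Z_r$ pointwise and acts freely away from it, the stabilizer $\langle r \rangle_x$ equals $\langle r \rangle$ when $x \in Z_r(K)$ and is trivial otherwise. Hence condition (3) of \cref{Fully Faithfulness and Easy Essential Image of Pullback by generalstacktoGITquotientmap} is vacuous at points off $Z_r$, and $\F$ descends to $V\sslash\langle r \rangle$ if and only if the $\langle r \rangle$-representation on the fiber $x^{\ast}(\F)$ is trivial for every $K$-point $x$ of $Z_r$. (We pass freely between $\IndCoh$ and $\QCoh$ and between $x^{\ast}$ and $x^!$ here, since $V$ is smooth so that $\Upsilon_V$ is an equivalence and these differ only by a shift, which does not affect triviality of a representation.)

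Next I would translate this fiberwise condition into a statement about $i_r^!(\F)$. Because $\langle r \rangle$ acts trivially on $Z_r$, we have $Z_r/\langle r \rangle \simeq Z_r \times B\langle r \rangle$, which gives the stated decomposition $\IndCoh(Z_r/\langle r \rangle) \simeq \IndCoh(Z_r) \otimes \text{Rep}(\langle r \rangle)$; over the characteristic-zero field $k$ this splits $i_r^!(\F)$ as a direct sum $\mathcal{G}_{\text{triv}} \oplus \mathcal{G}_{\text{sign}}$ of its trivial and sign isotypic parts. For a $K$-point $x$ of $Z_r$, the composite $\Spec(K) \to Z_r \xrightarrow{i_r} V$ identifies the fiber of $i_r^!(\F)$ at $x$, as a $\langle r \rangle$-representation and up to the cohomological shift coming from the closed embedding $i_r$, with the fiber $x^{\ast}(\F)$; under this identification the sign isotypic part of the fiber is precisely the fiber of $\mathcal{G}_{\text{sign}}$ at $x$. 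Thus the pointwise condition from the first paragraph is equivalent to the vanishing of every field-valued fiber of $\mathcal{G}_{\text{sign}}$ on $Z_r$.

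Finally, $Z_r$ is a smooth (indeed linear) classical scheme, so $\Upsilon_{Z_r}$ is an equivalence and \cref{For any qcoh sheaf on smooth classical Noetherian scheme there exists a field valued point where the fiber doesn't vanish} applies: $\mathcal{G}_{\text{sign}}$ is zero if and only if all its field-valued fibers vanish. Combining this with the previous paragraph yields that $\F$ descends if and only if $\mathcal{G}_{\text{sign}} = 0$, i.e. if and only if $i_r^!(\F)$ lies in the trivial summand, which is the claim.

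The hard part will be the bookkeeping in the second paragraph: I must check that the isotypic decomposition of $\IndCoh(Z_r/\langle r \rangle)$ is compatible with taking fibers at points of $Z_r$, and that the $\langle r \rangle$-equivariant identification of the fiber of $i_r^!(\F)$ with $x^{\ast}(\F)$ respects this decomposition, so that ``sign part vanishes'' genuinely matches on both sides. The shifts introduced by $i_r^!$ versus $i_r^{\ast}$ and by the passage between $x^{\ast}$ and $x^!$ on the smooth scheme $V$ are harmless for the triviality question, but they must be tracked to make the fiber identification precise.
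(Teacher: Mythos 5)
Your proposal is correct, but it is a genuinely different argument from the one in the paper. The paper never invokes the pointwise criterion here: it stratifies $V$ by the fixed hyperplane $Z_r$ and the free open complement $U_r = V \setminus Z_r$, producing two Cartesian squares over the corresponding decomposition of $V\sslash\langle r \rangle$. Since $\generalstacktoGITquotientmap^!$ is fully faithful (\cref{Pullback From GIT to Stack Quotient of Finite Group of Affine Scheme is Fully Faithful}), its essential image is closed under extensions, so membership can be tested on $i_r^!(\F)$ and $j_r^!(\F)$ separately; on the free locus the map $U_r/\langle r\rangle \to U_r\sslash\langle r\rangle$ is an equivalence, so that condition is vacuous, and on the fixed locus the pullback from $Z_r\sslash\langle r\rangle \cong Z_r$ identifies with $\mathrm{id}\otimes\mathrm{triv}$, whose essential image is exactly the trivial summand. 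That route is self-contained, stays entirely inside $\IndCoh$, and needs neither the coinvariant-algebra analysis nor \cref{For any qcoh sheaf on smooth classical Noetherian scheme there exists a field valued point where the fiber doesn't vanish}. Your route leans on the already-established \cref{Fully Faithfulness and Easy Essential Image of Pullback by generalstacktoGITquotientmap}, which makes it shorter given the paper's prior results, makes transparent why only points of the hyperplane matter, and extends verbatim to pseudo-reflections of arbitrary order (replace the sign character by all nontrivial characters of the cyclic group). Your reduction is also not circular: the pointwise criterion is proved before this proposition and independently of it.

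One warning about the bookkeeping you defer: your claim that the discrepancy between $i_r^!$ and $i_r^{\ast}$ is ``only a shift, harmless for triviality'' is false, and it is exactly the place a sign error would flip the statement. For the closed embedding $i_r$, the functors $i_r^!$ and $i_r^{\ast}$ differ by a twist by the conormal line of $Z_r$ in $V$, and that line carries the \emph{sign} character of $\langle r \rangle$; so an object lies in the trivial summand with respect to $i_r^!$ if and only if it lies in the \emph{sign} summand with respect to $i_r^{\ast}$. Similarly, on $V/\langle r\rangle$ the two identifications $\QCoh \simeq \IndCoh$ given by $\Psi$ and by $\Upsilon^{-1}$ differ by $\det(T_V)$, which again carries the sign character, so ``descends'' for $\Psi(\F)$ and for $\Upsilon^{-1}_{V/\langle r\rangle}(\F)$ are different conditions. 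The clean way to run your argument is to never introduce $i_r^{\ast}$ or $\Psi$ at all: use that $\Upsilon$ intertwines $\ast$-pullback with $!$-pullback on the nose, so that for an $r$-fixed point $x$ one has $x^!(\F) \simeq x^{\ast}\bigl(\Upsilon^{-1}_{V/\langle r\rangle}(\F)\bigr)$ as $\langle r\rangle$-representations with no shift and no twist (the dualizing complex of $\Spec(K)/\langle r\rangle$ is the trivial representation), apply the pointwise criterion to $\Upsilon^{-1}_{V/\langle r\rangle}(\F)$, and compare $!$-fibers throughout on the $\IndCoh$ side. With that convention your steps — stabilizers concentrate on $Z_r$, the isotypic decomposition commutes with $!$-restriction to points, and \cref{For any qcoh sheaf on smooth classical Noetherian scheme there exists a field valued point where the fiber doesn't vanish} applied to the smooth scheme $Z_r$ via \cref{Upsilon is an Equivalence for Quotient of Finite Group Acting on Vector Space} detects vanishing of the sign part — go through without incident. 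If instead you pass through $\Psi$, the two sign twists (from $\det(T_V)$ and from the conormal line) appear and cancel, but they must be tracked, not dismissed as shifts.
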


\begin{proof}
The closed subscheme $Z_r \xhookrightarrow{i_r} V$ and complementary open subscheme $U_r := V\setminus Z_r \xhookrightarrow{j_r} V$ are both affine and induce two Cartesian squares as follows
\raggedbottom
\begin{equation*}
  \xymatrix@R+2em@C+2em{
    Z_r/\langle r \rangle \ar[d]_{\generalstacktoGITquotientmap|_{Z_r}} \ar[r]^{i_r} & \ar[d]_{\generalstacktoGITquotientmap} V/\langle r \rangle & U_r/\langle r \rangle \ar[l]_{j_s} \ar[d]_{\generalstacktoGITquotientmap|_{U_r}}\\
    Z_r\sslash\langle r \rangle  \ar[r]^{i_r} &  V\sslash\langle r \rangle & U_r\sslash\langle r \rangle \ar[l]_{j_s}
  }
\end{equation*}
\noindent where each vertical arrow is obtained from the map $\generalstacktoGITquotientmap$. Since $\generalstacktoGITquotientmap^!$ is fully faithful (\cref{Pullback From GIT to Stack Quotient of Finite Group of Affine Scheme is Fully Faithful}), its essential image is closed under extensions. In particular, an object $\F \in \IndCoh(V/H)$ lies in the essential image if and only if $i_r^!(\F)$ and $j_r^!(\F)$ lie in the essential image of the respective pullbacks. However, since the action of $\langle r \rangle$ is free, the rightmost vertical arrow is an equivalence, so $j_r^!(\F)$ is always in the essential image of $\generalstacktoGITquotientmap^!$. Therefore, $\F$ lies in the essential image if and only if $i_r^!(\F)$ does. 

Note that the action of $r$ on $Z_r$ is trivial, and therefore we see that $Z_r\sslash\langle r \rangle \cong Z_r$. Furthermore, we may identify the pullback $\generalstacktoGITquotientmap|_{Z_r}^!$ with the functor 
\raggedbottom
\[\IndCoh(Z_r) \xrightarrow{\text{id} \otimes \text{triv}} \IndCoh(Z_r) \otimes \text{Rep}(\langle r \rangle)\]

\noindent and so we see that an object of the form $i_r^!(\F)$ is in the essential image of the pullback $\generalstacktoGITquotientmap^!$ if and only if the restriction lies entirely in the trivial summand. Combining this with the assertion that $\F$ lies in the essential image if and only if $i_r^!(\F)$ does, we obtain our desired characterization of the essential image. 
\end{proof}

We now summarize and give various equivalent conditions for a given $\F \in \IndCoh(V)^H$ to descend to the coarse quotient $V\sslash H$, where again we remind that $H$ is a Coxeter group in this section: 

\begin{Proposition}\label{Equivalent Conditions to Be in Essential Image of Pullback of GeneralStackToGITQuotient Pullback}
An object $\F \in \IndCoh(V/H)$ descends to the coarse quotient $V\sslash H$ if and only if one of the following equivalent conditions hold:
\begin{enumerate}
    \item For each field-valued point $x: \Spec(K) \to V$, the pullback $x^!(\text{oblv}^H_{H_x}(\F))$, which canonically acquires a $H_x$-representation in $\text{Vect}_{\fieldpossiblydifferentfromgroundfield}$, is the trivial $H_x$-representation.  
    \item For each reflection $r \in H$, $\text{oblv}_{\langle r \rangle}^H(\F)$ descends to the coarse quotient $V\sslash \langle r \rangle$.
    \item For each simple reflection $s \in H$, $\text{oblv}_{\langle s \rangle}^H(\F)$ descends to the coarse quotient $V\sslash \langle s \rangle$.
    \item Each cohomology group $H^i(\F) \in \IndCoh(V)^{H, \heartsuit} \simeq \IndCoh(V)^{\heartsuit, H}$ lies in the essential image of $\generalstacktoGITquotientmap^!|_{\IndCoh(V\sslash H)^{\heartsuit}}$. 
    \item For every reflection $r \in H$, the cohomology group $\text{oblv}^H_{\langle r \rangle}(H^i(\F)) \in \IndCoh(V)^{\langle r \rangle, \heartsuit}$ lies in the essential image of the pullback $\generalstacktoGITquotientmap_r^!$ restricted to $\IndCoh(V\sslash\langle r \rangle)^{\heartsuit}$ for all $i \in \mathbb{Z}$.
    \item For every simple reflection $s \in H$, the cohomology group $\text{oblv}^H_{\langle s \rangle}(H^i(\F)) \in \IndCoh(V)^{\langle s \rangle, \heartsuit}$ lies in the essential image of the pullback $\generalstacktoGITquotientmap_s^!$ restricted to $\IndCoh(V\sslash\langle s \rangle)^{\heartsuit}$ for all $i \in \mathbb{Z}$.
    \item For each simple reflection $s \in H$, the sheaf $i_s^!(\text{oblv}^W_{\langle s \rangle}(\F)) \in \IndCoh(V^{\langle s \rangle}) \otimes \text{Rep}(\langle s \rangle)$ lies entirely in the summand indexed by the trivial representation. 
    \item For every reflection $r \in H$, the sheaf $i_r^!(\text{oblv}^W_{\langle r \rangle}(\F)) \in \IndCoh(Z_r) \otimes \text{Rep}(\langle r \rangle)$ lies entirely in the summand indexed by the trivial representation. 
\end{enumerate}
\end{Proposition}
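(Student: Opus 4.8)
The plan is to take \lq $\F$ descends to $V\sslash H$\rq{} as a hub and show each of (1)--(8) is equivalent to it, freely using the IndCoh incarnations of the earlier results; these are available here because $V\sslash H$ is smooth by \cref{Chevalley-Shephard-Todd Theorem} and $\Upsilon$ is an equivalence on the relevant quotient stacks by \cref{Upsilon is an Equivalence for Quotient of Finite Group Acting on Vector Space}. Three of the equivalences are essentially direct translations. Descent $\iff$ (1) is the IndCoh analogue of condition (3) of \cref{Fully Faithfulness and Easy Essential Image of Pullback by generalstacktoGITquotientmap}. Descent $\iff$ (2) is the IndCoh analogue of \cref{Equivariant Sheaf for Pseudo-Reflection Group Descends iff It Descends for Each Reflection}, using that for a Coxeter reflection representation every pseudo-reflection is an (order-two) reflection. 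Finally descent $\iff$ (3) is the IndCoh form of the result of \cite{LoRemark}, which may alternatively be deduced from descent $\iff$ (2) together with $H$-equivariance and the standard fact that every reflection of $H$ is conjugate to a simple reflection.

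Next I would dispatch the two \lq trivial summand\rq{} conditions (7) and (8) by applying \cref{Trivial Restriction Characterization of Essential Image of Pullback by GeneralStacktoGITMap for Order Two Group} to the single-reflection sheaf $\text{oblv}_{\langle r\rangle}^H(\F)$. That proposition states precisely that $\text{oblv}_{\langle r\rangle}^H(\F)$ descends to $V\sslash\langle r\rangle$ if and only if $i_r^!(\text{oblv}_{\langle r\rangle}^H(\F))$ lies in the trivial summand of $\IndCoh(Z_r)\otimes\text{Rep}(\langle r\rangle)$. Running this over all reflections $r$ yields (2) $\iff$ (8), and running it over the simple reflections $s$ yields (3) $\iff$ (7).

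The remaining and most delicate step is the cohomological reduction to (4), (5), (6). First I would record that $\generalstacktoGITquotientmap^!$ is $t$-exact: since $V/H$ and $V\sslash H$ are smooth of the same dimension $d$, the relative dualizing complex is trivial, so under the identifications of \cref{Upsilon is an Equivalence for Quotient of Finite Group Acting on Vector Space} the functor $\generalstacktoGITquotientmap^!$ corresponds to the $t$-exact pullback $\generalstacktoGITquotientmap^{\ast}$. The functor $\generalstacktoGITquotientmap^!$ is fully faithful and, as used in \cref{Trivial Restriction Characterization of Essential Image of Pullback by GeneralStacktoGITMap for Order Two Group}, its essential image is closed under extensions; combining this with $t$-exactness (which forces the image to be closed under truncations) and the left and right completeness of the $t$-structure on $\IndCoh(V/H)$, a Postnikov-tower argument shows that $\F$ descends if and only if every $H^i(\F)$ lies in the essential image of $\generalstacktoGITquotientmap^!|_{\heartsuit}$. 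This is exactly descent $\iff$ (4).

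Finally, each $H^i(\F)$ is a heart object, so applying the equivalences descent $\iff$ (2) and descent $\iff$ (3) established above to $H^i(\F)$ in place of $\F$ (noting that on hearts \lq $\langle r\rangle$-descent\rq{} means membership in the essential image of $\generalstacktoGITquotientmap_r^!|_{\heartsuit}$, since a $t$-exact fully faithful functor reflects the heart) turns (4) into (5) and into (6) respectively. The main obstacle is the reduction descent $\iff$ (4): verifying that descent can be tested cohomology-group-by-cohomology-group requires both the $t$-exactness of $\generalstacktoGITquotientmap^!$ and the closure of its essential image under extensions, together with completeness of the $t$-structure to handle unbounded complexes; once this is in hand the rest of the proposition is bookkeeping among the already-proven equivalences.
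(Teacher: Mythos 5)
Your proposal is correct and follows essentially the same route as the paper's proof: descent to $V\sslash H$ serves as the hub, with (1) and (2) handled via the $\Upsilon$-translations of \cref{Fully Faithfulness and Easy Essential Image of Pullback by generalstacktoGITquotientmap} and \cref{Equivariant Sheaf for Pseudo-Reflection Group Descends iff It Descends for Each Reflection}, (3) via conjugacy of reflections to simple reflections and equivariance, (7)/(8) via \cref{Trivial Restriction Characterization of Essential Image of Pullback by GeneralStacktoGITMap for Order Two Group}, and (4)--(6) via $t$-exactness, fully faithfulness, closure of the essential image under extensions, and completeness of the $t$-structure --- indeed you pair (2) with (8) and (3) with (7), correcting an index swap in the paper's own write-up. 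One small caveat: your claim that the relative dualizing complex of $V/H \to V\sslash H$ is trivial is not accurate (it is the Jacobian line bundle with its determinant equivariance), but this is harmless, since the intertwining of $\ast$- and $!$-pullback by $\Upsilon$, together with the fact that both sides are smooth of the same dimension $d$ so the two $\Upsilon$'s carry the same shift, already identifies $\generalstacktoGITquotientmap^!$ with the $t$-exact functor $\generalstacktoGITquotientmap^{\ast}$, whose $t$-exactness is exactly what the paper extracts from the finite flatness of $V \to V\sslash H$.
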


\begin{proof}[Proof of \cref{Equivalent Conditions to Be in Essential Image of Pullback of GeneralStackToGITQuotient Pullback}]
Because $\Upsilon$ is an equivalence for $\ast/H_x$ and $V/H$ and intertwines $\ast$-pullback for $\QCoh$ and $!$-pullback for $\IndCoh$, we see that that $\F$ descends to the coarse quotient if and only if $\F$ satisfies (1) by \cref{Fully Faithfulness and Easy Essential Image of Pullback by generalstacktoGITquotientmap Revamp}. Similarly, we see that $\F$ descends to the coarse quotient if and only if $\F$ satisfies (2) \cref{Equivariant Sheaf for Pseudo-Reflection Group Descends iff It Descends for Each Reflection}.

We now show $(3) \Rightarrow (2)$. Fix some reflection $r \in H$, and choose some $w \in H$ for which $w^{-1}rw$ is a simple reflection $s$. Then the following diagram commutes
\raggedbottom
\begin{equation*}
  \xymatrix@R+2em@C+2em{
   V\sslash\langle r \rangle \ar[d]_{w} & \ar[l]_{\generalstacktoGITquotientmap_r}\ar[d]_{w} V/\langle r \rangle \ar[dr] \\
  V\sslash\langle s \rangle & \ar[l]_{\generalstacktoGITquotientmap_s} V/\langle s  \rangle \ar[r] & V/W
  }
\end{equation*}

\noindent where the vertical arrows are the maps induced by the action of $w \in H$ and the unlabeled arrows are the quotient maps. We then see if $\text{oblv}^W_{\langle
s \rangle}(\F) \simeq \generalstacktoGITquotientmap_s^!(\F')$ for some $\F'$ then 
\raggedbottom
\[\text{oblv}^W_{\langle r \rangle}(\F) \simeq w^!\text{oblv}_{\langle s \rangle}^{W}(\F) \simeq w^!(\generalstacktoGITquotientmap_s^!(\F')) \simeq \generalstacktoGITquotientmap_r^!(w^!(\F'))\]

\noindent showing that (2) holds. Conversely, the implication $(2) \Rightarrow (3)$ follows since simple reflections are reflections.

The equivalence of a given $\F$ descending to the coarse quotient $V\sslash H$ and the given $\F$ satisfying $(4)$ follows from the $t$-exactness and fully faithfulness of $\generalstacktoGITquotientmap^!$, where the $t$-exactness follows from the fact that $\text{oblv}^W$ reflects the $t$-structure and the fact that the quotient map $(V \to V\sslash H)$ is finite-flat, see \cref{For Pseudo-Reflection Groups Acting Faithfully on Vector Space The GIT Quotient is Flat}. Replacing the map $\generalstacktoGITquotientmap^!$ with $\generalstacktoGITquotientmap_r^!$ and $\generalstacktoGITquotientmap_s^!$, this argument also gives the equivalences $(2) \Leftrightarrow (5)$ and $(3) \Leftrightarrow (6)$. Finally, the equivalences $(2) \Leftrightarrow (7)$ and $(3) \Leftrightarrow (8)$ follow directly from \cref{Trivial Restriction Characterization of Essential Image of Pullback by GeneralStacktoGITMap for Order Two Group}.
\end{proof}

\section{Preliminary Computations}\label{The Union of Graphs of Affine Weyl Group is Ind Finite-Flat Section}
In this section, we review some preliminary results on the union of graphs of the affine Weyl group $\Waff$ and discuss extensions to the extended affine Weyl group. 


\subsection{The Integral Weyl Group}\label{IntegralWeylGroupDefinitionSection}
Let $\varphi: \widetilde{G}_{\mathrm{der}} \to G_{\mathrm{der}}$ denote the simply connected covering of the derived subgroup $G_{\mathrm{der}}$ of $G$. We recall that the natural map $\tilde{m}$ defined as the composite \[\tilde{G} := \widetilde{G}_{\mathrm{der}} \times Z(G)^{\circ} \xrightarrow{(\varphi \times \mathrm{id})} G_{\mathrm{der}} \times Z(G)^{\circ} \xrightarrow{m} G\] is a multiplicative isogeny, where $m$ is the multiplication map. Indeed, this follows, for example, from the fact that the quotient map of $\widetilde{G}_{\mathrm{der}}$ onto the derived subgroup of $G$ is a multiplicative isogeny by construction (see for example \cite[Definition 18.7]{MilneAlgebraicGroupsBook}) and the fact that the multiplication map $m: G_{\mathrm{der}} \times Z(G)^{\circ} \to G$ is a multiplicative isogeny by say \cite[Proposition 14.2]{BorelLinearAlgebraicGroups}. Let $T_0 := T \cap G_{\mathrm{der}}$. We recall that (by say \cite[Theorem 22.6]{BorelLinearAlgebraicGroups}) the group \[\tilde{T} := \tilde{m}^{-1}(T) \cong \varphi^{-1}(T_0) \times Z(G)^{\circ}\] is also a torus and we may identify the Weyl group of $\widetilde{G}_{\mathrm{der}} \times Z(G)$ with $W$ in such a way that the map $\tilde{m}|_{\tilde{T}}: \tilde{T} \to T$ is $W$-equivariant and pullback, respectively pushforward, by $\tilde{m}|_{\tilde{T}}$ gives a bijection from the roots for the $T$-action on $\LG$, respectively coroots for the $\tilde{T}$-action on $\mathrm{Lie}(\tilde{G})$, to the set of roots for the $\tilde{T}$-action on $\mathrm{Lie}(\tilde{G})$, respectively the coroots for the $T$-action on $\LG$. Moreover, since $\tilde{m}$ is an isogeny, its kernel is finite, and so the induced map $\characterlatticeforT \to X^{\bullet}(\tilde{T})$ on the characters of these tori is injective and its cokernel is finite. In particular, there is an isomorphism \begin{equation}\label{The rationalization map}\characterlatticeforT_{\Q} := \Q \otimes_{\mathbb{Z}} \characterlatticeforT \xrightarrow{\sim} \Q \otimes_{\mathbb{Z}} X^{\bullet}(\tilde{T}) =: X^{\bullet}(\tilde{T})_{\Q}\end{equation} of rational vector spaces induced by pullback. Let $\Lambda := X^{\bullet}(\tilde{T})$. Observe that, under the identification of \labelcref{The rationalization map}, we have \begin{equation}\label{Lambda Contained in Natural Pairing with Coroots is Integers}\Lambda \subseteq \{\lambda \in \characterlatticeforT_{\Q} : \lambda(\alpha^{\vee}) \in \mathbb{Z}\text{ for all coroots }\alpha^{\vee}\}\end{equation} since pushforward by $\tilde{m}|_{\tilde{T}}$ induces a bijection on coroots. 

\begin{Remark} To orient the reader on the notation, we record two facts without proof, neither of which will be used in what follows:
\begin{enumerate}
    \item If $G$ is semisimple one can show that the containment \labelcref{Lambda Contained in Natural Pairing with Coroots is Integers} is an equality.
    \item If $G$ is semisimple, one can show that $\characterlatticeforT = \Lambda$ if and only if $G$ is simply connected.
\end{enumerate}
\end{Remark}

For a fixed $x \in \LTd(K)$, let $[x]$ denote the image of this $K$-point in the quotient $\LTd/\Lambda$. The following is essentially shown in the proof of \cite[Satz 1.3]{JantzenModulnMitEinehHochstenGewicht}; we recall some details for the convenience of the reader: 

\begin{Proposition}\label{Equivalent Notions of Integral Weyl Group}
Fix some $x \in \LTd(K)$. The following subgroups of $W$ are identical:
\begin{enumerate}
    \item The image $\overline{\Waff_x}$ of the stabilizer $\Waff_x$ of $x$ under the $\Waff$-action on $\LTd$ under the quotient map $\Waff \to \Waff/\rootlattice \cong W$. 
    \item The subgroup $W_{[x]} := \{w \in W : wx - x \in \rootlattice\}$.
    \item The subgroup $W^{\bullet}_{[x]} := \{w \in W : w\cdot x - x \in \rootlattice\}$. 
     \item The subgroup $W^{[x]} := \langle s_{\alpha} : \langle x , \alpha^{\vee} \rangle \in \mathbb{Z}\rangle$, where $\alpha$ varies over the set of roots $\Phi$. 
\end{enumerate}

\noindent Furthermore, the group $W^{[x]}$ is a Weyl group of the root system whose roots are $\Phi_{[x]} := \{\alpha \in \Phi: \langle x, \alpha^{\vee} \rangle \in \mathbb{Z}\}$. 
\end{Proposition}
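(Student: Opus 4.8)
The plan is to obtain the equalities $(1)=(2)=(3)$ together with the inclusion $W^x \subseteq W_{[x]}$ and the fact that $\Phi_{[x]}$ is a root subsystem by elementary computation, and to reserve the real work for the reverse inclusion $W_{[x]} \subseteq W^x$, which I would deduce from the classical theorem that the isotropy group of a point in an affine reflection group is generated by the reflections fixing that point. For $(1)=(2)$: an element $(\mu,w)\in \rootlattice \rtimes W = \Waff$ acts on $\LTd$ by $y\mapsto wy+\mu$, so it fixes $x$ exactly when $\mu = x-wx$, which requires $x-wx\in\rootlattice$, i.e. $w\in W_{[x]}$; conversely any such $w$ lifts to $(x-wx,w)\in\Waff_x$. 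Since $(\mu,w)\mapsto w$ under $\Waff\to W$, the image $\overline{\Waff_x}$ is precisely $W_{[x]}$. For $(2)=(3)$ I would use $w\cdot x - x = (wx-x)+(w\rho-\rho)$ and the standard fact that $w\rho-\rho$ is (minus) a sum of positive roots, hence lies in $\rootlattice$, so the two congruence conditions modulo $\rootlattice$ agree. That $\Phi_{[x]}$ is a root subsystem follows from the identity $\langle x,(s_\a\b)^\vee\rangle = \langle x,\b^\vee\rangle - \langle\a,\b^\vee\rangle\langle x,\a^\vee\rangle$ and the integrality of Cartan integers, and $W^x\subseteq W_{[x]}$ is immediate from $s_\a x - x = -\langle x,\a^\vee\rangle\,\a$. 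I would also record that, since every root is primitive in $\rootlattice$, a reflection $s_\a$ lies in $W_{[x]}$ if and only if $\a\in\Phi_{[x]}$.

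The heart of the matter is $W_{[x]}\subseteq W^x$. Here I would regard $\Waff$ as the affine reflection group generated by the affine reflections $s_{\b,m}\colon y\mapsto y-(\langle y,\b^\vee\rangle - m)\,\b$ for $\b\in\Phi$, $m\in\Z$, in the hyperplanes $H_{\b,m}=\{\langle\cdot,\b^\vee\rangle = m\}$; the translation subgroup these generate is exactly $\rootlattice$, so this reflection group is $\Waff$. The isotropy group $\Waff_x$ is finite, and by the classical theorem on isotropy groups of affine reflection groups \cite{BourbakiGroupesEtAlgebresdeLieChaptersIVVVI} it is generated by the affine reflections fixing $x$, namely those $s_{\b,m}$ with $\langle x,\b^\vee\rangle = m\in\Z$. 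Any such $\b$ lies in $\Phi_{[x]}$, and the image of $s_{\b,m}$ under $\Waff\to W$ is $s_\b$; since the image of a generating set generates the image of the group, $\overline{\Waff_x}=W_{[x]}$ is generated by $\{s_\b : \b\in\Phi_{[x]}\}=W^x$, giving the inclusion. Finally, $\Phi_{[x]}$ being a root subsystem, its Weyl group is by definition generated by all of its reflections and hence equals $W^x$, which is the last assertion.

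The main obstacle lives entirely in this last inclusion, and it is twofold. First, the isotropy theorem is naturally stated over $\mathbb{R}$, whereas $x\in\LTd(k)$ for an arbitrary characteristic-zero field $k$; I would remove this by observing that $\Phi_{[x]}$, $W_{[x]}$ and $W^x$ depend on $x$ only through the induced homomorphism $\bar x\colon \mathbb{Z}\Phi^\vee \to k/\Z$, $\a^\vee\mapsto \langle x,\a^\vee\rangle \bmod \Z$. Indeed, pairing against the dual basis of fundamental coweights shows $wx-x\in\rootlattice$ is equivalent to $\langle x, w^{-1}\mu-\mu\rangle\in\Z$ for all coweights $\mu$, and $w^{-1}\mu-\mu\in\mathbb{Z}\Phi^\vee$, so the defining condition of $W_{[x]}$ is also expressible through $\bar x$. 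Since the image of $\bar x$ is a finitely generated subgroup of $k/\Z$ and any such subgroup embeds into $\mathbb{R}/\Z$, one may realize the same data by a real point and reduce to $k=\mathbb{R}$. Second, one must pin down the precise Coxeter-theoretic form of the isotropy statement (that the fixer of a point is generated by the reflections in the walls through it); this is the affine analogue of the Steinberg-type result already invoked in this paper for finite pseudo-reflection groups, and I would cite it rather than reprove it.
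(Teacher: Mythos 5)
Your proposal is correct, but it takes a genuinely different route from the paper at the one step that carries real content. The elementary parts are essentially the same (in fact your verification of $(1)=(2)$ --- an element $(\mu,w)$ fixes $x$ iff $\mu = x-wx$, which forces $wx-x\in\rootlattice$ --- is more direct than the paper's argument, which decomposes $x$ along a $\Q$-basis of $k$ into a root-span component and a complement; both proofs of $(2)=(3)$ rest on $w\rho-\rho\in\rootlattice$). The divergence is the inclusion $W_{[x]}\subseteq W^x$ and the final root-system assertion: the paper does not prove these at all, but cites them as precisely the content of \cite[Satz 1.3]{JantzenModulnMitEinehHochstenGewicht}, whereas you reprove that result. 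Your mechanism --- observe that $W_{[x]}$, $\Phi_{[x]}$ and $W^x$ depend on $x$ only through $\bar x\colon \Z\Phi^\vee\to k/\Z$ (since $wx-x$ always lies in the $k$-span of the roots and its membership in $\rootlattice$ is tested by pairing against fundamental coweights, with $w^{-1}\omega^\vee_j-\omega^\vee_j\in\Z\Phi^\vee$), embed the finitely generated image of $\bar x$ into $\R/\Z$, realize the composite by a real point, and then apply the theorem of \cite{BourbakiGroupesEtAlgebresdeLieChaptersIVVVI} that the stabilizer of a point under an affine reflection group is generated by the reflections fixing it --- is sound: the group generated by the affine reflections $s_{\beta,m}$ is exactly $\rootlattice\rtimes W$, the hyperplane arrangement is locally finite, and injectivity of the embedding preserves all defining conditions, each of which has the form \lq\lq$\bar x$ kills a specified element of $\Z\Phi^\vee$.\rq\rq{} Two small points deserve to be made explicit in a final write-up: the realization of $\iota\circ\bar x$ by an actual real point (lift along the free lattice $\Z\Phi^\vee$ and use perfectness of the pairing between the root span and the coroot span), and the fact that conjugating your closure/subsystem observation into the statement requires only that $\Phi_{[x]}$ is stable under $s_\alpha$ for $\alpha\in\Phi_{[x]}$, which your Cartan-integer identity gives. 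What the paper's route buys is brevity and a reference tailored to the exact statement; what yours buys is a self-contained proof (modulo Bourbaki's classical isotropy theorem rather than Jantzen's Satz), and it isolates the true input --- the isotropy theorem for affine reflection groups --- together with the field-reduction trick that extends it from $\R$ to arbitrary characteristic-zero fields.
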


\begin{proof}
Observe that, since pullback by $\tilde{m}|_{\tilde{T}}$ gives an isomorphism on the root lattice, there is a canonical isomorphism between the affine Weyl groups for $G$ and $\tilde{G}$ such that the isomorphism $\mathrm{Lie}(\tilde{T})^* \xleftarrow{\sim} \LTd$ induced by $\tilde{m}|_{\tilde{T}}$ is $\Waff$-equivariant. Therefore the groups of (1)-(4) for the group $G$ agree with their natural analogues where $G$ is replaced with $\tilde{G}$. Therefore, we may assume that $G$ is the product of a simply connected semisimple group and a torus.

We first show that $\overline{\Waff_x} = W_{[x]}$. Choose a $\mathbb{Q}$-basis for $K$, which induces a direct sum decomposition for the $\mathbb{Q}$-vector space $\LTd(K)$. In particular, we may write $x = \sum_{i = 0}^dq_ix_i$, where $x_0$ lies in the $\mathbb{Q}$-span of the roots and the $x_i$ for $i > 0$ do not and $q_i \in \mathbb{Q}$. Note that $W$ preserves this direct sum decomposition. 


Assume $w \in \overline{\Waff_x}$. Then there is some $\mu \in \rootlattice$ such that the affine Weyl group element $\tau_{\mu}w$ fixes $x$, where $\tau_{\mu} \in \Waff$ denotes the element which translates by $\mu$. Using the $W$-invariant decomposition above, we see that if $i > 0$, $wx_i = x_i$ and that $\tau_{\mu}wx_0 = x_0$. In particular, $wx - x = wx_0 - x_0 = -\mu$ lies in the root lattice, so we see $w \in W_{[x]}$. Conversely, assume that $w \in W_{[x]}$. Then because $wx - x \in \rootlattice$, the direct sum decomposition above implies that $wx_i = x_i$ for all $i > 0$ and $wx_0 - x_0 = \nu$ for some $\nu \in \rootlattice$. This in particular implies that $(\tau_{-\nu}, w)x = x$, so that $(\tau_{-\nu}, w) \in \Waff_x$ and thus $w \in \overline{\Waff_x}$. 

To show $W^{\bullet}_{[x]} = W_{[x]}$, we first note that for $w \in W$ and $x \in \LTd(K)$, $w\cdot x - x = wx - x + w\rho - \rho$. Therefore, our desired equality follows from the fact that $w\rho - \rho$ lies in the root lattice. In fact, this is more generally true for any $\nu \in \Lambda$ and any $w \in W$. This is because if $w$ is a simple reflection associated to a root $\alpha$ because $w\nu - \nu = -\langle \alpha^{\vee}, \nu \rangle\alpha$, which lies in the root lattice by containment \labelcref{Lambda Contained in Natural Pairing with Coroots is Integers}, and follows for general $w \in W$ by writing $w = s_1...s_r$ and noting that
\raggedbottom
$$w\nu - \nu = (s_1(s_2...s_r\nu) - s_2...s_r\nu) + (s_2(s_3...s_r\nu) - s_3...s_r\nu) + ... + (s_1\nu - \nu)$$

\noindent and that the Weyl group preserves $\Lambda$.  

Finally, the equality $W^{[x]} = W_{[x]}$ and the final statement are precisely the content of \cite[Satz 1.3]{JantzenModulnMitEinehHochstenGewicht} when $G$ is semisimple: its proofs naturally extend to the case where $G$ is a product of a semisimple group and a torus. 
\end{proof}

\begin{Definition}\label{Integral Weyl Group Definition}
For a fixed $x \in \LTd(K)$, we will refer to the group $W_{[x]}$ as the \textit{integral Weyl group} associated to $x$. 
\end{Definition}

\begin{Remark}\label{Integral Weyl Group is Subgroup of Affine Weyl Group Remark}
Note that we may also realize the integral Weyl group associated to $x$ as a subgroup of $\Waff$. In other words, in the notation of \cref{Equivalent Notions of Integral Weyl Group}, we have that the quotient map induces an isomorphism $\Waff_x \cong \overline{\Waff_x}$. This follows because if $w \in \overline{\Waff_x}$, there exists a unique $\mu \in \rootlattice$ such that $(\tau_{\mu}, w)x = x$; the existence is given in the second paragraph of the proof of \cref{Equivalent Notions of Integral Weyl Group} by taking $\mu := -\nu$, and the uniqueness follows trivially since $wx + \mu_1 = wx + \mu_2$ if and only if $\mu_1 = \mu_2$. 
\end{Remark}

\subsection{The Union of Graphs of a Finite Closed Subset of $\Waff$ is Finite Flat}
We now prove the following possibly known result which we were unable to locate a reference for:

\begin{Proposition}\label{FiniteFlat}
Let $S \subseteq W^{\text{aff}}$ denote a finite, closed (see \cref{ClosedSubsetDefinition}) subset of the \text{affine} Weyl group $\Waff$, and let $\pi_S: \Gamma_S \xhookrightarrow{} \LTd \times \LTd$ denote the union of graphs of those $w \in S$. Then the projection map onto the first factor $s: \Gamma_S \to \LTd$ is finite flat. 
\end{Proposition}
As we will see, \cref{FiniteFlat} will follow from a known extension of Borel's theorem on the cohomology of the flag variety, which we include an alternate proof of in the appendix. Using the results of the appendix, we prove \cref{FiniteFlat} after proving the following lemma: 

\begin{Lemma}\label{Picking a Field-Valued Point Splits Off Different Waff Orbits}
Fix some finite subset $S \subseteq \Waff$, and fix some $\lambda \in \LTd(\fieldpossiblydifferentfromgroundfield)$ for $\fieldpossiblydifferentfromgroundfield$ a (classical) field. Then, if $\Waff_{\lambda} \leq \Waff$ denotes the stabilizer of $\lambda$, the coproduct of inclusions induces a canonical isomorphism:

$$\Gamma_S \times_{\LTd} \Spec(\fieldpossiblydifferentfromgroundfield) \simeq \coprod_{w\Waff_{\lambda}  \in \Waff/\Waff_{\lambda} } (\Gamma_{S \cap w\Waff_{\lambda} } \times_{\LTd} \Spec(\fieldpossiblydifferentfromgroundfield))$$
\end{Lemma}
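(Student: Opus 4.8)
The plan is to reduce the statement to a Chinese Remainder Theorem computation, after separating the graphs coming from distinct cosets by a single affine open of $\LTd$. First I would base change along $\Spec K \to \Spec k$: this replaces $\lambda$ by a $K$-rational, hence closed, point of $\LTd_K$ and is compatible with the formation of graphs, of $\Gamma_S$, and of the fiber product, so I may assume $\lambda$ is a closed point and work over the ground field. I would also record that the coproduct on the right is effectively finite: since $S$ is finite, only the finitely many cosets $w\Waff_\lambda$ meeting $S$ contribute, the rest giving $\Gamma_\emptyset = \emptyset$. Writing $S = \coprod_j S_j$ with $S_j := S \cap w_j\Waff_\lambda$ for representatives $w_1,\dots,w_m$ of the cosets meeting $S$, the combinatorial input is that the graphs of $w,w' \in S$ pass through the same point of the fiber over $\lambda$ precisely when $w\lambda = w'\lambda$, i.e. when $w^{-1}w' \in \Waff_\lambda$, i.e. when $w,w'$ lie in the same coset $w\Waff_\lambda$. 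Graphs within a single $S_j$ collide at one point of the fiber (this is exactly where the nonreduced structure lives, and is why we must \emph{not} try to separate them), while graphs from different $S_j$ hit distinct points.

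Next I would produce an affine open $U \subseteq \LTd$ containing $\lambda$ over which the graphs attached to different cosets are genuinely disjoint. For $w \in S_j$ and $w' \in S_{j'}$ with $j \neq j'$, the locus where $\operatorname{graph}(w)$ and $\operatorname{graph}(w')$ meet is $\{x \in \LTd : wx = w'x\}$, a closed affine subspace (the solution set of the linear system $(A-B)x = b-a$ determined by the two affine transformations). Since $w^{-1}w' \notin \Waff_\lambda$ we have $w\lambda \neq w'\lambda$, so $\lambda$ does not lie in this locus; its complement is therefore an open set containing $\lambda$. Intersecting the complements over the finitely many cross-coset pairs and then shrinking to a principal open, I obtain an affine open $U \ni \lambda$ such that $\operatorname{graph}(w)|_U$ and $\operatorname{graph}(w')|_U$ are disjoint whenever $w,w'$ lie in different $S_j$.

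Finally I would run the Chinese Remainder Theorem over $U$. The preimage $s^{-1}(U) = \Gamma_S \times_{\LTd} U$ is affine (a fiber product of affines), and the closed subschemes $Z_j := \Gamma_{S_j} \times_{\LTd} U$ have pairwise disjoint supports by the choice of $U$; in an affine scheme an empty scheme-theoretic intersection forces the defining ideals to be comaximal. Since localization is exact it commutes with the finite intersection $I_S = \bigcap_j I_{S_j}$, so comaximality yields, via the Chinese Remainder Theorem, an isomorphism $\Gamma_S \times_{\LTd} U \cong \coprod_j Z_j$. Base changing this isomorphism along $\lambda \colon \Spec K \to U$, and using that base change commutes with finite coproducts together with $\Spec K \to U \hookrightarrow \LTd$ being $\lambda$, gives
\[
\Gamma_S \times_{\LTd} \Spec K \;\simeq\; \coprod_{j} \bigl(\Gamma_{S_j} \times_{\LTd} \Spec K\bigr) \;=\; \coprod_{w\Waff_\lambda \in \Waff/\Waff_\lambda} \bigl(\Gamma_{S \cap w\Waff_\lambda} \times_{\LTd} \Spec K\bigr),
\]
which is the assertion. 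The main obstacle is the scheme-theoretic (rather than merely set-theoretic) nature of this decomposition: one must separate the graphs across distinct cosets while deliberately keeping together those within a single coset, and then verify that passing to the open $U$ genuinely upgrades set-theoretic disjointness to comaximality of ideals, so that the Chinese Remainder Theorem applies and the local Artinian structure at each $w_j\lambda$ is faithfully recorded by $\Gamma_{S_j}$.
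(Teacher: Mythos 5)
Your proof is correct and takes essentially the same route as the paper's: partition $S$ into the pieces $S_j = S \cap w_j\Waff_{\lambda}$ cut out by the cosets of the stabilizer, observe that the cross-coset collision loci $\{wx = w'x\}$ are closed and avoid $\lambda$, restrict to the open complement $U$ where the union of graphs splits coset-by-coset, and then base change along $\lambda\colon \Spec(K) \to U$. The only substantive difference is that you justify the scheme-theoretic splitting $\Gamma_S \times_{\LTd} U \cong \coprod_j (\Gamma_{S_j} \times_{\LTd} U)$ explicitly via comaximality of ideals and the Chinese Remainder Theorem, a step the paper asserts without proof, so your write-up fills in a detail rather than changing the argument.
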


\begin{proof}
Enumerate the $\Waff$-orbit of $\lambda$ as $\{\lambda_i\}_{i \in \mathbb{N}}$, we can partition the set $S = \cup_{i}S_i$ where all $w_i \in S_i$ have the property that $w_i\lambda = \lambda_i$. Let $Z_S$ denote the closed subscheme of $\LTd$ given by $\bigcup_{i, j \in \mathbb{N}, i \neq j}\bigcup_{u \in S_i, v \in S_j}\{w_i = w_j\}$; since $S$ is finite, $Z_S$ can be expressed as a finite union of nonempty Zariski closed subsets. Furthermore, we see that $\lambda$ factors through the open complement $U_s$ of $Z_S$. Therefore, since $\Gamma_S \times_{\LTd} \Spec(\fieldpossiblydifferentfromgroundfield) \simeq (\Gamma_S \times_{\LTd} U_S) \times_{U_S} \text{Spec}(K)$ and, by definition of $U_S$, we have that $(\Gamma_S \times_{\LTd} U_S)$ can be written as a disjoint union of subschemes indexed by each $S_i$, we see that our induced map is an isomorphism. 
\end{proof}

\begin{proof}[Proof of \cref{FiniteFlat}]
The map $(\text{act}, \text{proj}): S \times \LTd \to \Gamma_S$ is dominant. Therefore $\mathcal{O}(\Gamma_S)$ is finitely generated as a $\Symt$-module, as it is a submodule of the finitely generated $\Symt$-module $\mathcal{O}(S \times \LTd)$ and $\Symt$ is Noetherian. It therefore remains to show that $s$ is flat. Since $s$ is a surjective morphism onto a smooth variety and every irreducible component of $\Gamma_S$ has the same dimension as $\LTd$, it is enough to show that for all points $x$ in $\LTd$, the length of the fiber at $x$ as an $\mathcal{O}_{\Gamma_{S}}$ module is independent of $x$ by a standard result in commutative algebra, see for example \cite[Corollary 18.17]{Eis}. Choose some field-valued $x \in \LTd(K)$. We enumerate the distinct $K$-points in the fiber of $x$, say, $(x, y_1), ..., (x, y_m) \in \Gamma_S$, and write $y_i = w_ix$ for the minimal such $w_i \in W^{\text{aff}}$. Write $\Waff = M\Waff_x$ where $M$ denotes the set of minimal elements of each coset in $\Waff/\Waff_x$. We thus have an isomorphism given by \cref{Picking a Field-Valued Point Splits Off Different Waff Orbits}

$$\Gamma_S \times_{\LTd} \{x\} \cong \coprod_{i}(\Gamma_{w_{i}\Waff_x \cap S} \times_{\LTd} \{x\})$$

\noindent where $i$ ranges over a finite index set and $w_i \in M$. Note that each $\Gamma_{w_{i}\Waff_x \cap S} \times_{\LTd} \{x\}$ is isomorphic via left multiplication by $w_i^{-1}$ to the fiber $\Gamma_{S'} \times_{\LTd} \Spec(K)$ for some subset $S' \subseteq \Waff_x$. 

Furthermore, we claim this $S'$ is a closed subset of $\Waff_x$ in the sense of \cref{ClosedSubsetDefinition}. To see this, recall the canonical isomorphism $\Waff_x \cong \overline{\Waff_x} = W^{[x]}$ given in \cref{Integral Weyl Group is Subgroup of Affine Weyl Group Remark} and \cref{Equivalent Notions of Integral Weyl Group}. If $u \in S'$ and $u' \leq^x u$ (where $\leq^x$ refers to the ordering on the Coxeter group $W^{[x]}$), we have that by \cite[Lemma 2.5]{LusztigMonodromicSystemsonAffineFlagManifolds}, $w_iu' \leq w_iu$, so that the fact that $S$ is closed gives that $u' \in S'$, and so $S'$ is closed in $W^{[x]}$. Therefore we can apply \cref{ExplicitBasis} to see that the total length of the fiber is $\sum_{i = 1}^m|S \cap w_iW_x| = |S|$, which is independent of $x$. 
\end{proof}

\subsection{Fiber of Map to Coarse Quotient of Affine Weyl Group at Field-Valued Point}
Fix some field $K/k$ and let $x \in \LTd(K)$. We will use the following lemma later:  

\begin{Lemma}\label{Orbit-Stabilizer for Union of All Graphs}
The multiplication map induces a left $\Waff$-equivariant isomorphism 
\raggedbottom
\[\eta: \Waff \mathop{\times}\limits^{\Waff_{x}} (\Gamma_{\Waff_{x}} \times_{\LTd} \Spec(K)) \xrightarrow{\sim} \Gamma_{\Waff} \times_{\LTd} \Spec(K)\]

\noindent where $\Gamma_{\Waff_{x}}$ is the union of graphs of the subgroup $\Waff_{x} \leq \Waff$, which admits a map $\Gamma_{\Waff_{x}} \to \LTd$ given by the projection $(wx, x) \mapsto x$.  
\end{Lemma}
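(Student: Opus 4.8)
The plan is to reduce the statement to the coproduct decomposition of \cref{Picking a Field-Valued Point Splits Off Different Waff Orbits}, exploiting that the source of $\eta$ is, since $\Waff$ is discrete, literally a disjoint union indexed by cosets. First I would record the geometric input underlying the whole argument: the left $\Waff$-action on $\Gamma_{\Waff} \subseteq \LTd \times \LTd$ by $g \cdot (z_1, z_2) = (g z_1, z_2)$ carries the graph of $w$ to the graph of $gw$, so that $g \cdot \Gamma_{\Waff_{x}} = \Gamma_{g\Waff_{x}}$, and the action preserves the fiber over $x$ since it leaves the second coordinate untouched. Restricting along $\Waff_{x} \leq \Waff$ equips $\Gamma_{\Waff_{x}} \times_{\LTd} \Spec(K)$ with the $\Waff_{x}$-action used to form the balanced product, and the assignment $(g, \xi) \mapsto g \cdot \xi$ descends to $\eta$ because $(gh) \cdot (h^{-1}\xi) = g \cdot \xi$ for $h \in \Waff_{x}$; left $\Waff$-equivariance is immediate from associativity of the action.

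Next I would observe that $\Waff_{x}$ is finite—by \cref{Integral Weyl Group is Subgroup of Affine Weyl Group Remark} and \cref{Equivalent Notions of Integral Weyl Group} it is isomorphic to the integral Weyl group $W^{x} \leq W$—and that $\Waff$ is discrete, so the balanced product is the coproduct $\coprod_{g\Waff_{x} \in \Waff/\Waff_{x}} (\Gamma_{\Waff_{x}} \times_{\LTd} \Spec(K))$, one summand per coset upon choosing representatives. On the target side I would decompose $\Gamma_{\Waff} \times_{\LTd} \Spec(K)$ by applying \cref{Picking a Field-Valued Point Splits Off Different Waff Orbits} with $\lambda = x$. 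Since that lemma is stated only for finite $S$, I would first write $\Gamma_{\Waff} = \operatorname{colim}_{S} \Gamma_{S}$ as the filtered colimit over finite closed subsets $S \subseteq \Waff$, each $\Gamma_{S}$ a scheme by \cref{FiniteFlat}, note that base change along $x \colon \Spec(K) \to \LTd$ commutes with this filtered colimit (finite limits commute with filtered colimits of prestacks), and pass to the colimit in the decomposition $\Gamma_{S} \times_{\LTd} \Spec(K) \simeq \coprod_{w\Waff_{x}} (\Gamma_{S \cap w\Waff_{x}} \times_{\LTd} \Spec(K))$. Taking $S \to \Waff$ yields $\Gamma_{\Waff} \times_{\LTd} \Spec(K) \simeq \coprod_{g\Waff_{x} \in \Waff/\Waff_{x}} (\Gamma_{g\Waff_{x}} \times_{\LTd} \Spec(K))$, indexed by the same set of cosets.

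Finally I would match the two decompositions summand by summand. On the summand indexed by a coset $g\Waff_{x}$, with chosen representative $g$, the map $\eta$ is by construction $\xi \mapsto g \cdot \xi$, which identifies $\Gamma_{\Waff_{x}} \times_{\LTd} \Spec(K)$ with $g \cdot (\Gamma_{\Waff_{x}} \times_{\LTd} \Spec(K)) = \Gamma_{g\Waff_{x}} \times_{\LTd} \Spec(K)$; this is an isomorphism because the action of $g$ is invertible with inverse the action of $g^{-1}$. Since $\eta$ thus restricts to an isomorphism on each summand and respects the common coset indexing, it is an isomorphism, and the equivariance was already noted. I expect the only real subtlety to be the bookkeeping in the second step: one must verify that forming the fiber over the field-valued point $x$ commutes with the filtered colimit defining the ind-scheme $\Gamma_{\Waff}$, and that the infinite coproduct appearing on the target is indexed compatibly with the cosets indexing the balanced product. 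The summand-wise verification in the last step is then purely formal.
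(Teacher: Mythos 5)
Your proposal is correct and follows essentially the same route as the paper: both decompose $\Gamma_{\Waff} \times_{\LTd} \Spec(K)$ into a disjoint union over cosets of $\Waff/\Waff_x$ via \cref{Picking a Field-Valued Point Splits Off Different Waff Orbits} and then check that the multiplication map restricts to an isomorphism on each summand, with equivariance following formally from equivariance of multiplication. The only difference is that you spell out details the paper leaves implicit — the filtered-colimit bookkeeping in passing from finite $S$ to all of $\Waff$, and the explicit coproduct description of the balanced product over the discrete group — which are exactly the right details to verify.
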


\begin{proof}
Using the results of \cref{Picking a Field-Valued Point Splits Off Different Waff Orbits} for every finite subset $S \subseteq \Waff$, we obtain an isomorphism 
\raggedbottom
\[\Gamma_{\Waff} \times_{\LTd} \Spec(K) \simeq \bigsqcup (\Gamma_{w\Waff_{x}} \times_{\LTd} \Spec(K))\]

\noindent where the right hand side ranges over the cosets of $\Waff/\Waff_{x}$. Therefore we may check that the multiplication map induces an isomorphism at each open subset $\Gamma_{w\Waff_{x}} \times_{\LTd} \Spec(K)$. However, we see that taking the fiber product of the above multiplication map by this open subset, we obtain the map
\raggedbottom
\[w\Waff_{x} \mathop{\times}\limits^{\Waff_{x}} (\Gamma_{\Waff_{x}} \times_{\LTd} \Spec(K)) \xrightarrow{} \Gamma_{w\Waff_{x}} \times_{\LTd} \Spec(K)\]

\noindent which is an isomorphism. Furthermore, $\eta$ is $\Waff$-equivariant because the multiplication map is $\Waff$-equivariant. 
\end{proof}

\section{The Coarse Quotient for the Affine Weyl Group}\label{The Coarse Quotient for Affine Weyl Group Section}
In this section, we define the coarse quotient $\LTd\sslash\Wext$ and determine some of its basic properties. After briefly reviewing the notion of a groupoid object in \cref{Groupoid Objects and Higher Algebra Subsection}, we define this quotient in general in \cref{Coarse Quotient Definition Section}, and show that sheaves on $\LTd\sslash\Wext$ are equivalently $\Wext$-equivariant sheaves satisfying conditions analogous to those of \cref{Equivalent Conditions to Be in Essential Image of Pullback of GeneralStackToGITQuotient Pullback} in \cref{CoxeterDescentSubsection}. 

In order to compute the category of sheaves on $\LTd\sslash\Wext$, we will use the fact that the map $\LTd \to \LTd\sslash\Wext$ is an ind-finite flat cover. Using the computations of \cref{The Union of Graphs of Affine Weyl Group is Ind Finite-Flat Section}, we show this in \cref{WAffIndFiniteFlat}. 


\subsection{Groupoid Objects and Higher Algebra}\label{Groupoid Objects and Higher Algebra Subsection}
In this section, we briefly recall the notion of groupoid objects in the higher categorical context. For a thorough treatment of groupoid objects in the (1,1) and $(\infty, 1)$ setting, see \cite[Section 6.1.2]{LuHTT}. 




In derived algebraic geometry, the notion of a classical groupoid is replaced with the notion of a \textit{groupoid} or an $\infty$\text{-groupoid} of an $(\infty, 1)$-category. We use an equivalent formulation of the notion of a groupoid (see \cite[Section 6.1.2.6]{LuHA}):

\begin{Definition}\label{Definition of Infinity Groupoid}
A \textit{groupoid object} of an $(\infty, 1)$ category $\C$ is a simplicial object $U$ of $\C$ such that for every $n \geq 0$ and every partition $[n] = S \cup S'$ such that $S \cap S'$ consists of a single element $s$, the canonical map $U([n]) \to U(S) \times_{U(\{s\})} U(S')$ is an equivalence (and, in particular, the latter term is defined). 
\end{Definition}

We now recall the basic results about groupoid objects in the $(\infty, 1)$ category of spaces Spc, which immediately implies the analogous fact for the category of prestacks since the results of \cite[Chapter 5.1.2]{LuHTT} show that limits and colimits in functor categories are computed termwise:

\begin{Proposition}\cite[Corollary 6.1.3.20]{LuHTT}\label{GroupoidResults}
Every groupoid object of Spc is \textit{effective}. In particular, if $U_{\bullet}$ is a groupoid object of Spc, then a geometric realization $U_{-1}$ of it exists and the canonical map $U_1 \to U_0 \times_{U_{-1}} U_0$ is an equivalence. 
\end{Proposition}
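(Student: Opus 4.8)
The plan is to take as the definition of \emph{effective} that the augmented simplicial object $U_\bullet \to U_{-1}$, where $U_{-1} := |U_\bullet| = \operatorname{colim}_{\Delta^{\mathrm{op}}} U_\bullet$ (which exists since $\mathrm{Spc}$ is cocomplete), is the \v{C}ech nerve of the canonical map $\epsilon\colon U_0 \to U_{-1}$; equivalently, that the comparison maps $U_n \to U_0 \times_{U_{-1}} \cdots \times_{U_{-1}} U_0$ are equivalences. My first reduction would be to observe that both $U_\bullet$ and the \v{C}ech nerve $\check{C}(\epsilon)_\bullet$ are groupoid objects in the sense of \cref{Definition of Infinity Groupoid}, and that a map of groupoid objects which is an equivalence in degrees $0$ and $1$ is automatically an equivalence in every degree, since the Segal decomposition $X_n \simeq X_1 \times_{X_0} \cdots \times_{X_0} X_1$ builds all higher terms from the bottom two. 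Hence it suffices to produce the degree-one equivalence $U_1 \simeq U_0 \times_{U_{-1}} U_0$, which is precisely the ``in particular'' clause.

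The engine I would use is \emph{descent} for $\mathrm{Spc}$. Via straightening/unstraightening \cite{LuHTT}, for any $X \in \mathrm{Spc}$ there is an equivalence $\mathrm{Spc}_{/X} \simeq \mathrm{Fun}(X,\mathrm{Spc})$ under which pullback along a map corresponds to restriction; since $\mathrm{Fun}(-,\mathrm{Spc})$ carries colimits to limits, the functor $X \mapsto \mathrm{Spc}_{/X}$ does too. Applied to $U_{-1} = \operatorname{colim}_{\Delta^{\mathrm{op}}} U_n$, this yields the descent equivalence
\[ \mathrm{Spc}_{/U_{-1}} \xrightarrow{\ \sim\ } \lim_{[n]\in\Delta}\mathrm{Spc}_{/U_n}, \qquad (W\to U_{-1}) \longmapsto \bigl(W\times_{U_{-1}}U_n \to U_n\bigr)_{[n]}. \]
Everything else is then formal bookkeeping against this equivalence.

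The key computation would be the décalage. I would consider the shifted simplicial object $[n]\mapsto U_{n+1}$ equipped with the augmentation to $U_\bullet$ coming from the extra face maps $U_{n+1}\to U_n$. The groupoid conditions of \cref{Definition of Infinity Groupoid} are exactly the statement that the naturality squares of this augmentation are Cartesian, so the décalage defines an object of $\lim_{[n]\in\Delta}\mathrm{Spc}_{/U_n}$; under the descent equivalence it corresponds to some $P \to U_{-1}$. The extra degeneracy of the décalage identifies its total space $P \simeq \operatorname{colim}_{\Delta^{\mathrm{op}}} U_{n+1} \simeq U_0$, with augmentation the canonical $\epsilon\colon U_0 \to U_{-1}$. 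Feeding $\epsilon$ back through the \emph{forward} descent functor, its associated Cartesian section is $[n]\mapsto U_0\times_{U_{-1}}U_n$; comparing this with the décalage section gives natural equivalences $U_0\times_{U_{-1}}U_n \simeq U_{n+1}$, and setting $n=0$ produces $U_0\times_{U_{-1}}U_0 \simeq U_1$, closing the loop with the first paragraph.

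The hard part, and the only place genuine input enters, is descent for $\mathrm{Spc}$ in the second paragraph: effectivity of groupoid objects is one of the $\infty$-categorical Giraud axioms, so I cannot shortcut it by invoking ``$\mathrm{Spc}$ is an $\infty$-topos'' without circularity (verifying that axiom, even for presheaf topoi, ultimately reduces to this very statement). Granting descent, the remaining care is twofold: checking that the décalage squares really are Cartesian, which is a direct unwinding of the groupoid conditions, and verifying that the chain of identifications above assembles into the \emph{canonical} comparison map, so that I obtain the structural equivalence $U_1 \xrightarrow{\sim} U_0\times_{U_{-1}}U_0$ rather than merely an abstract one. I expect these compatibility checks to be the fiddliest, if least conceptually deep, steps.
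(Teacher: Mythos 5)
The paper offers no proof of \cref{GroupoidResults}: the statement is imported verbatim from \cite[Corollary 6.1.3.20]{LuHTT}, so the only meaningful comparison is with Lurie's own argument, and your proposal is essentially that argument---descent for $\mathrm{Spc}$ obtained from straightening/unstraightening, the d\'ecalage construction whose Cartesian naturality squares encode the groupoid condition of \cref{Definition of Infinity Groupoid}, and the Segal-condition reduction of effectivity to the single degree-one equivalence $U_1 \simeq U_0 \times_{U_{-1}} U_0$. Your sketch is correct, your circularity worry is resolved exactly as you indicate (descent for $\mathrm{Spc}$ is independent input, not a consequence of effectivity), and the points you flag as fiddly---verifying the d\'ecalage squares are Cartesian and that the resulting equivalence is the canonical comparison map---are precisely the details carried out in the cited source.
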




\subsection{The Coarse Quotient}\label{Coarse Quotient Definition Section}
We now wish to apply the general framework above to our specific case of interest. Let $\Gamma_{\Waff}$ denote the union of graphs of each $w \in \Waff$, where we recall by graph we mean the closed subschemes $\LTd \xhookrightarrow{(w, \text{id})} \LTd \times \LTd$. More precisely, we view $\Gamma_{\Waff}$ as the classical ind-scheme given by the union of graphs $\Gamma_S$ given by the intersection of ideals (see \cref{UnionNotProductExample}) for $S$ a finite closed subset of the affine Weyl group. In particular $\Gamma_{\Waff}$ is naturally an ind-closed subscheme of $\LTd \times \LTd$. 

\subsubsection{Definition of the Coarse Quotient}\label{Definition of Coarse Quotient Subsubsection} Let $\Gamma_{\Wext}$ denote the balanced product $\Wext \mathop{\times}\limits^{\Waff} \Gamma_{\Waff}$. Note that $\Gamma_{\Wext}$ admits canonical maps to $s,t: \Gamma_{\Wext} \to \LTd$ given by the maps $s(\sigma, (w\lambda, \lambda)) = \lambda$ and $t(\sigma, (w\lambda, \lambda)) = \sigma w\lambda$. We now record the following general fact which remains valid if the subgroup $\Waff \leq \Wext$ is replaced with any closed subgroup $H \leq \tilde{H}$ and $\Gamma_{\Waff}$ is replaced with any $\Gamma$ with an $H$-action:

\begin{Proposition}\label{Action Map is Projection Map}
We have an isomorphism $\Gamma_{\Wext} \xrightarrow{\sim} \Wext/\Waff \times \Gamma_{\Waff}$ in such a way that the following diagram commutes: 

    \begin{equation*}
  \xymatrix@R+2em@C+2em{
\Gamma_{\Wext} := \Wext \mathop{\times}\limits^{\Waff} \Gamma_{\Waff} \ar[dr]_{t} \ar[r]^{\sim}  & \Wext/\Waff \times \Gamma_{\Waff} \ar[d]_{\text{proj}}\\
& \LTd
  }
 \end{equation*}
\end{Proposition}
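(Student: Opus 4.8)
The plan is to recognize the claimed map as the standard \emph{shearing} (or untwisting) isomorphism for a balanced product, and then to check that it carries the target map $t$ to the asserted projection. I would first isolate the general statement: given a discrete group $\tilde{H}$ with subgroup $H$ and an object $\Gamma$ carrying an $H$-action, one has an isomorphism $\tilde{H}\times^{H}\Gamma \xrightarrow{\sim}\tilde{H}/H\times\Gamma$. Since $\tilde{H}/H$ is a discrete set of points, I would choose a set-theoretic section, i.e.\ a system of coset representatives $\{\sigma_c\}_{c\in\tilde{H}/H}$ with $\sigma_c\in\tilde{H}$. Every point of $\tilde{H}\times^{H}\Gamma$ then has a \emph{unique} representative of the form $[\sigma_c,\gamma]$: writing an arbitrary $\sigma$ as $\sigma=\sigma_{[\sigma]}h$ with $h\in H$ determined, the relation $[\sigma,\gamma]=[\sigma h^{-1},h\gamma]$ lets me normalise the first coordinate to $\sigma_{[\sigma]}$. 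This yields the map $\Phi\colon[\sigma,\gamma]\mapsto([\sigma],\,h\cdot\gamma)$, with inverse $(c,\gamma)\mapsto[\sigma_c,\gamma]$.

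The key steps, in order, are: (i) check $\Phi$ is well defined, i.e.\ independent of the representative $(\sigma,\gamma)$ of a class in the balanced product — replacing $(\sigma,\gamma)$ by $(\sigma h'^{-1},h'\gamma)$ changes $h$ to $hh'^{-1}$ and $\gamma$ to $h'\gamma$, so the second coordinate $h\cdot\gamma$ is unchanged; (ii) check $\Phi$ and the representative-insertion map are mutually inverse, which is immediate from uniqueness of the normalised representative; (iii) specialise to $\tilde{H}=\Wext$, $H=\Waff$, $\Gamma=\Gamma_{\Waff}$, where the relevant $\Waff$-action on $\Gamma_{\Waff}$ is the one through the target (first) coordinate, $h\cdot(w\lambda,\lambda)=(hw\lambda,\lambda)$; and (iv) verify the triangle. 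For the last step I would simply compute $t$ on a normalised representative: since the $\Waff$-action is through the target, $t([\sigma_c h,\gamma])=\sigma_c h\cdot t_{\Gamma_{\Waff}}(\gamma)=\sigma_c\cdot t_{\Gamma_{\Waff}}(h\cdot\gamma)$, so under $\Phi$ the map $t$ becomes $(c,\gamma')\mapsto\sigma_c\cdot t_{\Gamma_{\Waff}}(\gamma')$: the projection onto $\Gamma_{\Waff}$ followed by its target map, placed into the $c$-th graph by the representative $\sigma_c$, which is exactly the asserted $\mathrm{proj}$.

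A conceptually cleaner, choice-free way to see the same thing — and the perspective I would foreground — is to use the action of all of $\Wext$ on the \emph{target} copy of $\LTd$. This action does not preserve $\Gamma_{\Waff}$ (it carries the graph of $w\in\Waff$ to the graph of $\sigma w$, which lies in $\Waff$ only when $\sigma\in\Waff$), but the assignment $[\sigma,\gamma]\mapsto([\sigma],\sigma\cdot\gamma)$ is manifestly $\Waff$-invariant and identifies $\Gamma_{\Wext}$ with the disjoint union, over cosets $c$, of the graphs in the coset $c$, sitting inside the union of all $\Wext$-graphs; translating the graphs of each coset $\tau_{\bar\omega}\Waff$ back to $\Gamma_{\Waff}$ recovers $\Wext/\Waff\times\Gamma_{\Waff}$, using $\Wext=\characterlatticeforT\rtimes W\supseteq\rootlattice\rtimes W=\Waff$ and the identification $\Wext/\Waff\cong\characterlatticeforT/\rootlattice$. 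Under this description the target map is the second projection followed by the target, which is the cleanest form of commutativity.

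The one genuine subtlety — and the step I expect to require the most care — is exactly this last bookkeeping: because $\Wext$ does not act on $\Gamma_{\Waff}$ itself, the shear must either be phrased with explicit coset representatives (making the isomorphism depend on a harmless choice, adequate for an existence statement) or be routed through the union of all $\Wext$-graphs. Keeping straight how the target map transforms when one translates each coset's graphs back to $\Gamma_{\Waff}$ — namely that $t$ picks up the representative $\sigma_c$ — is where an error would most easily creep in, so I would write that verification out on normalised representatives as in step (iv). Everything else is formal manipulation of balanced products.
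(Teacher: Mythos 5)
Your overall route---untwisting the balanced product---is the same as the paper's, whose entire proof is the pair of formulas $(\tilde{w},g)\mapsto(\tilde{w},\tilde{w}g)$ and $(\tilde{w},g')\mapsto(\tilde{w},\tilde{w}^{-1}g')$, and your verification that the sheared map $\Phi$ is a well-defined isomorphism is fine. The genuine gap is in step (iv), where you declare that the map $(c,\gamma')\mapsto\sigma_c\cdot t_{\Gamma_{\Waff}}(\gamma')$ ``is exactly the asserted $\mathrm{proj}$.'' It is not: $\mathrm{proj}$ is the canonical map that forgets the factor $\Wext/\Waff$ and records the target coordinate of $\Gamma_{\Waff}$, and in particular it cannot depend on a choice of coset representatives, whereas your map differs from it on the component indexed by $c$ by post-composition with the nontrivial automorphism $\sigma_c$ of $\LTd$. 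So with the isomorphism $\Phi$ you construct, the triangle in the proposition literally fails to commute, and asserting that it does amounts to silently replacing $\mathrm{proj}$ by a representative-dependent twist of it. Your ``choice-free'' paragraph claims the opposite (untwisted) conclusion, while your closing paragraph reverts to the twisted one; these cannot both be outcomes of the same identification, and the tension is exactly the unresolved point.

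The missing ingredient is normality of $\Waff$ in $\Wext$. Since $w\omega-\omega\in\rootlattice$ for every $\omega\in\characterlatticeforT$ and $w\in W$ (this is established in the proof of \cref{Equivalent Notions of Integral Weyl Group}), one has $\sigma\Waff\sigma^{-1}=\Waff$ for all $\sigma\in\Wext$. Consequently the automorphism $\mathrm{id}\times\sigma_c$ of $\LTd\times\LTd$, i.e.\ $(\mu,\lambda)\mapsto(\mu,\sigma_c\lambda)$, carries the union of graphs of the coset $c=\sigma_c\Waff$ isomorphically onto $\Gamma_{\Waff}$ (it sends $\mathrm{graph}(\tilde{u})$ to $\mathrm{graph}(\tilde{u}\sigma_c^{-1})$, and $\tilde{u}\sigma_c^{-1}\in\sigma_c\Waff\sigma_c^{-1}=\Waff$), while leaving the first (target) coordinate untouched. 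Composing your untwisting map $[\tilde{w},\gamma]\mapsto([\tilde{w}],\tilde{w}\cdot_{\mathrm{target}}\gamma)$ with these source-coordinate translations---equivalently, composing your $\Phi$ with the automorphism of $\Wext/\Waff\times\Gamma_{\Waff}$ that acts on the component $c$ by the diagonal action of $\sigma_c$, which preserves $\Gamma_{\Waff}$ again by normality---produces an isomorphism under which $t$ becomes the honest projection, precisely because $t$ only reads the target coordinate and this identification never moves it. This is the step your write-up flags as delicate but never carries out; without it, what you have proved (an isomorphism under which $t$ becomes a coset-twisted projection) is strictly weaker than the proposition, although it would still suffice for the flatness applications downstream.
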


\begin{proof}
The isomorphism is induced by the map $(\tilde{w}, g) \mapsto (\tilde{w}, \tilde{w}g)$, and the inverse map is induced by $(\tilde{w}, g') \mapsto (\tilde{w}, \tilde{w}^{-1}g')$. 
\end{proof}

Using the formulas of \labelcref{Groupoid Formulas}, we may construct a groupoid object $\Gamma_{\bullet}$ over $\LTd$ such that $\Gamma_1 \simeq \Gamma_{\Wext}$--specifically, set $\Gamma_n := \Gamma_{\Wext} \times_{\LTd} ... \times_{\LTd} \Gamma_{\Wext}$. However, working with this object in general would be a technical nusance, a priori: all of our fiber products are inherently derived. However, the following proposition allows us to argue that the $\Gamma_n$ systematically remain in the classical (1,1)-categorical setting. 

\begin{Proposition}\label{GroupoidIndFiniteFlat}
The map $s: \Gamma_{\Wext} \to \LTd$ is ind-finite flat. 
\end{Proposition}

\begin{proof}
By \cref{Action Map is Projection Map}, it suffices to show that the map is ind-finite flat flat when $\Wext = \Waff$. In this case, $\Waff$ is a Coxeter group and has a length function $\ell$. For each positive integer $m$ set $S_m = \{w \in W^{\text{aff}} : \ell(w) \leq m\}$,  and let $G_m$ denote the union of graphs of those $w \in S_m$. Then we clearly have $\Gamma = \cup_mG_m$, and so it suffices to show that $s_m: G_m \to \LTd$ is finite flat. However, this follows from \cref{FiniteFlat}. 
\end{proof}

\begin{Corollary}\label{All Objects of Groupoid Gammabullet Are Classical Indschemes}
Each $\Gamma_{i}$ is a filtered colimit of classical schemes.
\end{Corollary}





\begin{Definition}\label{Definition of Coarse Quotient}
    We define the \textit{coarse quotient} as the prestack $\LTd\sslash\Wext$ obtained from the geometric realization of $\Gamma_{\bullet}$.
\end{Definition}

Observe that, when $\Wext = \Waff$, \cref{Definition of Coarse Quotient} recovers \cref{Definition of Coarse Quotient for Coxeter Group}. Observe moreover that \cref{GroupoidResults} immediately implies the natural analogue of our desirada \labelcref{Union of Graphs Is Product Over GIT Quotient} from the introduction:

\begin{Proposition}\label{SpecificGroupoidResults}
We have a canonical equivalence $\Gamma_{\Wext} \xrightarrow{\sim} \LTd \times_{\LTd\sslash\Wext} \LTd$. 
\end{Proposition}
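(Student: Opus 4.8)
The plan is to deduce this as an immediate formal consequence of the theory of groupoid objects, exactly as the sentence preceding the statement advertises. By construction $\LTd\sslash\Wext$ is the geometric realization of the simplicial prestack $\Gamma_{\bullet}$, i.e. the term $\Gamma_{-1}$, while $\Gamma_0 = \LTd$ and $\Gamma_1 = \Gamma_{\Wext}$. So the assertion is precisely the ``Segal/effectiveness'' identification of $\Gamma_1$ with the fiber product of $\Gamma_0$ with itself over the realization, and the whole point is that the substantive work has already been done.

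First I would invoke \cref{Various Products of Union of Graphs of Waff Forms a Groupoid Object}, which establishes that $\Gamma_{\bullet}$ is a genuine groupoid object in the category of prestacks in the sense of \cref{Definition of Infinity Groupoid}. Next I would recall that prestacks form an $(\infty,1)$-category of functors valued in the category of spaces $\text{Spc}$, in which all limits and colimits --- in particular geometric realizations and fiber products --- are computed objectwise (i.e. by evaluation at each test affine scheme). This reduces every assertion about $\Gamma_{\bullet}$ to the corresponding pointwise assertion in $\text{Spc}$. Then I would apply \cref{GroupoidResults}: every groupoid object of $\text{Spc}$ is effective, and for such an object $U_{\bullet}$ the canonical map $U_1 \to U_0 \times_{U_{-1}} U_0$ is an equivalence. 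Because this comparison map is built functorially from the simplicial data, and because it is tested objectwise, it is an equivalence of prestacks if and only if it is an equivalence of spaces after evaluation at each affine scheme; the latter is exactly \cref{GroupoidResults}. Specializing to $\Gamma_{\bullet}$ with $\Gamma_{-1} = \LTd\sslash\Wext$ then yields the desired canonical equivalence $\Gamma_{\Wext} \xrightarrow{\sim} \LTd \times_{\LTd\sslash\Wext} \LTd$.

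The honest remark is that there is no hard step left here: all the difficulty was front-loaded into verifying that $\Gamma_{\bullet}$ really is a groupoid object, which in turn rested on the ind-finite-flatness of \cref{GroupoidIndFiniteFlat} (ultimately \cref{FiniteFlat}) being used to keep the iterated fiber products $\Gamma_n$ classical, so that they could be computed correctly and the Segal-type squares verified. Consequently, the only point meriting explicit care in writing this out is the transfer of \cref{GroupoidResults} from $\text{Spc}$ to prestacks, which I would handle in one sentence by noting that limits, colimits, and the canonical comparison maps in a functor category are all formed pointwise, so effectiveness of groupoid objects passes from the target $\text{Spc}$ to the functor category of prestacks verbatim.
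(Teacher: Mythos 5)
Your proposal is correct and follows exactly the paper's own route: the paper defines $\LTd\sslash\Wext$ as the geometric realization of the groupoid object $\Gamma_{\bullet}$ established in \cref{Various Products of Union of Graphs of Waff Forms a Groupoid Object}, and then deduces the proposition immediately from \cref{GroupoidResults}, using precisely the observation (stated just before \cref{GroupoidResults}) that effectiveness transfers from $\mathrm{Spc}$ to prestacks because limits and colimits in functor categories are computed termwise. Your write-up just makes these same steps slightly more explicit.
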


This has the following corollary, which should be compared to the finite group case in \cref{Quotient map iso}. 

\begin{Corollary}\label{Soergel-Ish Isomorphism}
For a fixed $x \in \LTd(K)$, we have canonical isomorphisms
\raggedbottom
\[\characterlatticeforT\backslash \LTd \times_{\LTd\sslash\Wext} \Spec(K) \simeq \characterlatticeforT\backslash \Gamma_{\Wext} \times_{\LTd} \Spec(K) \xrightarrow{\sim} W \mathop{\times}\limits^{W_{[x]}} \Spec(C_{W_{[x]}})\]

\noindent where $C_{[x]}$ denotes the coinvariant algebra for the integral Weyl group (\cref{Integral Weyl Group Definition}) of $x$. 
\end{Corollary}

\begin{proof}
The first equivalence follows from direct application of \cref{SpecificGroupoidResults}; we now show the second. Note that the inclusion $\Waff \xhookrightarrow{} \Wext$ induces an isomorphism $\rootlattice\backslash \Waff \xrightarrow{\sim} \characterlatticeforT\backslash \Wext$. Therefore we obtain canonical isomorphisms
\raggedbottom
\[\characterlatticeforT\backslash \Gamma_{\Wext} :=  \characterlatticeforT\backslash \Wext \mathop{\times}\limits^{\Waff} \Gamma_{\Waff} \xleftarrow{\sim} \rootlattice\backslash \Waff \mathop{\times}\limits^{\Waff} \Gamma_{\Waff} \simeq \rootlattice\backslash \Gamma_{\Waff}\]
\noindent over $\LTd$ with respect to the (right) projection map. Furthermore, by \cref{Orbit-Stabilizer for Union of All Graphs}, we see:
\raggedbottom
\[\rootlattice\backslash \Gamma_{\Waff} \times_{\LTd} \Spec(K) \xleftarrow{\sim}  \rootlattice\backslash \Waff \mathop{\times}\limits^{\Waff_{x}} (\Gamma_{\Waff_{x}} \times_{\LTd} \Spec(K))\]

\noindent so that, composing with the quotient map $\rootlattice\backslash \Waff \xrightarrow{\sim} W$, we see the quotient map induces an isomorphism
\raggedbottom
\[\rootlattice\backslash \Waff \mathop{\times}\limits^{\Waff_{x}} (\Gamma_{\Waff_{x}} \times_{\LTd} \Spec(K)) \xrightarrow{\sim} W \mathop{\times}\limits^{W_{[x]}} \Gamma_{\Waff_{x}} \times_{\LTd} \Spec(K)\]

\noindent where we identify the image of the stabilizer $\Waff_{x}$ with the integral Weyl group as in \cref{Integral Weyl Group is Subgroup of Affine Weyl Group Remark}. Therefore, since we may identify 
\raggedbottom
\[W \mathop{\times}\limits^{W_{[x]}} \Gamma_{\Waff_{x}} \times_{\LTd} \Spec(K) \simeq W \mathop{\times}\limits^{W_{[x]}} \Spec(C_{x}) \cong W \mathop{\times}\limits^{W_{[x]}} \Spec(C_{[x]})\]

\noindent where the first isomorphism follows from the definition of $C_{[x]}$ and the second isomorphism is given by translation. 
\end{proof}


We once and for all let $\quotientmapforcoarsequotient: \LTd \to \LTd\sslash\Wext$ denote the quotient map. 

\begin{Corollary}\label{WAffIndFiniteFlat}
The map $\quotientmapforcoarsequotient: \LTd \to \LTd\sslash\Wext$ is ind-finite flat. 
\end{Corollary}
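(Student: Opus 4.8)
The plan is to deduce ind-finite flatness of $\quotientmapforcoarsequotient$ from that of the source map $s$ via base change and fpqc descent. By construction $\LTd\sslash\Wext$ is the geometric realization of the groupoid object $\Gamma_{\bullet}$ with $\Gamma_0 = \LTd$ and $\Gamma_1 = \Gamma_{\Wext}$, and \cref{SpecificGroupoidResults} identifies $\Gamma_{\Wext}$ with the self-fiber product $\LTd \times_{\LTd\sslash\Wext} \LTd$. Under this identification the two projections out of the fiber product are exactly the source and target maps $s, t \colon \Gamma_{\Wext} \to \LTd$. In particular the base change of $\quotientmapforcoarsequotient$ along itself is the map $s$, which is ind-finite flat by \cref{GroupoidIndFiniteFlat}.

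To upgrade this to the statement that $\quotientmapforcoarsequotient$ itself is ind-finite flat, I would argue fpqc-locally on the target. Fix an affine scheme $T$ with a map $\phi \colon T \to \LTd\sslash\Wext$; the goal is to see that $\LTd \times_{\LTd\sslash\Wext} T \to T$ is ind-finite flat. Since $\LTd\sslash\Wext$ is a geometric realization whose zeroth term is $\LTd$, the map $\quotientmapforcoarsequotient$ is an effective epimorphism, so after passing to an fpqc cover $T' \to T$ the restriction $\phi|_{T'}$ lifts to a map $\tilde{\phi}\colon T' \to \LTd$. Composing Cartesian squares and using \cref{SpecificGroupoidResults} then gives
\[
\LTd \times_{\LTd\sslash\Wext} T' \;\simeq\; \Gamma_{\Wext} \times_{\LTd, s} T',
\]
so the projection to $T'$ is the base change of $s$ along $\tilde{\phi}$, hence ind-finite flat. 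As ind-finite flatness is stable under base change and descends along the fpqc cover $T' \to T$, the map $\LTd \times_{\LTd\sslash\Wext} T \to T$ is ind-finite flat, as desired.

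The main obstacle is to make the descent step precise without circularity: asserting that $\quotientmapforcoarsequotient$ is an fpqc cover appears to presuppose its flatness, which is the very conclusion. The resolution is that the only flatness input needed is that of the self-base-change $s$, supplied by \cref{GroupoidIndFiniteFlat} (which in turn rests on \cref{FiniteFlat}), together with the fact that $\Gamma_{\bullet}$ is an \emph{effective} groupoid object of prestacks (\cref{Various Products of Union of Graphs of Waff Forms a Groupoid Object} and \cref{GroupoidResults}). Thus ind-finite flatness of the face maps of the effective groupoid propagates to the quotient map by flat descent, and no independent verification that $\quotientmapforcoarsequotient$ is flat is required.
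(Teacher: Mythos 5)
Your overall architecture is the same as the paper's: both arguments reduce the ind-finite flatness of $\quotientmapforcoarsequotient$ to that of the projection $\Gamma_{\Wext} \to \LTd$ supplied by \cref{GroupoidIndFiniteFlat}, via the identification $\Gamma_{\Wext} \simeq \LTd \times_{\LTd\sslash\Wext} \LTd$ of \cref{SpecificGroupoidResults}. The paper compresses the remaining step into the assertion that the property may be checked after base change along $\quotientmapforcoarsequotient$ itself, whereas you try to spell that step out; it is precisely in the spelling out that a gap appears.

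The gap is your appeal to fpqc descent. You lift $\phi$ only after an fpqc cover $T' \to T$ and then conclude by asserting that ind-finite flatness "descends along the fpqc cover $T' \to T$." That descent assertion is not free: ind-finite flatness of $\LTd \times_{\LTd\sslash\Wext} T \to T$ means there exists a presentation of the fiber product as a filtered colimit, under closed embeddings, of finite flat $T$-schemes, and a descent datum on $\LTd \times_{\LTd\sslash\Wext} T'$ has no reason to preserve any chosen such presentation; descending ind-schematicness along a faithfully flat cover requires a genuine argument (e.g.\ cofinally rearranging the presentation to make it descent-compatible) which neither you nor the paper provides. Fortunately the detour is unnecessary, and removing it removes the gap. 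The quotient $\LTd\sslash\Wext$ is a geometric realization in the category of \emph{prestacks}, i.e.\ presheaves of spaces, where colimits are computed objectwise; since $\pi_0$ preserves colimits, for every affine $T$ the set $\pi_0\bigl((\LTd\sslash\Wext)(T)\bigr)$ is the coequalizer of $\pi_0(\Gamma_{\Wext}(T)) \rightrightarrows \pi_0(\LTd(T))$. Hence every $T$-point of $\LTd\sslash\Wext$ lifts, up to equivalence, to a $T$-point of $\LTd$ with no cover at all: in the presheaf $\infty$-topos, "effective epimorphism" means objectwise surjectivity on $\pi_0$, not fpqc-local surjectivity. Taking $T' = T$, your own computation $\LTd \times_{\LTd\sslash\Wext} T \simeq \Gamma_{\Wext} \times_{\LTd} T$ (base change of a projection, which is ind-finite flat by \cref{GroupoidIndFiniteFlat} together with \cref{Action Map is Projection Map}) finishes the proof; the circularity you worried about never arises, and no descent statement is needed. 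Had the quotient been defined as an fpqc sheaf, your argument would genuinely require the unproved descent step --- which is exactly why the paper proves this corollary at the level of prestacks, before any sheafification.
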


\newcommand{\colim}{\text{colim}}
\begin{proof}
Observe that if we are given an $S$-point $x: S \to \LTd\sslash\Wext$ for some $S \in \mathrm{Sch}^{\mathrm{aff}}$, then $x$ factors through some point of $\LTd(S)$: indeed, this follows since there is an equivalence \begin{equation}\label{Colim Iso}\mathrm{Maps}_{\text{PreStk}}(S, \LTd\sslash\Wext) \xleftarrow{\sim} \colim_{\Delta^{op}} \mathrm{Maps}_{\text{PreStk}}(S, \Gamma_{\bullet})\end{equation} given by the obvious map\footnote{This map will not be an equivalence if we replace $\LTd\sslash\Wext$ with its sheafification with respect to some Grothendieck topology; see \cref{Comparison to Classical GIT Construction Subsubsection} for further discussion on the comparison between $\LTd\sslash\Wext$ and its various sheafifications.} and so we obtain an induced equivalence \[\pi_0\mathrm{Maps}(S, \LTd\sslash\Wext) \xleftarrow{\sim} \colim_{\Delta^{op}} \pi_0(\mathrm{Maps}(S, \Gamma_{\bullet}))\] since $\pi_0$ is a left adjoint and thus commutes with colimits. Since the map \[\pi_0(\mathrm{Maps}(S, \LTd)) = \pi_0(\mathrm{Maps}(S, \Gamma_{0})) \to  \colim_{\Delta^{op}} \pi_0(\mathrm{Maps}(S, \Gamma_{\bullet}))\] of sets is obviously surjective, choosing a point in the preimage of this map gives our desired lift.

Therefore, to show that the map $S \times_{\LTd\sslash \Wext} \LTd \to S$ is ind-finite flat, it suffices to show the map $\LTd \times_{\LTd\sslash\Wext} \LTd \to \LTd$ is finite flat. However, by \cref{GroupoidResults}, our based changed map is canonically $t: \Gamma \to \LTd$, which is ind-finite flat by \cref{GroupoidIndFiniteFlat}. 
\end{proof}

In what follows, our main goal is to study the category $\IndCoh(\LTd\sslash\Wext)$. To show IndCoh is defined on $\LTd\sslash\Wext$, we prove the following: 

\begin{Proposition}\label{Affine Coarse Quotient is 0-Coconnective LFT}
Any prestack which is a countable discrete set of points is $0$-coconnective locally finite type (lft). Furthermore, the respective quotient prestacks $\LTd/\characterlatticeforT$ and $\LTd\sslash\Wext$ are $0$-coconnective lft prestacks.
\end{Proposition}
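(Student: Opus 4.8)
The plan is to reduce the entire statement to two facts and then feed the relevant prestacks into them. The first fact is that classical affine schemes of finite type over $k$ are $0$-coconnective lft; this is foundational in the Gaitsgory--Rozenblyum framework \cite{GaRoI} and I will simply cite it. The second, which is the engine of the argument, is that the properties of being $0$-coconnective and of being lft are each stable under arbitrary colimits of prestacks. For $0$-coconnectivity this holds because the full subcategory of $0$-coconnective prestacks is the essential image of left Kan extension along the fully faithful inclusion of classical affine schemes into all affine schemes; since this left Kan extension $L$ is a fully faithful left adjoint, one has $\text{colim}_i L(X_i) \simeq L(\text{colim}_i X_i)$, so its essential image is closed under colimits in $\text{PreStk}$. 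For lft the cleanest route is to note that, once restricted to $0$-coconnective (classical) prestacks, being lft amounts to commuting with filtered colimits of classical rings, a condition manifestly preserved under colimits because filtered colimits commute with all colimits. Granting these, it suffices to exhibit each of the three prestacks as a colimit of classical finite-type affine schemes.

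For the first assertion, I would write a countable discrete set of points as the coproduct $\coprod_{n \in \mathbb{N}} \Spec(k)$ taken in $\text{PreStk}$. Each $\Spec(k)$ is a classical affine scheme of finite type, hence $0$-coconnective lft, and a countable coproduct is a colimit, so the closure property immediately yields the claim. Only countability is ever needed, since the index sets arising below --- namely $\rootlattice$, $\characterlatticeforT$, and $\Wext/\Waff$ --- are all countable.

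For $\LTd/\characterlatticeforT$, I would use its presentation as the geometric realization of the bar construction $[n] \mapsto \characterlatticeforT^{\times n} \times \LTd$ for the translation action. Since $\characterlatticeforT \cong \Z^{\dim \LTd}$ is a countable discrete set, each term is a countable coproduct $\coprod_{\characterlatticeforT^{\times n}} \LTd$ of copies of the classical finite-type affine scheme $\LTd \cong \mathbb{A}^{\dim \LTd}$; by the first assertion each such term is $0$-coconnective lft, and the geometric realization, being a colimit, is as well. For $\LTd\sslash\Wext$, recall that it is \emph{defined} as the geometric realization of the groupoid object $\Gamma_\bullet$ with $\Gamma_n := \biggroupoid \times_{\LTd} \cdots \times_{\LTd} \biggroupoid$. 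By \cref{Various Products of Union of Graphs of Waff Forms a Groupoid Object}, each $\Gamma_n$ is a classical ind-scheme, written as a countable filtered union of fiber products $G_{i_1} \times_{\LTd} \cdots \times_{\LTd} G_{i_\ell}$ of the finite-type classical schemes $G_m$ (the union of graphs of $\{w : \ell(w) \le m\}$, a closed subscheme of $\LTd \times \LTd$). Thus each $\Gamma_n$ is a countable colimit of classical finite-type schemes, hence $0$-coconnective lft, and one further geometric realization completes the proof.

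The main obstacle is conceptual rather than computational: it is the load-bearing requirement that the fiber products $\Gamma_n$ genuinely remain classical and of finite type rather than acquiring derived structure. This is precisely where the flatness of the source map $s \colon \biggroupoid \to \LTd$ is indispensable, for without it the derived fiber products would in general fail to be $0$-coconnective; this input is exactly what \cref{FiniteFlat} and \cref{GroupoidIndFiniteFlat} provide, and it is already assembled in \cref{Various Products of Union of Graphs of Waff Forms a Groupoid Object}. The remaining ingredient --- closure of the two classes of prestacks under colimits --- is standard in the setting of \cite{GaRoI}, so once both are in hand all three claims follow formally by rewriting each prestack as a colimit of finite-type classical affine schemes.
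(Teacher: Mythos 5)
Your proof is correct and follows essentially the same route as the paper's: both arguments rest on the fact that $0$-coconnective (lft) prestacks form the essential image of a left Kan extension, hence are closed under colimits, combined with exhibiting each prestack in question as a colimit of classical finite-type affine schemes. The only difference is that you spell out details the paper leaves implicit --- the bar-construction presentation of $\LTd/\characterlatticeforT$ and the role of \cref{FiniteFlat} in keeping the fiber products $\Gamma_n$ classical --- which is a faithful elaboration rather than a different argument.
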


\begin{proof}
The $n$-coconnective prestacks (respectively, the $n$-coconnective lft prestacks) are those prestacks which are in the essential image of a certain left adjoint--namely, the left Kan extension of the inclusion of $n$-coconnective affine schemes (respectively, the left Kan extension of the inclusion of $n$-coconnective finite type affine schemes in the sense of \cite[Chapter 2, Section 1.5]{GaRoI}). Therefore, the condition of being $n$-coconnective and the condition of being $n$-coconnective lft are conditions closed under colimits. Classical finite type affine schemes are $0$-coconnective lft. Therefore since this condition is closed under colimits, then any colimit of classical schemes is 0-coconnective lft. In turn, since $\LTd\sslash\Wext$ is a certain colimit of 0-coconnective lft prestacks by \cref{All Objects of Groupoid Gammabullet Are Classical Indschemes}, we see that $\LTd\sslash\Wext$ is a 0-coconnective lft prestack. 
\end{proof}

\subsubsection{Comparison to Classical GIT Construction}\label{Comparison to Classical GIT Construction Subsubsection}
In \cref{Comparison to Classical GIT Construction Subsubsection}, we discuss a potential alternate definitions of the coarse quotient: the fppf sheafification of the prestack $\LTd\sslash\Wext$ of \cref{Definition of Coarse Quotient}. The results and discussion in \cref{Comparison to Classical GIT Construction Subsubsection} will not be used elsewhere in this paper.

One advantage of taking the fppf sheafification of $\LTd\sslash\Wext$ as the definition of the coarse quotient is that this more closely parallels the construction of $\LTd\sslash W$ in a way we now make precise. By using essentially identical methods as in the construction of the groupoid object $\Gamma_{\bullet}$ above, one can construct a finite flat groupoid $\Gamma_{\bullet}^{\mathrm{fin}}$ for which $\Gamma_{1}^{\mathrm{fin}}$ is the union of graphs of the Weyl group acting on $\LTd$, and define a prestack $\mathcal{Q}$ by taking the geometric realization of this groupoid object in the category of prestacks. Let $L$ denote the sheafification functor with respect to the fppf topology. Recall that the affine scheme $\LTd\sslash W$ is a sheaf with respect to the fppf topology \cite[Lemma 2.2.2.13]{ToenVezzosiHomotopicalAlgebraicGeometryIIGeometricStacksandApplications}, \cite[Proposition 2.4.2]{GaRoI}, and so there is an induced map \begin{equation}\label{Sheafification map}
    L(\mathcal{Q}) \to \LTd\sslash W
\end{equation} of fppf stacks.

\begin{Proposition}
    The map \labelcref{Sheafification map} is an isomorphism.
\end{Proposition}

\begin{proof} One can directly compute that the map $\Phi_0: \LTd \to \LTd\sslash W$ is a finite flat morphism using \cref{For Pseudo-Reflection Groups Acting Faithfully on Vector Space The GIT Quotient is Flat}. In particular, $\Phi_0$ is closed. Moreover, $\Phi_0$ is dominant by \cite[Lemma 29.8.8]{StacksProject} since the associated map of rings is injective. Therefore $\Phi_0$ is a flat surjection. It is also a morphism of finite type schemes by say \cref{Chevalley-Shephard-Todd Theorem}. Therefore $\Phi_0$ is a fppf surjection. Thus by \cite[Lemma 2.3.8]{GaRoI} (see also \cite[Section 2.3.2]{GaRoI}) the induced morphism obtained from the \v Cech nerve of $\Phi_0$ is an equivalence. Using \cref{Union of Graphs is Product of LieTs over GIT Quotient}, it is not difficult to show that the \v Cech nerve of $\Phi_0$ is equivalently given by the groupoid $\Gamma_W^{\bullet}$ constructed above. We deduce that the map $\mathcal{Q} \to \LTd\sslash W$ is a fppf equivalence. Now, since a map is a fppf equivalence if and only if the indued map on (fppf) sheafifications is an isomorphism (see \cite[Section 2.3.4]{GaRoI} and \cite[Section 2.3.2]{GaRoI}) we see that the induced map \labelcref{Sheafification map} is an equivalence, as desired. 
\end{proof}

One disadvantage of taking the fppf sheafification $L(\LTd\sslash\Wext)$ of $\LTd\sslash\Wext$ as the definition of the coarse quotient is that it is not immediately clear whether $L(\LTd\sslash\Wext)$ remains 0-coconnective or locally of finite type; in other words, it is not clear that the analogue of the second sentence in \cref{Affine Coarse Quotient is 0-Coconnective LFT} holds after sheafification.\footnote{On the other hand, one can use the fact that sheafification commutes with finite limits \cite[Lemma 2.3.6]{GaRoI} (see also \cite[Section 2.3.2]{GaRoI} to deduce all other results of \cref{Definition of Coarse Quotient Subsubsection} when $\LTd\sslash\Wext$ is replaced with its sheafification.} In particular, it is not immediate that $L(\LTd\sslash\Wext)$ is locally almost of finite type, and thus the category $\IndCoh(L(\LTd\sslash\Wext))$ need not be defined since the functor $\IndCoh(-)$ is only defined on prestacks which are locally almost of finite type. On the other hand, since $\IndCoh(-)$ satisfies fppf descent, if $L(\LTd\sslash\Wext))$ were locally of almost finite type, one can use Lurie's result that $\IndCoh(-)$ satisfies fppf descent \cite[Theorem 8.3.2]{GaiIndCoh} to prove that pullback by the sheafification map gives an equivalence \[\IndCoh(L(\LTd\sslash\Wext)) \xrightarrow{\sim} \IndCoh(\LTd\sslash\Wext).\] Since our main goal is to study the category $\IndCoh(\LTd\sslash\Wext)$, we content ourselves with taking the (non-sheafified) \cref{Definition of Coarse Quotient} as our definition of $\LTd\sslash\Wext$. 
\subsubsection{Base Change for Quotients}
We now proceed to study the category $\IndCoh(\LTd\sslash\Wext)$. By \cref{Affine Coarse Quotient is 0-Coconnective LFT}, we obtain that $\IndCoh(\LTd\sslash\Wext)$ is defined. Moreover,  \cref{WAffIndFiniteFlat} implies that $\quotientmapforcoarsequotient$ is in particular ind-schematic and ind-proper surjection between laft prestacks. We now record two immediate consequences of this fact.

\begin{Corollary}\label{Definition of Pushing Forward by Quotientmapforcoarsequotient}\cite[Chapter 3, Section 0.1.2]{GaRoII}
The functor $\quotientmapforcoarsequotient^!$ admits a left adjoint satisfying base change against $!$-pullbacks.
\end{Corollary}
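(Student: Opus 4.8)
The plan is to deduce the corollary directly from the general theory of $\IndCoh$ for ind-proper, ind-schematic morphisms between laft prestacks developed in \cite[Chapter 3]{GaRoII}. The essential input, recorded just above, is that \cref{WAffIndFiniteFlat} exhibits $\quotientmapforcoarsequotient$ as an ind-schematic, ind-proper surjection between laft prestacks. First I would spell out why the hypotheses of the cited result hold: ind-finiteness of $\quotientmapforcoarsequotient$ yields ind-schematicity, since finite morphisms are affine and in particular schematic, and likewise yields ind-properness, since finite morphisms are proper; and \cref{Affine Coarse Quotient is 0-Coconnective LFT} ensures that both $\LTd$ and $\LTd\sslash\Wext$ are laft.

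With these hypotheses verified, the general machinery produces a left adjoint $\quotientmapforcoarsequotient_*$ to $\quotientmapforcoarsequotient^!$, together with the base change isomorphisms: for any map $g \colon Z \to \LTd\sslash\Wext$ of laft prestacks, writing $g' \colon Z' \to \LTd$ and $\quotientmapforcoarsequotient' \colon Z' \to Z$ for the two remaining edges of the resulting Cartesian square, the Beck--Chevalley transformation $g^! \circ \quotientmapforcoarsequotient_* \to \quotientmapforcoarsequotient'_* \circ (g')^!$ is an equivalence. This is precisely the content of the corollary.

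Since the statement is imported wholesale from the cited reference, the only genuine task is the hypothesis check above, and I expect no serious obstacle. Were one to unwind the argument instead, the decisive step would be the reduction of the ind-proper ind-schematic case to that of a proper morphism of finite-type schemes, for which the adjunction $(f_*, f^!)$ and its compatibility with base change are built into the definition of $\IndCoh$ via Grothendieck--Serre duality; the ind-structure is then accommodated by expressing the maps in play as filtered colimits of proper morphisms of schemes and checking that the left adjoints and base change equivalences assemble compatibly. In our setting this reduction is transparent: by \cref{SpecificGroupoidResults} the base change of $\quotientmapforcoarsequotient$ along itself is the ind-finite-flat map $t \colon \biggroupoid \to \LTd$ lying over a scheme (\cref{GroupoidIndFiniteFlat}), and ind-schematicity permits testing the general base change statement after this single cover, where only the schematic theory is needed.
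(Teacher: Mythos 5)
Your proposal is correct and matches the paper's treatment: the paper records this corollary with no further argument, as an immediate consequence of \cite[Chapter 3, Section 0.1.2]{GaRoII} once \cref{WAffIndFiniteFlat} and \cref{Affine Coarse Quotient is 0-Coconnective LFT} establish that $\quotientmapforcoarsequotient$ is an ind-schematic, ind-proper surjection between laft prestacks, which is precisely the hypothesis check you perform. Your closing paragraph on unwinding the reduction to the schematic case is extra commentary beyond what the paper does, but it does not affect the argument.
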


We denote this left adjoint by $\quotientmapforcoarsequotient_*^{\IndCoh}$.

\begin{Corollary}\cite[Chapter 3, Section 0.4.3]{GaRoII}
If $\quotientmapforcoarsequotient: \LTd \to \LTd\sslash\Wext$ denotes the quotient map, the pullback functor $\quotientmapforcoarsequotient^!: \IndCoh(\LTd\sslash\Wext) \to \IndCoh(\LTd)$ induces an equivalence 
\raggedbottom
\[\quotientmapforcoarsequotient^!: \IndCoh(\LTd\sslash\Wext) \xrightarrow{\sim} \text{Tot}(\IndCoh(\mathfrak{t}^{\ast \bullet}))\]

\noindent where $\mathfrak{t}^{\ast \bullet}$ is the cosimplicial prestack given by the \v{C}ech nerve of $\quotientmapforcoarsequotient$. 
\end{Corollary}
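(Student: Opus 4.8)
The plan is to obtain this equivalence as a direct instance of ind-proper descent for $\IndCoh$, as established in \cite[Chapter 3]{GaRoII}; all of the geometric input required has in fact already been assembled. First I would collect the hypotheses: by \cref{WAffIndFiniteFlat} the quotient map $\quotientmapforcoarsequotient$ is ind-finite flat, hence in particular an ind-schematic, ind-proper surjection, and by \cref{Affine Coarse Quotient is 0-Coconnective LFT} both $\LTd$ and $\LTd\sslash\Wext$ are laft prestacks. This is precisely the setting in which the cited descent theorem applies and identifies $\IndCoh(\LTd\sslash\Wext)$ with the totalization of the cosimplicial category obtained by applying $\IndCoh$, with $!$-pullbacks, to the \v{C}ech nerve $\mathfrak{t}^{\ast \bullet}$ of $\quotientmapforcoarsequotient$.

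The engine behind this is the $\infty$-categorical descent criterion for an augmented cosimplicial category whose coface functors admit left adjoints compatible with base change, and I would spell out why its hypotheses hold here. By \cref{Definition of Pushing Forward by Quotientmapforcoarsequotient}, the functor $\quotientmapforcoarsequotient^!$ admits a left adjoint $\quotientmapforcoarsequotient_*^{\IndCoh}$ satisfying base change against $!$-pullbacks; the same holds for every coface map of $\IndCoh(\mathfrak{t}^{\ast \bullet})$, since each map occurring in the \v{C}ech nerve is itself a base change of the ind-proper map $\quotientmapforcoarsequotient$. This base change is exactly the Beck--Chevalley condition, and $\quotientmapforcoarsequotient^!$ is moreover conservative because $\quotientmapforcoarsequotient$ is an ind-proper surjection, so an object of $\IndCoh(\LTd\sslash\Wext)$ whose $!$-pullback to $\LTd$ vanishes must itself vanish. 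With conservativity and base change in hand, the descent theorem exhibits the augmentation $\quotientmapforcoarsequotient^!$ as a limit diagram, i.e. yields the asserted equivalence onto $\text{Tot}(\IndCoh(\mathfrak{t}^{\ast \bullet}))$.

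It then remains only to recognize the cosimplicial prestack. By definition $\mathfrak{t}^{\ast \bullet}$ is the \v{C}ech nerve of $\quotientmapforcoarsequotient$, and its first term $\mathfrak{t}^{\ast 1} \simeq \LTd \times_{\LTd\sslash\Wext} \LTd$ is canonically $\Gamma_{\Wext}$ by \cref{SpecificGroupoidResults}; the effectivity of the groupoid $\Gamma_{\bullet}$ recorded in \cref{GroupoidResults} identifies $\mathfrak{t}^{\ast \bullet}$ with $\Gamma_{\bullet}$ in every simplicial degree. The hard part, and the reason I would cite the prestack-level statement of \cite[Chapter 3]{GaRoII} rather than a scheme-theoretic descent result, is precisely that the descent theorem must be known in the generality of laft prestacks and for a map that is only \emph{ind}-proper (a filtered colimit of proper maps); once that theorem is granted, every remaining verification is one of the formal points above.
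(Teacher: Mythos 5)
Your proposal is correct and takes essentially the same route as the paper: the paper records this Corollary with no argument beyond citing \cite[Chapter 3, Section 0.4.3]{GaRoII}, having just established (via \cref{WAffIndFiniteFlat} and \cref{Affine Coarse Quotient is 0-Coconnective LFT}) that $\quotientmapforcoarsequotient$ is an ind-schematic, ind-proper surjection between laft prestacks, which is exactly the hypothesis verification you carry out. Your further unpacking of the descent theorem's internal mechanism (Beck--Chevalley compatibility from \cref{Definition of Pushing Forward by Quotientmapforcoarsequotient} together with conservativity of $\quotientmapforcoarsequotient^!$) and the identification of the \v{C}ech nerve with $\Gamma_{\bullet}$ via \cref{SpecificGroupoidResults} and \cref{GroupoidResults} are accurate but go beyond what the paper spells out.
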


\begin{Remark}
The definition of the coarse quotient $\LTd\sslash\Wext$ was inspired by the definition of the coarse quotient given when $\Wext$ is replaced by a Coxeter group in \cite[Section 2.7.3]{BZG}.
\end{Remark}

\subsubsection{t-Structure for Sheaves on Quotients}\label{t-strucure on Sheaves on Stack and Coarse Quotient Subsubsection}
\newcommand{\quotientFromModCharacterLatticeToSslashWext}{q}
\newcommand{\mapfromGammatoGammaModCharacters}{\phi}
\newcommand{\mapfromGammaModCharactersToLTd}{\mathring{t}}
We use the following proposition define a $t$-structure on the category $\IndCoh(\LTd\sslash\Wext)$ and determine some of its basic properties in \cref{t-Exactness of all Quotientmapforcoarsequotient functors}. 
\begin{Proposition}\label{!-Pullback to Union of Graphs from Lie T and pushforward are t-Exact}
The maps $s^!: \IndCoh(\LTd) \to \IndCoh(\Gamma_{\Waff})$ and $s_*^{\IndCoh}$ are $t$-exact.
\end{Proposition}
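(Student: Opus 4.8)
The plan is to reduce to the finite-flat pieces exhausting $\Gamma_{\Waff}$ and then transport $t$-exactness across the colimit defining the ind-scheme. Recall from the proof of \cref{GroupoidIndFiniteFlat} that $\Gamma_{\Waff} = \operatorname{colim}_m G_m$, where $G_m$ is the union of graphs of those $w \in \Waff$ with $\ell(w) \le m$, the insertions $\iota_m: G_m \hookrightarrow \Gamma_{\Waff}$ (and $\iota_{mn}: G_m \hookrightarrow G_n$) are closed embeddings, and each $s_m := s \circ \iota_m: G_m \to \LTd$ is finite flat by \cref{FiniteFlat}. First I would establish the finite-level statement: for every $m$, both $s_m^!$ and $(s_m)_*^{\IndCoh}$ are $t$-exact. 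The $t$-exactness of $(s_m)_*^{\IndCoh}$ holds because $s_m$ is finite. For $s_m^!$, the point is that $s_m$ is flat with Artinian (hence Cohen--Macaulay) fibers, so the relative dualizing complex $\omega_{s_m} := s_m^!(\mathcal{O}_{\LTd})$ is a locally free sheaf placed in cohomological degree $0$; combining the standard relation for a flat map $s_m^!(-) \simeq s_m^{\ast}(-) \otimes \omega_{s_m}$ with the $t$-exactness of the flat pullback $s_m^{\ast}$ and of tensoring with a degree-$0$ locally free sheaf yields $t$-exactness of $s_m^!$.

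The second step is to transport these statements across the colimit using the resolution of the identity $\operatorname{id}_{\IndCoh(\Gamma_{\Waff})} \simeq \operatorname{colim}_m (\iota_m)_*^{\IndCoh} \iota_m^!$, which is valid because $\IndCoh(\Gamma_{\Waff}) \simeq \operatorname{colim}_m \IndCoh(G_m)$ along the fully faithful $t$-exact pushforwards $(\iota_{mn})_*^{\IndCoh}$ (see \cite{GaRoII}). Since $\iota_m^! \circ s^! \simeq s_m^!$, applying this formula gives $s^!(\mathcal{F}) \simeq \operatorname{colim}_m (\iota_m)_*^{\IndCoh} s_m^!(\mathcal{F})$, and likewise $s_*^{\IndCoh}(\mathcal{G}) \simeq \operatorname{colim}_m (s_m)_*^{\IndCoh} \iota_m^!(\mathcal{G})$, using both $s_*^{\IndCoh} \circ (\iota_m)_*^{\IndCoh} \simeq (s_m)_*^{\IndCoh}$ and that $s_*^{\IndCoh}$ preserves colimits (it is the left adjoint of $s^!$ coming from the ind-proper adjunction, cf. \cref{Definition of Pushing Forward by Quotientmapforcoarsequotient}).

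From these two formulas the four required inclusions follow from the behaviour of the $t$-structure under colimits. For the left $t$-exact (coconnective-preserving) directions I would use that each $(\iota_m)_*^{\IndCoh}$ is $t$-exact, that $\iota_m^!$ is left $t$-exact as the right adjoint of the $t$-exact $(\iota_m)_*^{\IndCoh}$, that $s_m^!$ and $(s_m)_*^{\IndCoh}$ are left $t$-exact by the first step, and the crucial point that \emph{filtered} colimits in $\IndCoh(\LTd)$ and $\IndCoh(\Gamma_{\Waff})$ preserve coconnectivity, since these $t$-structures are compatible with filtered colimits. For the right $t$-exact (connective-preserving) directions I would use that \emph{arbitrary} colimits preserve connectivity, together with the right $t$-exactness of $(\iota_m)_*^{\IndCoh}$ and $s_m^!$; the right $t$-exactness of $s_*^{\IndCoh}$ may alternatively be deduced formally from the left $t$-exactness of $s^!$ via the adjunction $s_*^{\IndCoh} \dashv s^!$.

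The hard part will be the colimit step rather than the finite-level computation. The subtlety is that $!$-restriction $\iota_m^!$ along the ind-scheme detects coconnectivity but not connectivity, so one cannot naively verify $t$-exactness levelwise; the resolution-of-identity formula is precisely what repairs this, reducing connectivity to the fact that all colimits preserve it and coconnectivity to the compatibility of filtered colimits with cohomology. Thus the one thing I would pin down with care is that $\IndCoh(\Gamma_{\Waff})$ is genuinely the expected colimit with $t$-exact insertions, so that the identity resolution and the interchange of $s^!$ and $s_*^{\IndCoh}$ with these colimits are justified.
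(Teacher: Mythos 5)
Your proposal is correct, and its architecture is genuinely different from the paper's. The paper never proves finite-level $t$-exactness of each $s_m^!$ and never invokes a resolution of the identity: it gets $t$-exactness of $s_*^{\IndCoh}$ in one stroke from ind-affineness of $s$ (citing \cite{GaRoII}), deduces left $t$-exactness of $s^!$ by adjunction from ind-properness (\cref{Definition of Pushing Forward by Quotientmapforcoarsequotient}), and proves right $t$-exactness of $s^!$ by noting that $\IndCoh(\LTd)^{\leq 0}$ is generated under colimits by the single object $\mathcal{O}_{\LTd}$, so everything reduces to the computation $s^!(\mathcal{O}_{\LTd}) \simeq \operatorname{colim}_m (\iota_m)_*^{\IndCoh}(\omega_{G_m})[-d]$, whose terms sit in a single degree because each $G_m$ is Cohen--Macaulay (finite flat over affine space, by \cref{FiniteFlat}). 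You consume the same essential inputs (\cref{FiniteFlat}, Cohen--Macaulayness, $t$-exactness of pushforward along the closed embeddings $\iota_m$, compatibility of the $t$-structures with filtered colimits) but assemble them through levelwise exactness plus the identity resolution $\operatorname{id} \simeq \operatorname{colim}_m (\iota_m)_*^{\IndCoh}\iota_m^!$. Your route is more symmetric (both functors are handled by one mechanism) and avoids the ind-affineness lemma; the paper's route is leaner, since its only colimit manipulation concerns the single object $\omega_{\Gamma_{\Waff}}$ rather than the functors themselves. Your closing observation --- connectivity survives all colimits, coconnectivity needs filtered-colimit compatibility, and the identity resolution is what legitimizes a levelwise argument --- is exactly the right accounting.

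The one step you must repair is the finite-level treatment of $s_m^!$. You assert that $\omega_{s_m} := s_m^!(\mathcal{O}_{\LTd})$ is a locally free sheaf in degree $0$ and that tensoring with it is $t$-exact. The degree-$0$ claim is right (the fibers are Artinian, hence Cohen--Macaulay), and $\omega_{s_m}$ is locally free \emph{over the base}: $(s_m)_*\omega_{s_m} \simeq \uHom_{\mathcal{O}_{\LTd}}((s_m)_*\mathcal{O}_{G_m}, \mathcal{O}_{\LTd})$ is the dual of a locally free sheaf on $\LTd$. But it is locally free over $\mathcal{O}_{G_m}$ only if $s_m$ is Gorenstein, which fails here in general: the fibers of $s_m$ are the rings $\Symtt/J_S$ of \cref{ClosedGraphsTheorem}, i.e.\ cohomology rings of unions of Schubert cells, and these are Artinian rings whose socle typically has dimension greater than one. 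So "tensoring over $\mathcal{O}_{G_m}$ with a degree-$0$ locally free sheaf" is not available as written, and tensoring with a non-flat $\mathcal{O}_{G_m}$-module is not $t$-exact. The repair stays entirely inside your framework: get left $t$-exactness of $s_m^!$ by adjunction from the $t$-exactness of the finite (hence affine) pushforward $(s_m)_*^{\IndCoh}$, and get right $t$-exactness from continuity of $s_m^!$ plus the fact that $\IndCoh(\LTd)^{\leq 0}$ is generated under colimits by $\mathcal{O}_{\LTd}$, so that only $s_m^!(\mathcal{O}_{\LTd}) = \omega_{s_m} \in \IndCoh(G_m)^{\leq 0}$ is needed --- which is precisely the degree-$0$ statement. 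Equivalently, note that the only flatness your duality formula actually uses is flatness of $\omega_{s_m}$ over $\mathcal{O}_{\LTd}$, since $s_m^!(-) \simeq \omega_{s_m} \otimes_{\mathcal{O}_{\LTd}} (-)$. With that substitution the rest of your argument goes through.
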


\begin{proof}
Because the $t$-structure on $\IndCoh(\Gamma_{\Waff})$ is by definition compatible with filtered colimits \cite[Chapter 3, Section 1.2.1]{GaRoII} it suffices to show this claim when $\Wext = \Waff$. Note that the map $s_*^{\IndCoh}$ is $t$-exact because $s$ is ind-affine \cite[Chapter 3, Lemma 1.4.9]{GaRoII}, and therefore since $s$ is ind-proper, we have that $s^!$ is the right adjoint to the $t$-exact functor $s_*^{\IndCoh}$ (see \cref{Definition of Pushing Forward by Quotientmapforcoarsequotient}) and therefore is left $t$-exact. We now show that $s^!$ is right $t$-exact. 

We first claim that, to show $s^!$ is right $t$-exact, it suffices to show that $s^!(\mathcal{O}_{\LTd}) \in \IndCoh(\Gamma_{\Waff})^{\leq 0}$. This follows since $\IndCoh(\LTd)^{\leq 0}$ is equivalently smallest $\infty$-category of $\IndCoh(\LTd)$ containing $\mathcal{O}_{\LTd}$ and closed under colimits (which can be seen, for example, by the $t$-exact equivalence $\Psi_{\LTd}: \IndCoh(\LTd)^{\leq 0} \xrightarrow{\sim} \QCoh(\LTd)^{\leq 0}$ given by the fact $\LTd$ is smooth and classical). Since $s^!$ commutes with colimits and the subcategory $\IndCoh(\Gamma_{\Waff})^{\leq 0}$ is closed under colimits, we see that it remains to show that $s^!(\mathcal{O}_{\LTd}) \in \IndCoh(\Gamma_{\Waff})^{\leq 0}$.
In turn, to show this, we first note that
\raggedbottom
\[s^!(\mathcal{O}_{\LTd}) \simeq s^!(\omega_{\LTd}[-d]) \simeq \omega_{\Gamma}[-d] \simeq \text{colim}_m i_{m, *}^{\IndCoh}(\omega_{\Gamma_m})[-d]\]

\noindent where the first equivalence follows from the fact that $\LTd$ is smooth, the second follows from by the definition of the dualizing complex and the functoriality of $!$-pullback, and the third follows since we have an equivalence $\IndCoh(\Gamma) \xleftarrow{\sim} \text{colim}_m\IndCoh(\Gamma_m)$. We claim that each dualizing complex $\omega_{\Gamma_m}$ is concentrated in a single cohomological degree, i.e. $\Gamma_m$ is Cohen-Macaulay. This follows from the fact that the map $\Gamma_m \to \LTd$ is a finite flat map (\cref{FiniteFlat}) to affine space, and therefore $\Gamma_m$ is Cohen-Macaulay. Thus each object of the above colimit is contained entirely in cohomological degree zero \cite[Chapter 4, Lemma 1.2.5]{GaRoII} and therefore so too is $s^!(\mathcal{O}_{\LTd})$ since the $t$-structure is compatible with filtered colimits.  
\end{proof}
\newcommand{\quotientmaptostackquotient}{q}
Recall the canonical quotient map $\quotientmapforcoarsequotient: \LTd \to \LTd\sslash\Wext$. Define a $t$-structure on $\IndCoh(\LTd\sslash\Wext)$ by declaring $\IndCoh(\LTd\sslash\Wext)^{\leq 0}$ to be the full ordinary $\infty$-subcategory closed under colimits and containing $\quotientmapforcoarsequotient_*^{\IndCoh}(\mathcal{O}_{\LTd})$. Similarly, we define a $t$-structure on $\IndCoh(\LTd/\Wext)$ (respectively, $\IndCoh(\LTd/\characterlatticeforT)$) by declaring $\IndCoh(\LTd/\Wext)^{\leq 0}$ to be the full ordinary $\infty$-subcategory closed under colimits and containing the respective IndCoh pushforward given by the quotient map of the structure sheaf $\mathcal{O}_{\LTd}$. Note that these do indeed define $t$-structures since the inclusion functor preserves colimits, and therefore admits a right adjoint. We now record further properties of these $t$-structures: 

\begin{Proposition}\label{t-Exactness of all Quotientmapforcoarsequotient functors}With the $t$-structure on $\IndCoh(\LTd\sslash\Wext)$ defined as above, we have the following: 
\begin{enumerate}
    \item The functor $\quotientmapforcoarsequotient^!\quotientmapforcoarsequotient_*^{\IndCoh}$ is $t$-exact. 
    \item The map $\quotientmapforcoarsequotient^!$ is $t$-exact and reflects the $t$-structure.
    \item The map $\quotientmapforcoarsequotient_*^{\IndCoh}: \IndCoh(\LTd) \to \IndCoh(\LTd\sslash\Wext)$ is $t$-exact. 
    \item The $t$-structure on $\IndCoh(\LTd\sslash\Wext)$ is compatible with filtered colimits. 
\end{enumerate}
\end{Proposition}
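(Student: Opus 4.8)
The plan is to reduce everything to the Cartesian square of \cref{SpecificGroupoidResults}, which identifies $\Gamma := \Gamma_{\Wext}$ with $\LTd \times_{\LTd\sslash\Wext} \LTd$, together with the $t$-exactness already supplied by \cref{!-Pullback to Union of Graphs from Lie T and pushforward are t-Exact}. Writing $s,t \colon \Gamma \to \LTd$ for the source and target maps, the base change isomorphism of \cref{Definition of Pushing Forward by Quotientmapforcoarsequotient} gives a canonical equivalence $\quotientmapforcoarsequotient^!\quotientmapforcoarsequotient_*^{\IndCoh} \simeq s_*^{\IndCoh} t^!$. By \cref{!-Pullback to Union of Graphs from Lie T and pushforward are t-Exact} both $s^!$ and $s_*^{\IndCoh}$ are $t$-exact, and since $\Gamma$ carries an inversion automorphism $\iota$ (induced by $w \mapsto w^{-1}$) which interchanges $s$ and $t$ and is a $t$-exact equivalence, the functors $t^!$ and $t_*^{\IndCoh}$ are $t$-exact as well. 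Hence the composite $s_*^{\IndCoh}t^!$ is $t$-exact, which is (1); note that (1) concerns only the standard $t$-structure on $\IndCoh(\LTd)$ and so does not yet use the $t$-structure on the quotient.

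Next I would record two soft properties of the cover $\quotientmapforcoarsequotient$. Its $!$-pullback $\quotientmapforcoarsequotient^!$ is continuous (this is automatic for $!$-pullback in the $\IndCoh$ formalism, and $\quotientmapforcoarsequotient$ is ind-schematic and ind-proper by \cref{WAffIndFiniteFlat}), and it is conservative, by the $\check{\text{C}}$ech-nerve descent equivalence $\quotientmapforcoarsequotient^!\colon \IndCoh(\LTd\sslash\Wext)\xrightarrow{\sim}\mathrm{Tot}(\IndCoh(\mathfrak{t}^{\ast\bullet}))$ of \cite[Chapter 3, Section 0.4.3]{GaRoII}; the functor $\quotientmapforcoarsequotient_*^{\IndCoh}$ is continuous as a left adjoint. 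Right $t$-exactness of $\quotientmapforcoarsequotient_*^{\IndCoh}$ is then immediate: $\IndCoh(\LTd)^{\leq 0}$ is generated under colimits by $\mathcal{O}_{\LTd}$ (as recorded in the proof of \cref{!-Pullback to Union of Graphs from Lie T and pushforward are t-Exact}), while by construction $\quotientmapforcoarsequotient_*^{\IndCoh}(\mathcal{O}_{\LTd})$ both lies in and generates $\IndCoh(\LTd\sslash\Wext)^{\leq 0}$ under colimits, so continuity forces $\quotientmapforcoarsequotient_*^{\IndCoh}\big(\IndCoh(\LTd)^{\leq 0}\big)\subseteq \IndCoh(\LTd\sslash\Wext)^{\leq 0}$. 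Passing through the adjunction $\quotientmapforcoarsequotient_*^{\IndCoh}\dashv\quotientmapforcoarsequotient^!$ then yields left $t$-exactness of $\quotientmapforcoarsequotient^!$ for free: for $M\in\IndCoh(\LTd)^{\leq 0}$ and $\F\in\IndCoh(\LTd\sslash\Wext)^{\geq 1}$ one has $\Hom(M,\quotientmapforcoarsequotient^!\F)\simeq\Hom(\quotientmapforcoarsequotient_*^{\IndCoh}M,\F)=0$.

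For the remaining direction, right $t$-exactness of $\quotientmapforcoarsequotient^!$, I would combine (1) with continuity: $\IndCoh(\LTd\sslash\Wext)^{\leq 0}$ is generated under colimits by $\quotientmapforcoarsequotient_*^{\IndCoh}(\mathcal{O}_{\LTd})$, and $\quotientmapforcoarsequotient^!\quotientmapforcoarsequotient_*^{\IndCoh}(\mathcal{O}_{\LTd})\in\IndCoh(\LTd)^{\leq 0}$ by (1); since $\quotientmapforcoarsequotient^!$ is continuous and $\IndCoh(\LTd)^{\leq 0}$ is closed under colimits, $\quotientmapforcoarsequotient^!$ carries $\IndCoh(\LTd\sslash\Wext)^{\leq 0}$ into $\IndCoh(\LTd)^{\leq 0}$. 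Thus $\quotientmapforcoarsequotient^!$ is $t$-exact. Being $t$-exact and conservative, it automatically reflects the $t$-structure: applying $\quotientmapforcoarsequotient^!$ to the truncation triangle of any $\F$ with $\quotientmapforcoarsequotient^!\F$ connective (resp.\ coconnective) and invoking conservativity forces the offending truncation of $\F$ to vanish. This finishes (2). Left $t$-exactness of $\quotientmapforcoarsequotient_*^{\IndCoh}$, the piece of (3) still missing, now follows by combining reflection with (1): for $M\in\IndCoh(\LTd)^{\geq 0}$ we have $\quotientmapforcoarsequotient^!\quotientmapforcoarsequotient_*^{\IndCoh}M\in\IndCoh(\LTd)^{\geq 0}$ by (1), whence $\quotientmapforcoarsequotient_*^{\IndCoh}M\in\IndCoh(\LTd\sslash\Wext)^{\geq 0}$ because $\quotientmapforcoarsequotient^!$ reflects coconnectivity; with the earlier right $t$-exactness this proves (3).

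For (4) I would transport compatibility with filtered colimits across $\quotientmapforcoarsequotient^!$. The $t$-structure on $\IndCoh(\LTd)$ is compatible with filtered colimits, so if $\F\simeq\text{colim}_i\F_i$ is a filtered colimit with each $\F_i\in\IndCoh(\LTd\sslash\Wext)^{\geq 0}$, then $\quotientmapforcoarsequotient^!\F\simeq\text{colim}_i\quotientmapforcoarsequotient^!\F_i$ (continuity) lies in $\IndCoh(\LTd)^{\geq 0}$ ($t$-exactness together with closure of the coconnective part under filtered colimits), and hence $\F\in\IndCoh(\LTd\sslash\Wext)^{\geq 0}$ since $\quotientmapforcoarsequotient^!$ reflects the $t$-structure; this shows $\IndCoh(\LTd\sslash\Wext)^{\geq 0}$ is closed under filtered colimits, which is (4). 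The main obstacle is not the formal bookkeeping here — that is driven entirely by the base-change identity $\quotientmapforcoarsequotient^!\quotientmapforcoarsequotient_*^{\IndCoh}\simeq s_*^{\IndCoh}t^!$, the adjunction, and conservativity — but rather the single geometric input that $t_*^{\IndCoh}$ (equivalently $s_*^{\IndCoh}$) is $t$-exact, which is exactly the content of \cref{!-Pullback to Union of Graphs from Lie T and pushforward are t-Exact}, namely that the dualizing complexes of the finite-flat graph schemes $\Gamma_m\to\LTd$ are concentrated in a single degree. One must also be careful to correctly invoke continuity and conservativity of the $!$-pullback along the ind-proper cover $\quotientmapforcoarsequotient$, since the entire argument rests on them.
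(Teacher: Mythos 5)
Your proof is correct and follows essentially the same route as the paper's: base change along the Cartesian square of \cref{SpecificGroupoidResults} plus the $t$-exactness of $s^!$ and $s_*^{\IndCoh}$ for (1), generation of both $t$-structures under colimits together with the adjunction and conservativity of $\quotientmapforcoarsequotient^!$ for (2) and (3), and transport along the continuous, $t$-exact, $t$-structure-reflecting functor $\quotientmapforcoarsequotient^!$ for (4). The differences are cosmetic: you establish right $t$-exactness of $\quotientmapforcoarsequotient_*^{\IndCoh}$ first and deduce left $t$-exactness of $\quotientmapforcoarsequotient^!$ by adjunction (the paper argues in the reverse order, testing against the generator directly), and you patch the $s$-versus-$t$ asymmetry in the base-change identity via the inversion automorphism of $\Gamma_{\Wext}$, a point the paper elides.
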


\begin{proof}

The first claim follows by base change (\cref{Definition of Pushing Forward by Quotientmapforcoarsequotient}) of the Cartesian diagram in \cref{SpecificGroupoidResults}, since we may identify this functor with the composite of $s^!$, $t$-exact by \cref{!-Pullback to Union of Graphs from Lie T and pushforward are t-Exact}, with the functor $s_*^{\IndCoh}$, which is $t$-exact since $s$ is ind-affine \cite[Chapter 3, Lemma 1.4.9]{GaRoII}.

Next, we show that $\quotientmapforcoarsequotient^!$ is right $t$-exact. If $\mathcal{G} \in \IndCoh(\LTd\sslash\Wext)^{\leq 0}$ we may write $\mathcal{G}$ as some colimit $\text{colim} (\quotientmapforcoarsequotient_*^{\IndCoh}(\mathcal{O}_{\LTd}))$. Since $\quotientmapforcoarsequotient^!$ is continuous, we see that by (1) that $\quotientmapforcoarsequotient^!(\mathcal{G})$ is a colimit of objects in the heart of the $t$-structure, and thus lies in $\IndCoh(\LTd)^{\leq 0}$. 

To see the left $t$-exactness of $\quotientmapforcoarsequotient^!$, let $\F \in \IndCoh(\LTd\sslash\Wext)^{> 0}$. We wish to show that $\quotientmapforcoarsequotient^!(\F) \in \IndCoh(\LTd)^{> 0}$, and to show this it suffices to show that $\Hom_{\IndCoh(\LTd)}(\mathcal{O}_{\LTd}, \quotientmapforcoarsequotient^!(\F))$ vanishes. However, since $\quotientmapforcoarsequotient$ is ind-proper, we see that by adjunction (\cref{Definition of Pushing Forward by Quotientmapforcoarsequotient}) it suffices to show $\Hom_{\IndCoh(\LTd\sslash\Wext)}(\quotientmapforcoarsequotient_*^{\IndCoh}(\mathcal{O}_{\LTd}), \F)$ vanishes, which follows by the definition of the $t$-structure. Thus the functor  $\quotientmapforcoarsequotient^!$ is $t$-exact, and this along with its conservativity gives (2). 

Now, to show (3), note that (2) gives that $\quotientmapforcoarsequotient^!$ reflects the $t$-structure, so it suffices to show that $\quotientmapforcoarsequotient^!\quotientmapforcoarsequotient_*^{\IndCoh}$ is $t$-exact, which is precisely (1).  Finally, (4) follows from the fact that $\quotientmapforcoarsequotient^!$ is continuous and reflects the $t$-structure, along with the fact that $t$-structure on $\IndCoh(\LTd)$ is compatible with filtered colimits. 
\end{proof}

We can use a similar argument to construct $t$-structures in the setting of a discrete group acting on some ind-scheme. Let $X$ denote any discrete group acting on some ind-scheme $\Gamma$ and let $\mapfromGammatoGammaModCharacters: \Gamma \to \Gamma/X$ denote the quotient map. Define a $t$-structure on $\IndCoh(\Gamma/X)$ via setting $\IndCoh(\Gamma/X)^{\leq 0}$ to be the full subcategory generated under colimits by objects of the form $\mapfromGammatoGammaModCharacters_{\ast}^{\IndCoh}(\F)$ for $\F \in \IndCoh(\Gamma)^{\heartsuit}$ (or, equivalently by the continuity of $\mapfromGammatoGammaModCharacters_{\ast}^{\IndCoh}$, for $\F \in \IndCoh(\Gamma)^{\leq 0}$).  

\begin{Lemma}\label{t-Exactness of all Quotientmaptostackquotient functors}
The functors $\mapfromGammatoGammaModCharacters^{\IndCoh}_{\ast}$ and $\mapfromGammatoGammaModCharacters^!$ are $t$-exact.
\end{Lemma}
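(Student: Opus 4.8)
The plan is to mirror the proof of \cref{t-Exactness of all Quotientmapforcoarsequotient functors}, replacing the map $\quotientmapforcoarsequotient$ by the quotient map $\phi$ and the groupoid $\biggroupoid$ by the action groupoid of $X$ on $\Gamma$. As in that setting, $\phi$ is an ind-proper ind-schematic morphism: its fibers are the $X$-orbits, each a discrete (hence ind-proper) set of points, and $\Gamma \times_{\Gamma/X}\Gamma \simeq X \times \Gamma$ is an ind-scheme. Consequently $\phi_*^{\IndCoh}$ is the left adjoint of $\phi^!$ and satisfies base change against $!$-pullback, exactly as in \cite[Chapter 3, Section 0.1.2]{GaRoII}.

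First I would record that $\phi_*^{\IndCoh}$ is right $t$-exact, which is immediate from the definition of the $t$-structure on $\IndCoh(\Gamma/X)$: its connective part is generated under colimits by the objects $\phi_*^{\IndCoh}(\F)$ with $\F \in \IndCoh(\Gamma)^{\heartsuit}$. Since $\IndCoh(\Gamma)^{\leq 0}$ is generated under colimits by its heart and $\phi_*^{\IndCoh}$ is continuous, it carries $\IndCoh(\Gamma)^{\leq 0}$ into $\IndCoh(\Gamma/X)^{\leq 0}$. By the adjunction $\phi_*^{\IndCoh}\dashv \phi^!$, this immediately makes $\phi^!$ left $t$-exact.

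The key step, which I expect to be the main obstacle, is to prove that the composite $\phi^!\phi_*^{\IndCoh}$ is $t$-exact. Here I would use the identification of the \v{C}ech nerve $\Gamma \times_{\Gamma/X}\Gamma \simeq X \times \Gamma = \coprod_{x \in X}\Gamma$, with the two projections being the projection $p$ and the action map $a$. Base change along the resulting Cartesian square yields a canonical equivalence $\phi^!\phi_*^{\IndCoh} \simeq p_*^{\IndCoh}\,a^!$. On the component indexed by $x \in X$ the map $a$ is the automorphism of $\Gamma$ given by the action of $x$ while $p$ is the identity, so $p_*^{\IndCoh}a^! \simeq \bigoplus_{x \in X}(x\cdot)^!$ is a direct sum of $!$-pullbacks along isomorphisms. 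Each such functor is $t$-exact, and since the $t$-structure on $\IndCoh(\Gamma)$ is compatible with filtered colimits the direct sum remains $t$-exact; hence $\phi^!\phi_*^{\IndCoh}$ is $t$-exact. The only non-formal point is verifying that the base-change identification is precisely this decomposition indexed by the $X$-action and that the infinite direct sum is $t$-exact.

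Finally I would deduce the remaining statements formally. For the right $t$-exactness of $\phi^!$, note that any $\F \in \IndCoh(\Gamma/X)^{\leq 0}$ may be written as a colimit of objects $\phi_*^{\IndCoh}(\mathcal{G}_i)$ with $\mathcal{G}_i \in \IndCoh(\Gamma)^{\leq 0}$; since $\phi^!$ is continuous, $\phi^!(\F) \simeq \text{colim}_i\, \phi^!\phi_*^{\IndCoh}(\mathcal{G}_i)$ lies in $\IndCoh(\Gamma)^{\leq 0}$ by the $t$-exactness of $\phi^!\phi_*^{\IndCoh}$. Combined with its left $t$-exactness, this shows $\phi^!$ is $t$-exact. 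The adjunction $\phi_*^{\IndCoh}\dashv \phi^!$ then upgrades the right $t$-exactness of $\phi^!$ to left $t$-exactness of its left adjoint $\phi_*^{\IndCoh}$, so that $\phi_*^{\IndCoh}$ is $t$-exact as well, completing the argument.
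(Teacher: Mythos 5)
Most of your argument coincides with the paper's: the right $t$-exactness of $\phi_*^{\IndCoh}$ read off from the definition of the $t$-structure, the left $t$-exactness of $\phi^!$ by adjunction, the base-change identification $\phi^!\phi_*^{\IndCoh}\simeq\bigoplus_{x\in X}(x\cdot)^!$ (the paper phrases this as the composite $\text{act}_*^{\IndCoh}\circ\text{proj}^!$, which is the same computation), and the deduction of right $t$-exactness of $\phi^!$ by writing connective objects as colimits of objects $\phi_*^{\IndCoh}(\G_i)$. The genuine gap is your final step, where you claim the adjunction $\phi_*^{\IndCoh}\dashv\phi^!$ ``upgrades'' right $t$-exactness of the right adjoint $\phi^!$ to left $t$-exactness of the left adjoint $\phi_*^{\IndCoh}$. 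For an adjunction $L\dashv R$ the only formal implication of this kind is the one you used earlier: $L$ is right $t$-exact if and only if $R$ is left $t$-exact. The implication ``$R$ right $t$-exact $\Rightarrow$ $L$ left $t$-exact'' is false in general: take $i\colon\Spec(k)\hookrightarrow\Spec(k[x]/x^2)$ and the adjunction $i^*\dashv i_*$; the right adjoint $i_*$ is $t$-exact, hence right $t$-exact, yet the left adjoint $i^*=k\otimes_{k[x]/x^2}-$ is not left $t$-exact, since $i^*i_*(k)\simeq k\otimes_{k[x]/x^2}k$ has cohomology in all negative degrees. This is exactly the phenomenon the paper isolates in \cref{Derived Fiber is Necessary for Pointwise Criterion}, so the failure is not exotic in this context.

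The repair is short, and you already have every ingredient. The paper's route: $\phi^!$ is conservative (it is $!$-pullback along an ind-proper map surjective on geometric points), and a conservative $t$-exact functor reflects the $t$-structure; hence $\phi_*^{\IndCoh}$ is $t$-exact if and only if $\phi^!\phi_*^{\IndCoh}$ is, and the latter is precisely your direct-sum computation. Alternatively, one can avoid conservativity by exploiting the definition of the $t$-structure: $\IndCoh(\Gamma/X)^{\leq -1}$ is generated under colimits by $\phi_*^{\IndCoh}(\IndCoh(\Gamma)^{\leq -1})$, so by adjunction (and passing to mapping spaces, which turn colimits in the source into limits) an object $\F$ lies in $\IndCoh(\Gamma/X)^{\geq 0}$ if and only if $\phi^!(\F)\in\IndCoh(\Gamma)^{\geq 0}$; applying this to $\F=\phi_*^{\IndCoh}(\G)$ with $\G$ coconnective reduces left $t$-exactness of $\phi_*^{\IndCoh}$ once again to the $t$-exactness of $\phi^!\phi_*^{\IndCoh}$. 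With either patch your proof becomes correct and essentially agrees with the paper's; as written, however, the last inference rests on a false principle.
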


\begin{proof}
We first show $\mapfromGammatoGammaModCharacters^!$ is $t$-exact. Consider the Cartesian diagram:

\begin{equation}\label{Ind-Scheme With Action of Discrete Set of Points Cartesian Diagram}
  \xymatrix@R+2em@C+2em{
   \Gamma \times X\ar[r]^{\text{act}} \ar[d]^{\text{proj}} & \Gamma \ar[d]^{\mapfromGammatoGammaModCharacters} \\
  \Gamma \ar[r]^{\mapfromGammatoGammaModCharacters} & \Gamma/X
  }
 \end{equation}

\noindent given by the quotient. Since act is ind-affine, it is $t$-exact. We also have by direct computation that $\text{proj}^!$ is $t$-exact. Therefore, by base change and continuity of $\mapfromGammatoGammaModCharacters^!$, we see that $\mapfromGammatoGammaModCharacters^!$ is right $t$-exact. For left $t$-exactness, assume $\F \in \IndCoh(\Gamma/X)^{> 0}$ and $\mathcal{G} \in \IndCoh(\Gamma)^{\leq 0}$. Then since $\mapfromGammatoGammaModCharacters$ is ind-proper, by adjunction we see
\raggedbottom
\[\text{Hom}_{\IndCoh(\Gamma)}(\mathcal{G}, \mapfromGammatoGammaModCharacters^!(\F)) \simeq \text{Hom}_{\IndCoh(\Gamma/X)}(\mapfromGammatoGammaModCharacters_{\ast}^{\IndCoh}(\mathcal{G}), \F) \simeq 0\]

\noindent by construction of our $t$-structure, and so $\mapfromGammatoGammaModCharacters^!(\F) \in \IndCoh(\Gamma)^{> 0}$. Finally, since $\mapfromGammatoGammaModCharacters^!$ is $t$-exact and conservative, $\mapfromGammatoGammaModCharacters^{\IndCoh}_{\ast}$ is $t$-exact if and only if $\mapfromGammatoGammaModCharacters^!\mapfromGammatoGammaModCharacters^{\IndCoh}_{\ast}$ is $t$-exact. However, this follows from base change along diagram \labelcref{Ind-Scheme With Action of Discrete Set of Points Cartesian Diagram}. 
\end{proof}

\begin{Corollary}\label{t-Structure on Sheaves on Stack and Coarse Quotient is Both Left and right-complete}
The $t$-structures on $\IndCoh(\LTd/\Wext)$ and $\IndCoh(\LTd\sslash\Wext)$ are both left-complete and right-complete. 
\end{Corollary}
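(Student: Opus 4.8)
The plan is to deduce both completeness statements from the corresponding (standard) statement for $\IndCoh(\LTd)$ by transporting along a $t$-exact conservative pullback. The abstract principle I would isolate first is: if $F\colon \C \to \D$ is a $t$-exact conservative functor of presentable stable $\infty$-categories with $t$-structures, and $F$ preserves countable limits and countable colimits, then $\C$ is left-complete (respectively right-complete) as soon as $\D$ is. Indeed, for $X \in \C$ the canonical maps $\operatorname{colim}_n \tau^{\leq n} X \to X$ and $X \to \lim_n \tau^{\geq -n} X$ are carried by $F$ to the analogous maps for $F(X)$: $t$-exactness commutes $F$ with the truncations $\tau^{\leq n}$ and $\tau^{\geq -n}$, while preservation of the relevant sequential colimit (resp. limit) commutes $F$ with $\operatorname{colim}_n$ (resp. $\lim_n$). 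These comparison maps are equivalences in $\D$ by hypothesis, and conservativity of $F$ forces them to be equivalences in $\C$.

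The base case is $\IndCoh(\LTd)$ itself. Since $\LTd \cong \mathbb{A}^d_k$ is a smooth classical affine scheme, $\Upsilon_{\LTd}$ identifies $\IndCoh(\LTd)$, up to a cohomological shift, with $\QCoh(\LTd)$, i.e. with modules over the polynomial coordinate ring of $\LTd$. The standard $t$-structure on a module category over a discrete ring is both left- and right-complete; this is exactly the completeness already invoked in the proof of \cref{For any qcoh sheaf on smooth classical Noetherian scheme there exists a field valued point where the fiber doesn't vanish}, and a cohomological shift does not affect it.

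For $\IndCoh(\LTd\sslash\Wext)$ I would apply the principle with $F = \quotientmapforcoarsequotient^!$. By \cref{t-Exactness of all Quotientmapforcoarsequotient functors}(2) this functor is $t$-exact and conservative; it is continuous, hence preserves colimits; and by \cref{Definition of Pushing Forward by Quotientmapforcoarsequotient} it is the right adjoint of $\quotientmapforcoarsequotient_*^{\IndCoh}$, hence preserves limits. Thus every hypothesis holds and the completeness of $\IndCoh(\LTd)$ transports to $\IndCoh(\LTd\sslash\Wext)$. For $\IndCoh(\LTd/\Wext)$ I would argue identically with $F = \mapfromGammatoGammaModCharacters^!$, in the special case $\Gamma = \LTd$ and $X = \Wext$ of \cref{t-Exactness of all Quotientmaptostackquotient functors}: that lemma supplies $t$-exactness, conservativity, and continuity of $\mapfromGammatoGammaModCharacters^!$, while ind-properness of $\mapfromGammatoGammaModCharacters$ furnishes the left adjoint $\mapfromGammatoGammaModCharacters_*^{\IndCoh}$, so $\mapfromGammatoGammaModCharacters^!$ again preserves both limits and colimits.

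The only point requiring genuine care — and the step I would flag as the main obstacle — is the simultaneous verification that each of these pullbacks preserves both countable limits and countable colimits, since the transfer principle needs one of each for the two completeness statements. Limit-preservation is immediate from the ind-proper adjunctions (both $\quotientmapforcoarsequotient^!$ and $\mapfromGammatoGammaModCharacters^!$ are right adjoints), and colimit-preservation is precisely the continuity already established and used in the proofs of \cref{t-Exactness of all Quotientmapforcoarsequotient functors} and \cref{t-Exactness of all Quotientmaptostackquotient functors}. Once both are recorded, the argument is purely formal.
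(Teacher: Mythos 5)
Your proposal is correct and takes essentially the same route as the paper: both arguments transport left- and right-completeness from $\IndCoh(\LTd)$ (identified $t$-exactly with $\QCoh(\LTd)$ by smoothness of $\LTd$) along the conservative, $t$-exact pullbacks $\quotientmapforcoarsequotient^!$ and $\mapfromGammatoGammaModCharacters^!$, using limit-preservation (these are right adjoints of the ind-proper pushforwards) for left-completeness and continuity for right-completeness. The only difference is cosmetic: you spell out in detail the transfer principle that the paper invokes in a single sentence.
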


\begin{proof}
By \cref{t-Exactness of all Quotientmaptostackquotient functors} and \cref{t-Exactness of all Quotientmapforcoarsequotient functors}, both categories admit conservative, $t$-exact functors to $\IndCoh(\LTd)$ which commute with limits (since they are right adjoints). Any category which admits a conservative, $t$-exact functor which commutes with limits to a left-complete category is left-complete, therefore the left-completeness holds in this case, where $\IndCoh(\LTd)$ admits a $t$-exact equivalence to $\QCoh(\LTd)$ (since $\LTd$ is a smooth classical scheme) and therefore is left-complete. Similarly, each functor to $\IndCoh(\LTd)$ is continuous and so the right-completeness follows from the fact that $\IndCoh(\LTd)$ is also right-complete. 
\end{proof}

\newcommand{\quotientmapforFINITEcoarsequotient}{\overline{s}_{\text{fin}}}
\newcommand{\mapfromstackquotientofWexttocoarsequotientofWext}{\tilde{\phi}}
\newcommand{\mapfromstackquotientofWexttocoarsequotientofWextATAPOINT}{\dot{\tilde{\phi}}}
\subsection{Descent to the Coarse Quotient for Affine Weyl Groups}\label{CoxeterDescentSubsection}
The quotient map $\quotientmapforcoarsequotient: \LTd \to \LTd\sslash\Wext$ induces a canonical map of prestacks $\mapfromstackquotientofWexttocoarsequotientofWext: \LTd/\Wext \to \LTd\sslash\Wext$. We now study the pullback functor $\mapfromstackquotientofWexttocoarsequotientofWext^!$ and show that this functor behaves similarly to the case where $\Wext$ is replaced with a finite Weyl group. For example, we show that the functor $\mapfromstackquotientofWexttocoarsequotientofWext^!$ is fully faithful in \cref{CoherentFullyFaithful}. We define those sheaves in $\IndCoh(\LTd/\Wext)$ in the essential image of $\mapfromstackquotientofWexttocoarsequotientofWext^!$ as those sheaves \textit{descending to the coarse quotient} for $\Wext$, and provide descriptions of those sheaves descending to the coarse quotient $\LTd\sslash \Wext$ in \cref{Subsubsection with All Equivalent Conditions of Satisfying Descent to Wext Coarse Quotient} which parallel the description for the finite Weyl group case in \cref{Equivalent Conditions to Be in Essential Image of Pullback of GeneralStackToGITQuotient Pullback}.

\subsubsection{Fully Faithfulness of Affine Pullback}\label{Fully Faithfulness of Affine Pullback Subsubsection}
\newcommand{\SpecofClambda}{\text{Spec}(C_{\lambda})}
\newcommand{\SpecofClambdaprime}{\text{Spec}(C_{\lambda'})}
\renewcommand{\G}{\mathcal{G}}   
\begin{Theorem}\label{CoherentFullyFaithful}
The functor $\mapfromstackquotientofWexttocoarsequotientofWext^!$ is fully faithful. 
\end{Theorem}
This subsection will be dedicated to the proof of \cref{CoherentFullyFaithful}. For a given $x \in \LTd(\fieldpossiblydifferentfromgroundfield)$, let $C_x$ denote the coinvariant algebra for the action of $\Waff_x$ on $\LTd$, which is in particular a $\fieldpossiblydifferentfromgroundfield$-algebra. The closed subscheme $\Spec(C_x) \xhookrightarrow{} \LTd$ induces a map $\Spec(C_x)/\Waff_x \to \LTd/\Waff$ which we denote by $q|$. Furthermore, let $[x]$ denote the image of $x$ under the quotient map $\LTd \to \LTd/\characterlatticeforT$, and let $\overline{x}$ denote the image of $x$ under the quotient map $\quotientmaptostackquotient: \LTd \to \LTd/\Wext$. Since the map $\mapfromstackquotientofWexttocoarsequotientofWext$ induces a bijection on $K$-points, so we abuse notation in also regarding $\overline{x}$ as a $\fieldpossiblydifferentfromgroundfield$-point of $\LTd\sslash\Wext$. 

\begin{Proposition}\label{Cartesian Diagrams Involving Union of Graphs Proposition}
Fix some field-valued point $x \in \LTd(\fieldpossiblydifferentfromgroundfield)$. There is a $\Wext$-equivariant isomorphism 
\raggedbottom
\[\Gamma_{\Wext} \times_{\LTd} \text{Spec}(K) \simeq \Wext \mathop{\times}\limits^{\Waff} \coprod_{x' \in \text{orbit}_{\Waff}(x)} \text{Spec}(C_{x'})\]

\noindent and, moreover, the rectangles of the following diagram are (derived) Cartesian: 
\raggedbottom
\begin{equation*}
  \xymatrix@R+2em@C+2em{
  \Wext \mathop{\times}\limits^{\Waff} \coprod_{x' \in \text{orbit}_{\Waff}(x)} \Spec(C_{x'}) \ar[r]^{\text{\ \ \ \ \ \ \  }s} \ar[d]^{t} & \Spec(C_x)/\Waff_x \ar[d]^{\quotientmaptostackquotient|} \ar[r]^{\terminalmapfromCmodassociatedstabilizer} & \Spec(K) \ar[d]^{\overline{x}}\\
  \LTd \ar[r]^{\quotientmaptostackquotient} & \LTd/\Wext \ar[r]^{\mapfromstackquotientofWexttocoarsequotientofWext} & \LTd\sslash\Wext
  }
\end{equation*}
\end{Proposition}

\begin{proof}
We first claim the outer rectangle is Cartesian. Applying \cref{Action Map is Projection Map}, we may prove this first claim when $\Wext = \Waff$. Write $\Gamma_{\Waff}$ as a union of $\Gamma_S$ where $S \subseteq \Waff$ varies over the finite subsets. Because this set is filtered, colimits over it commute with all finite limits (and, in particular, Cartesian products), and so we obtain
\raggedbottom
\[\Gamma_{\Waff} \times_{\LTd} \SpecofL \simeq \coprod_S(\Gamma_S \times_{\LTd} \SpecofL) \simeq \coprod_S \coprod_{x' \in \text{orbit}(x)} (\Gamma_{S \cap \text{stab}(x')} \times_{\LTd} \SpecofL)\]

\noindent where the second equivalence follows from \cref{Picking a Field-Valued Point Splits Off Different Waff Orbits}. Therefore we see that the outer rectangle in \cref{Cartesian Diagrams Involving Union of Graphs Proposition} is Cartesian. 

The fact that the left box is Cartesian follows from the fact that the stack quotient $\LTd/\Wext$ is defined as the colimit over a groupoid $U$ such that $U_1 \simeq \Wext \times \LTd$, and so in particular $\LTd \times_{\LTd/\Wext} \LTd \simeq \Wext \times \LTd$ by \cref{GroupoidResults}. Now, because the outer rectangle is Cartesian and all of the maps are $\Wext$-equiariant, we may take the quotient by $\Wext$. This is a sifted colimit because the opposite category of the simplex category is sifted, and in particular, taking the quotient by $\Wext$ preserves the Cartesian product and shows the rightmost rectangle is Cartesian. 
\end{proof}

\begin{Lemma}\label{Left Adjoint of MapfromStackQuotientofWexttoCoarsequotientofWext Exists}
In the setup and notation of \cref{Cartesian Diagrams Involving Union of Graphs Proposition}, the functor $\mapfromstackquotientofWexttocoarsequotientofWext$ admits a (continuous) left adjoint.
\end{Lemma}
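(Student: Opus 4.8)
The plan is to exhibit $\mapfromstackquotientofWexttocoarsequotientofWext^!$ as the restriction-of-scalars functor associated to a map of monads on $\IndCoh(\LTd)$, since such restriction functors always admit a continuous left adjoint given by a relative tensor product. The key input is that both quotient maps $\quotientmaptostackquotient\colon \LTd \to \LTd/\Wext$ and $\quotientmapforcoarsequotient = \mapfromstackquotientofWexttocoarsequotientofWext \circ \quotientmaptostackquotient\colon \LTd \to \LTd\sslash\Wext$ exhibit their targets as monadic over $\IndCoh(\LTd)$.

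First I would record that $\quotientmapforcoarsequotient^!$ and $\quotientmaptostackquotient^!$ are each conservative, continuous, and right adjoints. Indeed, $\quotientmaptostackquotient_*^{\IndCoh}\dashv \quotientmaptostackquotient^!$ and $\quotientmapforcoarsequotient_*^{\IndCoh}\dashv\quotientmapforcoarsequotient^!$ by \cref{t-Exactness of all Quotientmaptostackquotient functors} and \cref{Definition of Pushing Forward by Quotientmapforcoarsequotient}, so both functors are right adjoints. Conservativity of $\quotientmapforcoarsequotient^!$ follows from the fact that it reflects the $t$-structure (\cref{t-Exactness of all Quotientmapforcoarsequotient functors}), conservativity of $\quotientmaptostackquotient^!$ is part of \cref{t-Exactness of all Quotientmaptostackquotient functors}, and continuity of both is established in the proofs of those results. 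By Barr--Beck--Lurie, $\quotientmaptostackquotient^!$ and $\quotientmapforcoarsequotient^!$ are therefore monadic, giving equivalences $\IndCoh(\LTd/\Wext)\simeq T_{\quotientmaptostackquotient}\text{-mod}$ and $\IndCoh(\LTd\sslash\Wext)\simeq T_{\quotientmapforcoarsequotient}\text{-mod}$ over $\IndCoh(\LTd)$, where $T_{\quotientmaptostackquotient} = \quotientmaptostackquotient^!\quotientmaptostackquotient_*^{\IndCoh}$ and $T_{\quotientmapforcoarsequotient}=\quotientmapforcoarsequotient^!\quotientmapforcoarsequotient_*^{\IndCoh}$ are the associated monads, and the two forgetful functors are identified with $\quotientmaptostackquotient^!$ and $\quotientmapforcoarsequotient^!$ respectively.

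Next, the equality $\quotientmapforcoarsequotient^! = \quotientmaptostackquotient^!\circ\mapfromstackquotientofWexttocoarsequotientofWext^!$ (functoriality of $!$-pullback) says precisely that, under the two monadic identifications, $\mapfromstackquotientofWexttocoarsequotientofWext^!$ commutes with the forgetful functors to $\IndCoh(\LTd)$. A functor between module categories lying over $\IndCoh(\LTd)$ in this way is canonically the restriction-of-scalars functor along an essentially unique map of monads $T_{\quotientmaptostackquotient}\to T_{\quotientmapforcoarsequotient}$. Restriction of scalars along a map of monads admits a left adjoint, namely the relative tensor product $T_{\quotientmapforcoarsequotient}\otimes_{T_{\quotientmaptostackquotient}}(-)$, which exists and is continuous because $\IndCoh(\LTd\sslash\Wext)$ is presentable and the monads are accessible. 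This produces the desired continuous left adjoint of $\mapfromstackquotientofWexttocoarsequotientofWext^!$.

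The main obstacle is verifying the hypotheses of Barr--Beck--Lurie, in particular conservativity together with preservation of the relevant totalizations; once monadicity is in place, the construction of the left adjoint is formal. If one prefers to bypass the monadic formalism, the same conclusion follows from the adjoint functor theorem: $\mapfromstackquotientofWexttocoarsequotientofWext^!$ preserves all small limits, since $\quotientmaptostackquotient^!$ is conservative and limit-preserving while $\quotientmaptostackquotient^!\circ\mapfromstackquotientofWexttocoarsequotientofWext^! = \quotientmapforcoarsequotient^!$ preserves limits (being a right adjoint), so the canonical comparison map becomes an equivalence after applying the conservative $\quotientmaptostackquotient^!$; one then invokes accessibility of $\mapfromstackquotientofWexttocoarsequotientofWext^!$ to conclude.
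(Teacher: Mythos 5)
Your proposal is correct, and your closing ``fallback'' paragraph is in fact precisely the paper's own proof: the paper invokes the adjoint functor theorem, showing that $\mapfromstackquotientofWexttocoarsequotientofWext^!$ preserves small limits because $\quotientmaptostackquotient^!\circ\mapfromstackquotientofWexttocoarsequotientofWext^!\simeq\quotientmapforcoarsequotient^!$ preserves limits (it admits the left adjoint $\quotientmapforcoarsequotient_*^{\IndCoh}$ of \cref{Definition of Pushing Forward by Quotientmapforcoarsequotient}, as $\quotientmapforcoarsequotient$ is ind-proper by \cref{WAffIndFiniteFlat}), while $\quotientmaptostackquotient^!$ is conservative (since $\quotientmaptostackquotient$ is ind-proper and surjective on geometric points) and itself preserves limits; your version is if anything slightly more careful, since the paper leaves the limit-preservation of $\quotientmaptostackquotient^!$ and the accessibility hypothesis implicit. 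Your primary, monadic route is a genuinely different packaging and is also essentially sound: both $\quotientmaptostackquotient^!$ and $\quotientmapforcoarsequotient^!$ are continuous, conservative, and admit left adjoints, hence are monadic by Barr--Beck--Lurie, and restriction of scalars along a map of monads admits a continuous left adjoint in the presentable setting. What this buys is an explicit description of the left adjoint as a relative tensor product (bar construction), which the adjoint functor theorem does not provide. The cost is the middle step: the claim that a functor commuting with the forgetful functors to $\IndCoh(\LTd)$ is restriction of scalars along an essentially unique monad map is, in the $\infty$-categorical setting, a genuine theorem rather than a formality (one must carry the coherence datum $\quotientmaptostackquotient^!\mapfromstackquotientofWexttocoarsequotientofWext^!\simeq\quotientmapforcoarsequotient^!$ supplied by functoriality of $\IndCoh$), so it needs a citation; and note that verifying that restriction of scalars preserves limits and is accessible ultimately reuses the same conservativity-plus-detection trick, so the monadic dressing does not shorten the argument. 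Two small citation corrections: conservativity of $\quotientmapforcoarsequotient^!$ does not follow from its reflecting the $t$-structure---the paper argues in the opposite direction, deducing reflection \emph{from} conservativity in \cref{t-Exactness of all Quotientmapforcoarsequotient functors}---but rather from $\quotientmapforcoarsequotient$ being an ind-proper cover (as used in the proof of \cref{CoherentFullyFaithful}); likewise conservativity of $\quotientmaptostackquotient^!$ is used inside the proof of \cref{t-Exactness of all Quotientmaptostackquotient functors} rather than being part of its statement. Neither slip affects correctness, since the facts themselves are established in the paper.
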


\begin{proof}
To show that $\mapfromstackquotientofWexttocoarsequotientofWext^!$ admits a left adjoint, it suffices to show that $\mapfromstackquotientofWexttocoarsequotientofWext^!$ commutes with (small) limits, by the adjoint functor theorem (see \cite[Chapter 5]{LuHTT}). To see that $\mapfromstackquotientofWexttocoarsequotientofWext^!$ commutes with small limits, consider the following commutative diagram:

\begin{equation}
  \xymatrix@R+2em@C+2em{
  \LTd \ar[r]^{\text{id}} \ar[d]^{\quotientmaptostackquotient} & \LTd \ar[d]^{\quotientmapforcoarsequotient}\\
  \LTd/\Wext \ar[r]^{\mapfromstackquotientofWexttocoarsequotientofWext}  & \LTd\sslash\Wext
  }
 \end{equation}
 
\noindent Since $\quotientmaptostackquotient$ is ind-proper and surjective on geometric points, by ind-proper descent (say) we have that $\quotientmaptostackquotient^!$ is conservative. Therefore we may check that a map in $\IndCoh(\LTd/\Waff)$ is an isomorphism after applying $\quotientmaptostackquotient^!$. However, since $\quotientmaptostackquotient^!\mapfromstackquotientofWexttocoarsequotientofWext^! \simeq \quotientmapforcoarsequotient^!$, we see that $\quotientmaptostackquotient^!\mapfromstackquotientofWexttocoarsequotientofWext^!$ commutes with small limits (since $\quotientmapforcoarsequotient^!$ is also a right adjoint since $\quotientmapforcoarsequotient$ is ind-proper by \cref{WAffIndFiniteFlat}) and so $\mapfromstackquotientofWexttocoarsequotientofWext^!$ commutes with small limits as well, and thus admits a left adjoint by the adjoint functor theorem \cite{LuHTT}. 
\end{proof}

Denote the left adjoint to $\mapfromstackquotientofWexttocoarsequotientofWext^!$ by $\mapfromstackquotientofWexttocoarsequotientofWext_*^{\IndCoh}$. This light abuse of notation is justified by the following: 

\begin{Corollary}\label{FakeBaseChangeLemma}
The three Cartesian diagrams of \cref{Cartesian Diagrams Involving Union of Graphs Proposition} satisfy base change. In particular, the canonical map $\terminalmapfromCmodassociatedstabilizer_*^{\IndCoh} \quotientmaptostackquotient|^! \to \overline{x}^!\mapfromstackquotientofWexttocoarsequotientofWext_*^{\IndCoh}$ is an isomorphism. 
\end{Corollary}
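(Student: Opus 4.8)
The plan is to deduce base change for all three Cartesian squares of \cref{Cartesian Diagrams Involving Union of Graphs Proposition} from the base-change theorem for ind-proper morphisms, treating the right-hand box (the ``in particular'') as the only genuinely new case and extracting it from the other two by a pasting argument. The two key inputs are already available: the quotient map $\quotientmapforcoarsequotient\colon \LTd \to \LTd\sslash\Wext$ is ind-finite flat, hence ind-proper, by \cref{WAffIndFiniteFlat}, so $\quotientmapforcoarsequotient_*^{\IndCoh}$ satisfies base change against $!$-pullbacks by \cref{Definition of Pushing Forward by Quotientmapforcoarsequotient}; and $\quotientmaptostackquotient\colon \LTd \to \LTd/\Wext$ is ind-proper with $\quotientmaptostackquotient^!$ conservative, as recorded in the proof of \cref{Left Adjoint of MapfromStackQuotientofWexttoCoarsequotientofWext Exists}. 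The proper base-change theorem for $\IndCoh$ \cite[Chapter 3]{GaRoII} then handles directly the two squares whose bottom edge is one of these maps: for the outer rectangle one gets $\overline{x}^!\,\quotientmapforcoarsequotient_*^{\IndCoh} \simeq (\terminalmapfromCmodassociatedstabilizer \circ s)_*^{\IndCoh}\, t^!$, and for the left box one gets $(\quotientmaptostackquotient|)^!\,\quotientmaptostackquotient_*^{\IndCoh} \simeq s_*^{\IndCoh}\, t^!$, with $s$, $t$, $\terminalmapfromCmodassociatedstabilizer$, $\overline{x}$ as labelled in the diagram.

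For the right box the bottom edge is $\mapfromstackquotientofWexttocoarsequotientofWext\colon \LTd/\Wext \to \LTd\sslash\Wext$, which I do not know to be ind-proper, so I cannot invoke \cite{GaRoII} directly; this is the one real obstacle. Instead I use that the outer rectangle is the horizontal composite of the left and right boxes, so that $\quotientmapforcoarsequotient = \mapfromstackquotientofWexttocoarsequotientofWext \circ \quotientmaptostackquotient$ and, passing to left adjoints in $\quotientmapforcoarsequotient^! = \quotientmaptostackquotient^!\,\mapfromstackquotientofWexttocoarsequotientofWext^!$, one has $\quotientmapforcoarsequotient_*^{\IndCoh} \simeq \mapfromstackquotientofWexttocoarsequotientofWext_*^{\IndCoh}\,\quotientmaptostackquotient_*^{\IndCoh}$. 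Evaluating the canonical Beck--Chevalley map $\terminalmapfromCmodassociatedstabilizer_*^{\IndCoh}(\quotientmaptostackquotient|)^! \to \overline{x}^!\,\mapfromstackquotientofWexttocoarsequotientofWext_*^{\IndCoh}$ of the right box on an object of the form $\quotientmaptostackquotient_*^{\IndCoh}(\F)$, the left-box isomorphism rewrites the source as $\terminalmapfromCmodassociatedstabilizer_*^{\IndCoh} s_*^{\IndCoh} t^!(\F) = (\terminalmapfromCmodassociatedstabilizer \circ s)_*^{\IndCoh} t^!(\F)$, while $\mapfromstackquotientofWexttocoarsequotientofWext_*^{\IndCoh}\quotientmaptostackquotient_*^{\IndCoh} \simeq \quotientmapforcoarsequotient_*^{\IndCoh}$ together with the outer-rectangle isomorphism rewrites the target as the same object $(\terminalmapfromCmodassociatedstabilizer \circ s)_*^{\IndCoh} t^!(\F)$. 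By the standard compatibility of Beck--Chevalley cells under pasting, these two identifications agree with the base-change map itself, so the right-box transformation is an equivalence on the essential image of $\quotientmaptostackquotient_*^{\IndCoh}$.

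Finally I would extend the equivalence from the image of $\quotientmaptostackquotient_*^{\IndCoh}$ to all of $\IndCoh(\LTd/\Wext)$. Since $\quotientmaptostackquotient^!$ is continuous and conservative, the essential image of its left adjoint $\quotientmaptostackquotient_*^{\IndCoh}$ generates $\IndCoh(\LTd/\Wext)$ under colimits (if $\Hom(\quotientmaptostackquotient_*^{\IndCoh}(\F), Y)$ vanishes for all $\F$, then $\quotientmaptostackquotient^!(Y) = 0$ and hence $Y = 0$); moreover both composites $\terminalmapfromCmodassociatedstabilizer_*^{\IndCoh}(\quotientmaptostackquotient|)^!$ and $\overline{x}^!\,\mapfromstackquotientofWexttocoarsequotientofWext_*^{\IndCoh}$ are continuous, the pushforwards being left adjoints and the $!$-pullbacks $(\quotientmaptostackquotient|)^!$ and $\overline{x}^!$ being continuous because the morphisms in play are ind-finite by \cref{WAffIndFiniteFlat} and \cref{GroupoidIndFiniteFlat}. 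A natural transformation of continuous functors that is an equivalence on a generating family is an equivalence, so the right-box base-change map is an equivalence; this is exactly the isomorphism $\terminalmapfromCmodassociatedstabilizer_*^{\IndCoh}\quotientmaptostackquotient|^! \to \overline{x}^!\mapfromstackquotientofWexttocoarsequotientofWext_*^{\IndCoh}$ asserted in the statement, and with it all three squares satisfy base change. The care required is entirely in the pasting step: one must verify that the Beck--Chevalley cell of the right box, composed with that of the left box, really is the cell of the outer rectangle, so that the abstract equivalences above assemble into the specific canonical map named in the corollary.
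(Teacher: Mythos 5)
Your proposal is correct and follows essentially the same route as the paper's proof: ind-proper base change from \cite[Chapter 3]{GaRoII} handles the left box and the outer rectangle, and the right box is then deduced by evaluating its Beck--Chevalley map on the image of $\quotientmaptostackquotient_*^{\IndCoh}$, using the factorization $\quotientmapforcoarsequotient_*^{\IndCoh} \simeq \mapfromstackquotientofWexttocoarsequotientofWext_*^{\IndCoh}\quotientmaptostackquotient_*^{\IndCoh}$ coming from uniqueness of left adjoints. The only cosmetic difference is that the paper checks the map on the single compact generator $\quotientmaptostackquotient_*^{\IndCoh}(\omega_{\LTd})$, whereas you check it on the full essential image of $\quotientmaptostackquotient_*^{\IndCoh}$ and invoke colimit-generation and continuity, while also making explicit the pasting compatibility of the base-change cells that the paper leaves implicit.
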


\begin{proof}
The left and the \lq large\rq{} Cartesian diagrams satisfy base change since the maps $\quotientmaptostackquotient$ and $\quotientmapforcoarsequotient$ are ind-schematic and so satisfy base change by \cite[Chapter 3, Theorem 5.4.3]{GaRoII}, see also \cref{Definition of Pushing Forward by Quotientmapforcoarsequotient}. To show base change for the other Cartesian diagram, note that we may check that the map is an isomorphism on a compact generator of $\LTd/\Waff$. We choose the generator $\mathcal{G} := \quotientmaptostackquotient_*^{\IndCoh}(\omega_{\LTd})$. The uniqueness of left adjoints then gives that $\mapfromstackquotientofWexttocoarsequotientofWext_*^{\IndCoh}(\mathcal{G}) \simeq \quotientmapforcoarsequotient_*^{\IndCoh}(\omega_{\LTd})$. Base change by the outer Cartesian diagram of \cref{Cartesian Diagrams Involving Union of Graphs Proposition} then gives the desired claim. 
\end{proof}

\begin{proof}[Proof of \cref{CoherentFullyFaithful}]
    To show that $\mapfromstackquotientofWexttocoarsequotientofWext^!$ is fully faithful, it suffices to show that the counit map $\mapfromstackquotientofWexttocoarsequotientofWext_*^{\IndCoh}\mapfromstackquotientofWexttocoarsequotientofWext^! \to \text{id}$ is an equivalence. Because $\quotientmapforcoarsequotient$ admits a left adjoint (\cref{Definition of Pushing Forward by Quotientmapforcoarsequotient}), we have that $\mathcal{G} := \quotientmapforcoarsequotient_*^{\IndCoh}(\omega_{\LTd})$ is a compact generator of $\IndCoh(\LTd\sslash\Waff)$. Therefore to show that the counit is an equivalence, by continuity it suffices to show that the map $c(\mathcal{G}): \mapfromstackquotientofWexttocoarsequotientofWext_*^{\IndCoh}\mapfromstackquotientofWexttocoarsequotientofWext^!(\mathcal{G}) \to \mathcal{G}$ is an equivalence. Since $\quotientmapforcoarsequotient$ is an ind-proper cover (see \cref{WAffIndFiniteFlat}), $\quotientmapforcoarsequotient^!$ is conservative, and so it suffices to show $s^!(c(\mathcal{G})): s^!(\mapfromstackquotientofWexttocoarsequotientofWext_*^{\IndCoh}\mapfromstackquotientofWexttocoarsequotientofWext^!(\mathcal{G})) \to s^!(\mathcal{G})$ is an equivalence. 
    
    However, our map $s^!(c(\mathcal{G}))$ is a map in $\IndCoh(\LTd)$, which is generated by the skyscraper sheaves associated to all field-valued points, a direct consequence of \cref{For any qcoh sheaf on smooth classical Noetherian scheme there exists a field valued point where the fiber doesn't vanish} and the smoothness of $\LTd$, which gives that $\Psi_{\LTd}$ is an equivalence. Therefore, we may show this map is an isomorphism when restricted to each field-valued point $x \in \LTd$. By \cref{FakeBaseChangeLemma}, we see that $\overline{x}^!c(\G) \simeq c_{\terminalmapfromCmodassociatedstabilizer}(\overline{x}^!(\G))$, where $c_{\terminalmapfromCmodassociatedstabilizer}$ denotes the counit of the adjunction $(\terminalmapfromCmodassociatedstabilizer_*^{\IndCoh}, \terminalmapfromCmodassociatedstabilizer^!)$ of for the finite Coxeter group. However, we have that $\terminalmapfromCmodassociatedstabilizer^!$ is fully faithful by \cref{Pullback From GIT to Stack Quotient of Finite Group of Affine Scheme is Fully Faithful}. Therefore, since this holds for every field-valued point $x$, $\mapfromstackquotientofWexttocoarsequotientofWext^!$ is also fully faithful.
\end{proof}

\subsubsection{Equivalent Characterizations of Descent to the Coarse Quotient for The Affine Weyl Group}\label{Subsubsection with All Equivalent Conditions of Satisfying Descent to Wext Coarse Quotient}
We have seen in \cref{CoherentFullyFaithful} that the functor $\mapfromstackquotientofWexttocoarsequotientofWext^!$ is fully faithful. In analogy with the case where $\Wext$ is replaced with a finite group, we make the following definition: 

\begin{Definition}\label{Descends To Coarse Quotient For Wext Definition}
We say that a sheaf $\F \in \IndCoh(\LTd)^{\Wext}$ \textit{descends to the coarse quotient} $\LTd\sslash \Wext$ if it lies in the essential image of $\mapfromstackquotientofWexttocoarsequotientofWext^!$. When the $\Wext$-action is clear from context, we will simply say the given sheaf \textit{descends to the coarse quotient}. 
\end{Definition}

We now provide many alternative characterizations of a $\Wext$-equivariant sheaf descending to the coarse quotient, noting that many of the following conditions involve the usual affine Weyl group $\Waff$ as opposed to the extended affine Weyl group $\Wext$. 

\begin{Theorem}\label{Various Conditions for Wext Equivariant Sheaf to Satisfy Coxteter Descent}
A sheaf $\mathcal{F} \in \IndCoh(\LTd)^{\Wext}$ descends to the coarse quotient $\LTd\sslash \Wext$ if and only if it satisfies one of the following equivalent conditions:
\begin{enumerate}
    \item For every field-valued point $x \in \LTd(K)$, the canonical $\Waff_x$-representation on $\overline{x}^!(\text{oblv}^{\Wext}_{\Waff_x}(\F))$ is trivial. 
    
    \item For every finite parabolic subgroup $W'$ of $\Waff$, the object $\text{oblv}^{\Wext}_{W'}(\F) \in \IndCoh(\LTd/W')$ descends to the coarse quotient $\LTd\sslash W'$. 
    
    \item The object $\text{oblv}^{\Wext}_{\langle r \rangle}(\F) \in \IndCoh(\LTd/\langle r \rangle)$ descends to the coarse quotient $\LTd\sslash \langle r \rangle$ for every reflection $r \in \Waff$.

    \item For each $n$, each cohomology group $\tau^{\geq n}\tau^{\leq n}(\F)$ given by the $t$-structure in \cref{t-strucure on Sheaves on Stack and Coarse Quotient Subsubsection} descends to the coarse quotient $\LTd\sslash \Wext$. 
\end{enumerate}

If $G$ is simply connected, then these conditions are moreover equivalent to the following conditions: 

\begin{enumerate}
    \item[(5)] The object $\text{oblv}^{\Wext}_{\langle s \rangle}(\F) \in \IndCoh(\LTd/\langle s \rangle)$ descends to the coarse quotient $\LTd\sslash \langle s \rangle$ for every simple reflection $s \in W$.
    
    \item[(6)] The object $\text{oblv}^{\Wext}_W(\F) \in \IndCoh(\LTd/W)$ descends to the coarse quotient $\LTd\sslash W$.

    %
\end{enumerate}
\end{Theorem}

\begin{proof}
We first show that $\F$ descends to the coarse quotient $\LTd\sslash \Wext$ if and only if $\F$ satisfies (1). For a given $x \in \LTd(K)$, note that the fact that the right box in \cref{Cartesian Diagrams Involving Union of Graphs Proposition} commutes implies that any object in the essential image of $\mapfromstackquotientofWexttocoarsequotientofWext^!$ has the property that the pullback to $\IndCoh(\Spec(K))^{\Waff_x} \simeq \text{Rep}_{K}(\Waff_x)$ is trivial. Therefore, it remains to show that the left adjoint $\mapfromstackquotientofWexttocoarsequotientofWext_*^{\IndCoh}$ of \cref{Left Adjoint of MapfromStackQuotientofWexttoCoarsequotientofWext Exists} is conservative on this subcategory, since a functor with an adjoint is an equivalence if and only if it is fully faithful and its adjoint is conservative. 

Assume we are given some nonzero $\F \in \IndCoh(\LTd)^{\Wext}$ has the property that, for every field-valued point $x$ of $\LTd$, the pullback to $\IndCoh(\Spec(K))^{\Waff_x} \simeq \text{Rep}_{K}(\Waff_x)$ is trivial. Since $\F$ is nonzero, its pullback $\quotientmaptostackquotient^!(\F)$ is nonzero, and so in particular there exists a field-valued point for which $x^!\quotientmaptostackquotient^!(\F) \simeq \overline{x}^!(\F)$ is nonzero by \cref{For any qcoh sheaf on smooth classical Noetherian scheme there exists a field valued point where the fiber doesn't vanish}. Furthermore, since $\Upsilon_{\LTd}$ is an equivalence, we see that $\overline{x}^!(\text{oblv}^{\Wext}_{\Waff_x}(\F))$ lies in the full subcategory determined by the fully faithful (see \cref{Pointwise Essential Image of Pullback to Equivariant Sheaves on Coinvariant Algebra for IndCoh}) functor $\Xi_{\text{Spec}(C)}^H$ since $\Upsilon$ intertwines with pullback. Therefore, by \cref{Pointwise Essential Image of Pullback to Equivariant Sheaves on Coinvariant Algebra for IndCoh}, we see that the assumption that $\overline{x}^!(\text{oblv}^{\Wext}_{\Waff_x}(\F))$ is the trivial representation implies that, in the notation of \cref{Cartesian Diagrams Involving Union of Graphs Proposition}, $q|^!(\text{oblv}^{\Wext}_{\Waff_x}(\F))$ lies in the essential image of $\terminalmapfromCmodassociatedstabilizer^!$. Moreover, this sheaf is nonzero since $\overline{x}^!(\F)$ is nonzero and, since $i_x: \Spec(K) \to \Spec(C_x)$ is surjective on geometric points, $i_x^!$ is conservative \cite[Chapter 4, Proposition 6.2.2]{GaRoI}, and therefore the pullback functor $i_x^!$ is conservative. Thus we in particular see that $\terminalmapfromCmodassociatedstabilizer_*^{\IndCoh}(q|^!(\text{oblv}^{\Wext}_{\Waff_x}(\F)))$ is nonzero. Applying base change (\cref{FakeBaseChangeLemma}), we therefore see that $\overline{x}^!\mapfromstackquotientofWexttocoarsequotientofWext_*^{\IndCoh}(\F)$ does not vanish, and therefore neither does $\mapfromstackquotientofWexttocoarsequotientofWext_*^{\IndCoh}(\F)$, as required. 

Now, to show that $(1) \Rightarrow (2)$, let $\F \in \IndCoh(\LTd)^{\Wext}$ be some sheaf satisfying (1) and assume $W'$ is some parabolic subgroup of $\Waff$. We wish to show that $\mathcal{G} := \text{oblv}^{\Wext}_{\Waff_x}(\F)$ descends to the coarse quotient $\LTd\sslash \Waff_x$. By \cref{Equivalent Conditions to Be in Essential Image of Pullback of GeneralStackToGITQuotient Pullback}(3), it suffices to show that the canonical $W'_x$-representation on $x^!(\mathcal{G})$ is trivial. However, note that the following diagram commutes
\raggedbottom
 \begin{equation*}
 \xymatrix@R+2em@C+2em{
 \text{Rep}(\Waff_x) \ar[r]^{\text{oblv}^{\Waff_x}_{W'_x}} & \text{Rep}(W'_x) \\
  \IndCoh(\LTd)^{\Waff_x} \ar[r]^{\text{oblv}^{\Waff_x}_{W'_x}} \ar[u]^{x^!} & \IndCoh(\LTd)^{W'_x} \ar[u]^{x^!}
  }
 \end{equation*}

\noindent and so the fact that the associated $\Waff_x$-representation structure on $x^!(\mathcal{G})$ is trivial implies that the associated $W'_x$-representation is trivial, as desired. 

Conversely, if we are given some $\F$ satisfying (2) and some field-valued $x \in \LTd(K)$, it is standard (see, for example, \cite[Proposition 5.3]{Lo}) that the subgroup $\Waff_x$ is a finite parabolic subgroup. Therefore we see that, by assumption, $\text{oblv}^{\Wext}_{\Waff_x}(\F)$ descends to the coarse quotient for $\LTd\sslash \Waff_x$, and so that by \cref{Equivalent Conditions to Be in Essential Image of Pullback of GeneralStackToGITQuotient Pullback}(3), the canonical $(\Waff_x)_x = \Waff_x$-representation on $x^!(\text{oblv}^{\Wext}_{\Waff_x}(\F))$ is trivial, as required. 

The equivalence $(2) \Leftrightarrow (3)$ follows directly from the fact that one can check if a given $\mathcal{G} \in \IndCoh(\LTd)^{W'}$ descends to the coarse quotient for $\LTd\sslash W'$ if and only if $\text{oblv}^{W'}_{\langle r \rangle}(\mathcal{G}) \in \IndCoh(\LTd)^{\langle r \rangle}$ descends to the coarse quotient $\LTd\sslash \langle r \rangle$ for all reflections $r \in W'$, see \cref{Equivalent Conditions to Be in Essential Image of Pullback of GeneralStackToGITQuotient Pullback}. Similarly, the equivalence $(5) \Leftrightarrow (6)$ by varying $r$ over all \textit{simple} reflections of $W$, see \cref{Equivalent Conditions to Be in Essential Image of Pullback of GeneralStackToGITQuotient Pullback}(3). 

The proof of the equivalence $(3) \Leftrightarrow (5)$ follows nearly identically to the proof of the claim \lq $(2) \Leftrightarrow (3)$\rq{} of \cref{Equivalent Conditions to Be in Essential Image of Pullback of GeneralStackToGITQuotient Pullback}. The relevant addition is the standard fact (see, for example, \cite[Lemma 2.1.1]{BezrukavnikovMirkovicSomersRepresentationsofSemisimpleLieAlgebrasinPrimeCharacteristic}) that, if $G$ is simply connected, any reflection of $\Waff$ is conjugate in $\Wext$ to some simple reflection of $W$.

Because $\mapfromstackquotientofWexttocoarsequotientofWext^!$ is fully faithful (\cref{CoherentFullyFaithful}) and $t$-exact (\cref{t-Exactness of all Quotientmapforcoarsequotient functors}) we have that the essential image is closed under truncations, thus showing that if $\F$ descends to the coarse quotient, then so too does its cohomology groups. Since the $t$-structure on $\IndCoh(\LTd/\Wext)$ is left-complete and right-complete (\cref{t-Structure on Sheaves on Stack and Coarse Quotient is Both Left and right-complete}), a given $\F \in \IndCoh(\LTd/\Wext)$ has a canonical isomorphism
\raggedbottom
\[\F \simeq \text{lim}_m\text{colim}_n\tau^{\geq m}\tau^{\leq n}(\F)\]

\noindent where $-m, n \in \mathbb{Z}^{\geq 0}$. Therefore, since $\mapfromstackquotientofWexttocoarsequotientofWext^!$ is a continuous right adjoint, its essential image is closed under both limits and colimits. Thus since the essential image of $\phi^!$ is also closed under extensions (by fully faithfulness) we see that if all cohomology groups of $\F$ descend to the coarse quotient $\LTd\sslash \Wext$, so too does $\F$. 
\end{proof}

\begin{Remark}
    Using a variant of the argument in \cref{Cant Just Check Closed Points}, we claim it is possible to prove that condition (1) of \cref{Various Conditions for Wext Equivariant Sheaf to Satisfy Coxteter Descent} is strictly stronger than the condition \begin{enumerate}
        \item[(1')] For every closed point $x \in \LTd(k)$, the canonical $\Waff_x$-representation on $\overline{x}^!(\text{oblv}^{\Wext}_{\Waff_x}(\F))$ is trivial. 
    \end{enumerate}

    We do not include a proof here.
\end{Remark}

\begin{Remark} If $G$ is not simply connected, it is possible for conditions (5) and (6) of \cref{Various Conditions for Wext Equivariant Sheaf to Satisfy Coxteter Descent} to hold for some $\F \in \IndCoh(\LTd)^{\Wext}$ which does not descend to the coarse quotient. We now give an example.

If $G := \mathrm{PGL}_2$, we may identify $\Wext \cong 2\mathbb{Z} \rtimes Z/2\mathbb{Z}$ and $\LTd \cong \A^1$ in such a way that the element $(2, 0)$ acts by translation by 2 and $(0, 1)$ acts by reflection about 0. Let \[i: \mathcal{X} :=\coprod_{\{2m + 1 : m \in \mathbb{Z}\}} \Spec(k) \xhookrightarrow{} \A^1\] denote the inclusion of the ind-closed subscheme of odd integers. Since this ind-closed subscheme is invariant under the action of $\Waff$, there is a natural $\Waff$-equivariant structure on $\omega_{\mathcal{X}}$ and thus on $i_*^{\IndCoh}(\omega_{\mathcal{X}})$. Let $\F := i_*^{\IndCoh}(\omega_{\mathcal{X}}) \otimes_{k}k_{\mathrm{sign}}$ denote the $\Wext$-equivariant sheaf obtained by twisting the equivariant structure on $i_*^{\IndCoh}(\omega_{\mathcal{X}})$ by the sign character of $\Wext$. Then $\F$ does not descend to the coarse quotient by \cref{Various Conditions for Wext Equivariant Sheaf to Satisfy Coxteter Descent} since, for example, the induced $\mathbb{Z}/2\mathbb{Z}$-representation on the !-restriction along the closed point $\{1\} \to \A^1$ is the sign representation. On the other hand, since the !-restriction of $\F$ to 0 vanishes, $\text{oblv}_{\mathbb{Z}/2\mathbb{Z}}^{\Wext}(\F)$ descends to $\LTd\sslash W$ by \cref{Trivial Restriction Characterization of Essential Image of Pullback by GeneralStacktoGITMap for Order Two Group}. 

\end{Remark}

\appendix 

\section{Review of Borel Isomorphism Extension}\label{Generalization of the Borel Isomorphism Section}

A celebrated theorem of Borel identifies the cohomology of the flag variety $H^*(G^{\vee}/B^{\vee})$ with the coinvariant algebra $C := \Symt/\Symt^W_+$, where $G^{\vee}$ denotes the Langlands dual group to $G$ and $B^{\vee}$ denotes the corresponding Borel subgroup. In this section, we review a statement of an upgrade of this theorem, \cref{ClosedGraphsTheorem},  when the flag variety is replaced with the (closed) Schubert variety $X_v \xhookrightarrow{} G^{\vee}/B^{\vee}$ and give an alternate proof. In fact, we will generalize this theorem to the cohomology of unions of Schubert cells $H^*(X_S)$ determined by \textit{closed} subsets of the Weyl group: 

\begin{Definition}\label{ClosedSubsetDefinition}
If $\tilde{W}$ is a Coxeter group, we say a subset $S \subseteq \tilde{W}$ is \textit{closed} if $w \in S$ and $w' \leq w$ implies $w' \in S$.  
\end{Definition}

To state our desired extension to closed subsets of the Weyl group, we first obtain the following alternate description of $C$. Consider the scheme $\LTd \times \LTd$, and let $\text{graph}(w)$ denote the closed subscheme cut out by the ideal $I_{\text{graph}(w)}$, defined in turn to be the ideal generated by elements of the form $wp \otimes 1 - 1 \otimes p$ for $p \in \Symt$. Set $I_W := \cap_{w \in W}I_{\text{graph}(w)}$, and set $J_W$ to be the ideal generated by $I_W$ and $\Symtplus \otimes \Symt$. Similarly, assume we are given a closed subset $S \subseteq W$. Set $I_S := \cap_{w \in S}I_{\text{graph}(w)}$, and set $J_S$ to be the ideal generated by $I_S$ and $\Symtplus \otimes \Symt$. Note that we obtain a canonical map 
\raggedbottom
\[\Phi: \Symt \otimes_{\Symt^W} \Symt \to \Symtt/I_W\]

\noindent We now prove the following proposition, which says that the product $\LTd \times_{\LTd\sslash W} \LTd$ may be identified with the union of graphs of $W$:

\begin{Proposition}\label{Union of Graphs is Product of LieTs over GIT Quotient}
The map $\Phi$ is an isomorphism. 
\end{Proposition}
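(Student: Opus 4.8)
The plan is to show that $\Phi$ is a surjection of $\Symt$-modules (for the module structure coming from the first tensor factor) which is an isomorphism after passing to the generic point of $\LTd$, and then to upgrade this to injectivity using that the source is free over $\Symt$.

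First I would verify that $\Phi$ is well defined and surjective. The quotient map $\Symtt \to \Symtt/I_W$ annihilates every element $f \otimes 1 - 1 \otimes f$ with $f \in \Symt^W$: since $wf = f$, each such element lies in $I_{\text{graph}(w)}$ for all $w$, hence in $I_W$. As these elements generate the kernel of the defining surjection $\Symtt \to \Symt \otimes_{\Symt^W} \Symt$, the composite $\Symtt \to \Symtt/I_W$ factors as $\Symtt \to \Symt \otimes_{\Symt^W}\Symt$ followed by $\Phi$; this is exactly the canonical map in the statement. Surjectivity of $\Phi$ is then immediate, since $\Symtt \to \Symtt/I_W$ is surjective. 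Note also that $\Phi$ is $\Symt$-linear for the first-factor action.

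Next I would use that, because $W$ acts on $\LTd$ as a reflection group, $\Symt$ is a free $\Symt^W$-module of rank $|W|$ (Chevalley). Hence the source $\Symt \otimes_{\Symt^W}\Symt$ is a free module of rank $|W|$ over the first copy of $\Symt$, and in particular it is torsion-free. Let $F := \operatorname{Frac}(\Symt)$ denote the fraction field of this first copy. I claim it suffices to prove that $\Phi \otimes_{\Symt} F$ is injective: localization is flat, so $\ker\Phi \otimes_{\Symt} F = \ker(\Phi \otimes_{\Symt} F)$, whence injectivity over $F$ forces $\ker\Phi$ to be torsion; but $\ker\Phi$ is a submodule of the torsion-free module $\Symt \otimes_{\Symt^W}\Symt$, so $\ker\Phi = 0$. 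It then remains to compare generic fibers. On the source side, freeness gives $\dim_F\big((\Symt\otimes_{\Symt^W}\Symt)\otimes_{\Symt} F\big) = |W|$. On the target side, $\Symtt/I_W$ is reduced, being the intersection $\cap_{w}I_{\text{graph}(w)}$ of the primes cutting out the graphs, and its underlying closed set is $\bigcup_{w \in W}\text{graph}(w)$. Since $W$ acts faithfully, each nonidentity element fixes only a proper subspace of $\LTd$, so over the generic point the translates $wx$ are pairwise distinct; thus the generic fiber of the source map $\Gamma_W \to \LTd$ consists of exactly $|W|$ distinct reduced $F$-rational points, giving $(\Symtt/I_W)\otimes_{\Symt} F \cong F^{|W|}$, of dimension $|W|$. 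Therefore $\Phi \otimes_{\Symt} F$ is a surjection of $F$-vector spaces of equal dimension $|W|$, hence an isomorphism, and in particular injective. Combined with the torsion-free argument this yields injectivity of $\Phi$, and together with surjectivity proves the proposition.

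I expect the main obstacle to be the generic-fiber computation on the target: pinning down that the scheme-theoretic fiber of $\bigcup_w \text{graph}(w) \to \LTd$ at the generic point has length exactly $|W|$, i.e. that generic freeness of the $W$-action makes the graphs disjoint and reduced there. Everything else is formal once Chevalley's freeness theorem is invoked. One could alternatively run the entire argument as a comparison of Hilbert series of the two graded rings, but computing the Hilbert series of $\Symtt/I_W$ directly requires precisely this same disjointness input, so the generic-point approach is the most economical.
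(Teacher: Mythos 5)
Your proof is correct and follows essentially the same strategy as the paper's: both rest on surjectivity of $\Phi$, Chevalley's theorem making the source a free $\Symt$-module of rank $|W|$, and generic freeness of the $W$-action forcing the target to have rank $|W|$ over the generic point of $\LTd$. The only difference is the finishing move for injectivity --- the paper squeezes the rank of \emph{every} fiber of the target to exactly $|W|$ via upper semicontinuity of the degree of a finite morphism, whereas you localize at the generic point and invoke torsion-freeness of the free source module; this is a marginally more economical route to the same conclusion.
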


\begin{proof}
We note that $\Phi$ is in particular a map of $\Symt$-modules. We see that by base changing the quotient map $\LTd \to \LTd\sslash W$ by itself, by \cref{For Pseudo-Reflection Groups Acting Faithfully on Vector Space The GIT Quotient is Flat} the $\Symt$-module $\Symt \otimes_{\Symt^W} \Symt$ is finite flat. The fact that $\Symt \otimes_{\Symt^W} \Symt$ is finite implies that $\Symtt/I_W$ is also finite as a $\Symt$-module. We also see that $W$ acts freely on a dense open subset of $\LTd$, so that for a dense open subset of $\LTd$, the fiber of the map $\text{Spec}(\Symtt/I_W) \to \LTd$ has degree $W$. This in particular implies that all fibers of the $\Symt$-module $\Symtt/I_W$ have rank no less than $|W|$, since degree of a finite morphism is a upper semicontinuous function on the target. However, we also see that each fiber at some point $x$ admits a surjection from the fiber of $\Symt \otimes_{\Symt^W} \Symt$ at the point, which has rank precisely $|W|$ since $\Symt$ is a free rank $\Symt^W$-module of rank $W$ by by the Chevalley-Shephard-Todd theorem.
\end{proof}

\begin{Remark}
We temporarily assume that $G_{\mathbb{Z}}$ is an adjoint type Chevalley group scheme defined over the integers with maximal torus $T_{\mathbb{Z}}$. An identical result to \cref{Union of Graphs is Product of LieTs over GIT Quotient} with a similar proof (which will not be used below) also holds for the action of the Weyl group $W$ on the torus $T_{\mathbb{Z}}$. The analogue of \cref{For Pseudo-Reflection Groups Acting Faithfully on Vector Space The GIT Quotient is Flat} is the Pittie-Steinberg theorem \cite{SteinbergOnATheoremofPittie}, which says that for such $G_{\mathbb{Z}}$ that $\mathcal{O}(T_{\mathbb{Z}}^{\vee})$ is a free $\mathcal{O}(T_{\mathbb{Z}}^{\vee})^W$-module of rank $|W|$. 
\end{Remark}

\begin{Remark}\label{UnionNotProductExample}
Note that we work with the union of graphs given by the intersection of ideals, not the product. For example, if $\LG = \mathfrak{sl}_3$, we may pick a simple reflection $s \in W$ and choose coordinates on $\LTd$ so that $\LTd \simeq \text{Spec}(k[h,p])$ where $s(h) = -h$ and $s(p) = p$. The intersection $I_1 \cap I_s$ contains the degree 1 polynomial $p \otimes 1 - 1 \otimes p$, whereas the product $I_1I_s$ is generated by degree two polynomials. 
\end{Remark}

In particular, \cref{Union of Graphs is Product of LieTs over GIT Quotient} shows that one can identify $C \cong \Symtt/J_W$. For a closed subset $S \subseteq W$, let $X_S$ denote the closed subvariety of $G^{\vee}/B^{\vee}$ given by the union of the Schubert cells labelled by $w \in S$. We now give an alternate proof of the following result of \cite{CarrellSomeRemarksOnRegularWeylGroupOrbitsandtheCohomologyOfSchubertVarieties}:

\begin{Theorem}\label{ClosedGraphsTheorem}
Fix a closed subset $S \subseteq W$. There is an isomorphism $H^*(X_S) \simeq \Symtt/J_{S}$ such that the following diagram commutes:

 \begin{equation*}
  \xymatrix@R+2em@C+2em{
  \Symtt/J_W \ar[r]^{\sim} \ar[d]_{} & H^*(G^{\vee}/B^{\vee})  \ar[d] \\
  \Symtt/J_{S} \ar[r]_{\sim} & H^*(X_S)
  }
 \end{equation*}
 
 \noindent where the vertical maps are the canonical quotient maps and the top arrow is the Borel isomorphism. 
\end{Theorem}


\begin{Remark}
An alternate description of the rings $\Symtt/J_S$ in the case $G = \text{GL}_n$ and $S$ is the closure of some Weyl group element is computed in \cite{ALPCohomologyOfSchubertSubvarietiesofGLnModP}. We thank Victor Reiner for making us aware of this reference. 
\end{Remark}

\subsubsection{Results on Demazure Operators}
We first will recall some definitions and results of \cite{BGGAnnouncement} and \cite{BGG}. 

\begin{Definition}
For a simple reflection $s \in W$ associated to a coroot $\alpha$, define the vector space map $D_s: \Symt \to \Symt$ via $D_s(f) := \frac{f - sf}{\alpha}$. For a $w \in W$, choose a reduced expression $w = s_{1}...s_{r}$ and set $D_w := D_{s_1}...D_{s_r}$. These $D_w$ are known as \textit{Demazure operators}. 
\end{Definition}

Let $w_0$ denote the longest element of the Weyl group $W$ with respect to some ordering, and let $\ell := \ell(w_0)$. We now recall the following theorem:

\begin{Theorem}\cite{BGG}\label{BGGSummary} We have the following: 
\begin{enumerate}
    \item The Demazure operators are well defined and independent of reduced expression. 
    \item If $s_1, ..., s_p$ is not a simple expression, then $D_{s_1}...D_{s_p}$ vanishes. 
    \item The Poincar\'e dual class to the Schubert variety $[X_1] \in H_0(G/B)$ maps to $\rho^{\ell}/\ell!$ in the coinvariant algebra, and if $S \subseteq W$ is closed, the vector space $H^*(X_S)$ has a basis given by the $D_{u}(\rho^{\ell}/\ell!)$ for which $w_0u^{-1} \in S$. 
\end{enumerate}
\end{Theorem}

\begin{Remark}
To translate between point (3) of \cref{BGGSummary} and Theorem 3.15 in \cite{BGG}, we note that the vector space $H^*(X_S)$ also, in the notation of \cite{BGG}, has basis $P_w$ for which $w \in S$. The notation $P_w$ will not be used outside this remark.
\end{Remark}
    
\subsubsection{Proof of \cref{ClosedGraphsTheorem}}
In this subsection, we prove \cref{ClosedGraphsTheorem}. 
To prove \cref{ClosedGraphsTheorem}, we will first determine a specific element of $\Symtt$ which projects to a nonzero homogeneous element of degree $\ell := \ell(w_0)$ under the composite $\Symtt \twoheadrightarrow \Symtt/J_W \simeq H^*(G/B)$. 


\begin{Proposition}\label{PolynomialProp}
There exists a polynomial $F(x,y) \in \Symtt$ such that 

\begin{enumerate}
\item $F(x,vx) = 0$ if $v \neq w_0$,
\item $F(x,w_0x) = \prod \gamma(x)$, where $\gamma$ varies over the positive coroots, and
\item $F(x,y) \neq 0$ in the coinvariant algebra $\Symtt / J_W$.
\end{enumerate}
\end{Proposition}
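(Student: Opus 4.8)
The plan is to rephrase the three conditions on the union of graphs. Write $R := \Symtt/I_W$, so that \cref{Union of Graphs is Product of LieTs over GIT Quotient} gives an isomorphism $\Phi\colon \Symt\otimes_{\Symt^W}\Symt \xrightarrow{\sim} R$, and observe that since $\Spec(R)$ is the reduced union of the graphs $\text{graph}(v)$ (its irreducible components), the restriction maps assemble into an injection $\rho\colon R\hookrightarrow\prod_{v\in W}\Symt$ whose $v$-th component sends the class of $g$ to $g(x,vx)$. Setting $\Pi := \prod_{\gamma>0}\gamma\in\Symt$, conditions (1) and (2) ask precisely for an $F\in\Symtt$ whose image in $R$ satisfies $\rho(F)_v = \delta_{v,w_0}\Pi$; as $\Pi$ is homogeneous of degree $\ell=\ell(w_0)=|\Phi^+|$ and $\rho$ is graded (the substitution $y=vx$ is linear), I may then pass to the degree-$\ell$ component of $F$. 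Condition (3) asks that this $F$ avoid $J_W$.

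First I would establish existence. Over the fraction field the tuple $(\delta_{v,w_0}\Pi)_v$ manifestly defines an element of $R\otimes_{\Symt}\text{Frac}(\Symt)=\prod_{v\in W}\text{Frac}(\Symt)$, so the only issue is integrality. Because the first projection is finite flat (\cref{Finite Coxeter Quotient Map is Finite Flat and fpqc}), $R$ is a finite free, hence reflexive, module over the polynomial ring $\Symt$, so $R$ is the intersection of its localizations $R_{\mathfrak p}$ at height-one primes inside $R\otimes\text{Frac}(\Symt)$; it thus suffices to check the tuple lies in each $R_{\mathfrak p}$. Away from the reflection hyperplanes the projection is étale, so there $R_{\mathfrak p}\cong\prod_v\Symt_{\mathfrak p}$ and every tuple is integral. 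The only remaining primes are the $\mathfrak p=(\gamma)$ cutting out a wall $\gamma^{\perp}$; at the generic point of such a wall the stabilizer is $\langle s_{\gamma}\rangle$, the sheets collide transversally in the pairs $\{v,vs_{\gamma}\}$, and the condition for a tuple to extend across the wall is exactly that paired components agree modulo $\gamma$. In our tuple the only nonzero component $\Pi$ is paired with $0$ (since $w_0s_\gamma\neq w_0$), and $\gamma\mid\Pi$ by definition, so this compatibility holds while all other pairs are $(0,0)$. I expect this gluing across the walls to be the main obstacle, as it is precisely where the non-transversality of the graphs rescues an otherwise impossible ``delta-function'' prescription.

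It then remains to verify (3). Writing $\Phi^{-1}(F)=\sum_i a_i\otimes b_i$, so that $\rho(F)_v=\sum_i a_i(x)\,b_i(vx)$, I would antisymmetrize in $v$: the operator $\text{Alt}(b)(x):=\sum_{v\in W}\det(v)\,b(vx)$ is $\Symt^W$-linear with image the free rank-one module $\Pi\cdot\Symt^W$, so I may write $\text{Alt}(b_i)=\Pi c_i$ with $c_i\in\Symt^W$. Then
\[ \sum_{v\in W}\det(v)\,\rho(F)_v = \det(w_0)\,\Pi \quad\text{and}\quad \sum_{v\in W}\det(v)\,\rho(F)_v = \Pi\sum_i a_i c_i, \]
so cancelling $\Pi$ in the domain $\Symt$ yields $\sum_i a_i c_i=\det(w_0)$. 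On the other hand, under $\Symtt/J_W\cong\Symt/(\Symt^W)^+\Symt$ the image of $F$ is $\sum_i a_i(0)\,\overline{b_i}$. If this vanished, then $\sum_i a_i(0)b_i\in(\Symt^W)^+\Symt$; applying $\text{Alt}$, which carries $(\Symt^W)^+\Symt$ into $\Pi(\Symt^W)^+$, and cancelling $\Pi$ would force $\sum_i a_i(0)c_i\in(\Symt^W)^+$, i.e.\ with vanishing constant term. But evaluating $\sum_i a_i c_i=\det(w_0)$ at $0$ shows this constant term equals $\det(w_0)\neq0$, a contradiction. Hence $F\notin J_W$, giving (3).

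The essential inputs are therefore \cref{Union of Graphs is Product of LieTs over GIT Quotient} and \cref{Finite Coxeter Quotient Map is Finite Flat and fpqc} for the module-theoretic setup, the classical description of the $W$-antiinvariants of $\Symt$ as the free rank-one $\Symt^W$-module generated by $\Pi$, and the pairwise collision of sheets along each wall; the degree bookkeeping records that the resulting $F$ is homogeneous of degree $\ell$, as needed for its later use toward \cref{ClosedGraphsTheorem}.
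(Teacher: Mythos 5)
Your proof is correct, and it takes a genuinely different route from the paper's. The paper runs the BGG Demazure‑operator machinery: it first produces a polynomial $Q$ with the vanishing properties (\cref{Existence Of Q}), proves divisibility statements by backward induction along a reduced word for $w_0$ (\cref{BGG3.15}, \cref{CanFactorOut}), sets $F(x,y)=Q(x,y)\prod_{\gamma>0}\gamma(x)\big/Q(x,w_0x)$, and reads off (3) from the fact that the coefficient of $R_{1\times w_0}$ in $F$ is $(-1)^{\ell}$. You instead observe that (1)--(2) prescribe the image of $F$ under the injection $\rho\colon \Symtt/I_W\hookrightarrow\prod_{v\in W}\Symt$, and you produce that element by a codimension‑one gluing argument: since $\Symtt/I_W$ is finite free over $\Symt$ (\cref{Union of Graphs is Product of LieTs over GIT Quotient}, \cref{Finite Coxeter Quotient Map is Finite Flat and fpqc}) and $\Symt$ is normal, membership can be checked at height‑one primes; the only nontrivial ones are the walls $(\gamma)$, where the sheets pair up as $\{v,vs_\gamma\}$ glued along the wall (the sum of the two graph ideals is the reduced ideal of the wall, so the localized ring consists of pairs of functions agreeing mod $\gamma$), and your prescription is compatible exactly because $\gamma$ divides $\prod_{\gamma'>0}\gamma'$, while all other pairs are $(0,0)$. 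Your antisymmetrization proof of (3) is likewise new relative to the paper and is a clean use of classical invariant theory: I checked that $a\otimes b\mapsto a\,\mathrm{Alt}(b)$ is well defined over $\Symt^{W}$, that $\mathrm{Alt}$ lands in the anti‑invariants $\bigl(\prod_{\gamma>0}\gamma\bigr)\cdot\Symt^{W}$, that equating the two computations of $\sum_{v}\det(v)\rho(F)_v$ gives $\sum_i a_ic_i=\det(w_0)$, and that $\Symtt/J_W\cong\Symt/(\Symt^{W})^{+}\Symt$ carries $F$ to $\sum_i a_i(0)\overline{b_i}$, so the constant‑term comparison does yield the contradiction. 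As for what each route buys: yours isolates the geometric reason the proposition holds (divisibility by $\gamma$ across each wall forces exactly the factor $\prod_{\gamma>0}\gamma$) and avoids reduced‑word combinatorics entirely; moreover, since $\rho$ is injective, your $F$ agrees with the paper's modulo $I_W$. The paper's longer construction is not wasted effort, however: the expansion of $F$ in the $R_{1\times w}$'s and the applicability of \cref{BGG3.15} to $F$ itself (which formally requires a representative of $y$-degree $\ell(w_0)$ --- your homogeneity remark gives total degree $\ell$, hence only an upper bound on the $y$-degree) are reused verbatim in the proof of \cref{ExplicitBasis}, so substituting your argument into the paper would require supplementing it there, e.g. by rerunning your wall‑local analysis for the quotients $\Symtt/J_Z$ directly.
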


To prove \cref{PolynomialProp}, we will set the following notation, closely following the notation and proof of \cite[Theorem 3.15]{BGG}. 

\begin{Proposition}\label{Existence Of Q}
There exists some polynomial $Q(x, y) \in \Symtt$ of $y$-degree $\ell(w_0)$ for which $Q(x, wx) = 0$ for $w \neq w_0$ and such that $Q(x, w_0x)$ is generically nonvanishing.  
\end{Proposition}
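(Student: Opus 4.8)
The plan is to produce $Q$ not as an explicit product of linear forms but abstractly, as a ``delta function at $w_0$'' inside the coordinate ring of the union of graphs, and then to bound its $y$-degree using Chevalley's freeness theorem. I emphasize at the outset that the naive candidate $\prod_{\alpha > 0}\langle y - x, \alpha^{\vee}\rangle$ does \emph{not} work: restricted to $\mathrm{graph}(w)$ it equals $\prod_{\alpha>0}\langle x, (w^{-1}\alpha)^{\vee} - \alpha^{\vee}\rangle$, which vanishes identically only if $w$ fixes some positive root, and so it is generically nonzero on $\mathrm{graph}(w)$ for a Coxeter-type element $w \neq w_0$. This already shows that vanishing on \emph{all} non-$w_0$ graphs at once is the essential difficulty, and that $Q$ cannot be a single product of linear forms.

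Set $A := \Symt$, regarded as functions on the first factor of $\LTd \times \LTd$ (the $x$-variable), and $R := \Symtt / I_W$. By \cref{Union of Graphs is Product of LieTs over GIT Quotient} we have $R \cong \Symt \otimes_{\Symt^W} \Symt$, where the first tensor factor is the $x$-copy $A$. By \cref{Finite Coxeter Quotient Map is Finite Flat and fpqc}, $\Symt$ is free of rank $|W|$ over $\Symt^W$, and since the coinvariant algebra is concentrated in degrees $0$ through $\ell := \ell(w_0)$, a homogeneous such basis may be chosen in $y$-degrees at most $\ell$. Consequently $R$ is free over $A$ on a basis of $y$-degree $\le \ell$, so that \emph{every} class of $R$ admits a representative in $\Symtt$ of $y$-degree at most $\ell$. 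This is the source of the degree bound.

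Next I would assemble the graph-evaluation maps into a single $A$-linear map $\operatorname{ev}\colon R \to \prod_{w \in W} A$, $f \mapsto (f(x,wx))_{w}$; each $\operatorname{ev}_w$ is $A$-linear because restriction to $\mathrm{graph}(w)$ preserves the $x$-coordinate. Let $\delta := \prod_{\alpha > 0}\langle x, \alpha^{\vee}\rangle \in A$ be the product of positive coroots, a homogeneous degree-$\ell$ element cutting out the branch locus. The map $\LTd \to \LTd \sslash W$ is finite flat of degree $|W|$ and \'etale over the regular locus $\{\delta \neq 0\}$, where $W$ acts freely; base-changing along the first quotient map identifies $\LTd \times_{\LTd\sslash W}\LTd \to \LTd$ as a finite flat degree-$|W|$ cover that, over $\{\delta \neq 0\}$, is the disjoint union of the $|W|$ graphs. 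Hence $\operatorname{ev}[\delta^{-1}]$ is an isomorphism, so $\operatorname{coker}(\operatorname{ev})$ is a finitely generated $A$-module annihilated by some power $\delta^{N}$. In particular the vector $\delta^{N}\mathbf{1}_{w_0}$, whose $w_0$-component is $\delta^{N}$ and whose other components vanish, lies in the image of $\operatorname{ev}$; I choose $\Theta \in R$ with $\operatorname{ev}(\Theta) = \delta^{N}\mathbf{1}_{w_0}$ and lift it, using the degree-$\le\ell$ basis above, to $Q \in \Symtt$ of $y$-degree at most $\ell$. Then $Q(x,wx) = \operatorname{ev}_w(\Theta) = 0$ for $w \neq w_0$, while $Q(x,w_0 x) = \delta^{N}$ is generically nonzero. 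To make the $y$-degree exactly $\ell$ one corrects $Q$ by an element of $I_W$ of $y$-degree $\ell$ (these exist, e.g.\ any $\Symtt$-multiple of $p(y)-p(x)$ for a basic invariant $p$, since such elements vanish on every graph), which alters no $\operatorname{ev}_w$.

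The main obstacle is precisely the ``clearing of denominators'' in the previous paragraph: one must pass from the \emph{generic} separation of the graph of $w_0$ from the others — trivial over $\{\delta \neq 0\}$ — to an honest \emph{polynomial} identity, and the mechanism is that multiplication by a power of the Jacobian $\delta$ kills the cokernel that measures the failure of $\operatorname{ev}$ to be surjective along the root hyperplanes. I expect that the sharp form, namely that $N=1$ already suffices and hence $Q(x,w_0x) = \delta$ exactly (the normalization needed downstream in \cref{PolynomialProp}), follows from the fact that the Jacobian of a basic system of invariants equals $\delta$, so that the different of $\Symt/\Symt^W$ is $(\delta)$; but for the statement at hand only the $\delta$-power-torsion of the cokernel, and hence only \'etaleness over the regular locus, is required.
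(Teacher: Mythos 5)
Your argument is correct in substance, and it reaches the crucial object by a genuinely different route than the paper. The paper's proof is the BGG substitution trick: it starts from an \emph{arbitrary} $Q'$ vanishing on $\mathrm{graph}(w)$ for $w \neq w_0$ but not on $\mathrm{graph}(w_0)$ (such a $Q'$ exists trivially, the graphs being distinct irreducible linear subvarieties), expands $Q' = \sum_{w} g_w(x,y)R_{1\times w}(y)$ over $\Symtt^{1 \times W}$ using the BGG basis, and then replaces each coefficient $g_w(x,y)$ by $g_w(x,x)$; the $W$-invariance of $g_w$ in the $y$-variable guarantees that no restriction $Q'(x,vx)$ changes, while the $y$-degree drops to at most $\ell$. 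You instead analyze the $A$-linear evaluation map $\operatorname{ev}\colon R \to \prod_{w} A$ directly: inverting the discriminant $\delta$ splits the union of graphs into its $|W|$ components (via \cref{Union of Graphs is Product of LieTs over GIT Quotient} and \cref{Finite Coxeter Quotient Map is Finite Flat and fpqc}), so $\operatorname{coker}(\operatorname{ev})$ is finitely generated and $\delta$-power-torsion, whence $\delta^{N}\mathbf{1}_{w_0}$ lifts to a class $\Theta \in R$; Chevalley freeness, which is the common pillar of both proofs, then supplies a representative of $y$-degree $\le \ell$. What your route buys is the normalized value $Q(x,w_0x) = \delta^{N}$ and a conceptual reason for existence (the failure of surjectivity of $\operatorname{ev}$ is supported on the branch locus); what the paper's route buys is brevity and, more importantly, the explicit expansion $Q = \sum_{w} g_w(x,x)R_{1\times w}(y)$ with coefficients depending on $x$ alone, which the subsequent arguments (notably \cref{CanFactorOut} and \cref{PolynomialProp}) invoke verbatim. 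Your lifted $Q$ has exactly this form too (its basis coefficients lie in $A$), so it feeds into those proofs without change, \emph{provided} you do not perturb it by elements of $I_W$.

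That proviso points to the one genuine misstep: the final degree-correction. In rank one it is impossible as stated: for $A_1$ one has $\ell = 1$ while $I_W = (y^2 - x^2)$, so every nonzero element of $I_W$ has $y$-degree at least $2$, and no multiple of $p(y)-p(x)$ has $y$-degree exactly $\ell$. Fortunately the correction is also unnecessary. The paper's own proof likewise only exhibits $y$-degree $\le \ell$ (it never checks at this stage that the top coefficient $g_{w_0}(x,x)$ is nonzero), and exactness is automatic from the other two conditions: the induction of \cref{BGG3.15} uses only the vanishing and nonvanishing hypotheses together with the bound $\deg_y Q \le \ell$, and if one had $\deg_y Q < \ell$ then $Q_0 = D_{1 \times w_0}(Q)$ would vanish for degree reasons (each Demazure operator strictly lowers $y$-degree), so the identity $Q_0(x,x)\prod_{j}\gamma_j(x) = (-1)^{\ell}Q(x,w_0x)$ would contradict the generic nonvanishing on $\mathrm{graph}(w_0)$. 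So simply delete that step; everything else in your construction stands.
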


\begin{proof}
There exists some polynomial $Q' \in \Symtt$ such that $Q'(x, wx) = 0$ for $w \neq wx$ and $Q'(x, w_0x)$ generically does not vanish. Set $R_{1 \times w_0}(y) := (1 \otimes \rho)^{\ell}/\ell!$, and for $w \neq w_0$ set $R_{1 \times w} := D_{1 \times w_0w^{-1}}(R_{1 \times w_0})$. These give minimal degree lifts of the basis of the coinvariant algebra labeled by the Schubert cells by \cref{BGGSummary}. In particular, there exist polynomials $g_w(x,y) \in \Symtt^{1 \times W}$ such that $Q'(x, y) = \sum_{w \in W}g_w(x,y)R_{1 \times w}(y)$. Set $Q(x, y) := \sum_{w \in W}g_w(x,x)R_{1 \times w}(y)$. Then since 

$$Q(x, vx) := \sum_{w \in W}g_w(x,x)R_{1 \times w}(vx) = \sum_{w \in W}g_w(x,vx)R_{1 \times w}(vx) = Q'(x, vx),$$

\noindent we see that $Q(x, wx) = 0$ for $w \neq w_0$ and $Q(x, w_0x)$ is generically nonvanishing. 
\end{proof}

Choose such a $g_w(x, y)$, $R_{1 \times w}(y)$, and $Q$ as in the proof of \cref{Existence Of Q}. Unfortunately, such a $Q$ need not satisfy condition (3) of \cref{PolynomialProp}, even if $Q'$ does. Therefore, we will need to modify our choice of $Q$ further. To this end, choose any reduced expression $w_0 = s_{\alpha_1}...s_{\alpha_{r}}$ labelled by coroots $\alpha_i$. Given this decomposition, set $w_i := s_{\alpha_i}...s_{\alpha_1}$, $v_i := s_{\alpha_{i + 1}}...s_{\alpha_r}$, $Q_i := D_{1 \times v_i}Q$, $\gamma_1 := \alpha_1$, and, for $i > 1$, we set\footnote{Note that the notation of \cite[Theorem 3.15]{BGG} contains a typographic error: in their version, $\alpha_1$ should be replaced with $\alpha_i$. This is reflected in their  \cite[Lemma 2.2]{BGG}, which is appealed to in the proof of \cite[Theorem 3.15]{BGG}} $\gamma_i := w_{i-1}^{-1}(\alpha_i)$.

\begin{Lemma}\label{BGG3.15}
For any $Q$ satisfying the conditions of \cref{Existence Of Q} and any reduced expression for $w_0$, in the notation above, each polynomial $Q_i$ has $y$-degree $i$, $Q_i(x, w_ix)\prod_{r \geq j > i} \gamma_j(x) = (-1)^{(r - i)}Q(x, w_0x)$ and $Q_i(x, wx) = 0$ if $w \ngeq w_i$.
\end{Lemma}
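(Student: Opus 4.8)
The plan is to argue by downward induction on $i$, beginning at $i=r$. Since $v_r$ is the empty word and $v_{i-1}=s_{\alpha_i}v_i$, we have $Q_r=Q$ and $Q_{i-1}=D_{1\times s_{\alpha_i}}Q_i$, so each inductive step applies a single divided-difference operator in the $y$-variable. The base case $i=r$ is immediate: the reverse of the reduced word $s_{\alpha_1}\cdots s_{\alpha_r}$ is a reduced word for $w_0^{-1}=w_0$, so $w_r=w_0$; the product $\prod_{r\ge j>r}\gamma_j$ is empty; and the three assertions then read ``$Q$ has $y$-degree $\ell(w_0)=r$'', ``$Q(x,w_0x)=Q(x,w_0x)$'', and ``$Q(x,wx)=0$ for $w\ngeq w_r=w_0$'', all of which hold by \cref{Existence Of Q}.

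The engine of the inductive step is the formula obtained by substituting $y=w'x$ into $D_{1\times s_{\alpha_i}}g=(g-s_{\alpha_i}g)/\alpha_i$, namely
\[
Q_{i-1}(x,w'x)=\frac{Q_i(x,w'x)-Q_i(x,s_{\alpha_i}w'x)}{\alpha_i(w'x)},\qquad \alpha_i(w'x)=((w')^{-1}\alpha_i)(x),
\]
valid for every $w'\in W$. To obtain the product formula I would set $w'=w_{i-1}$. Because $s_{\alpha_1}\cdots s_{\alpha_i}$ is a reduced prefix of $w_0$, its reverse gives $w_i=s_{\alpha_i}w_{i-1}$ with $\ell(w_i)=\ell(w_{i-1})+1$; hence the second numerator term is $Q_i(x,w_ix)$, while the first term $Q_i(x,w_{i-1}x)$ vanishes by the inductive vanishing hypothesis since $w_{i-1}<w_i$. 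As $\alpha_i(w_{i-1}x)=(w_{i-1}^{-1}\alpha_i)(x)=\gamma_i(x)$, this gives $Q_{i-1}(x,w_{i-1}x)\,\gamma_i(x)=-Q_i(x,w_ix)$; multiplying through by $\prod_{r\ge j>i}\gamma_j(x)$ and substituting the inductive product formula advances the sign from $(-1)^{r-i}$ to $(-1)^{r-(i-1)}$, which is exactly the assertion at level $i-1$.

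For the vanishing assertion I would show that $w'\ngeq w_{i-1}$ forces both numerator terms to vanish, i.e. that $w'\ge w_i$ and $s_{\alpha_i}w'\ge w_i$ each imply $w'\ge w_{i-1}$. The first is immediate from $w_i>w_{i-1}$ and transitivity; the second is the step I expect to be the main obstacle and relies on the lifting property of the Bruhat order. It splits on whether $s_{\alpha_i}$ is a left ascent or descent of $w'$: if $s_{\alpha_i}w'<w'$ one has the chain $w'>s_{\alpha_i}w'\ge w_i>w_{i-1}$, while if $s_{\alpha_i}w'>w'$ then $s_{\alpha_i}$ is a left descent of both $s_{\alpha_i}w'$ and $w_i=s_{\alpha_i}w_{i-1}$, so the standard fact that left multiplication by a simple reflection preserves the Bruhat order among elements having that reflection as a left descent yields $w'\ge w_{i-1}$. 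Finally, the $y$-degree statement follows by tracking leading terms: expanding $Q=\sum_{w}g_w(x,x)R_{1\times w}(y)$ as in the proof of \cref{Existence Of Q} and letting each $D_{1\times v_i}$ act on the basis $R_{1\times w}$ via \cref{BGGSummary}, only the longest term $w=w_0$ contributes in top $y$-degree $i$, with coefficient $g_{w_0}(x,x)\neq 0$ forced by $Q$ having $y$-degree $r$; hence $Q_i$ has $y$-degree exactly $i$.
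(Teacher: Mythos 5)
Your proposal is correct and takes essentially the same route as the paper's proof: backward induction on $i$ with base case $i=r$ given by \cref{Existence Of Q}, the divided-difference recursion $Q_{i-1}(x,wx)=\bigl(Q_i(x,wx)-Q_i(x,s_{\alpha_i}wx)\bigr)/\alpha_i(wx)$ specialized at $w=w_{i-1}$ for the product formula (with the same sign bookkeeping), and the Bruhat-order implication ``$w\ngeq w_{i-1}$ forces $w\ngeq w_i$ and $s_{\alpha_i}w\ngeq w_i$'' for the vanishing. The only divergences are that you prove that Bruhat step directly from the lifting property where the paper cites \cite[Corollary 2.6]{BGG}, and you supply a leading-term argument for the exact $y$-degree claim that the paper dismisses as obvious; incidentally, your index $s_{\alpha_i}$ in the recursion is the correct one (the paper's displayed equation misprints it as $s_{\alpha_{i-1}}$).
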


\begin{proof}
This proof closely follows the proof of the Lemma below \cite[Theorem 3.15]{BGG}; we include the details for the reader's convenience. We proceed by backward induction on $i$. Note that when $i = \ell(w_0)$, we see that by assumption $Q_i(x, wx) = Q(x, wx)$ so the claim follows trivially from \cref{Existence Of Q}. 

Now assume that the lemma has been proved for $Q_i$ for some $i>0$. Then we obviously have the $y$-degree of $Q_{i-1}$ is $i-1$. Furthermore, we compute that for any $w \in W$, we have
\raggedbottom
\begin{equation}\label{Deqtn}Q_{i-1}(x, wx) = \frac{Q_i(x, wx) - Q_i(x, s_{\alpha_{i-1}}wx)}{\alpha_i(wx)}.\end{equation}

\noindent In particular, if $w = w_{i-1}$, we see that by our inductive hypothesis, $Q_i(x, wx) = 0$, and furthermore that $\alpha_i(wx) = (w_{i-1}^{-1}\alpha_i)(x) = \gamma_i(x).$ Therefore, we see that in this case, we have
\raggedbottom
\[Q_{i-1}(x, wx) = -\frac{Q_i(x, s_{\alpha_{i-1}}wx)}{\gamma_i(x)}\]

\noindent and so by induction we have 
\raggedbottom
\[Q_{i-1}(x, w_{i-1}x)\prod_{r \geq j > i-1} \gamma_j(x) = (-1)^{(r - i)}Q(x, w_0x).\]

\noindent Finally, if $w \ngeq w_{i-1}$ \cite[Corollary 2.6]{BGG} implies that $w \ngeq w_i$ and $s_{\alpha_i}w \ngeq w_i$. By induction we see that both terms in the numerator of \cref{Deqtn} vanish so our claim is proved. 
\end{proof}

We note the following corollary of \cref{BGG3.15}:

\begin{Corollary}\label{CanFactorOut}
We have $g_{w_0}(x, x) \prod_{\gamma} \gamma(x) = (-1)^{\ell}Q(x, w_0x)$, and furthermore for all $w \in W$, $Q(x, w_0x)$ divides $g_w(x,x)\prod_{\gamma}\gamma(x)$.
\end{Corollary}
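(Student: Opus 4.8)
The plan is to evaluate $D_{1\times w_0}Q$ explicitly and then invoke \cref{BGG3.15} at $i=0$. First I would record the relevant properties of the Demazure operators acting in the $y$-variable. Since $R_{1\times u}=D_{1\times w_0u^{-1}}R_{1\times w_0}$ is homogeneous of $y$-degree $\ell(u)$ and $D_{1\times w_0}$ is $\Symt^W$-linear and lowers $y$-degree by $\ell=\ell(w_0)$, the element $D_{1\times w_0}R_{1\times u}$ vanishes whenever $\ell(u)<\ell$, i.e. for every $u\neq w_0$, while for $u=w_0$ we get $D_{1\times w_0}R_{1\times w_0}=R_{1\times e}=1$ (this is the $w=e$ case of the defining formula for $R_{1\times w}$, normalized as in \cref{BGGSummary}). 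Noting $v_0=s_{\alpha_1}\cdots s_{\alpha_r}=w_0$, this gives $Q_0=D_{1\times v_0}Q=\sum_u g_u(x,x)\,D_{1\times w_0}R_{1\times u}=g_{w_0}(x,x)$, consistent with the assertion in \cref{BGG3.15} that $Q_0$ has $y$-degree $0$.

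With this identification the first assertion is immediate. Applying \cref{BGG3.15} at $i=0$ yields $Q_0(x,w_0x)\prod_{r\geq j>0}\gamma_j(x)=(-1)^rQ(x,w_0x)$; since $Q_0=g_{w_0}(x,x)$ is independent of $y$, since $r=\ell$, and since $\gamma_1,\dots,\gamma_r$ is exactly the set of positive coroots (the standard enumeration $\gamma_j=s_{\alpha_1}\cdots s_{\alpha_{j-1}}(\alpha_j)$ attached to the reduced word $w_0=s_{\alpha_1}\cdots s_{\alpha_r}$), this is precisely $g_{w_0}(x,x)\prod_{\gamma}\gamma(x)=(-1)^{\ell}Q(x,w_0x)$.

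For the divisibility I would first reduce it to a statement about the coefficients alone. The just-proved identity gives $Q(x,w_0x)=(-1)^{\ell}g_{w_0}(x,x)\prod_{\gamma}\gamma(x)$, so cancelling the nonzero factor $\prod_{\gamma}\gamma(x)$ in the domain $\Symt$ shows that $Q(x,w_0x)\mid g_w(x,x)\prod_{\gamma}\gamma(x)$ holds for all $w$ if and only if $g_{w_0}(x,x)\mid g_w(x,x)$ for all $w$. The key point is then that $g_{w_0}(x,x)$ may be taken to be a nonzero scalar. The conditions $Q'(x,wx)=0$ cut out the graphs, which are linear, hence homogeneous; therefore each graded piece of $Q'$ again vanishes on the graphs of $w\neq w_0$, so I may replace $Q'$ by a homogeneous representative. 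For homogeneous $Q'$ of degree $\ell$ the coefficient $g_u(x,y)$ is homogeneous of degree $\ell-\ell(u)$; in particular $g_{w_0}(x,x)$ has degree $0$, and it is nonzero since $Q(x,w_0x)=Q'(x,w_0x)$ is generically nonzero. A nonzero scalar divides every $g_w(x,x)$, giving the claim.

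The step I expect to be the genuine obstacle is justifying that $Q'$ can be arranged homogeneous of degree \emph{exactly} $\ell$, equivalently that $g_{w_0}(x,x)$ is a unit and not a higher-degree polynomial. The bound $\deg\geq\ell$ is automatic (any homogeneous $Q'$ nonzero on $\mathrm{graph}(w_0)$ forces $g_{w_0}\neq 0$, hence degree $\geq\ell$); the real content is the existence of a degree-$\ell$ representative, i.e. of a homogeneous element of $\bigcap_{w\neq w_0}I_{\mathrm{graph}(w)}$ of degree $\ell$ not lying in $I_{\mathrm{graph}(w_0)}$. This is the longest-element double Schubert class (the ``dual point class''), whose existence follows from Poincar\'e duality on $G^{\vee}/B^{\vee}$ together with \cref{BGGSummary}. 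Once such an $\mathfrak{S}_{w_0}=\sum_w h_w(x)R_{1\times w}(y)$ with $h_w\in\Symt$ is in hand, one even gets the divisibility directly: using \cref{Union of Graphs is Product of LieTs over GIT Quotient} to identify $\Symtt/I_W$ with functions on $\LTd\times_{\LTd\sslash W}\LTd$ and base-changing to $\mathrm{Frac}(\Symt)$, both $\bar Q$ and $\bar{\mathfrak{S}}_{w_0}$ are supported on $\mathrm{graph}(w_0)$ with values $Q(x,w_0x)$ and $\prod_{\gamma}\gamma(x)$, so comparing coefficients in the free $\Symt$-basis $\{R_{1\times w}(y)\}$ yields $g_w(x,x)\prod_{\gamma}\gamma(x)=h_w(x)\,Q(x,w_0x)$, which is the asserted divisibility.
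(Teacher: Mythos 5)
Your treatment of the first identity is correct and is essentially the paper's own argument: you expand $D_{1\times w_0}Q$, observe that $D_{1\times w_0}R_{1\times u}=0$ for $u\neq w_0$ on degree grounds, use the standard normalization $D_{1\times w_0}R_{1\times w_0}=1$ to get $Q_0=g_{w_0}(x,x)$, and then quote \cref{BGG3.15} at $i=0$. The divisibility part, however, has a genuine gap, and it sits exactly where you flagged it: your argument needs a homogeneous element of $\bigcap_{w\neq w_0}I_{\mathrm{graph}(w)}$ of degree \emph{exactly} $\ell$ that is nonzero on $\mathrm{graph}(w_0)$, and the justification you offer --- ``Poincar\'e duality on $G^{\vee}/B^{\vee}$ together with \cref{BGGSummary}'' --- does not produce it. Both of those inputs are statements about the one-variable coinvariant algebra (ordinary cohomology of $G^\vee/B^\vee$ and of Schubert varieties); what you need is the two-variable, $T$-equivariant point class (the general-type double Schubert class $\mathfrak{S}_{w_0}$), whose existence requires the equivariant Borel presentation together with an equivariant pushforward/localization argument, or else a Gorenstein/linkage computation in $\Symt\otimes_{\Symt^W}\Symt$; it has no elementary product formula outside type $A$, and none of these inputs is available in the paper. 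Worse, within the paper's logic such a class is the \emph{goal}, not a tool: an element of degree $\ell$ supported on $\mathrm{graph}(w_0)$ with restriction $\prod_\gamma\gamma(x)$ is precisely the polynomial $F$ of \cref{PolynomialProp}, and \cref{CanFactorOut} is the step used to manufacture $F$. So as written your route is circular; it becomes non-circular only if you import the existence of $\mathfrak{S}_{w_0}$ from external literature, which you have not done.

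Two further remarks. First, even granting that existence, your homogenization replaces $Q'$, so you would prove the corollary only for one particular choice of $Q'$ and of the $g_w$, whereas the statement (and the paper's proof) applies to the arbitrary choice fixed before the corollary; the special case would still suffice for \cref{PolynomialProp}, but it is a strictly weaker claim. Second, for contrast, the paper's proof needs no homogeneity and no auxiliary class: it runs a backwards induction on $\ell(w)$, applying $D_{1\times\tilde{w}}$ with $\tilde{w}=w_0ww_0$ to $Q=\sum_u g_u(x,x)R_{1\times u}(y)$, discarding the non-reduced compositions by \cref{BGGSummary}(2), noting that $u=w$ is the unique surviving index of minimal length, and then combining \cref{BGG3.15} (after multiplying by $\prod_\gamma\gamma(x)$) with the inductive hypothesis to isolate $g_w(x,x)\prod_\gamma\gamma(x)$ and conclude it is divisible by $Q(x,w_0x)$. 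You should either adopt that induction or supply an actual proof or reference for the existence of the degree-$\ell$ equivariant point class.
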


\newcommand{\tw}{\tilde{w}}
\begin{proof}
The first statement is a direct application of the $i = 0$ claim of \cref{BGG3.15}. We also use it as the base case of for the second statement, which we prove by backwards induction on $\ell(w)$ for $w \in W$. Fix $w \in W$, and set $\tw := w_0ww_0$. Apply $D_{1 \times \tw}$ to the equality $Q(x, y) := \sum_{u \in W}g_u(x,x)R_{1 \times u}(y)$ to obtain
\raggedbottom
\begin{equation}\label{DemazureKillsStuffEquation}
D_{1 \times \tw}(Q(x,y)) = \sum_{\{u : \ell(\tw w_0u^{-1}) = \ell(\tw ) + \ell(w_0u^{-1})\}}g_u(x,x)D_{1 \times \tw w_0u^{-1}}(R_{1 \times w_0})(y)
\end{equation}

\noindent where the other terms vanish by (2) of \cref{BGGSummary}. In particular, the set $\{u : \ell(\tw w_0u^{-1}) = \ell(\tw ) + \ell(w_0u^{-1})\}$ contains a unique element of minimal length, namely $u = w_0\tw w_0 = w$, because the element $y \in W$ of largest length for which $\ell(\tw y) = \ell(\tw) + \ell(y)$ is $y = \tw^{-1}w_0$. Multiply both sides of \cref{DemazureKillsStuffEquation} by $\prod_{\gamma}\gamma(x)$. By \cref{BGG3.15}, we have that $Q(x, w_0x)$ divides the left hand side, and by induction, we have that $Q(x, w_0x)$ divides all terms in the right hand side except the term $g_w(x,x)R_{1 \times w}(y)$, so therefore it divides $g_w(x,x)$. 
\end{proof}

\begin{proof}[Proof of \cref{PolynomialProp}]
Set $F(x,y) = \frac{Q(x,y)\prod \gamma(x)}{Q(x, w_0x)}$, which is well defined by \cref{CanFactorOut}. Then by construction, $F$ satisfies (2), and $F$ satisfies (1) because $Q$ does. Furthermore, since the coefficient on $R_{1 \times w_0}$ on $F$ is $(-1)^{\ell}$ in the coinvariant algebra, (3) is satisfied, thus completing the proof of \cref{PolynomialProp}.
\end{proof}

\begin{Corollary}\label{ExplicitBasis}
If $Z \subseteq W$ is a closed subset, the underlying vector space of $\Symtt/J_{Z}$ has basis given by the images of $D_{1 \times u}(F)$ for which $w_0u^{-1} \in Z$. 
\end{Corollary}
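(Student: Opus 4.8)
The plan is to deduce the claim from the known basis of \cref{BGGSummary}(3) by a triangularity argument with respect to the cohomological grading. Throughout I identify $\Symtt/J_Z$ with $H^*(X_Z)$ via \cref{ClosedGraphsTheorem}, so that $\{D_{1\times u}(R_{1 \times w_0}) : w_0 u^{-1} \in Z\}$ is a homogeneous $k$-basis, with $D_{1\times u}(R_{1 \times w_0})$ in cohomological degree $\ell - \ell(u)$; here $R_{1 \times w_0} = (1\otimes\rho)^{\ell}/\ell!$. In particular this index set has cardinality $|Z| = \dim_k \Symtt/J_Z$, so it will suffice to prove that the images of $\{D_{1\times u}(F)\}_{w_0 u^{-1}\in Z}$ are linearly independent.

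First I would record how the second-factor Demazure operators interact with the quotients. Since $D_{1\times s}$ acts only on the second tensor factor it is $(\Symt\otimes 1)$-linear, hence preserves the ideal generated by $\Symtplus\otimes\Symt$; and for $f \in I_W$ one computes $D_{1\times s}(f)(x,wx) = (f(x,wx) - f(x,swx))/\alpha_s(wx) = 0$ for every $w\in W$, so $D_{1\times s}$ preserves $I_W$ as well. Thus each $D_{1\times u}$ descends to the coinvariant algebra $C := \Symtt/J_W$, where it agrees with the usual Demazure action on $H^*(G^\vee/B^\vee)$ under \cref{Union of Graphs is Product of LieTs over GIT Quotient}. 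I would emphasize that the analogous computation \emph{fails} for $I_Z$, since $Z$ is only Bruhat-closed and need not be stable under left multiplication by $s$; consequently one must apply $D_{1\times u}$ before restricting to $\Symtt/J_Z$. Because $J_W \subseteq J_Z$, the projection $\Symtt \to \Symtt/J_Z$ factors through $C$, so the image of $D_{1\times u}(F)$ in $\Symtt/J_Z$ is obtained by passing to $C$, applying the descended $D_{1\times u}$, and then restricting.

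Next I would compute the image of $F$ in $C$. Using the expansion $F = \sum_{w} h_w(x)\, R_{1\times w}(y)$ with $h_w(x) := g_w(x,x)\prod\gamma(x)/Q(x,w_0 x)$ coming from \cref{CanFactorOut}, and killing $\Symtplus\otimes\Symt$, the image of $F$ in $C$ equals $\sum_w h_w(0)\, R_{1\times w}$, where $h_{w_0}(x) = (-1)^{\ell}$ is constant by the first identity of \cref{CanFactorOut}. Since $R_{1\times w}$ has cohomological degree $\ell(w)$, this shows that in $C$ one has $F \equiv (-1)^{\ell} R_{1 \times w_0} + (\text{terms of strictly smaller cohomological degree})$. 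Applying $D_{1\times u}$, which lowers cohomological degree by exactly $\ell(u)$, and then restricting to $\Symtt/J_Z$, the image of $D_{1\times u}(F)$ equals $(-1)^{\ell} D_{1\times u}(R_{1 \times w_0})$ plus a term of cohomological degree strictly below $\ell - \ell(u)$.

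Finally I would conclude by unitriangularity. For $w_0 u^{-1}\in Z$ write $b_u := D_{1\times u}(R_{1 \times w_0})$, of degree $d_u = \ell - \ell(u)$, and $c_u$ for the image of $D_{1\times u}(F)$, so that $c_u = (-1)^{\ell} b_u + (\text{degree} < d_u)$. Because the $b_v$ form a homogeneous basis, the lower-degree correction in $c_u$ is a combination of the $b_v$ with $d_v < d_u$; comparing top graded pieces in any relation $\sum_u a_u c_u = 0$ then forces all $a_u = 0$ by descending induction on degree. Hence the $c_u$ are linearly independent, and by the dimension count above they form a basis of $\Symtt/J_Z$. The main point to get right is the descent subtlety flagged above: the Demazure operators must be applied in $\Symtt/J_W$ and only afterwards restricted, since they do not preserve $J_Z$; once this is handled the grading argument is routine.
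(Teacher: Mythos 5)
Your proposal has a genuine gap: it is circular relative to the paper's development. Both load-bearing inputs in your argument --- the identification $\Symtt/J_Z \cong H^*(X_Z)$, from which you extract the dimension count $\dim_k \Symtt/J_Z = |Z|$ \emph{and} the claim that the images of $D_{1\times u}(R_{1\times w_0})$ for $w_0u^{-1}\in Z$ form a homogeneous basis of $\Symtt/J_Z$ --- are precisely the content of \cref{ClosedGraphsTheorem}. But in this paper \cref{ClosedGraphsTheorem} has no independent proof: it is deduced \emph{from} \cref{ExplicitBasis} (the subsection ends by setting $Z = S$ in \cref{ExplicitBasis} and matching against \cref{BGGSummary} to conclude \cref{ClosedGraphsTheorem}). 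Before \cref{ExplicitBasis} is available, the only Borel-type statement one may use is the classical one for the full flag variety, i.e.\ \cref{BGGSummary}(3) transported to $C = \Symtt/J_W$; for a proper closed subset $Z \subsetneq W$, the compatibility of $\Symtt/J_Z$ with $H^*(X_Z)$ is exactly what is at stake. Once the circular input is removed, your argument retains no upper bound on $\dim_k\Symtt/J_Z$ and no basis of $\Symtt/J_Z$ against which to run the unitriangularity, so it collapses. (Your preliminary observations are correct --- $D_{1\times s}$ preserves $I_W$ and the ideal generated by $\Symtplus\otimes\Symt$ but not $J_Z$, and $F \equiv (-1)^{\ell}R_{1\times w_0} + \text{lower order terms}$ in $C$ by \cref{CanFactorOut} --- but these are not where the difficulty lies.)

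The paper's proof never presupposes anything about $\Symtt/J_Z$ and instead produces the two bounds directly. The lower bound $\dim_k \Symtt/J_Z \geq |Z|$ is geometric: the projection $\Gamma_Z \to \LTd$ is finite with generic fibers of cardinality $|Z|$, so by upper semicontinuity of fiber degree the fiber at $0$, namely $\Spec(\Symtt/J_Z)$, has length at least $|Z|$. The upper bound uses a Bruhat-order vanishing statement that is entirely absent from your write-up: applying \cref{BGG3.15} to a reduced expression $w_0 = (w_0u^{-1})\cdot u$ shows that $D_{1\times u}(F)$ vanishes identically on the graph of every $t$ with $t \ngeq w_0u^{-1}$; since $Z$ is closed, for $w_0u^{-1}\notin Z$ this vanishing holds for \emph{all} $t \in Z$, hence $D_{1\times u}(F) \in I_Z \subseteq J_Z$. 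Since the images of $\{D_{1\times u}(F)\}_{u \in W}$ span $C$ (via \cref{PolynomialProp} and \cref{BGGSummary}(3) for the full flag variety, where only the classical Borel isomorphism is needed) and $C \twoheadrightarrow \Symtt/J_Z$, the surviving $|Z|$ elements indexed by $w_0u^{-1}\in Z$ span $\Symtt/J_Z$, giving $\dim_k \Symtt/J_Z \leq |Z|$ and the basis statement simultaneously. If you wish to repair your proof, replace the appeal to \cref{ClosedGraphsTheorem} by these two steps; your triangularity bookkeeping then becomes unnecessary. (Your route would only be legitimate if one imported an independent proof of \cref{ClosedGraphsTheorem}, e.g.\ from the literature cited in the remark following it, which is not how this paper is structured.)
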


\begin{proof}
Consider the map $\Symtt/J_W \twoheadrightarrow \Symtt/J_{Z}$. Since, generically in $\LT$, the union of $|Z|$ graphs will have $|Z|$ points lying above them, we have that $\text{dim}_k(\Symtt/J_{Z}) \geq |Z|$. 

On the other hand, assume that $w_0u^{-1} \notin Z$. We will show that $D_{1 \times u}(F)(tx', x') = 0$ for all $t \in Z$. In particular, by \cref{PolynomialProp}, these elements are all linearly independent, and so by showing this we will obtain the opposite inequality $\text{dim}_k(\Symtt/J_Z) \leq |Z|$. Choose a reduced word decomposition for $w_0u^{-1}$ and of $u$ to obtain a reduced word decomposition of $w_0 = (w_0u^{-1})u$. Apply \cref{BGG3.15} (where, in the notation of the lemma, $i$ denotes the length of $w_0u^{-1}, w_i = uw_0, v_i = u$, and $w = t$) to see that $D_{1 \times u}(F)(x, t^{-1}x) = 0$ if $t^{-1} \ngeq uw_0$, where we make the coordinate change $x := tx'$. Since inversion preserves ordering, we see that this is equivalent to the condition that $t \ngeq w_0u^{-1}$. However, by assumption, $t \in Z$ and $Z$ is closed, so $t \ngeq w_0u^{-1}$, and so our desired vanishing holds. 
\end{proof} 

Setting $Z = S$ in \cref{ExplicitBasis}, this precisely matches the description of \cref{BGGSummary}, which therefore proves \cref{ClosedGraphsTheorem}.

\bibliographystyle{amsplain}
\bibliography{references}

\end{document}